\newcommand{\imod}[1]{\allowbreak\mkern4mu({\operator@font mod}\,\,#1)}
\renewcommand{\a}{\alpha}
\renewcommand{\b}{\beta}
 \newcommand{\e}{\epsilon}
 \renewcommand{\L}{\Lambda}
\renewcommand{\l}{\lambda} 
 \renewcommand{\to}{\rightarrow}
 \newcommand{\s}{\sigma}
\renewcommand{\o}{\omega} 
 \newcommand{\C}{\mathcal{C}}
\newcommand{\leqs}{\leqslant}
 \newcommand{\vs}{\vspace{3mm}}
\newcommand{\la}{\langle}
\newcommand{\ra}{\rangle}
\newcommand{\R}{\mathbb{R}}
\newcommand{\Z}{\mathbb{Z}}
\newtheorem{theorem}{Theorem}
\newtheorem{remark}{Remark}
\newtheorem{thm}{Theorem}[section]
\newtheorem{prop}[thm]{Proposition}
\newtheorem{lem}[thm]{Lemma}
\newtheorem{cor}[thm]{Corollary}
\newtheorem{rmk}[thm]{Remark}
\theoremstyle{definition}
\theoremstyle{remark}
\newtheorem{remk}[thm]{Remark}
\numberwithin{section}{chapter}
\numberwithin{equation}{chapter}
\begin{document}

\frontmatter

\title{Irreducible geometric subgroups of \\ classical algebraic groups}

\author{Timothy C. Burness}
\address{T.C. Burness, School of Mathematics, University of Bristol, Bristol BS8 1TW, United Kingdom}
\email{\texttt{t.burness@bristol.ac.uk}}

\author{Souma\"{i}a Ghandour}
\address{S. Ghandour, Facult\'{e} des Sciences, Section V, Universit\'{e} Libanaise, Nabatieh, Lebanon}
\email{\texttt{soumaia.ghandour@gmail.com}} 

\author{Donna M. Testerman}
\address{D.M. Testerman, Section de Math\'ematiques, Station 8, \'{E}cole Polytechnique F\'{e}d\'{e}rale de Lausanne, CH-1015 Lausanne, Switzerland}
\email{\texttt{donna.testerman@epfl.ch}}

\date{November 25th, 2010}

\subjclass[2010]{Primary 20G05; Secondary 20E28, 20E32}

\keywords{Classical algebraic group; disconnected maximal subgroup; irreducible triple}

\begin{abstract}
Let $G$ be a simple classical algebraic group over an algebraically closed field $K$ of characteristic $p \ge 0$ with natural module $W$. Let $H$ be a closed subgroup of $G$ and let $V$ be a non-trivial irreducible tensor-indecomposable $p$-restricted rational $KG$-module such that the restriction of $V$ to $H$ is irreducible. In this paper we classify all such triples $(G,H,V)$, where $H$ is a maximal closed disconnected positive-dimensional subgroup of $G$, and $H$ preserves a natural geometric structure on $W$. 
\end{abstract}

\maketitle

\tableofcontents

\mainmatter

\chapter{Introduction}\label{s:intro}

In the 1950s, Dynkin \cite{Dynkin1} determined the maximal closed connected subgroups of the classical matrix groups over $\mathbb{C}$. In the course of his analysis, he observed that if $G$ is a semisimple algebraic group over $\mathbb{C}$ and if $\phi:G \to {\rm SL}(V)$ is an irreducible rational representation, then with specified exceptions the image of $G$ is maximal among closed connected subgroups in one of the classical groups ${\rm SL}(V)$, ${\rm Sp}(V)$ or ${\rm SO}(V)$. In particular, he determined all
triples $(G,H,V)$ where $G$ is a simple closed irreducible subgroup of ${\rm SL}(V)$ and $H$ is a positive-dimensional closed connected subgroup of $G$ such that the restriction of $V$ to $H$, denoted by $V|_{H}$, is also irreducible. Naturally, one is interested in investigating the more general situation where $\mathbb{C}$ is replaced by an arbitrary algebraically closed field.

In the 1980s, Seitz \cite{Seitz2} initiated the investigation of such triples in the positive characteristic setting as part of a wider study of the subgroup structure of finite and algebraic simple groups. By introducing a variety of new techniques, which differed greatly from those employed by Dynkin, he determined all the triples $(G,H,V)$ where $G$ is a simply connected simple classical algebraic group over any algebraically closed field $K$ of characteristic $p \ge 0$, and $H$ is a closed connected subgroup of $G$. For exceptional algebraic groups $G$, the detailed analysis of Testerman \cite{Test1} handles the case where $H$ is connected, and the case where $H$ is a positive-dimensional disconnected subgroup of an exceptional group has been settled very recently by Ghandour \cite{g_paper}.

Therefore, in order to complete the analysis for simple algebraic groups it remains to consider the case where $G$ is classical and $H$ is a positive-dimensional disconnected subgroup. Here a partial analysis has been undertaken by Ford. In  \cite{Ford1} and \cite{Ford2} he classifies all triples $(G,H,V)$ where $G$ is classical and $H$ is disconnected, under the additional assumption that the connected component $H^0$ is simple and, more importantly, that the $KH^0$-composition factors of $V$ are $p$-restricted as $KH^0$-modules (with the convention that every dominant weight is $p$-restricted when $p=0$). These extra assumptions help to simplify the analysis. Nevertheless, under these hypotheses Ford discovered a very interesting family of triples $(G,H,V)$ with $G=B_n$ and $H=D_n.2$ (see \cite[Section 3]{Ford1}). Furthermore, these examples were found to have applications to the representation theory of the symmetric groups, and led to a proof of the Mullineux conjecture (see \cite{FK}). However, for future applications it is desirable to remove the additional conditions on $H$ and $V$.

Some special cases have been studied by various authors. For instance, in \cite{GT}, Guralnick and Tiep consider irreducible triples in the special case $G={\rm SL}(W)$, $V=S^k(W)$ (the $k$-th symmetric power of the natural module $W$ for $G$) and $H$ is any closed (possibly finite) subgroup of $G$. A similar analysis of the exterior powers $\Lambda^k(W)$ is in progress. These results have found interesting applications in the study of holonomy groups of stable vector bundles on  smooth projective varieties (see \cite{BK}). For finite groups, a related problem for subgroups of ${\rm GL}_{n}(q)$ is investigated by Kleshchev and Tiep in \cite{KT}.

Let $G$ be a simple classical algebraic group over an algebraically closed field $K$ of characteristic $p \ge 0$ with natural module $W$. More precisely,
let $G = {\rm Isom}(W)'$, where ${\rm Isom}(W)$ is the full isometry 
group of a suitable form $f$ on $W$, namely, the zero bilinear form, a 
symplectic form, or a non-degenerate quadratic form.
We write $G=Cl(W)$ to denote the respective simple classical groups ${\rm SL}(W)$, ${\rm Sp}(W)$ and ${\rm SO}(W)$ defined in this way. Note that
$G = {\rm Isom}(W)\cap {\rm SL}(W)$, with the exception
that if $p=2$, $f$ is quadratic and $\dim W$ is even, then $G$ has index 2 in 
${\rm Isom}(W)\cap {\rm SL}(W)$. 

The main theorem on the subgroup structure of $G$ is due to Liebeck and Seitz. In \cite{LS}, six collections of natural, geometrically defined closed subgroups of $G$ are presented, labelled $\C_i$ for $1 \le i \le 6$. These collections include the stabilizers of appropriate subspaces of $W$, and the stabilizers of certain direct sum and tensor product decompositions of $W$. We set $\mathcal{C}=\bigcup_{i=1}^{6}\C_i$. The main theorem of \cite{LS} states that if $H$ is a closed subgroup of $G$ then either $H$ is contained in a member of $\C$,  or roughly speaking, $H$ is almost simple (modulo scalars) and the unique quasisimple normal subgroup of $H$ (which coincides with the connected component $H^0$ when $H$ is infinite) acts irreducibly on $W$. 
We write $\mathcal{S}$ to denote this additional collection of `non-geometric' maximal subgroups of $G$. This result provides a natural algebraic group analogue of Aschbacher's well-known structure theorem for finite classical groups (see \cite{Asch}), and we refer the reader to Section \ref{ss:cla} for further details on the $\C_i$ collections.

In this paper, we consider the case where $H$ is a maximal disconnected positive-dimensional subgroup in one of the above $\C_i$ collections, a so-called \emph{geometric} maximal subgroup of $G$. Fix a set of fundamental dominant weights $\{\l_1, \ldots, \l_n\}$ for $G$ (in this paper, we adopt the standard labelling of simple roots and fundamental weights given in Bourbaki \cite{Bour}). Let $V$ be an irreducible $KG$-module with highest weight $\l$. As previously remarked, Seitz \cite{Seitz2} handles the case where $H$ is connected (more precisely, the case where 
$H/Z(G)$ is connected), so we will assume $H/Z(G)$ is disconnected. It is also natural to assume that $V$ is tensor-indecomposable as a $KG$-module, and in view of Steinberg's tensor product theorem, we will also assume that $\l$ is a $p$-restricted highest weight for $G$ (where we adopt the convention that every dominant weight is $p$-restricted when $p=0$). Finally, to ensure that the weight lattice of the underlying root system $\Sigma$ of $G$  coincides with the character group of a maximal torus of $G$, we replace $G$ by a  simply connected cover also having root system $\Sigma$.
Our main theorem is the following:

\vs

\begin{theorem}\label{main}
Let $G$ be a simply connected cover of a simple classical algebraic group $Cl(W)$ defined over an algebraically closed field $K$ of characteristic $p \ge 0$. Assume that $(G,p) \neq (B_n,2)$.
Let $V$ be an irreducible tensor-indecomposable $p$-restricted $KG$-module with highest weight $\l$, and let $H \in \C$ be a maximal positive-dimensional subgroup of $G$ such that $H/Z(G)$ is disconnected. Then 
$V|_{H}$ is irreducible if and only if $(G,H,\l)$ is one of the cases recorded in Table \ref{t:main}.
\end{theorem}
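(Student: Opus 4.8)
The plan is to prove the two implications separately, with almost all of the effort going into the forward (``only if'') direction. Writing $H^0$ for the connected component, the hypothesis that $H/Z(G)$ is disconnected means $H/H^0$ is a nontrivial finite group, and for each geometric family $\C_i$ it is generated by an element inducing a prescribed automorphism $\tau$ of $H^0$ --- typically a diagram automorphism of $H^0$, the interchange of two isomorphic simple factors, or the reflection distinguishing $\mathrm{O}$ from $\mathrm{SO}$. The organising principle is Clifford theory: if $V|_H$ is irreducible then the $H^0$-composition factors of $V$ form a single orbit under $\langle \tau\rangle$. This splits the analysis in two. When $V|_{H^0}$ is already irreducible, $V|_H$ is \emph{a fortiori} irreducible, and the resulting triples are read off from the connected classification of Seitz \cite{Seitz2} applied to $H^0$, the only task being to decide which qualifying $H^0$ sit inside a disconnected geometric $H$. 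The genuinely new regime is when $V|_{H^0}$ is reducible, with its factors permuted nontrivially by $\tau$.

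First I would use the Liebeck--Seitz description of $\C$ to list the finitely many shapes of a maximal positive-dimensional disconnected $H$ together with the pairs $(H^0,\tau)$: the relevant contributions come from the non-degenerate decomposition stabilizers $W=W_1\perp W_2$ with $\dim W_1=\dim W_2$ in $\C_1$, the imprimitive subgroups $Cl(U)\wr \mS_t$ in $\C_2$, the tensor-product stabilizers in $\C_4$, and the distinguished family $G=B_n$, $H=D_n.2$ already isolated by Ford \cite{Ford1}. For each such $H$ I would fix a maximal torus $T_H\le T$ of $G$ and make the restriction map $X(T)\to X(T_H)$ on character groups explicit, so that the $T_H$-weight $\mu=\l|_{T_H}$ of a highest weight vector, and the full $T_H$-weight system of $V=L(\l)$, can be computed directly from $\l$.

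The technical heart is to pin down the $H^0$-composition factors of $V$ from this weight data and then to impose the orbit condition. The factor containing the highest weight vector has $H^0$-highest weight $\mu$, so the Clifford condition forces the remaining factors to be the $\tau$-conjugates $\mu^{\tau},\mu^{\tau^2},\dots$; in particular all factors have equal dimension and $\dim V=|H:H^0|\cdot\dim L(\mu)$ must hold. Matching these equalities against the true weight multiplicities of $V$ produces rigid numerical relations among the coefficients of $\l$. The hard part, and where positive characteristic bites, is exactly this step: since Weyl modules need not be irreducible and $V|_{H^0}$ need not be semisimple, one cannot simply read composition factors off the weight lattice but must control the multiplicities of $L(\l)$ itself. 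I expect to lean on Smith's theorem on Levi restrictions, Premet's theorem on the saturation of weights, and L\"ubeck's low-dimensional representation tables to force $\l$ to have only one or two nonzero coefficients of bounded size, thereby reducing the infinite families to a finite list of candidates.

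Finally, the reverse (``if'') direction is a case-by-case verification over the finitely many surviving candidates and the sporadic entries of Table \ref{t:main}. For each I would exhibit the decomposition of $V|_{H^0}$ explicitly and check that $\tau$ permutes its composition factors transitively, so that $V|_H$ is irreducible; in the self-conjugate cases, where $\mu^{\tau}=\mu$, this reduces instead to confirming via Seitz's criterion that $V|_{H^0}$ is already irreducible. These are routine dimension and weight computations, so the principal difficulty of the whole argument remains the uniform bounding of $\l$ carried out in the forward direction.
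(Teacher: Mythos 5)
Your overall skeleton (Clifford theory, explicit torus restrictions, multiplicity comparisons, Premet/Smith/L\"ubeck as tools, then a case check for sufficiency) is the same as the paper's, but there are two concrete gaps that would stop the argument where the real work lies.

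First, you model $H/H^0$ as cyclic, generated by a single automorphism $\tau$, so that the $KH^0$-composition factors are the $\tau$-orbit $\mu, \mu^{\tau}, \mu^{\tau^2}, \ldots$ and $\dim V = |H:H^0|\cdot \dim L(\mu)$. Both statements fail for the families that occupy most of the paper: for the imprimitive subgroups in $\C_2$ and the tensor-product subgroups in $\C_4({\rm ii})$, the component group is of shape $S_t$, $2^{t-1}.S_t$ or $2^t.S_t$, not cyclic, and the number of composition factors is the orbit length $\kappa$, which is typically a \emph{proper} divisor of $|H:H^0|$ (e.g.\ $\kappa = 2^{(t-1)/2}$ for the spin module under $(2^{t-1}\times B_l^t).S_t$, against $|H:H^0| = 2^{t-1}\,t!$). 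Handling these groups is precisely the combinatorial heart of the paper: one attaches to each weight $\mu$ affording a highest weight of a $KH^0$-composition factor an \emph{associated permutation} $\sigma \in S_t$, and then exploits invariants ($h(\mu|_{H^0})$ and $\ell(\mu|_{H^0})$, together with $\sigma$-invariant subsets of $\{1,\ldots,t\}$, as in Lemmas \ref{l:eh} and \ref{l:sumc}) to rule out candidate $\sigma$'s. Your sketch has no substitute for this machinery, and a single-$\tau$ Clifford analysis cannot produce the needed constraints.

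Second, your stated goal of forcing ``$\l$ to have only one or two nonzero coefficients of bounded size'' is contradicted by the theorem itself: the entries $(B_n, D_n.2, \sum_i a_i\l_i)$ and $(C_n, D_n.2, \sum_{i<n} a_i\l_i)$ with $p=2$ in Table \ref{t:main} admit highest weights with arbitrarily many nonzero coefficients, subject only to the congruence conditions of Remark \ref{r:conds}(d). No uniform bounding argument can succeed for these maximal-rank subgroups. The paper deals with them differently: after using a short weight argument to show $a_n \le 1$ (so that the composition factors of $V|_{H^0}$ have $p$-restricted highest weights), it invokes Ford's classification \cite{Ford1} as a black box, whose Section 3 contains the long analysis producing exactly this family. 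You cite Ford only as the discoverer of the examples, not as an ingredient of the proof, so your plan would in effect have to reprove his theorem; without either that reduction or his analysis, the forward direction collapses in these cases.
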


\vs

\begin{remark}\label{r:conds}
\emph{
Let us make some remarks on the statement of Theorem \ref{main}.
\begin{enumerate}\addtolength{\itemsep}{0.3\baselineskip}
\item[{\rm (a)}] Since $A_1 \cong B_1 \cong C_1$, $B_2 \cong C_2$ and $A_3 \cong D_3$ (as algebraic groups), the
conditions on $n$ recorded in the first column of Table \ref{t:main} avoid an unnecessary repetition of cases. Also note that $D_n$ is simple if and only if $n \ge 3$. 
\item[{\rm (b)}] Note that in the statement of Theorem \ref{main}, and for the remainder of this paper, we assume that $(G,p) \neq (B_n,2)$. The relevant irreducible triples for $(G,p) = (B_n,2)$ can be quickly deduced from the corresponding list of cases presented in Table \ref{t:main} for the dual group of type $C_n$ (there is an exceptional isogeny between groups of type $B_n$ and $C_n$ when $p=2$).
\item[\rm{(c)}] The final column of Table \ref{t:main} records necessary and sufficient conditions for the irreducibility of the corresponding triple $(G,H,V)$, as well as ensuring the existence and maximality of $H$ in $G$ (see Table \ref{t:disc} in Section \ref{ss:cla}).
\item[\rm{(d)}] The required conditions for the case $(G,H)=(B_n,D_n.2)$ in Table \ref{t:main} are as follows:
\vspace{2mm}
\begin{quote}
$a_n=1$; if $a_i,a_j \neq 0$, where $i<j<n$ and $a_k=0$ for all $i<k<j$, then $a_i+a_j \equiv i-j \imod{p}$; if $i<n$ is maximal such that $a_i \neq 0$ then $2a_i \equiv -2(n-i)-1 \imod{p}$.
\end{quote}
\vspace{2mm}
In particular, if $p=0$ then $\l=\l_n$ is the only possibility. This interesting family of examples was found by Ford (see case ${\rm U}_{2}$ in \cite[Table II]{Ford1}). 
\item[{\rm(e)}] The required conditions for the case $(G,H)=(C_n,C_l^t.S_t)$ in Table \ref{t:main} (with $n=lt$ and $\l = \l_{n-1}+a\l_n$) are as follows:
\vspace{2mm}
\begin{quote}
$t=2$ and either $(l,a) = (1,0)$, or $0 \le a <p$ and $2a+3 \equiv 0 \imod{p}$.
\end{quote}
\vspace{2mm}
\item[{\rm(f)}] In Table \ref{t:main} we write $T_k$ to denote a $k$-dimensional torus.
\item[{\rm (g)}] Let $(G,H,\l)$ be one of the following cases from Table \ref{t:main}:
$$\hspace{14mm} (A_{n}, D_m.2,\l_j),\; (C_n,C_l^t.S_t,\l_n), \; (C_n, D_n.2,\sum_{i<n}a_i\l_i),\;(D_{n},(2 \times B_l^2).2,\l_k)$$ 
(where $1 \le j \le n$, $j \neq m$, and $k=n-1,n$). Here the connected group $H^0$ acts irreducibly on $V$, so these cases appear in \cite[Table 1]{Seitz2}.  
\item[{\rm (h)}] Consider the example $\l=\l_7$ for $(G,H)=(D_n, C_l^t.S_t)$ with $n=8$ and $(l,t)=(1,4)$ or $(2,2)$. If $\tilde{H}$ denotes the image of $H$ under a non-trivial graph automorphism of $G$ then $\l=\l_8$ is an example for 
the pair $(G,\tilde{H})$. Similarly, $\l=\l_4$ is an example for $(G,\tilde{H})$ when $G=D_4$ and $H$ is of type $A_3T_1.2$ or $C_1^3.S_3$ (with $p=2$). In this latter case, $\l=\l_1+\l_3$ is an additional example for $(G,\tilde{H})$.
\item[{\rm (i)}] For each case $(G,H,\l)$ listed in Table \ref{t:main}, the restriction of $\l$ to a suitable maximal torus of $(H^0)'$ is given in a later table, according to the particular $\C_i$ family to which $H$ belongs (see Table \ref{t:c1,3,6} if $H \in \C_1 \cup \C_3 \cup \C_6$, Table \ref{t:c2} if $H \in \C_2$ and Tables \ref{t:c4i} and \ref{t:c4ii} if $H \in \C_4$). In these tables we also record the number $\kappa$ of $KH^0$-composition factors in $V|_{H^0}$.
\item[{\rm (j)}] Let $G,H$ and $V$ be given as in the statement of Theorem \ref{main}, and assume   that $V|_{H}$ is irreducible. Then $H^0$ is reductive. Indeed, the unipotent radical of $H^0$ acts completely reducibly on $V$, which implies that it acts trivially on $V$.
\end{enumerate}}
\end{remark}

\renewcommand{\arraystretch}{1.2}
\begin{table}\small
\vspace{10mm}
$$\begin{array}{llcll} \hline
G & H &  & \l & \mbox{Conditions} \\ \hline
A_n & A_l^tT_{t-1}.S_t & n+1=(l+1)t & \l_1, \, \l_n & \mbox{$l \ge 0$, $t \ge 2$} \\
n \ge 1 & & &  \l_k &  \mbox{$l=0$, $t \ge 2$, $1<k<n$} \\
 & A_l^t.S_t & n+1=(l+1)^t& \l_1, \, \l_n & \mbox{$l \ge 2$, $t \ge 2$}  \\
& & & \l_2,\, \l_{n-1} & \mbox{$l \ge 2$, $t=2$, $p\neq 2$}\\
& D_m.2 & n+1 = 2m & \l_k & \mbox{$1 \le k \le n$, $n \ge 3$, $p \neq 2$} \\
&&&&\\
B_n & D_lB_{n-l}.2 & & \l_n & 1\le l < n \\
n \ge 3 & (2^{t-1}\times B_l^t).S_t & 2n+1=(2l+1)t & \l_1, \, \l_n & \mbox{$l \ge 1$, $t \ge 3$ odd} \\
& B_l^t.S_t & 2n+1=(2l+1)^t & \l_1 & \mbox{$l \ge 1$, $t \ge 2$} \\
& & & \l_4 & \mbox{$(l,t)=(1,2)$, $p \neq 3$} \\
  & D_n.2 & &  \sum_{i=1}^{n} a_i\l_i & 
\mbox{See Remark \ref{r:conds}}{\rm (d)} \\
&&&& \\
C_n & C_l^t.S_t & n=lt &  \l_1 & \mbox{$l \ge 1$, $t \ge 2$} \\
n \ge 2 & & & \l_n & \mbox{$l \ge 1$, $t \ge 2$, $p=2$} \\
 & & & \l_{n-1}+a\l_n & \mbox{See Remark \ref{r:conds}(e)} \\
& A_{n-1}T_1.2 & & \l_1 &  p \neq 2 \\
& C_aD_b.2 & n=2ab & \l_1 & \mbox{$b \ge 2$, $p \neq 2$} \\ 
& C_l^t.S_t & 2n=(2l)^t&  \l_1 & \mbox{$l \ge 1$, $t \ge 3$ odd, $p \neq 2$} \\
& & & \l_2			& \mbox{$(l,t)=(1,3)$, $p \neq 2$}\\
	&	& & \l_3			& \mbox{$(l,t)=(1,3)$, $p \neq 2,3$}\\
 & D_n.2 & & \l_n,\,   \sum_{i=1}^{n-1} a_i\l_i& p=2 \\
&&&& \\
D_n & D_lD_{n-l}.2 && \l_{n-1},\,\l_{n} & 1 \leq l < n/2 \\
n \ge 4 & (2^{t-1}\times B_l^t).S_t & 2n=(2l+1)t & \l_1, \, \l_{n-1},\,\l_{n} & \mbox{$l \ge 1$, $t \ge 2$ even, $p \neq 2$} \\
& (D_l^t.2^{t-1}).S_t & n=lt & \l_1,\, \l_{n-1},\,\l_{n} & \mbox{$l \ge 1$, $t \ge 2$} \\
& & & \l_{1}+\l_{n-1},\, \l_{1}+\l_{n} & (t,p)=(2,2),\; \mbox{$l \ge 3$ odd} \\
 & A_{n-1}T_1.2 & & \l_1 & \mbox{$n$ even} \\
 & & & \l_3 &	n=4 \\
 & D_aD_b.2^2 & n=2ab & \l_1 & \mbox{$a>b \ge 2$, $p \neq 2$} \\
 & C_l^t.S_t & 2n=(2l)^t & \l_1 &  \mbox{$l \ge 1$, $t \ge 2$ even or $p=2$} \\
 & & & \l_3,\, \l_1+\l_4,\, \l_3+\l_4			& (l,t,p) = (1,3,2)\\
	&	& & \l_7			& \mbox{$(l,t)=(1,4)$, $p\ne3$}\\
	&	& & \l_7			& \mbox{$(l,t)=(2,2)$, $p\ne5$}\\
 & (D_l^t.2^t).S_t & 2n=(2l)^t & \l_1 & \mbox{$l \ge 3$, $t \ge 2$, $p \neq 2$} \\ \hline
\end{array}$$
\caption{The maximal closed positive-dimensional disconnected irreducible geometric subgroups of classical algebraic groups}
\label{t:main}
\end{table}
\renewcommand{\arraystretch}{1}

\vs

Let $G, H$ and $V$ be given as in the statement of Theorem \ref{main}. Since $H \in \mathcal{C}$ we have a concrete description of the embedding of $H$ in $G$, and we can directly calculate the restriction of $\l$ to a suitable maximal torus of $(H^0)'$ in terms of a set of fundamental weights for $(H^0)'$. If $V|_{H^0}$ is irreducible then the possibilities for $(G,H,V)$ can be deduced from the work of Seitz \cite{Seitz2}, so we focus on the situation where $V|_{H}$ is irreducible, but $V|_{H^0}$ is reducible. By Clifford theory, $V|_{H^0}$ is completely reducible and the highest weights of $KH^0$-composition factors of $V$ are $H$-conjugate; we can exploit this to severely restrict the possibilities for $\l$. This is similar to the approach adopted by Ghandour \cite{g_paper} in her work on exceptional algebraic groups; the challenge here is to extend her combinatorial analysis of weights to classical groups of arbitrary rank.

In \cite{BGMT} we complete the analysis of disconnected maximal positive-dimensional subgroups of classical groups by dealing with the relevant subgroups in the aforementioned $\mathcal{S}$ collection. Here $H^0$ is simple (modulo scalars) and so it remains to overcome Ford's $p$-restricted hypothesis in \cite{Ford2} on the highest weights of the composition factors of $V|_{H^0}$. This requires completely different methods to those used in the present paper.

\vs

Finally, let us make some comments on the organization of this paper. In Section \ref{s:prel} we recall some preliminary results concerning weights and their multiplicities, and we discuss the  maximal subgroups in the various $\C_i$ collections. In addition, we calculate the dimension of some specific irreducible $KG$-modules, which we will need in the proof of Theorem \ref{main} (see Table \ref{t:dims} for a summary). Some of these results are new, and may be of independent interest. We begin the proof of Theorem \ref{main} in Section \ref{s:c136}, where we deal with the disconnected subgroups in the $\C_1$, $\C_3$ and $\C_6$ collections. Next, in Section  \ref{s:c2} we consider the imprimitive subgroups comprising the collection $\C_2$. Finally, in Sections \ref{s:c4i} and \ref{s:c4ii} we deal with the disconnected tensor product subgroups in $\C_4$. Note that the subgroups in the collection $\C_5$ are finite, so our work in Sections \ref{s:c136} -- \ref{s:c4ii} will complete the proof of Theorem \ref{main}.

\section*{Acknowledgments} 

The first author acknowledges the support of EPSRC grant EP/I019545/1, a Collaborative Small Grant from the London Mathematical Society and he thanks FIM at the ETH Zurich and the Section de Math\'{e}matiques at EPFL for their kind hospitality. The second author
acknowledges the support of Swiss National Science Foundation grant number
PP0022-114794, held by Professor Karin Baur at the ETHZ. The third author
acknowledges the support
of a Swiss National Science Foundation grant (number 200021-122267/1). The authors thank Gunter Malle for helpful comments, and they thank an anonymous referee for his or her careful reading of the paper, which led to many improvements. 

\chapter{Preliminaries}\label{s:prel}

\numberwithin{table}{chapter}

\section{Notation}\label{ss:notation}

First, let us fix some notation that we will use for the rest of the paper. As in the statement of Theorem \ref{main}, let $G$ be a simply connected cover of a simple classical algebraic group $Cl(W)$ defined over an algebraically closed field $K$ of characteristic $p \ge 0$. 
Here $Cl(W) = {\rm Isom}(W)'$, where ${\rm Isom}(W)$ is the full isometry 
group of a form $f$ on $W$, which is either the zero bilinear form, a 
symplectic form, or a non-degenerate quadratic form. In particular, we note that $Cl(W) = {\rm Isom}(W)\cap {\rm SL}(W)$, with the exception
that if $p=2$, $f$ is quadratic and $\dim W$ is even, in which case $Cl(W)$ has index two in 
${\rm Isom}(W)\cap {\rm SL}(W)$. It is convenient to adopt the familiar Lie notation $A_n$, $B_n$, $C_n$ and $D_n$ to denote the various possibilities for $G$, where $n$ is the rank of $G$. As in the statement of Theorem \ref{main}, in this paper we will assume that $(G,p) \neq (B_n,2)$.

Let $B=UT$ be a Borel subgroup of $G$ containing a fixed maximal torus $T$ of $G$, where $U$ denotes the unipotent radical of $B$. Let $\Pi(G)=\{\a_1, \ldots, \a_{n}\}$ be a corresponding base of the root system $\Sigma(G)=\Sigma^+(G) \cup \Sigma^{-}(G)$ of $G$, where $\Sigma^+(G)$ and $\Sigma^{-}(G)$ denote the positive and negative roots of $G$, respectively. Let $X(T) \cong \Z^n$ denote the character group of $T$ and let $\{\l_1, \ldots, \l_{n}\}$ be the fundamental dominant weights for $T$ corresponding to our choice of base $\Pi(G)$, so $\la \l_i,\a_j\ra=\delta_{i,j}$ for all $i$ and $j$, where
$$\la \l, \a \ra = 2\frac{(\l,\a)}{(\a,\a)},$$
$(\, ,\,)$ is the usual inner product on $X(T)_{\R}=X(T) \otimes_{\Z} \R$, and $\delta_{i,j}$ is the familiar Kronecker delta. In addition,  let $s_{\a}:X(T)_{\R} \to X(T)_{\R}$  be the reflection relative to $\a \in \Sigma(G)$, defined by $s_{\a}(\l)=\l-\la \l,\a \ra\a$; the corresponding finite group $\la s_{i} \mid 1 \le i \le n\ra$ generated by the fundamental reflections $s_i=s_{\a_i}$ is the Weyl group of $G$, denoted by $W(G)$.

We use the notation $U_{\a}=\{x_{\a}(c) \mid c \in K\}$ to denote the $T$-root subgroup of $G$ corresponding  to $\a \in \Sigma(G)$, and we write $\mathcal{L}(G)$ for the Lie algebra of $G$ (with Lie bracket $[\, , \,]$). For each positive root $\a \in \Sigma^+(G)$ we fix $e_{\a} \in \mathcal{L}(U_{\a})$ and $f_{\a} \in \mathcal{L}(U_{-\a})$ such that 
$$\mathcal{L}(\la U_{\a},U_{-\a} \ra) = {\rm span}_{K}\{e_{\a},f_{\a},[e_{\a},f_{\a}] \} \cong \mathfrak{sl}_{2}(K).$$ 

If $H$ is a closed subgroup of $G$ and $T_{H^0}$ is a maximal torus of $H^0$ contained in $T$ then we abuse notation by writing $\l|_{H^0}$ to denote the restriction of $\l \in X(T)$ to the subtorus $T_{H^0}$. We will write $\mathbb{N}_0 = \mathbb{N}\cup\{0\}$ for the set of non-negative integers. Finally, recall that we adopt the standard labelling of simple roots (and corresponding fundamental dominant weights) given in Bourbaki \cite{Bour}.

\section{Weights and multiplicities}\label{ss:wm}

Let $V$ be a finite-dimensional $KG$-module. The action of $T$ on $V$ can be diagonalized, giving a decomposition
$$V = \bigoplus_{\mu \in X(T)}{V_{\mu}},$$
where
$$V_{\mu}=\{v \in V \mid t\cdot v = \mu(t)v \mbox{ for all $t \in T$}\}.$$
A character $\mu \in X(T)$ with $V_{\mu} \neq 0$ is called a weight (or $T$-weight) of $V$, and $V_{\mu}$ is its corresponding weight space. The dimension of $V_{\mu}$, denoted by $m_{V}(\mu)$, is called the multiplicity of $\mu$. We write $\L(V)$ for the set of weights of $V$. For any weight $\mu \in \L(V)$ and any root $\a \in \Sigma(G)$ we have 
\begin{equation}\label{e:mt}
U_{\a}v \subseteq v +\sum_{m \in \mathbb{N}}V_{\mu+m\a}
\end{equation}
for all $v \in V_{\mu}$ (see \cite[Lemma 15.4]{MT}, for example). There is a natural action of the Weyl group $W(G)$ on $X(T)$, which in turn induces an action on $\L(V)$. In particular, $\L(V)$ is a union of $W(G)$-orbits, and all weights in a $W(G)$-orbit have the same multiplicity.

By the Lie-Kolchin theorem, the Borel subgroup $B$ stabilizes a $1$-dimensional subspace $\la v^{+} \ra$ of $V$, and the action of $B$ on $\la v^{+} \ra$ affords a homomorphism $\chi:B \to K^*$ with kernel $U$. Therefore $\chi$ can be identified with a character $\l \in X(T)$, which is a weight of $V$. If $V$ is an irreducible $KG$-module then $V=\la Gv^{+} \ra$, $m_{V}(\l)=1$ and each 
weight $\mu \in \L(V)$ is obtained from $\l$ by subtracting some positive roots. Consequently, we say that $\l$ is the highest weight of $V$, and $v^+$ is a maximal vector. 

Since $G$ is simply connected, the fundamental dominant weights form a $\Z$-basis for the additive group of all weights for $T$, and a weight $\l$ is said to be dominant if $\l=\sum_{i=1}^{n}a_i\l_i$ and each $a_i$ is a non-negative integer. If $V$ is a finite-dimensional irreducible $KG$-module then its highest weight is dominant. Conversely, given any dominant weight $\l$ one can construct a finite-dimensional irreducible $KG$-module with highest weight $\l$. Moreover, this correspondence defines a bijection between the set of dominant weights of $G$ and the set of isomorphism classes of finite-dimensional irreducible $KG$-modules. For a dominant weight $\l=\sum_{i=1}^{n}a_i\l_i$ we write $L_G(\l)$ (or just $L(\l)$) for the unique irreducible $KG$-module with highest weight $\l$, and $W_G(\l)$ denotes the corresponding Weyl module (recall that $W_G(\l)$ has a unique maximal submodule $M$ such that $W_G(\l)/M \cong L_G(\l)$, and $M$ is trivial if $p=0$). In general, there is no known formula for $\dim L_G(\l)$, but $\dim W_G(\l)$ is given by \emph{Weyl's dimension formula}
$$\dim W_G(\l) = \frac{\Pi_{\a \in \Sigma^+(G)}(\a,\l+\rho)}{\Pi_{\a \in \Sigma^+(G)}(\a,\rho)},$$
where $\rho = \frac{1}{2}\sum_{\a\in \Sigma^+(G)}\a$.
In addition, we say that $L_G(\l)$ is $p$-restricted if $a_i<p$ for all $i$. By a slight abuse of terminology, it is convenient to say that every dominant weight is $p$-restricted when $p=0$. 

Suppose $p>0$. The Frobenius automorphism $F_p:K \to K$, $c \mapsto c^p$, of $K$ induces an endomorphism $F:G \to G$ of algebraic groups defined by $x_{\a}(c) \mapsto x_{\a}(c^p)$, for all $\a \in \Sigma(G)$, $c \in K$. Given a rational representation $\rho:G \to {\rm GL}(V)$ 
and an integer $i \ge 1$, we can use $F$ to define a new rational representation $\rho^{(p^i)}$ on $V$; the corresponding $KG$-module is denoted by $V^{(p^i)}$, and the action is given by $\rho^{(p^i)}(g)v = \rho(F^i(g))v$ for $g \in G, v \in V$. We say that $V^{(p^i)}$ is a Frobenius twist of $V$. 

By Steinberg's tensor product theorem, every irreducible $KG$-module is a tensor product of Frobenius twists of $p$-restricted $KG$-modules, so naturally we focus on the $p$-restricted modules.  For a detailed account of the representation theory of algebraic groups, we refer the reader to \cite{Jantzen}.

We begin by recording some results on the existence and multiplicity of certain weights in various $KG$-modules. The first result, known as \emph{Freudenthal's formula}, provides an effective recursive algorithm for calculating the multiplicity of weights in a Weyl module $V=W_G(\l)$, starting from the fact that $m_{V}(\l)=1$. See \cite[\S 22.3]{Hu1} for a proof.

\begin{thm}\label{t:freud}
Let $V=W_G(\l)$, where $\l$ is a dominant weight for $T$, and let $\mu$ be a weight of $V$. Then $m_{V}(\mu)$ is given recursively as follows:
$$((\l+\rho,\l+\rho)-(\mu+\rho,\mu+\rho)) \cdot m_{V}(\mu) = 2\sum_{\a \in \Sigma^+(G)}\sum_{i \in \mathbb{N}}(\mu+i\a,\a)\cdot m_{V}(\mu+i\a),$$ 
where $\rho = \frac{1}{2}\sum_{\a\in \Sigma^+(G)}\a$.
\end{thm}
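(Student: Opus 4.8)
The plan is to derive the formula from the standard Casimir-operator argument, after first reducing to characteristic zero. The recursion involves only the weight multiplicities $m_{V}(\mu)$ of the Weyl module $V=W_G(\l)$, and these are encoded in the formal character of $W_G(\l)$, which is given by \emph{Weyl's character formula} independently of $p$. Consequently $m_{W_G(\l)}(\mu)$ coincides with the dimension of the $\mu$-weight space of the irreducible module of highest weight $\l$ over $\mathbb{C}$, so it suffices to establish the identity for this complex irreducible module. From now on I would therefore work over $\mathbb{C}$, writing $\g = \mathcal{L}(G)$ for the corresponding semisimple Lie algebra and taking $V$ to be the irreducible $\g$-module of highest weight $\l$.

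First I would introduce the Casimir element. Fix a non-degenerate invariant symmetric bilinear form on $\g$, normalised so that the form it induces on $X(T)_{\R}$ is the $(\,,\,)$ appearing in the statement. Choosing an orthonormal basis $\{h_i\}$ of the Cartan subalgebra together with root vectors $e_{\a}\in\g_{\a}$, $f_{\a}\in\g_{-\a}$ (for $\a\in\Sigma^+(G)$) normalised by $(e_{\a},f_{\a})=1$, the Casimir element of $U(\g)$ may be written $c=\sum_i h_i^2+\sum_{\a\in\Sigma^+(G)}(e_{\a}f_{\a}+f_{\a}e_{\a})$, with $[e_{\a},f_{\a}]=t_{\a}$ acting on any weight space $V_{\nu}$ by the scalar $(\nu,\a)$. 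Since $c$ is central in $U(\g)$ and $V$ is irreducible, Schur's Lemma shows that $c$ acts on $V$ as a scalar; evaluating on a maximal vector $v^{+}$, on which each $e_{\a}$ acts as zero, and using $e_{\a}f_{\a}v^{+}=t_{\a}v^{+}=(\l,\a)v^{+}$ together with $\sum_{\a\in\Sigma^+(G)}\a=2\rho$, this scalar is seen to equal $(\l+\rho,\l+\rho)-(\rho,\rho)$.

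The heart of the proof is a second, direct computation of $\mathrm{tr}(c|_{V_{\mu}})$. The Cartan part contributes $\mathrm{tr}(\sum_i h_i^2|_{V_{\mu}})=(\mu,\mu)\,m_{V}(\mu)$. For a fixed positive root $\a$ I would evaluate $\mathrm{tr}(e_{\a}f_{\a}|_{V_{\mu}})$ by a telescoping argument along the $\a$-string through $\mu$: the cyclic property of the trace gives $\mathrm{tr}(f_{\a}e_{\a}|_{V_{\nu}})=\mathrm{tr}(e_{\a}f_{\a}|_{V_{\nu+\a}})$, while $\mathrm{tr}((e_{\a}f_{\a}-f_{\a}e_{\a})|_{V_{\nu}})=(\nu,\a)\,m_{V}(\nu)$; applying these with $\nu=\mu+i\a$ and summing over $i\ge0$, and using that the string terminates since $V$ is finite-dimensional, yields $\mathrm{tr}(e_{\a}f_{\a}|_{V_{\mu}})=\sum_{i\ge0}(\mu+i\a,\a)\,m_{V}(\mu+i\a)$. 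Hence $\mathrm{tr}((e_{\a}f_{\a}+f_{\a}e_{\a})|_{V_{\mu}})=(\mu,\a)\,m_{V}(\mu)+2\sum_{i\ge1}(\mu+i\a,\a)\,m_{V}(\mu+i\a)$, and summing over $\a\in\Sigma^+(G)$ collects the $i=0$ terms into $2(\mu,\rho)\,m_{V}(\mu)$ via $\sum_{\a\in\Sigma^+(G)}\a=2\rho$.

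Finally I would equate the two evaluations of $\mathrm{tr}(c|_{V_{\mu}})$ to obtain
\[
[(\l+\rho,\l+\rho)-(\rho,\rho)]\,m_{V}(\mu)=[(\mu,\mu)+2(\mu,\rho)]\,m_{V}(\mu)+2\sum_{\a\in\Sigma^+(G)}\sum_{i\ge1}(\mu+i\a,\a)\,m_{V}(\mu+i\a),
\]
and the identity $(\mu,\mu)+2(\mu,\rho)=(\mu+\rho,\mu+\rho)-(\rho,\rho)$ then rearranges this into the stated recursion. The step I expect to be the main obstacle is the telescoping trace computation: one must keep careful track of the normalisation of the invariant form and of the vectors $e_{\a},f_{\a}$, verify that every $\a$-string is finite, and correctly isolate the diagonal $i=0$ contributions that assemble into the $2(\mu,\rho)$ summand. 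The reduction to characteristic zero, though clean in principle, also relies on the characteristic-independence of the formal character of $W_G(\l)$.
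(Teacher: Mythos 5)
Your proof is correct and is essentially the paper's own argument: the paper simply cites \cite[\S 22.3]{Hu1}, and that reference contains exactly the Casimir-operator computation (scalar action via Schur's Lemma, telescoping traces along $\a$-strings) that you reproduce. Your preliminary reduction from $W_G(\l)$ in characteristic $p$ to the complex irreducible module, via the characteristic-independence of the Weyl module's formal character, is the right way to bridge the gap between the paper's statement and the characteristic-zero setting of the cited source.
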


For the remainder of Section \ref{ss:wm}, let  $V$ be an irreducible $p$-restricted $KG$-module with highest weight $\l = \sum_{i=1}^na_{i}\l_{i}$. Let $e(G)$ be the maximum of the squares of the ratios of the lengths of the roots in $\Sigma(G)$.

\begin{lem}\label{l:t1}
If $a_{i} \neq 0$ then $\mu=\l-d\a_{i} \in \L(V)$ for all $1 \le d \le a_{i}$. Moreover $m_{V}(\mu)=1$.
\end{lem}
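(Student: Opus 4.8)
The plan is to deduce the existence of the weights $\mu=\l-d\a_i$ from the representation theory of the rank-one subgroup attached to $\a_i$, and then to pin down the multiplicity by a support argument on simple-root coefficients. Since $\la \l_j,\a_i\ra = \delta_{i,j}$, we have $\la \l,\a_i\ra = a_i$. Consider the subgroup $G_i = \la U_{\a_i},U_{-\a_i}\ra$ of type $A_1$, together with the $\mathfrak{sl}_2$-triple $\{e_{\a_i},f_{\a_i},[e_{\a_i},f_{\a_i}]\}$ fixed in Section \ref{ss:notation}. The maximal vector $v^+$ is annihilated by $e_{\a_i}$ and is a weight vector of $G_i$-weight $a_i$. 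Setting $v_d = f_{\a_i}^{(d)}v^+$ (a divided power in the distribution algebra of $G_i$), each $v_d$ lies in $V_{\l-d\a_i}$, and the commutation relations in the distribution algebra of $G_i$ give $e_{\a_i}v_d = (a_i-d+1)v_{d-1}$. As $V$ is $p$-restricted we have $a_i<p$, so for $1 \le d \le a_i$ the scalar $a_i-d+1$ lies in $\{1,\ldots,a_i\}$ and is nonzero in $K$; an easy induction on $d$, starting from $v_0=v^+\neq 0$, then shows $v_d \neq 0$ for all $0 \le d \le a_i$. Hence $\l-d\a_i \in \L(V)$ for $1 \le d \le a_i$, and indeed $\la G_i v^+\ra$ is the irreducible $A_1$-module of highest weight $a_i$ and dimension $a_i+1$.

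For the multiplicity assertion, recall that $V = \la Gv^+\ra$ is cyclic, so by the Kostant basis of the distribution algebra $V$ is spanned by the images of $v^+$ under ordered monomials $\prod_{\b \in \Sigma^+(G)}f_{\b}^{(c_\b)}$ in the divided powers of the negative root vectors (see \cite{Jantzen}). Such a monomial carries $v^+$ into $V_{\l-\nu}$ where $\nu = \sum_\b c_\b\b$, so $V_{\l-d\a_i}$ is spanned by those monomials with $\sum_\b c_\b\b = d\a_i$. Comparing, for each $k \neq i$, the coefficient of $\a_k$ on both sides and using that every positive root has non-negative simple-root coefficients, we find $c_\b = 0$ whenever $\b$ has nonzero support away from $\a_i$; since $2\a_i$ is not a root, the only surviving root is $\b=\a_i$ itself, forcing $c_{\a_i}=d$. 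Thus $V_{\l-d\a_i}$ is spanned by $f_{\a_i}^{(d)}v^+ = v_d$ alone, whence $m_V(\mu) \le 1$; together with the existence statement this gives $m_V(\mu)=1$.

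The essential hypothesis, and the only place where it really bites, is the $p$-restrictedness $a_i<p$: this is exactly what guarantees that none of the scalars $a_i-d+1$ vanishes, so that the intermediate weights $\l-d\a_i$ genuinely occur and the relevant $A_1$-module remains irreducible of full dimension $a_i+1$. I expect the main point requiring care to be the multiplicity step, where one must phrase the spanning of $V_\mu$ over the distribution algebra of $G$ (rather than over $\mathcal{L}(G)$, which would be insufficient in positive characteristic) before the elementary support argument on simple-root coefficients can be applied.
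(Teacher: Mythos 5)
Your proof is correct, and it takes a genuinely different route from the paper: the paper disposes of this lemma with a one-line citation to \cite[1.30]{Test1}, whereas you give a self-contained argument. Both of your steps are sound. For existence, the relation $e_{\a_i}f_{\a_i}^{(d)}v^+ = (a_i-d+1)f_{\a_i}^{(d-1)}v^+$ holds in the Kostant $\Z$-form and hence over $K$, and $p$-restrictedness ($a_i<p$, or $p=0$) is exactly what makes the scalars $a_i-d+1$ nonzero for $1 \le d \le a_i$, so the induction giving $v_d \neq 0$ is valid. For multiplicity, your appeal to $L_G(\l)=\mathrm{Dist}(U^-)v^+$ being spanned by ordered monomials $\prod_{\b}f_{\b}^{(c_\b)}v^+$ is the standard hyperalgebra fact (see \cite{Jantzen}), and your observation that working over the distribution algebra rather than $\mathcal{L}(G)$ is essential in characteristic $p$ is the right instinct --- the Lie algebra version of the spanning statement can fail. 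The support argument (comparing coefficients of $\a_k$, $k \neq i$, and using that $2\a_i$ is not a root) correctly forces $c_\b = 0$ for $\b \neq \a_i$ and $c_{\a_i}=d$, giving $m_V(\mu) \le 1$. What your approach buys is transparency and a slightly stronger conclusion: your multiplicity step shows $m_V(\l-d\a_i)\le 1$ for \emph{every} $d \ge 0$, not only for $d \le a_i$, and it makes visible that only the existence half uses the $p$-restricted hypothesis; what the citation buys the authors is brevity, since \cite[1.30]{Test1} packages the same standard $A_1$-string argument.
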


\begin{proof}
This follows from \cite[1.30]{Test1}.
\end{proof}

Recall that a weight $\mu = \l-\sum_{i}{c_{i}\a_{i}} \in \L(V)$ is \emph{subdominant} to $\l$ if $\mu$ is a dominant weight, that is, $\mu = \sum_{i}d_i\l_i$ with $d_i \ge 0$ for all $i$.

\begin{lem}\label{l:pr}
Let $\mu$ be a weight of the Weyl module $W_G(\l)$, and assume that $p=0$ or $p>e(G)$. Then $\mu \in \L(V)$. In particular, if $\mu = \l-\sum_{i=1}^{n}{c_{i}\a_{i}}$ is a subdominant weight, then $\mu \in \L(V)$.
\end{lem}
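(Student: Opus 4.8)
The plan is to prove the equality of weight sets $\L(W_G(\l)) = \L(V)$; the ``in particular'' clause then follows immediately. One inclusion is free and needs no hypothesis on $p$: since $V = L_G(\l)$ is the irreducible quotient of the Weyl module $W_G(\l)$, we always have $\L(V) \subseteq \L(W_G(\l))$. All the substance therefore lies in the reverse inclusion $\L(W_G(\l)) \subseteq \L(V)$, i.e.\ in showing that the set $\L(V)$ is \emph{saturated} and hence agrees with the (characteristic-free) weight set of $W_G(\l)$. For the final assertion I would invoke the standard fact that every dominant weight $\mu \preceq \l$ --- in particular every subdominant $\mu = \l - \sum_i c_i\a_i$ --- is a weight of $W_G(\l)$, since the formal character of $W_G(\l)$ is independent of $p$ and coincides with that of the irreducible module in characteristic $0$; granting the main claim, such a $\mu$ then lies in $\L(V)$.

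To obtain the reverse inclusion I would split on the characteristic. If $p=0$ then the maximal submodule of $W_G(\l)$ is trivial (as already noted in the excerpt), so $W_G(\l)=L_G(\l)=V$ and the two weight sets literally coincide. Suppose instead that $p > e(G)$. Since $e(G) \in \{1,2,3\}$, the inequality $p > e(G)$ is exactly the condition that $p$ is \emph{not special} for $G$: no restriction in the simply-laced cases $A_n$ and $D_n$ (where $e(G)=1$), and $p \neq 2$ in the cases $B_n$ and $C_n$ (where $e(G)=2$), with the analogous exclusions $p\neq 2$ for $F_4$ and $p\neq 3$ for $G_2$ in the general statement. Under precisely this hypothesis, Premet's theorem on the weights of irreducible representations guarantees that $\L(V)=\L(L_G(\l))$ is saturated and therefore equal to $\L(W_G(\l))$. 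This yields $\mu \in \L(V)$ for every weight $\mu$ of $W_G(\l)$, as required.

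The hard part is exactly this reverse inclusion, and it is where the hypothesis $p>e(G)$ is genuinely used. A self-contained attempt would try to establish saturation directly: given $\mu \in \L(V)$ and a root $\a$ with $\langle \mu, \a\rangle = m > 0$, one restricts $V$ to the subalgebra $\mathrm{span}_K\{e_\a,f_\a,[e_\a,f_\a]\}\cong\mathfrak{sl}_2(K)$ and uses the root-string description (\ref{e:mt}) to fill in the string $\mu, \mu-\a, \dots, \mu-m\a = s_\a(\mu)$. In characteristic $0$ this is immediate from $\mathfrak{sl}_2$-theory, but in positive characteristic the restriction of $V$ to a root $\mathrm{SL}_2$ can develop \emph{gaps} in its weight strings once the relevant $\mathrm{SL}_2$-high weights reach $p$, and such gaps are exactly the obstruction that does occur at the special primes excluded by $p>e(G)$. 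Controlling this phenomenon in full is precisely the content of Premet's theorem, so rather than reproduce that analysis I would quote it directly; the only remaining work on our side is the elementary reduction of the ``in particular'' clause to the main statement, as described above.
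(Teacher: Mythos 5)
Your proof is correct and is essentially the paper's own argument: the paper's proof of Lemma \ref{l:pr} consists precisely of citing Premet's theorem \cite[Theorem 1]{Pr}, which is the key fact you invoke for the case $p>e(G)$. The surrounding scaffolding you add (the trivial inclusion $\L(V)\subseteq\L(W_G(\l))$, the $p=0$ case where $W_G(\l)=L_G(\l)$, and the reduction of the ``in particular'' clause) is routine and consistent with what the paper leaves implicit.
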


\begin{proof}
This follows from \cite[Theorem 1]{Pr}.
\end{proof}

\begin{cor}\label{c:sat}
Suppose $p=0$ or $p>e(G)$. If $\mu \in \L(V)$ then $\mu-k\a \in \L(V)$ for all $\a \in \Sigma^{+}(G)$ and all integers $k$ in the range $0 \le k \le \la \mu, \a\ra$.
\end{cor}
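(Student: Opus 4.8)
The plan is to reduce the statement to the corresponding fact about the Weyl module $W_G(\l)$, where the weight combinatorics mimics characteristic zero, and then to invoke the standard saturation property of weight sets.

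First I would observe that, under the hypothesis $p=0$ or $p>e(G)$, the irreducible module $V=L_G(\l)$ and the Weyl module $W_G(\l)$ have the same set of weights. Since $V$ is the irreducible quotient $W_G(\l)/M$, we have $\L(V) \subseteq \L(W_G(\l))$ automatically; the reverse inclusion is exactly the content of Lemma \ref{l:pr}. Hence $\L(V) = \L(W_G(\l))$, and it suffices to prove the assertion with $V$ replaced by $W_G(\l)$. Next I would use that the weight multiplicities of $W_G(\l)$, computed by Freudenthal's recursion (Theorem \ref{t:freud}), are independent of $p$; in particular the \emph{set} $\L(W_G(\l))$ of weights of nonzero multiplicity coincides with the weight set of the irreducible module of highest weight $\l$ for the complex semisimple Lie algebra of the same type. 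Thus $\L(W_G(\l))$ is a saturated set of weights in the classical sense (see \cite{Hu1}), and the corollary is precisely the defining saturation property specialized to $\a \in \Sigma^+(G)$.

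Concretely, saturation is checked one $\a$-string at a time. Fixing $\mu \in \L(W_G(\l))$ and $\a \in \Sigma^+(G)$, the $\a$-string $\{\mu + i\a\}$ through $\mu$ is unbroken in the characteristic-zero picture, running from $\mu + r\a$ down to $\mu - q\a$ with $q,r \ge 0$ and $q-r = \la \mu, \a\ra$. Since $r \ge 0$ forces $q \ge \la \mu, \a\ra$, every weight $\mu - k\a$ with $0 \le k \le \la \mu, \a \ra$ lies on this string, hence in $\L(W_G(\l)) = \L(V)$. (If $\la \mu, \a\ra < 0$ the stated range is empty and there is nothing to prove.) This disposes of the claim.

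I expect the only genuine subtlety to lie in the reduction of the first paragraph rather than in the string argument itself. The saturation property can fail for $L_G(\l)$ in small characteristic, where the relevant $SL_2$-restrictions may be non-restricted and the $\a$-strings correspondingly broken; the hypothesis $p=0$ or $p>e(G)$ is needed precisely so that Lemma \ref{l:pr} lets us transfer the question to $W_G(\l)$, where the characteristic-zero weight combinatorics applies. In other words, the real work is already packaged into Lemma \ref{l:pr} and Theorem \ref{t:freud}, and the corollary is their formal consequence together with the classical fact that weight sets of finite-dimensional modules are saturated.
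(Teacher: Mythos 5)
Your proof is correct and follows essentially the same route as the paper: the paper's one-line argument likewise combines the saturation of the weight set of the Weyl module $W_G(\l)$ (citing \cite[Section 13.4]{Hu1}) with Lemma \ref{l:pr} to transfer the conclusion to $V$. You simply spell out the details that the paper leaves implicit, namely the $p$-independence of $\L(W_G(\l))$ and the unbroken $\a$-string argument underlying saturation.
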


\begin{proof}
The set of weights of the Weyl module $W_G(\l)$ is saturated (see \cite[Section 13.4]{Hu1}), so the result follows from Lemma \ref{l:pr}.
\end{proof}

\begin{lem}\label{l:118}
Suppose $\a,\b \in \Pi(G)$ with $(\a,\b)<0$ and $\la \l,\a\ra=c$, $\la \l,\b\ra=d$ for $c,d>0$.
Set $m=m_{V}(\l-\a-\b)$. Then $m \in \{1,2\}$ and the following hold:
\begin{itemize}\addtolength{\itemsep}{0.3\baselineskip}
\item[{\rm (i)}] If $(\a,\a)=(\b,\b)$ then $m=1$ if and only if $c+d=p-1$.
\item[{\rm (ii)}] If $(\a,\a)=2(\b,\b)$ then $m=1$ if and only if $2c+d+2 \equiv 0 \imod{p}$.
\item[{\rm (iii)}] If $(\a,\a)=3(\b,\b)$ then $m=1$ if and only if $3c+d+3 \equiv 0 \imod{p}$.
\end{itemize}
In particular, $m=2$ if $p=0$.
\end{lem}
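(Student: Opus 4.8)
The plan is to compute $m = m_V(\l - \a - \b)$ directly via Freudenthal's formula (Theorem \ref{t:freud}), but first reduce to the Weyl module. The key point is that $\l - \a - \b$ sits very high in the weight poset — only one root below $\l$ along the $\{\a,\b\}$ directions — so the weights strictly above it that can contribute are extremely limited. Concretely, the weights of the form $(\l - \a - \b) + i\gamma$ for $\gamma \in \Sigma^+(G)$ and $i \in \mathbb{N}$ that actually occur in $V$ are only $\l$ itself (contributing through $\gamma = \a + \b$ if that is a root, or through $\gamma = \a$ and $\gamma = \b$), together with $\l - \a$ and $\l - \b$. By Lemma \ref{l:t1}, each of $\l$, $\l - \a$ (present since $c = \la\l,\a\ra > 0$) and $\l - \b$ (present since $d > 0$) has multiplicity $1$ in the Weyl module $W_G(\l)$. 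So the right-hand side of Freudenthal's recursion is a completely explicit sum over this short list, and the left-hand coefficient $(\l+\rho,\l+\rho) - (\mu+\rho,\mu+\rho)$ with $\mu = \l - \a - \b$ is a concrete inner-product expression.

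First I would set up the arithmetic. Since $\a, \b \in \Pi(G)$ with $(\a,\b) < 0$ and the ambient root system is of classical type, the only possibilities are that $\a, \b$ are orthogonal-adjacent nodes with $(\a,\a) = (\b,\b)$ (the simply-laced case, where $\a + \b$ is a root), or that they span a $B_2/C_2$-type pair with $(\a,\a) = 2(\b,\b)$ — case (iii) with ratio $3$ only arises in $G_2$, which is not classical, but the lemma is stated generally so I would treat it uniformly by the same computation. In each case $\a + \b$ is a root (of the appropriate length), so the term $m_V(\l)\cdot(\mu + \gamma, \gamma)$ with $\gamma = \a+\b$ contributes, alongside the $\gamma = \a$ and $\gamma = \b$ terms evaluated at $\l - \b$ and $\l - \a$ respectively. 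Expanding $(\mu + i\gamma, \gamma)$ using $\la\l,\a\ra = c$, $\la\l,\b\ra = d$ and the Cartan integers between $\a$ and $\b$, the recursion collapses to an identity of the shape $N \cdot m = M$ in $\Z$ (independent of $p$), and one reads off that over $\Q$ the multiplicity is $2$ — giving the final assertion that $m = 2$ when $p = 0$, and more generally for $W_G(\l)$.

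The passage from the Weyl module to the irreducible $V = L_G(\l)$ is where the characteristic-dependent congruences enter, and this is the main obstacle. The multiplicity $m$ in the irreducible module drops from $2$ to $1$ precisely when the relevant maximal vector of weight $\l - \a - \b$ in $W_G(\l)$ lies in the radical, i.e. when a certain integer linear combination of $c$, $d$ and the root-length data vanishes modulo $p$. I would detect this by examining the $\mathfrak{sl}_2$-triples attached to $\a$, $\b$ and $\a + \b$ acting on the two-dimensional $\mu$-weight space of $W_G(\l)$: applying $e_\a$, $e_\b$ (and using the commutator relations among the $f_\gamma$ on the maximal vector $v^+$) produces explicit linear functionals on this space whose simultaneous kernel is nontrivial exactly under the stated congruence. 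The bookkeeping of the structure constants of the $\mathfrak{sl}_2$-triples — which carry the factors $2$ and $3$ distinguishing cases (i), (ii), (iii) — is the delicate part; the congruences $c + d = p-1$, $2c + d + 2 \equiv 0$, and $3c + d + 3 \equiv 0 \imod p$ are the three outputs. I expect the cleanest route is to cite or mirror the standard rank-$2$ computation (this is essentially a calculation inside the rank-$2$ Levi subgroup generated by $U_{\pm\a}$ and $U_{\pm\b}$, so one may reduce to $G$ of type $A_2$, $B_2$ or $G_2$ and invoke known weight-multiplicity tables for $L_G(\l)$ restricted to that Levi), thereby avoiding a from-scratch grind through the nilpotent-operator linear algebra.
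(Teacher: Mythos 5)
Your proposal is correct, and its skeleton is the same as the paper's, but where the paper's proof is a two-line citation -- ``this follows from Theorem \ref{t:freud} and the final proposition of \cite{Bur}'' -- you supply the actual content of Burgoyne's proposition, so your argument is self-contained where the paper's is not. Your first half (Freudenthal applied to $W_G(\l)$, noting that the only weights above $\mu=\l-\a-\b$ that contribute are $\l$, $\l-\a$, $\l-\b$, each of multiplicity $1$) is exactly the paper's use of Theorem \ref{t:freud}, and it yields $m_{W_G(\l)}(\mu)=2$, hence the $p=0$ assertion. Your second half -- detecting the drop in $L_G(\l)$ by finding a maximal vector of weight $\mu$ in the maximal submodule $M$ of $W_G(\l)$ -- is precisely what \cite{Bur} provides: on the span of $f_{\a}f_{\b}v^{+}$ and $f_{\b}f_{\a}v^{+}$ the operators $e_{\a},e_{\b}$ are given by the rows $(c+1,\,c)$ and $(d,\,d-\la \a,\b\ra)$ (note $[e_{\a},f_{\b}]=[e_{\b},f_{\a}]=0$ since $\a-\b$ is never a root), whose determinant is $c+d+1$, $2c+d+2$ or $3c+d+3$ according as $\la\a,\b\ra=-1,-2,-3$; so the factors $2$ and $3$ come from the Cartan integers rather than from commutator structure constants, and no delicate bookkeeping is actually needed. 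Your fallback of reducing to the rank-two Levi $\la U_{\pm\a},U_{\pm\b}\ra$ is also legitimate and is licensed by the paper's own Lemma \ref{l:sr}.

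Two small points you should make explicit to close the argument. First, the matrix above is never zero (its first row $(c+1,c)$ cannot vanish modulo $p$ since $0<c<p$), so its kernel has dimension at most $1$; this is what gives $m\in\{1,2\}$ rather than possibly $m=0$. Second, the identification $m_{V}(\mu)=2-\dim M_{\mu}$ together with ``$M_{\mu}=\{\mbox{maximal vectors of weight }\mu\}$'' requires knowing that $f_{\a}v^{+},f_{\b}v^{+}\notin M$, which follows from $e_{\a}f_{\a}v^{+}=cv^{+}\neq 0$ and $e_{\b}f_{\b}v^{+}=dv^{+}\neq 0$, i.e.\ from $p$-restrictedness of $\l$; and in case (i) one should note that $0<c,d<p$ forces $c+d+1\equiv 0 \imod{p}$ to mean exactly $c+d=p-1$, matching the stated form of the condition.
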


\begin{proof}
This follows from Theorem \ref{t:freud} and the final proposition of \cite{Bur}.
\end{proof}

\begin{lem}\label{l:s816}
Suppose $G=A_n$ and $\l=a\l_i+b\l_j$, where $n \ge 2$, $i<j$ and $a,b>0$. If $1 \le r \le i$ and $j \le s \le n$ then $\mu=\l-(\a_r+ \cdots +\a_s) \in \L(V)$ and
$$m_{V}(\mu)=\left\{\begin{array}{ll}
j-i & \mbox{if $a+b+j-i \equiv 0 \imod{p}$} \\
j-i+1 & \mbox{otherwise.}
\end{array}\right.$$
In particular, if $p=0$ then $m_{V}(\mu)=j-i+1$.
\end{lem}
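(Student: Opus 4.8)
The plan is to first establish that $\mu \in \L(V)$, and then to compute its multiplicity, reducing the general case to the situation $r=i$, $s=j$ before applying Freudenthal's formula. Throughout I use that $\beta := \a_r+\cdots+\a_s$ is the positive root $\epsilon_r-\epsilon_{s+1}$ of $A_n$, and that since $r\le i<j\le s$ both $\a_i$ and $\a_j$ occur in $\beta$. As all roots of $A_n$ have the same length, $(\beta,\beta)=2$ and hence $\la\l,\beta\ra=(\l,\beta)=\sum_{k=r}^{s}(\l,\a_k)=a+b>0$. Since $e(A_n)=1$, Corollary \ref{c:sat} applies for every $p$, and taking $\a=\beta$ and $k=1$ gives $\mu=\l-\beta\in\L(V)$ (indeed $\l-k\beta\in\L(V)$ for $0\le k\le a+b$). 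This settles the first assertion.

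Next I would reduce to the case $r=i$, $s=j$, exploiting that weight multiplicities are constant on $W(G)$-orbits. Writing $\mu^{(r)}=\l-(\a_r+\cdots+\a_s)$, for $r<i$ one computes $\la\mu^{(r+1)},\a_r\ra=\la\l,\a_r\ra-\la\a_{r+1}+\cdots+\a_s,\a_r\ra=0-(-1)=1$, so that $s_{\a_r}(\mu^{(r+1)})=\mu^{(r+1)}-\a_r=\mu^{(r)}$ and therefore $m_V(\mu^{(r)})=m_V(\mu^{(r+1)})$. Iterating, and arguing symmetrically at the upper end via $s_{\a_s}$, I reduce to computing $m_V(\l-(\a_i+\cdots+\a_j))$. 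In particular this already explains why the multiplicity depends only on $a$, $b$, $j-i$ and $p$, as the statement requires.

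For the reduced weight $\mu=\l-\theta$, where $\theta=\a_i+\cdots+\a_j$ is the highest root of the Levi subsystem on the nodes $i,\ldots,j$, I would compute via Freudenthal's formula (Theorem \ref{t:freud}). The base case $j-i=1$ is exactly Lemma \ref{l:118}(i) with $c=a$, $d=b$: there $m=1$ if and only if $a+b+1\equiv 0\imod p$, which matches the formula since $j-i=1$. For the general gap, the intermediate weights entering the recursion are the $\l-\sum_{k\in S}\a_k$ with $S$ a union of subintervals of $[i,j]$, whose multiplicities are pinned down by Lemma \ref{l:t1} and the $\mathfrak{sl}_2$-string/Weyl symmetry already used; feeding these into the recursion yields the Weyl-module value $m_{W_G(\l)}(\mu)=j-i+1$. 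The characteristic enters only through the Freudenthal denominator: using $(\theta,\theta)=2$ and $(\rho,\theta)=j-i+1$ one finds $(\l+\rho,\l+\rho)-(\mu+\rho,\mu+\rho)=2(\l+\rho,\theta)-(\theta,\theta)=2(a+b+j-i)$, which vanishes modulo $p$ precisely when $a+b+j-i\equiv 0\imod p$.

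The main obstacle is the final passage from the Weyl-module multiplicity to the multiplicity in the irreducible module $V=L_G(\l)$: I must show that $m_V(\mu)$ equals the generic value $j-i+1$ unless $a+b+j-i\equiv 0\imod p$, in which case it drops by exactly one to $j-i$. This is the genuinely characteristic-$p$ part of the argument, governed by the vanishing of the denominator $2(a+b+j-i)$ computed above; it is the direct analogue of Lemma \ref{l:118}(i) for a gap larger than one, and I expect it to follow from the weight-multiplicity analysis of \cite{Bur} applied to $\mu$, exactly as in the proof of Lemma \ref{l:118}. Verifying that the drop is by one and no more, uniformly in the rank $n$, is the crux of the calculation.
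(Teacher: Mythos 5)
Your opening steps are sound: saturation (Corollary \ref{c:sat}, applicable since $e(A_n)=1$, so any prime exceeds it) gives $\mu\in\L(V)$, and the reflection computation $s_{\a_r}(\mu^{(r+1)})=\mu^{(r)}$ correctly reduces the whole problem to the single weight $\l-(\a_i+\cdots+\a_j)$ (the paper's Lemma \ref{l:sr} would achieve the same reduction). Note that the paper itself supplies no argument here — its proof is the citation \cite[8.6]{Seitz2} — so any self-contained proof must do what you are attempting. But your proof is not complete, and the gap sits exactly where you flag it. Your proposed tool cannot close it: the result of \cite{Bur} underlying Lemma \ref{l:118} computes the multiplicity of $\l-\a-\b$ only for \emph{adjacent simple} roots $\a,\b$, which is precisely the case $j=i+1$ of the present lemma; it says nothing about the string $\theta=\a_i+\cdots+\a_j$ when $j-i\ge 2$, so "applying \cite{Bur} to $\mu$" is not an available move, and the uniform-in-rank drop-by-exactly-one assertion is left unproved.

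The Freudenthal-denominator remark is likewise a heuristic rather than an argument: Freudenthal's formula computes multiplicities in the Weyl module $W_G(\l)$, which are characteristic-free, so the vanishing of $(\l+\rho,\l+\rho)-(\mu+\rho,\mu+\rho)=2(a+b+j-i)$ modulo $p$ proves nothing by itself about $L_G(\l)$. What it does correctly identify is the linkage condition: $\mu$ is an affine-Weyl reflection of $\l$ under the dot action (via the reflection attached to $\theta$) precisely when $p\mid a+b+j-i$, since $\la\l+\rho,\theta\ra=a+b+j-i+1$. To convert this into the lemma one needs two further facts, neither of which you address: first, that no composition factor of $W_G(\l)$ other than $L_G(\l)$ and possibly $L_G(\mu)$ can contribute to the $\mu$-weight space — this requires checking that the dominant weights strictly between $\mu$ and $\l$ (namely $\l-\a_i$, $\l-\a_j$ and, when $j>i+1$, $\l-\a_i-\a_j$, subject to $a\ge2$ or $b\ge 2$) each have multiplicity one in both $W_G(\l)$ and $L_G(\l)$, hence never occur as highest weights of composition factors; and second, the genuinely modular statement that $[W_G(\l):L_G(\mu)]$ equals $1$ when $p\mid a+b+j-i$ and $0$ otherwise, so that the drop is exactly one. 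The second fact is the crux, and it needs a real characteristic-$p$ input: Jantzen's sum formula, a rank computation for the contravariant form on the $\mu$-weight space, or a direct PBW maximal-vector computation in the style of the paper's proof of Lemma \ref{l:bwt} (or, of course, Seitz's own proof of \cite[8.6]{Seitz2}). As it stands, your proposal establishes the lemma only in the case $j=i+1$ and in characteristic $0$.
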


\begin{proof}
This is \cite[8.6]{Seitz2}.
\end{proof}

\begin{lem}\label{l:bwt}
Suppose $G=B_n$ and $\l=\l_1+\l_n$, where $n\geq2$ and  $p\neq 2$. 
Let $\mu=\l-\a_1-\cdots-\a_n = \l_n$. 
Then 
$$m_{V}(\mu)=\left\{\begin{array}{ll}
n-1 & \mbox{if $p \mid 2n+1$} \\
n & \mbox{otherwise.}
\end{array}\right.$$
In particular, $m_V(\mu)=n$ if $p=0$.
\end{lem}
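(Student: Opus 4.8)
The plan is to compute $m_V(\mu)$ first in the Weyl module $W_G(\l)$, where it is independent of $p$, and then to measure the drop in passing to the irreducible quotient $V=L_G(\l)$. Throughout write weights in the standard orthonormal coordinates $e_1,\dots,e_n$, so that $\a_i=e_i-e_{i+1}$ for $i<n$ and $\a_n=e_n$. First I would record the coordinate data: $\a_1+\cdots+\a_n=e_1=\l_1$, which confirms $\mu=\l-\a_1-\cdots-\a_n=\l_n=\tfrac12(e_1+\cdots+e_n)$, a fundamental (hence dominant) weight. Since every weight of $V$ lies in $\l-\Z\Sigma(G)$, all its coordinates are half-integers, and the only dominant half-integer weight strictly below $\l=(\tfrac32,\tfrac12,\dots,\tfrac12)$ is $\l_n$ itself. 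Thus $\l_n$ is the \emph{unique} dominant weight properly subdominant to $\l$; note also that for $G=B_n$ we have $e(G)=2<p$, so by Lemma \ref{l:pr} the weight sets of $W_G(\l)$ and $V$ coincide.

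Next I would compute $m_{W_G(\l)}(\mu)$. As the formal character of a Weyl module is independent of $p$, it suffices to work in characteristic $0$, where I use the natural module $W=L_G(\l_1)$ and the spin module $S=L_G(\l_n)$. A direct count of pairs of weights summing to $\l_n$ gives $m_{W\otimes S}(\l_n)=n+1$ (one contribution from the zero weight of $W$, and one from each of the weights $e_1,\dots,e_n$ of $W$, the weights $-e_j$ giving nothing); this count is valid in every characteristic since both factors have multiplicity-one, characteristic-independent weights. In characteristic $0$ one has $W\otimes S=V(\l)\oplus V(\l_n)$ (where $V(\cdot)$ denotes the characteristic-$0$ irreducible), and $\l_n$ occurs with multiplicity $1$ in $V(\l_n)$, so $m_{W_G(\l)}(\mu)=(n+1)-1=n$. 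Alternatively this drops straight out of Freudenthal's formula (Theorem \ref{t:freud}).

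Now I would pass to $V=L_G(\l)$. Because $\l_n$ is the only dominant weight below $\l$, every composition factor of $\mathrm{rad}\,W_G(\l)$ is isomorphic to $L_G(\l_n)$; using $\mathrm{Ext}^1_G(L_G(\l_n),L_G(\l_n))=0$ together with the fact that $\l_n$ admits no dominant weight below it, so that $\chi(\l_n)=[L_G(\l_n)]$, one obtains $m_V(\mu)=n-[W_G(\l):L_G(\l_n)]$. It remains to decide when $L_G(\l_n)$ is a composition factor of $W_G(\l)$. By the strong linkage principle this forces $\l_n$ to be strongly linked to $\l$; for the short root $\a=e_1$ one computes $\langle\l+\rho,\a^\vee\rangle=2n+2$, and the affine reflection $s_{\a,mp}\cdot\l=\l-(2n+2-mp)\a$ equals $\l_n=\l-e_1$ precisely when $mp=2n+1$, i.e. when $p\mid 2n+1$. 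Equivalently, the difference of Casimir eigenvalues is $(\l+\rho,\l+\rho)-(\l_n+\rho,\l_n+\rho)=(n+1)^2-n^2=2n+1$, a unit in $K$ exactly when $p\nmid 2n+1$; in that case the two weights lie in distinct Casimir eigenspaces, $L_G(\l_n)$ cannot occur in $W_G(\l)$, and $m_V(\mu)=n$.

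The main obstacle is to show that when $p\mid 2n+1$ the drop is \emph{exactly} one, that is $[W_G(\l):L_G(\l_n)]=1$ rather than larger; the difficulty is acute when $p^2\mid 2n+1$. I would settle this with the Jantzen sum formula: the only term producing the character $\chi(\l_n)$ is the one above with $\a=e_1$, $mp=2n+1$, so $\sum_{i>0}[W_G(\l)^i:L_G(\l_n)]=\nu_p(2n+1)$ (with $\nu_p$ the $p$-adic valuation), which already gives $[W_G(\l):L_G(\l_n)]\le \nu_p(2n+1)$ but counts occurrences across Jantzen layers rather than composition multiplicity. To pin the value at $1$ I would argue that $\mathrm{rad}\,W_G(\l)$, being semisimple with all factors $L_G(\l_n)$, is generated by its maximal vectors of weight $\l_n$, each of which generates a quotient of the minuscule Weyl module $W_G(\l_n)=L_G(\l_n)$; bounding the space of such maximal vectors by showing $\dim\mathrm{Hom}_G(L_G(\l_n),W_G(\l))\le 1$ (equivalently, that the map $\bigoplus_i e_{\a_i}$ out of the $n$-dimensional weight space $W_G(\l)_{\l_n}$ has $(n-1)$-dimensional image) then forces the multiplicity to be $1$. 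Combining the two cases yields $m_V(\mu)=n-1$ if $p\mid 2n+1$ and $m_V(\mu)=n$ otherwise, with $m_V(\mu)=n$ when $p=0$, as claimed in Lemma \ref{l:bwt}.
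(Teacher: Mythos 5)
Your strategy is genuinely different from the paper's and most of it is sound: the characteristic-zero decomposition $L_G(\lambda_1)\otimes L_G(\lambda_n)\cong V(\lambda_1+\lambda_n)\oplus V(\lambda_n)$ correctly gives $m_{W_G(\lambda)}(\mu)=n$; the observation that $\lambda_n$ is the unique dominant weight strictly under $\lambda$ correctly reduces everything to $m_V(\mu)=n-[W_G(\lambda):L_G(\lambda_n)]$ (using $\mathrm{Ext}^1_G(L_G(\lambda_n),L_G(\lambda_n))=0$ and minusculity of $\lambda_n$); and the linkage/Casimir computation correctly kills the composition factor when $p\nmid 2n+1$. Moreover, when $\nu_p(2n+1)=1$ the Jantzen sum formula (whose single contributing term, from $\alpha=e_1$ and $mp=2n+1$, you identify correctly, although you do not verify that no other pair $(\alpha,m)$ contributes) pins $[W_G(\lambda):L_G(\lambda_n)]=1$ exactly, so in that case your argument is complete. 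The paper proceeds quite differently: it works directly inside $V$, writing down the spanning set $v_i=f_{\alpha_1+\cdots+\alpha_i}f_{\alpha_{i+1}+\cdots+\alpha_n}v^{+}$ of $V_\mu$ and solving the linear system $e_{\alpha_j}\sum_i c_iv_i=0$ explicitly; since $\lambda$ is $p$-restricted, $V$ is irreducible over $\mathcal{L}(G)$, so that solution space equals the space of relations among the $v_i$ in $V$, and the system reduces to $(2n+1)c_0=0$, giving both cases at once.

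The genuine gap is the case $p^2\mid 2n+1$ (this is not vacuous: e.g.\ $(n,p)=(4,3)$ or $(12,5)$). There the sum formula only yields $1\le[W_G(\lambda):L_G(\lambda_n)]\le\nu_p(2n+1)$, and your proposed remedy --- that $\dim\mathrm{Hom}_G(L_G(\lambda_n),W_G(\lambda))\le 1$, equivalently that the map $\bigoplus_i e_{\alpha_i}$ on the $n$-dimensional weight space $W_G(\lambda)_{\lambda_n}$ has rank at least $n-1$ --- is stated as what needs to be shown but is never actually established. This is not a formality that follows from general principles: homomorphism spaces between Weyl modules are \emph{not} always at most one-dimensional (counterexamples are known), so the bound genuinely requires a computation with the structure constants of $\mathcal{L}(G)$. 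That computation is precisely the content of the paper's proof, and it is the one nontrivial step in the whole lemma; having reduced to it without performing it, your proposal leaves the hardest part undone. It is also worth noting that once you do carry out that rank computation, the entire superstructure you built (tensor decomposition, linkage, sum formula) becomes redundant, since the same computation, performed in $V$ as in the paper, determines $m_V(\mu)$ in every case simultaneously.
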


\begin{proof}
The result quickly follows from Lemma \ref{l:118} if $n=2$, so we will assume $n\geq 3$.
Since $\lambda$ is a $p$-restricted weight, $V$ is also irreducible as an
$\mathcal{L}(G)$-module, where $\mathcal{L}(G)$ denotes the Lie algebra of $G$. Let $v^+ \in V$ be a maximal vector with respect to the 
standard Borel subgroup of $G$, and recall the notation $e_{\a}, f_{\a}$ (for $\a \in \Sigma^+(G)$) defined in Section \ref{ss:notation}. Using the PBW basis of the universal enveloping algebra of
$\mathcal{L}(G)$ (see \cite[Section 17.3]{Hu1}) we easily deduce that a spanning set for the weight
space $V_{\mu}$ is given by $\{v_i \mid 0\leq i\leq  n-1\}$, where
$v_i=f_{\alpha_1+\cdots+\alpha_i}f_{\alpha_{i+1}+\cdots+\alpha_n}v^+$. 
Then using the basis for $\mathcal{L}(G)$ given in \cite[p.108]{Seitz2},
we see that
$e_{\alpha_j}\sum_{i=0}^{n-1} c_iv_i = 0$ for all $1\leq j\leq n$ if and only if
$c_1=c_2=\cdots=c_{n-1}$, $c_{n-1}=2c_0$ and $c_0+2c_1+c_2+\cdots+c_{n-1}=0$. 
Hence the weight
$\mu$ has multiplicity $n-1$ if $p$ divides $2n+1$, and multiplicity $n$ otherwise.
\end{proof}

\begin{lem}\label{l:sr}
Suppose $\mu = \l - \sum_{\a \in S}c_{\a}\a \in \L(V)$ for some subset $S \subseteq \Pi(G)$. Then $m_{V}(\mu) = m_{V'}(\mu')$, where $V'=L_{X}(\lambda|_{X})$, $\mu' = \mu|_{X}$ and $X=\la U_{\pm \a} \mid \a \in S\ra$.
\end{lem}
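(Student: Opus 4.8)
The plan is to reduce the computation of the multiplicity $m_V(\mu)$ to a multiplicity computation inside the irreducible module for the subsystem subgroup $X = \la U_{\pm\a} \mid \a \in S\ra$, where the weight $\mu$ differs from $\l$ only by roots supported on $S$. The essential idea is that the weight space $V_\mu$ is controlled entirely by the action of the Lie algebra (or group) generated by the root subgroups attached to $S$, so nothing outside $X$ can contribute to, or interfere with, this weight space.

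First I would set up $X$ as the subsystem subgroup generated by $\{U_{\pm\a} : \a \in S\}$, with root system $\Sigma_X = \Z S \cap \Sigma(G)$ and simple roots $S$. Fix a maximal vector $v^+ \in V_\l$; since $v^+$ is fixed by $U$, it is in particular a highest weight vector for the Borel subgroup $B \cap X$ of $X$, so $\la X v^+\ra$ is a (not necessarily irreducible) $KX$-submodule of $V$ whose highest weight is $\l|_{T_X}$, where $T_X = T \cap X$. The key structural claim to establish is that $V_\mu = (\la X v^+\ra)_\mu$, i.e.\ every vector of weight $\mu$ already lies in the $X$-submodule generated by $v^+$. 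This is where the hypothesis $\mu = \l - \sum_{\a\in S} c_\a \a$ is used: a PBW-type argument shows that any weight vector of weight $\mu$ obtained from $v^+$ by applying negative root elements must only involve $f_\b$ for $\b \in \Sigma^+_X$, because applying any $f_\gamma$ with $\gamma \notin \Z S$ shifts the weight outside the coset $\l + \Z S$, and $\mu$ lies in $\l - \sum_{\a\in S}\mathbb{N}_0\,\a \subseteq \l + \Z S$. Hence the whole weight space $V_\mu$ is spanned by images of $v^+$ under monomials in the $f_\b$ with $\b \in \Sigma^+_X$, all of which lie in $\la X v^+\ra$.

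Next I would identify the $X$-module $\la X v^+\ra$ with the Weyl module structure for $X$. Since $\l$ is $p$-restricted for $G$, the restriction $\l|_{T_X}$ is $p$-restricted for $X$, and $v^+$ generates a highest-weight module for $X$ with that highest weight; the standard theory then gives that the relevant quotient is $V' = L_X(\l|_X)$. The content of the lemma is precisely that passing to this quotient does not change the multiplicity at $\mu$, and this follows because the maximal $KX$-submodule $M$ of the Weyl module $W_X(\l|_X)$ has highest weight strictly below $\l|_X$; one checks that $M$ contributes nothing to the $\mu$-weight space when $\mu$ is governed by the $S$-supported roots. Combining the two identifications, $m_V(\mu) = \dim V_\mu = \dim (\la X v^+\ra)_\mu = m_{V'}(\mu')$ with $\mu' = \mu|_{T_X}$.

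The main obstacle is the careful bookkeeping in the PBW argument, specifically verifying that the weight space $V_\mu$ receives no contribution from root vectors $f_\gamma$ with $\gamma$ involving simple roots outside $S$, and simultaneously confirming that the $X$-submodule $\la X v^+\ra$ realizes the correct irreducible (rather than merely the Weyl module), so that the multiplicity really matches that of $L_X(\l|_X)$ and not of $W_X(\l|_X)$. Rather than grinding through the enveloping-algebra monomials explicitly, I expect the cleanest route is to invoke the general principle that for $\mu$ in the sub-lattice $\l + \Z S$ the weight space is computed entirely within the Levi-type subgroup $X$, which is a standard consequence of the structure of the hyperalgebra and the fact that weights of $V$ are obtained from $\l$ by subtracting positive roots; the irreducibility of $L_X(\l|_X)$ as the relevant quotient then yields the multiplicity equality.
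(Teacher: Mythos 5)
Your opening reduction is correct and is the same in spirit as the paper's: since $\l-\mu$ is a non-negative integer combination of roots in $S$, a support argument (via the hyperalgebra/PBW, or equivalently via \eqref{e:mt}) shows that $V_\mu$ lies in the $KX$-submodule $\la Xv^+\ra$ generated by the maximal vector, and likewise that $V_\mu$ is fixed pointwise by the unipotent radical $Q$ of the standard parabolic $P=QL$ corresponding to $S$, which is how the paper phrases it. The gap is in your second step, where you identify this highest-weight $X$-module with $L_X(\l|_X)$. You argue that the maximal submodule $M$ of the Weyl module $W_X(\l|_X)$ ``contributes nothing to the $\mu$-weight space when $\mu$ is governed by the $S$-supported roots''. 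That claim is false in general: every weight of $M$ has the form $\l|_X$ minus a non-negative integer combination of simple roots of $X$, so $M$ can perfectly well have weight $\mu'$. Indeed, if your claim were correct it would force $m_{W_X(\l|_X)}(\mu')=m_{L_X(\l|_X)}(\mu')$ for all such weights, which fails: Lemma \ref{l:118} exhibits weights $\l-\a-\b$ whose multiplicity drops from $2$ in the Weyl module to $1$ in the irreducible module, and the paper applies Lemma \ref{l:sr} precisely in such situations (see the proof of Lemma \ref{c2:l3}), so the distinction is not a technicality that can be checked away.

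What your argument actually needs at this point is the theorem of Smith, cited in the paper as \cite[Proposition 2.11]{Jantzen}: for an irreducible $KG$-module $V$ with highest weight $\l$ and a parabolic $P=QL$, the fixed-point space $V^{Q}$ is an \emph{irreducible} $KL$-module of highest weight $\l|_{L}$. This is exactly the fact that rules out the possibility that the cyclic module $\la Xv^+\ra$ is a quotient of $W_X(\l|_X)$ strictly larger than $L_X(\l|_X)$; without it, your reasoning only yields the two-sided bound $m_{L_X(\l|_X)}(\mu') \le m_V(\mu) \le m_{W_X(\l|_X)}(\mu')$. The paper's proof is precisely your outline with this ingredient supplied, and since irreducibility of $V^{Q}$ is a genuine theorem rather than a formal consequence of highest-weight theory, it must either be cited or proved; your final paragraph's appeal to ``the irreducibility of $L_X(\l|_X)$ as the relevant quotient'' assumes exactly what is at stake, namely that the module housing $V_\mu$ is the irreducible quotient and not something bigger.
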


\begin{proof}
Let $P$ be the standard parabolic subgroup of $G$ corresponding to
the subset $S$, so that $X$ is the derived subgroup of
a Levi factor of $P$. We see that the $\mu$-weight space $V_{\mu}$ lies in the fixed
point space of the unipotent radical of $P$, which is the irreducible $KX$-module
with highest weight $\lambda|_X$,
by \cite[Proposition 2.11]{Jantzen}. The result now
follows.
\end{proof}

\begin{lem}\label{c2:l3}
Suppose $G=C_n$ and $\lambda=\lambda_{n-1}+a\lambda_n$, where $n \geq 3$, $0 \le a<p$ and $2a+3\equiv 0\imod{p}$. Then
$\mu=\lambda-\a_{n-2}-2\a_{n-1}-\a_n \in \L(V)$ and $m_{V}(\mu)=1$.
\end{lem}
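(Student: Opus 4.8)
The plan is to reduce to a rank-three calculation via Lemma \ref{l:sr} and then analyse the relevant irreducible module for $C_3$ directly.

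Since $\mu = \l - \a_{n-2} - 2\a_{n-1} - \a_n$, the set of simple roots occurring in $\l-\mu$ is $S = \{\a_{n-2},\a_{n-1},\a_n\}$, and $X = \la U_{\pm\a} \mid \a \in S\ra$ is a subsystem subgroup of type $C_3$. Writing $\o_1,\o_2,\o_3$ for the fundamental dominant weights of $X$ afforded by $\a_{n-2},\a_{n-1},\a_n$, the Cartan integers $\la \l,\a_{n-2}\ra = 0$, $\la \l,\a_{n-1}\ra = 1$ and $\la \l,\a_n\ra = a$ give $\l|_X = \o_2 + a\o_3$; a direct computation of Cartan integers then shows that $\mu|_X = \l|_X - \a_{n-2} - 2\a_{n-1} - \a_n = a\o_3$. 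Since the weight space $V_\mu$ is identified with the corresponding weight space of $V' = L_X(\l|_X)$ (this is exactly what the proof of Lemma \ref{l:sr} supplies, via the fixed space of the unipotent radical of the associated parabolic), it suffices to prove that $a\o_3 \in \L(V')$ and $m_{V'}(a\o_3) = 1$. Thus I may assume from the outset that $G = C_3$, $\l = \o_2 + a\o_3$ and $\mu = a\o_3 = \l - \a_1 - 2\a_2 - \a_3$.

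I would then compute $m_V(\mu)$ directly, following the method of Lemma \ref{l:bwt}. First, Freudenthal's formula (Theorem \ref{t:freud}) shows that $\mu$ occurs in the Weyl module $W_G(\l)$ with multiplicity greater than one (in fact equal to two); so the assertion $m_V(\mu) = 1$ genuinely concerns the irreducible quotient $V$ and cannot be read off from $W_G(\l)$ alone. Because $\l$ is $p$-restricted, $V$ is irreducible as a module for $\mathcal{L}(G)$, so I would use the PBW basis of the universal enveloping algebra to write down a spanning set for $V_\mu$ consisting of monomials $f_{\delta_1}f_{\delta_2}\cdots v^+$ with $\sum_i \delta_i = \a_1 + 2\a_2 + \a_3$, and then evaluate the raising operators $e_{\a_1}, e_{\a_2}, e_{\a_3}$ on an arbitrary linear combination of these monomials using the explicit structure constants for $\mathcal{L}(G)$.

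The main obstacle is to extract the exact dimension of $V_\mu$ from this data. Since $V$ is irreducible and $\mu \neq \l$, no nonzero vector of weight $\mu$ can be annihilated by all of $e_{\a_1},e_{\a_2},e_{\a_3}$; hence a linear combination of the spanning monomials vanishes in $V$ precisely when it is killed by every $e_{\a_i}$, and the relations among the spanning vectors are exactly the solutions of the resulting homogeneous linear system. I expect this system to cut out a one-dimensional space of relations exactly when $2a+3 \equiv 0 \imod{p}$, forcing $m_V(\mu) = 1$ in this case (and $m_V(\mu)=2$ otherwise); in particular $m_V(\mu)\ge 1$, so $\mu \in \L(V)$. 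The delicate point is that this extra relation among the lowering monomials is a characteristic-$p$ phenomenon invisible to Freudenthal's formula, so the congruence $2a+3\equiv 0\imod p$ must emerge from careful bookkeeping of the structure constants; alternatively, one may show that $W_G(\l)$ acquires a maximal vector of weight $a\o_3$ precisely under this congruence, generating the composition factor that accounts for the drop. Finally, the borderline value $a=0$ (which forces $p=3$) should be checked separately, since then $\la\l,\a_3\ra = 0$ and some intermediate weights degenerate.
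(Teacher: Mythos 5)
Your reduction to $C_3$ via Lemma \ref{l:sr} is exactly the paper's first step, and the identifications $\l|_X = \o_2+a\o_3$ and $\mu|_X = a\o_3$ are correct. The genuine gap lies in the computation you then sketch but never carry out, and in the fact that your quantitative expectations for it are wrong. For $a>0$, Freudenthal's formula gives $m_{W_G(\l)}(\mu)=3$, not $2$ (the value $2$ occurs only in the degenerate case $a=0$); correspondingly, in characteristic $0$ one has $m_V(\mu)=3$, so your prediction ``$m_V(\mu)=2$ otherwise'' is false, and the linear system you describe would have to cut the seven PBW spanning monomials down to a one-dimensional space, i.e.\ exhibit two independent relations beyond those already holding in the Weyl module. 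Since the whole content of the lemma is that the congruence $2a+3\equiv 0 \imod{p}$ forces precisely this drop, leaving that computation as an ``I expect'' with the wrong target numbers is a gap at the central point. The raising-operator method of Lemma \ref{l:bwt} is viable in principle, but the recursion is heavier than you indicate: for instance the weight $\l-\a_1-\a_2-\a_3$ lying above $\mu$ has Weyl multiplicity $2$, so the relations in that weight space must be controlled first.

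Your fallback suggestion --- that $W_G(\l)$ acquires a maximal vector of weight $a\o_3$ under the congruence --- is structurally false, and this is exactly where the paper's actual argument goes differently. By Lemma \ref{l:118}(ii), the congruence forces $m_V(\l-\a_2-\a_3)=1$, so $W_G(\l)$ has a composition factor $L_G(\nu)$ with $\nu=\l-\a_2-\a_3=\l_1+\l_2+(a-1)\l_3$; since $\mu=\nu-\a_1-\a_2$ and $p\neq 3$ (as $p=2a+3\ge 5$), Lemma \ref{l:118}(i) gives $m_{L_G(\nu)}(\mu)=2$, whence $m_V(\mu)\le 3-2=1$, with equality because $\mu\in\L(V)$ by saturation (Corollary \ref{c:sat}). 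In particular the bookkeeping $3=1+2$ leaves no room for a composition factor of highest weight $\mu$, so $W_G(\l)$ has no maximal vector of weight $a\o_3$: the multiplicity drop is caused by a factor whose highest weight lies strictly above $\mu$, not at $\mu$. Finally, you flag the case $a=0$ (so $p=3$) but give no argument for it; the paper settles it by a dimension count, using $\dim V=13$ from L\"ubeck's tables against the $12$ Weyl conjugates of $\l$.
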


\begin{proof}
It is easy to see that $\mu \in \L(V)$ by repeatedly applying Corollary \ref{c:sat}, so it remains to show that $m_{V}(\mu)=1$. By Lemma \ref{l:sr}, it suffices to consider the case $n=3$. First assume $a=0$, so $p=3$. By inspecting \cite[Table A.32]{Lubeck} we see that $\dim V = 13$, and thus $m_V(\mu)=1$ since $\l$ has $12$ distinct $W(G)$-conjugates (see \cite[1.10]{Seitz2}). 

Now assume $a>0$, so $p>3$. By applying Theorem \ref{t:freud}, we calculate that $\mu$ has multiplicity $3$ in the Weyl module $W_G(\l)$. By Lemma \ref{l:118}, $W_G(\l)$
has a $KG$-composition
factor of highest weight $\nu=\l-\a_2-\a_3$, which is dominant since 
$\nu= \l_1+\l_2+(a-1)\l_3$. Moreover, since $\mu=\nu-\a_1-\a_2$ and $p\ne3$,
Lemma \ref{l:118} implies that $\mu$ occurs with
multiplicity $2$ in this composition factor. The result follows.
\end{proof}

\begin{rmk}\label{r:sz}
\emph{Note that Lemma \ref{c2:l3} is a special case of \cite[Theorem 0.1]{SZ}, which states that $\dim V = (p^n-1)/2$ and $m_V(\mu)=1$ for all weights $\mu \in \L(V)$.}
\end{rmk}

The final lemma in this section records a trivial observation that will be used frequently in the proof of Theorem \ref{main}. 

\begin{lem}\label{l:easy}
Let $G$, $H$ and $V$ be given as in the statement of Theorem \ref{main}. Write $V=V_1 \oplus \cdots \oplus V_r$, where each $V_i$ is a $KH^0$-composition factor, and let $\mu_1, \ldots, \mu_{s} \in \L(V)$ be distinct $T$-weights with the property that $\mu_i|_{H^0} = \nu$ for all $1 \le i \le s$. Then
$$\sum_{i=1}^{s}m_V(\mu_i) \leq m_V(\nu) = \sum_{i=1}^{r}m_{V_i}(\nu).$$
\end{lem}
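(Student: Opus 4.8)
The plan is to recover the statement from basic facts about weight spaces and the action of the torus of $H^0$.  The module $V$ decomposes as $V = V_1 \oplus \cdots \oplus V_r$ into $KH^0$-composition factors (this uses Remark \ref{r:conds}(j) together with Clifford theory, so $V|_{H^0}$ is completely reducible), and each $V_i$ is itself a sum of its own $T_{H^0}$-weight spaces.  First I would observe that $m_V(\nu)$, the dimension of the $T_{H^0}$-weight space of $V$ for the weight $\nu$, is additive across this direct sum, which immediately gives the asserted equality
$$m_V(\nu) = \sum_{i=1}^{r} m_{V_i}(\nu).$$
Here I am writing $m_V(\nu)$ for the multiplicity of $\nu$ as a $T_{H^0}$-weight of $V$; this is exactly the dimension of the $\nu$-weight space $V_\nu$ relative to the subtorus $T_{H^0} \le T$.

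Next I would set up the inequality.  The key point is the compatibility between the $T$-weight space decomposition and the $T_{H^0}$-weight space decomposition: restricting characters from $T$ to $T_{H^0}$ sends each $T$-weight $\mu$ to the $T_{H^0}$-weight $\mu|_{H^0}$, and every $T$-weight space $V_\mu$ lies inside the $T_{H^0}$-weight space $V_{\mu|_{H^0}}$.  Consequently, for the distinct $T$-weights $\mu_1, \ldots, \mu_s$ all satisfying $\mu_i|_{H^0} = \nu$, the corresponding $T$-weight spaces $V_{\mu_1}, \ldots, V_{\mu_s}$ are subspaces of the single $T_{H^0}$-weight space $V_\nu$.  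Because the $\mu_i$ are distinct $T$-weights, the spaces $V_{\mu_i}$ are linearly independent (they belong to distinct eigenspaces of $T$), so their dimensions sum to at most $\dim V_\nu$.  This yields
$$\sum_{i=1}^{s} m_V(\mu_i) \;=\; \sum_{i=1}^{s} \dim V_{\mu_i} \;\le\; \dim V_\nu \;=\; m_V(\nu),$$
which is precisely the claimed bound.

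There is no serious obstacle here; the statement is, as the authors say, a trivial observation, and the only point requiring a moment of care is the deliberate overloading of the symbol $m_V(\cdot)$, which denotes an ordinary $T$-multiplicity when applied to a $T$-weight $\mu_i$ but a $T_{H^0}$-multiplicity when applied to $\nu$.  Both are genuine weight-space dimensions, and the inequality is simply the assertion that several $T$-eigenspaces fitting inside one $T_{H^0}$-eigenspace cannot collectively exceed it in dimension, while the equality is the additivity of that $T_{H^0}$-multiplicity over the $KH^0$-composition factors.  I would phrase the final write-up to make this dual reading of $m_V$ explicit at the outset so that no confusion arises.
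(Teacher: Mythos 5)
Your proof is correct, and it fills in exactly the argument the authors intend: the paper states this lemma without proof, calling it a ``trivial observation,'' and the intended justification is precisely what you wrote --- additivity of the $T_{H^0}$-weight space dimension over the $KH^0$-composition factors gives the equality, while the inclusion of the pairwise-independent $T$-weight spaces $V_{\mu_1}, \ldots, V_{\mu_s}$ inside the single $T_{H^0}$-weight space $V_\nu$ gives the inequality. Your closing remark about the overloaded notation $m_V(\cdot)$ (a $T$-multiplicity for the $\mu_i$, a $T_{H^0}$-multiplicity for $\nu$) matches the clarification the authors themselves append after the statement, so nothing is missing.
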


Here $m_V(\nu)$ denotes the multiplicity in $V$ of the $T_{H^0}$-weight $\nu$, and $m_{V_i}(\nu)$ is its multiplicity in the $KH^0$-composition factor $V_i$, where $T_{H^0}$ is a suitable maximal torus of $(H^0)'$ contained in $T$ (recall that $H^0$ is reductive; see Remark \ref{r:conds}(j)).

\section{Some dimension calculations}\label{ss:dc}

In this section, let $G$ be a classical algebraic group of rank $n$ over an algebraically closed field $K$ of characteristic $p \ge 0$, where we assume $n \ge 2$ if $G=B_n$ or $C_n$, and $n \ge 3$ if $G=D_n$. In addition, we assume $p \neq 2$ if $G = B_n$. In the proof of Theorem \ref{main} we need to compute the dimension of the irreducible $KG$-module $L_G(\l)$ for some specific $p$-restricted highest weights $\l$.
Our main result is the following.

\begin{prop}\label{p:dims}
Let $\l$ be one of the weights listed in Table \ref{t:dims}. Then $\dim L_G(\l)$ is given in the third column of the table.
\end{prop}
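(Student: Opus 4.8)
The plan is to obtain each $\dim L_G(\l)$ by first computing the dimension of the Weyl module $W_G(\l)$ and then subtracting off the dimensions of the remaining composition factors. The quantity $\dim W_G(\l)$ is given unconditionally by Weyl's dimension formula, so the substance of the proof lies in analysing the $KG$-module structure of $W_G(\l)$, that is, in identifying the dominant weights $\mu < \l$ for which $L_G(\mu)$ is a composition factor together with the multiplicities $[W_G(\l):L_G(\mu)]$. Once these are known,
$$\dim L_G(\l) = \dim W_G(\l) - \sum_{\mu < \l}[W_G(\l):L_G(\mu)]\cdot \dim L_G(\mu).$$
Since $W_G(\l)=L_G(\l)$ when $p=0$, and since the weights in Table \ref{t:dims} are small enough that Lemma \ref{l:pr} forces $W_G(\l)$ to be irreducible once $p>e(G)$, it suffices to treat the finitely many small primes at which $W_G(\l)$ can be reducible; these are precisely the primes flagged by the explicit congruence conditions appearing in the third column.

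To locate the composition factors I would work downwards from $\l$. Each candidate highest weight $\mu$ is dominant, $p$-restricted, of the form $\l-\sum_i c_i\a_i$, and strictly below $\l$, so there are only finitely many to consider. For each candidate I would compute the $T$-weight multiplicity $m_{W_G(\l)}(\mu)$ via Freudenthal's formula (Theorem \ref{t:freud}) and compare it with the contribution predicted by the factors already accounted for; a drop in the multiplicity of a carefully chosen weight — exactly of the kind detected by Lemma \ref{l:118} and Lemma \ref{l:bwt}, where under a divisibility hypothesis on $p$ the multiplicity of $\l-\a-\b$ (respectively of $\l_n$ in the $B_n$ spin case) decreases by one — signals the presence of the associated factor $L_G(\mu)$. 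The dimensions $\dim L_G(\mu)$ of the smaller factors that arise are then supplied recursively (the weights $\mu$ are again of the small, restricted type handled by the table), or from the results of Seitz and Premet; Corollary \ref{c:sat} guarantees that the relevant subdominant weights genuinely occur. For the base cases of these recursions I would reduce to small rank using Lemma \ref{l:sr}, exactly as in the proof of Lemma \ref{c2:l3}, and then read off the dimension directly from L\"ubeck's tables \cite{Lubeck}.

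The main obstacle is the accurate determination of the Weyl module structure in the handful of bad characteristics. Weyl's formula pins down $\dim W_G(\l)$ once and for all, but passing to the irreducible quotient requires knowing both which $L_G(\mu)$ appear and with what multiplicity, and the answer depends delicately on congruences satisfied by $p$ (for instance $p\mid 2n+1$ in the spin-type cases). The weight-multiplicity bookkeeping via Freudenthal's formula grows rapidly with the rank, so the real work is to organise it so that each potential factor is controlled by a single divisibility condition, and to confirm that no further, more deeply nested factors intervene. I expect to manage this family by family and weight by weight, using Lemma \ref{l:118} to extract the first layer of factors, Corollary \ref{c:sat} to control the weight support, Lemma \ref{l:sr} to anchor each recursion at small rank, and the tables of \cite{Lubeck} as an independent check on those anchors.
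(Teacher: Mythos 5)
Your reduction to finitely many primes rests on a misreading of Lemma \ref{l:pr}. Premet's theorem asserts only that, for $p=0$ or $p>e(G)$, the \emph{set} of weights of $L_G(\l)$ coincides with that of the Weyl module $W_G(\l)$; it says nothing about multiplicities and certainly does not force $W_G(\l)$ to be irreducible. The very table you are proving contradicts this step: for $G=B_n$ and $\l=2\l_1$ (or $\l=\l_1+\l_n$) the Weyl module is reducible whenever $p\mid 2n+1$, and such $p$ can be arbitrarily large compared with $e(G)=2$ (take $n=5$, $p=11$); likewise $p\mid n$ for $D_n$, $\l=\l_1+\l_{n-1}$. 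Your fallback, that the bad primes are ``precisely the primes flagged by the explicit congruence conditions appearing in the third column'', is circular: those congruences are exactly what the proposition asks you to establish, so they cannot be used to delimit the primes you need to analyse.

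The second gap is in your detection mechanism. A multiplicity \emph{drop} can only be observed if one already knows the multiplicity of the chosen weight in the irreducible module $L_G(\l)$; Freudenthal's formula (Theorem \ref{t:freud}) gives multiplicities in $W_G(\l)$ only. The paper supplies irreducible multiplicities just for weights of depth two (Lemma \ref{l:118}) and for one ad hoc $B_n$ case (Lemma \ref{l:bwt}, proved by an explicit PBW/Lie algebra calculation, not by Freudenthal). For $D_n$ with $\l=2\l_n$ and $p\neq 2$, your scheme requires showing that \emph{no} dominant $\mu$ below $\l$ contributes a composition factor, for every odd $p$, and there is no tool in the paper giving the irreducible multiplicities needed to run such a comparison at the deeper subdominant weights; ``confirming that no further factors intervene'' is precisely the hard part and is left undone. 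This is where the paper switches methods entirely: it proves $\dim L_{D_n}(2\l_n)=\frac{1}{2}\binom{2n}{n}$ by combining the Weyl-dimension upper bound with an inductive lower bound coming from the $Q$-levels of a parabolic (Lemma \ref{l:dmdim}), handles $p=2$ by a Frobenius twist, treats the spin modules by minimal-weight orbit counting (Lemma \ref{l:dmspin}), and computes $\dim L_{D_n}(\l_1+\l_n)$ by comparing with the tensor product $L_G(\l_1)\otimes L_G(\l_n)$ rather than with the Weyl module (Lemma \ref{l:dm}). Without substitutes for these arguments your outline cannot be completed as stated.
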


\renewcommand{\arraystretch}{1.2}
\begin{table}
$$\begin{array}{llll} \hline
G & \l & \hspace{4.5mm} \dim L_G(\l) & 
\\ \hline
A_n & a\l_1,\, a\l_n\, (a<p) & \hspace{4.5mm} (n+a)!/n!a! & \\
n \ge 1 & & & \\

& & & \\

\hspace{-2mm} \begin{array}{l} 
B_n \\
n \ge 2 
\end{array}    & \hspace{-2mm} \begin{array}{l}  2\l_1 \\ \mbox{  } \end{array} &	
           \left\{\begin{array}{l}
		n(2n+3)-1  \\
        	n(2n+3) \end{array}\right. 
		&  \begin{array}{l}
		\mbox{if $p\mid 2n+1$}\\
		\mbox{otherwise}
		\end{array} \\

    & \l_2 & \left\{\begin{array}{l}
		4  \\
		n(2n+1)
		\end{array}\right. 
		&  \begin{array}{l}
		\mbox{if $n=2$} \\
		\mbox{otherwise}
		\end{array} \\
		
& \l_n & \hspace{4.5mm} 2^n & \\

& \l_1+\l_n  & \left\{\begin{array}{l}
		2^n(2n-1)   \\
		2^{n+1}n 
		\end{array}\right. 
		&  \begin{array}{l}
		\mbox{if $p\mid 2n+1$} \\
		 \mbox{otherwise}
		 \end{array} \\
		
		& & & \\

\hspace{-2mm} \begin{array}{l} 
C_n \\
n \ge 2 \end{array}

& \hspace{-2mm} \begin{array}{l}  2\l_1 \\ \mbox{  } \end{array} &	\left\{\begin{array}{l}
		2n  \\
        	n(2n+1)	  
		\end{array}\right. 
		&  \begin{array}{l}
		\mbox{if $p=2$} \\
		\mbox{otherwise}
		\end{array} \\
		
    & \l_2 & \left\{\begin{array}{l}
		(n-1)(2n+1)-1   \\
		(n-1)(2n+1) 
		\end{array}\right. 
		&  \begin{array}{l}
		\mbox{if $p\mid n$} \\
		\mbox{otherwise}
		\end{array} \\

& & & \\

\hspace{-2mm} \begin{array}{l}  
D_n \\
n \ge 3 \end{array}

& \hspace{-2mm} \begin{array}{l} 2\l_1 \\ \mbox{ } \end{array} & \left\{\begin{array}{l}
       		2n  \\
         	(n+1)(2n-1)-1   \\
		(n+1)(2n-1) 
		\end{array}\right. 
		&  \begin{array}{l}
		\mbox{if $p=2$} \\
		\mbox{if $p\ne2$ and $p\mid n$} \\
		\mbox{otherwise}
		\end{array} \\
		
& \l_2 \; (n \ge 4) & \left\{\begin{array}{l}
		n(2n-1)-2   \\
		n(2n-1)-1   \\
		n(2n-1) 
		\end{array}\right. 
		&  \begin{array}{l}
		\mbox{if $p=2$ and $n$ is even} \\
		\mbox{if $p=2$ and $n$ is odd} \\
		\mbox{otherwise}
		\end{array} \\
		
& \l_{n-1},\, \l_n & \hspace{5mm} 2^{n-1} \\  
  
 & 2\l_{n-1},\, 2\l_n & \left\{\begin{array}{l}
		2^{n-1}    \\
		\frac{1}{2}\binom{2n}{n} 
		\end{array}\right. 
		&  \begin{array}{l}
		 \mbox{if $p=2$} \\
		 \mbox{otherwise}
		 \end{array} \\
  
    & \l_1+\l_{n-1},\, \l_1+\l_{n}  & \left\{\begin{array}{l}
		2^{n}(n-1)    \\
		2^{n-1}(2n-1) 
		\end{array}\right. 
		&  \begin{array}{l}
		 \mbox{if $p \mid n$} \\
		 \mbox{otherwise}
		 \end{array} \\ \hline
\end{array}$$
\caption{The dimensions of some irreducible $KG$-modules}
\label{t:dims}
\end{table}
\renewcommand{\arraystretch}{1}

We partition the proof of Proposition \ref{p:dims} into a series of separate lemmas. Note that $\dim L_G(\l)$ has been computed by L\"{u}beck \cite[Theorems 4.4, 5.1]{Lubeck} in the cases $\l = 2\l_1$ or $\l_2$ (for every classical group $G$), so it remains to deal with the other cases in Table \ref{t:dims}. 

Recall that we can define a partial order on the set of weights for $T$: if $\mu,\l$ are weights then $\mu \preccurlyeq \l$ if and only if $\mu = \l - \sum_{i=1}^{n}c_i\a_i$ and each $c_i$ is a non-negative integer. In this situation, we say that $\mu$ is \emph{under} $\l$. Then following \cite[p.72]{Hu1}, a dominant weight $\l$ for $T$ is \emph{minimal} if for all dominant weights $\mu$, we have $\mu \preccurlyeq \l$ if and only if 
$\mu=\l$. It is easy to verify that the non-zero minimal weights of the classical irreducible root systems are as follows:
\begin{equation}\label{e:mini}
A_{n}: \l_1, \ldots, \l_n, \; B_n: \l_n, \; C_n: \l_1, \; D_n: \l_1, \l_{n-1},\l_{n}.
\end{equation}
The next result is well-known, but we provide a proof for completeness.

\begin{lem}\label{l:dmspin}
$\dim{L_{B_n}(\l_n)}=2^n$ and $\dim{L_{D_n}(\l_{n-1})}=\dim{L_{D_n}(\l_{n})}=2^{n-1}$.
\end{lem}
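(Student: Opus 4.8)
The plan is to recognize $L_{B_n}(\l_n)$ and $L_{D_n}(\l_{n-1}), L_{D_n}(\l_n)$ as the spin and half-spin modules, and to compute their dimensions as the sizes of the relevant Weyl group orbits. The key structural input is that these are \emph{minimal} dominant weights, as recorded in \eqref{e:mini}. For a minimal weight $\l$, the only dominant weight $\preccurlyeq \l$ is $\l$ itself; since the weights of the Weyl module $W_G(\l)$ form the saturated set consisting of all $W(G)$-conjugates of dominant weights $\mu \preccurlyeq \l$ (see \cite[Section 13.4]{Hu1}), it follows that every weight of $W_G(\l)$ lies in the single orbit $W(G)\cdot\l$. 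As all weights in a $W(G)$-orbit share a common multiplicity and $m_{W_G(\l)}(\l)=1$, each weight of $W_G(\l)$ has multiplicity $1$, and hence $\dim W_G(\l) = |W(G)\cdot\l|$.

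Next I would carry out the orbit computation in standard coordinates. Writing the weight lattice in terms of an orthonormal basis $\varepsilon_1, \ldots, \varepsilon_n$, the relevant highest weights are $\l_n = \tfrac{1}{2}(\varepsilon_1 + \cdots + \varepsilon_n)$ for $B_n$ and $D_n$, and $\l_{n-1} = \tfrac{1}{2}(\varepsilon_1 + \cdots + \varepsilon_{n-1} - \varepsilon_n)$ for $D_n$. The Weyl group of $B_n$ acts by all permutations and all sign changes of the $\varepsilon_i$, so $W(B_n)\cdot\l_n$ consists of all $\tfrac{1}{2}(\pm\varepsilon_1 \pm \cdots \pm \varepsilon_n)$, giving $2^n$ weights. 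The Weyl group of $D_n$ permits permutations together with an even number of sign changes, so $W(D_n)\cdot\l_n$ is the set of such half-sums with an even number of minus signs, while $W(D_n)\cdot\l_{n-1}$ consists of those with an odd number; each orbit has size $2^{n-1}$. This yields $\dim W_{B_n}(\l_n) = 2^n$ and $\dim W_{D_n}(\l_{n-1}) = \dim W_{D_n}(\l_n) = 2^{n-1}$.

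Finally I must pass from the Weyl module to the irreducible module, that is, show $L_G(\l) = W_G(\l)$ for these $\l$ in every characteristic (recall that $p \neq 2$ is assumed throughout for $B_n$). Since all weights of $W_G(\l)$ lie in the single orbit $W(G)\cdot\l$ with multiplicity $1$, I would argue irreducibility by a connectivity argument: any non-zero submodule $M$ is a sum of weight spaces, and given a weight vector $v_\mu \in M$ with $\mu \neq \l$, the chain of weights obtained by successively adding simple roots to climb from $\mu$ up to $\l$ passes only through multiplicity-one weight spaces, so the associated $\mathfrak{sl}_2$-strings are irreducible and the raising operators connect $v_\mu$ to the maximal vector. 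Hence $M$ contains the $\l$-weight space, forcing $M = W_G(\l)$, so the unique maximal submodule is trivial and $\dim L_G(\l) = \dim W_G(\l)$. I expect this irreducibility step to be the main obstacle, as it is the only part genuinely sensitive to $p$; everything else is characteristic-independent orbit combinatorics.
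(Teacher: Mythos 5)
Your proof is correct in substance, but it takes a genuinely different route from the paper at the one step that is sensitive to the characteristic. The paper never proves that the Weyl module $W_G(\l)$ is irreducible. It argues as follows: the set of weights of $L_G(\l)$ is stable under $W(G)$ and contains $\l$, and weights in a common $W(G)$-orbit have equal multiplicities; since minimality of $\l$ forces every weight of $W_G(\l)$ to lie in the orbit $W(G)\cdot\l$, every such weight already occurs in $L_G(\l)$, necessarily with multiplicity $1$. Hence $\dim L_G(\l)$ equals the orbit size, and no analysis of submodules or raising operators in characteristic $p$ is needed. The orbit size is then obtained from \cite[1.10]{Seitz2} as an index of Weyl groups, $|W(B_n)|/|W(A_{n-1})|=2^n$ and $|W(D_n)|/|W(A_{n-1})|=2^{n-1}$, which agrees with your explicit count in $\varepsilon$-coordinates. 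So your argument proves the stronger statement $W_G(\l)=L_G(\l)$, at the cost of controlling raising operators in characteristic $p$, while the paper gets away with the weaker equality $\dim L_G(\l)=\dim W_G(\l)$ from $W(G)$-invariance alone.

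One justification in your irreducibility step is imprecise as stated: multiplicity-one weight spaces along a string do not by themselves make the corresponding $\mathfrak{sl}_2$-module irreducible. For instance, a string with weights $2,0,-2$, each of multiplicity one, can be reducible in characteristic $2$ (the adjoint module of ${\rm SL}_2$ is an example, since the diagonal part spans a trivial submodule). What actually rescues your argument is that these weights are minuscule rather than merely multiplicity-free: since all weights of $W_G(\l)$ are $W(G)$-conjugate, they all have the same length, which forces $\la \mu,\a\ra \in \{-1,0,1\}$ for every weight $\mu$ and every root $\a$; thus every $\a$-string has length at most $2$, and a two-dimensional module for the corresponding ${\rm SL}_2$ with torus weights $\pm 1$ has no choice but to be its natural (irreducible) module, so the root group $U_{\a}$ moves a vector of weight $\mu$ non-trivially into the $(\mu+\a)$-weight space whenever $\la\mu,\a\ra=-1$. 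With that substitution (and the standard fact that any weight $\mu\neq\l$ of $W_G(\l)$ can be raised to $\l$ through a chain of weights differing by simple roots), your proof closes up.
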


\begin{proof}
As $\l_n$ is a minimal weight for $B_n$ (see \eqref{e:mini}), all weights in the Weyl module
 $W_{B_n}(\l_n)$ are conjugate to $\l_n$, and so they occur in the spin module $L_{B_n}(\l_n)$ 
in any characteristic.
Therefore, by applying \cite[1.10]{Seitz2} we deduce that $\dim{L_{B_n}(\l_n)}=|W(B_n)|/|W(A_{n-1})|=2^n$, and similarly
$$\dim{L_{D_n}(\l_{n-1})}=\dim{L_{D_n}(\l_n)}=|W(D_n)|/|W(A_{n-1})|=2^{n-1}.$$
\end{proof}

\begin{lem}\label{l:s114}
Suppose $G=A_n$ and $\l=a\l_1$ or $a\l_n$, where $a>0$ and either $p=0$ or $a<p$. Then $m_V(\mu) = 1$ for all $\mu \in \L(V)$, and $\dim L_G(\l)=(n+a)!/n!a!$.
\end{lem}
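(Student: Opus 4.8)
The plan is to establish both assertions for $\lambda = a\lambda_1$ and then deduce the case $\lambda = a\lambda_n$ by symmetry. For the latter I would apply the graph automorphism $\tau$ of $A_n$ induced by the Dynkin diagram symmetry $\alpha_i \mapsto \alpha_{n+1-i}$ (equivalently, pass to the dual module): since $\tau$ interchanges $\lambda_1$ and $\lambda_n$ we have $L_G(a\lambda_n) \cong L_G(a\lambda_1)^{\tau}$, and twisting by $\tau$ preserves the dimension and merely permutes the weights without altering their multiplicities. So it suffices to treat $\lambda = a\lambda_1$, and throughout I write $V = L_G(\lambda)$.

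The core is a single dimension count, carried out first on the Weyl module. I would use the standard fact that the formal character of $W_G(a\lambda_1)$ is independent of $p$; evaluating in characteristic $0$, where $W_G(a\lambda_1)$ is the symmetric power $S^a(W)$ of the natural $(n+1)$-dimensional module $W$, the monomial basis $\{\prod_i x_i^{b_i} : b_i \ge 0,\ \sum_i b_i = a\}$ shows at once that $\dim W_G(a\lambda_1) = \binom{n+a}{a} = (n+a)!/(n!\,a!)$. The crucial observation is that distinct monomials carry distinct $T$-weights: if $\prod_i t_i^{b_i}$ and $\prod_i t_i^{c_i}$ agree as characters of the maximal torus $T = \{{\rm diag}(t_0,\dots,t_n) : \prod_i t_i = 1\}$ of ${\rm SL}_{n+1}$, then $b_i - c_i$ is independent of $i$, which together with $\sum_i b_i = \sum_i c_i = a$ forces $(b_i) = (c_i)$. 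Hence the $\binom{n+a}{a}$ monomial weights are pairwise distinct, so every weight of $W_G(a\lambda_1)$ has multiplicity $1$. (As a cross-check, Weyl's dimension formula at $a\lambda_1$ --- only the positive roots $\varepsilon_1 - \varepsilon_j$ contribute nontrivially --- telescopes to $\prod_{k=1}^{n}(a+k)/k = (n+a)!/(n!\,a!)$.)

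To transfer these conclusions from $W_G(\lambda)$ to $V$, I would invoke Lemma \ref{l:pr}. Since $A_n$ is simply laced we have $e(G) = 1$, and because $\lambda = a\lambda_1$ is $p$-restricted --- this is exactly where the hypothesis $a < p$ enters, since it guarantees $p > e(G)$ and the applicability of Lemma \ref{l:pr} --- that lemma gives $\Lambda(V) = \Lambda(W_G(\lambda))$. As $V$ is a quotient of $W_G(\lambda)$ we have $m_V(\mu) \le m_{W_G(\lambda)}(\mu) = 1$ for every weight $\mu$; on the other hand
$$\dim V \;\ge\; |\Lambda(V)| \;=\; |\Lambda(W_G(\lambda))| \;=\; \dim W_G(\lambda) \;\ge\; \dim V,$$
the final inequality holding because $V$ is a quotient. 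Equality throughout forces $\dim V = (n+a)!/(n!\,a!)$ and $m_V(\mu) = 1$ for all $\mu \in \Lambda(V)$.

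The only genuinely delicate point is the multiplicity-freeness of $W_G(\lambda)$, which rests entirely on the elementary observation that the monomial weights remain pairwise distinct as characters of the torus of ${\rm SL}_{n+1}$; once this collapses the weight count onto the dimension, Lemma \ref{l:pr} does the rest. It is worth emphasizing that the restriction $a < p$ is essential: it is precisely what makes $\lambda$ $p$-restricted and hence Lemma \ref{l:pr} available (for $a \ge p$ the Weyl module $W_G(a\lambda_1)$ is reducible, already for ${\rm SL}_2$ with $\lambda = p\lambda_1$).
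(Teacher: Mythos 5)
Your argument is correct, but it takes a genuinely different route from the paper, which offers no argument at all for Lemma \ref{l:s114}: the paper's entire proof is the citation \cite[1.14]{Seitz2}. What you do instead is a self-contained deduction using only tools already in the paper: the character of $W_G(a\lambda_1)$ is characteristic-independent and in characteristic zero equals that of $S^a(W)$; the monomial weights are pairwise distinct (your observation that $\sum_i d_i\varepsilon_i=0$ forces all $d_i$ equal, since the only relation among the $\varepsilon_i$ for ${\rm SL}_{n+1}$ is $\sum_i\varepsilon_i=0$, is exactly right), so $W_G(a\lambda_1)$ is multiplicity-free of dimension $\binom{n+a}{a}$; then Premet's theorem (Lemma \ref{l:pr}) gives $\Lambda(V)=\Lambda(W_G(\lambda))$, and the sandwich $\dim V\ge|\Lambda(V)|=\dim W_G(\lambda)\ge\dim V$ forces equality, yielding both assertions at once (and, as a by-product, the irreducibility of the Weyl module when $a<p$). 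The reduction of $a\lambda_n$ to $a\lambda_1$ via the graph automorphism is also fine. What the citation buys the paper is brevity; what your proof buys is transparency about where each hypothesis is used. On that point, one correction of wording: $a<p$ does \emph{not} ``guarantee $p>e(G)$'' --- for $A_n$ one has $e(G)=1$, so the condition $p=0$ or $p>e(G)$ in Lemma \ref{l:pr} holds automatically in every characteristic; the hypothesis $a<p$ is needed precisely to make $\lambda$ $p$-restricted, which is the standing assumption on $V$ under which Lemma \ref{l:pr} is stated. This is a slip in the parenthetical justification, not a gap: the facts you actually invoke are the right ones, and the argument stands.
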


\begin{proof}
See \cite[1.14]{Seitz2}.
\end{proof}

\begin{lem}\label{l:bn2}
Let $G=B_n$ with $n \ge 2$ and $p \neq 2$. Then 
$$\dim L_{G}(\l_1+\l_n) = \left\{\begin{array}{ll}
		2^n(2n-1) & \mbox{if $p\mid 2n+1$} \\
		2^{n+1}n & \mbox{otherwise.}
		\end{array}\right.$$
\end{lem}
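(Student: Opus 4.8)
The plan is to compute $\dim L_G(\l_1+\l_n)$ by starting from the Weyl module $W_G(\l_1+\l_n)$, whose dimension is given by Weyl's dimension formula, and then subtracting the dimensions of any proper composition factors. First I would apply Weyl's dimension formula to $\l = \l_1+\l_n$ for $G = B_n$. Since the weight $\l_1+\l_n$ is relatively simple, this calculation should yield a clean polynomial expression in $n$; I expect the answer $\dim W_G(\l_1+\l_n) = 2^{n+1}n$, matching the generic dimension in the statement. This establishes the result immediately whenever $W_G(\l)$ is irreducible.

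Next I would determine exactly when $W_G(\l_1+\l_n)$ fails to be irreducible, and identify the composition factors. The key tool is Lemma \ref{l:bwt}: the weight $\mu = \l - \a_1 - \cdots - \a_n = \l_n$ has multiplicity $n$ in $V = L_G(\l)$ when $p \nmid 2n+1$, but multiplicity $n-1$ when $p \mid 2n+1$. The drop in multiplicity in the case $p \mid 2n+1$ signals that the Weyl module is reducible and has a composition factor whose highest weight is $\l_n$ (the minimal dominant weight below $\l$ here). By Lemma \ref{l:dmspin}, this factor $L_G(\l_n)$ is the spin module of dimension $2^n$. I would argue that this is the \emph{only} additional composition factor — for instance by checking via Freudenthal's formula (Theorem \ref{t:freud}) and the dominant weights below $\l$ that no other dominant weight can support a further factor, since the only dominant weights under $\l_1+\l_n$ are $\l_1+\l_n$ itself and $\l_n$.

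Putting these together, in the case $p \mid 2n+1$ we subtract the spin factor:
$$\dim L_G(\l_1+\l_n) = \dim W_G(\l_1+\l_n) - \dim L_G(\l_n) = 2^{n+1}n - 2^n = 2^n(2n-1),$$
which is precisely the claimed value; and in the case $p \nmid 2n+1$ the Weyl module is irreducible, giving $2^{n+1}n$. The main obstacle I anticipate is rigorously confirming that $L_G(\l_n)$ appears with multiplicity exactly one as a composition factor and that there are no other factors — that is, translating the single multiplicity drop recorded in Lemma \ref{l:bwt} into a complete determination of the composition series of $W_G(\l_1+\l_n)$. In characteristic $p$ one cannot simply assume a one-step filtration, so I would either invoke a linkage/block argument or carry out a direct Freudenthal computation confirming that the multiplicities of all dominant weights under $\l$ are accounted for by exactly the two factors $L_G(\l_1+\l_n)$ and $L_G(\l_n)$.
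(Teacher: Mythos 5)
Your proposal is correct, but it takes a genuinely different route from the paper's proof. The paper never touches the Weyl module: it observes that $\l_n$ is the unique subdominant weight of $\l=\l_1+\l_n$, so every weight of $V=L_G(\l)$ is $W(G)$-conjugate either to $\l$ (multiplicity $1$) or to $\l_n$ (multiplicity $n$ or $n-1$ by Lemma \ref{l:bwt}), and then simply counts orbit sizes: the orbit of $\l$ has size $2^n n$ and that of $\l_n$ has size $2^n$, giving $2^n n + 2^n\cdot n = 2^{n+1}n$ when $p \nmid 2n+1$, and $2^n n + 2^n(n-1) = 2^n(2n-1)$ when $p \mid 2n+1$. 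Your route instead computes $\dim W_G(\l)=2^{n+1}n$ --- which you assert rather than verify; it does need to be carried out, either via Weyl's formula or from the characteristic-zero decomposition $L(\l_1)\otimes L(\l_n)=L(\l)\oplus L(\l_n)$ --- and then subtracts composition factors. The worry you raise at the end, namely pinning down that $L_G(\l_n)$ occurs exactly once and that nothing else occurs, has a clean resolution requiring no linkage or block theory: weight multiplicities in $W_G(\l)$ are characteristic-independent, so $m_{W_G(\l)}(\l_n)=n$; since multiplicities are additive over composition factors and $\l_n$ is the only dominant weight strictly under $\l$, Lemma \ref{l:bwt} gives $n = m_{L_G(\l)}(\l_n)+k$ where $k$ is the number of spin factors $L_G(\l_n)$, forcing $k=0$ if $p\nmid 2n+1$ and $k=1$ if $p\mid 2n+1$. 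With that, your subtraction $2^{n+1}n-2^n=2^n(2n-1)$ is exactly right. In summary, the paper's orbit count is shorter and stays entirely inside $L_G(\l)$; your argument costs the extra Weyl-module bookkeeping but yields the full composition series of $W_G(\l_1+\l_n)$ as a by-product, and it is the same style of argument the paper itself uses for the $D_n$ analogue in Lemma \ref{l:dm}.
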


\begin{proof} 
Set $\lambda =\lambda_1+\lambda_n$ and $V= L_{G}(\lambda)$.
There is a unique subdominant weight of $V$, namely $\lambda_n=\lambda-\alpha_1-\alpha_2-\cdots-\alpha_n$, and the multiplicity of this weight is given in Lemma~\ref{l:bwt}.
By counting the $W(G)$-conjugates of $\lambda$ and $\lambda_n$, using \cite[1.10]{Seitz2}, we obtain the dimension
of $V$ as claimed.
\end{proof}

\begin{lem}\label{l:dm}
Let $G=D_n$ with $n \ge 3$. Then
$$\dim L_G(\l_1+\l_{n-1}) = \dim L_G(\l_1+\l_{n}) = \left\{\begin{array}{ll}
2^{n}(n-1) & \mbox{if $p \mid n$} \\
2^{n-1}(2n-1) & \mbox{otherwise.}
\end{array}\right.$$
\end{lem}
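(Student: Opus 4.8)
The plan is to treat a single highest weight and then run the ``subdominant weight plus Weyl orbit count'' strategy already used in Lemma \ref{l:bn2}. First, the graph automorphism of $D_n$ interchanging $\l_{n-1}$ and $\l_n$ (and fixing $\l_1$) carries $L_G(\l_1+\l_n)$ to $L_G(\l_1+\l_{n-1})$, so the two modules have equal dimension and it suffices to treat $\l=\l_1+\l_n$. Writing weights in the standard orthonormal coordinates $e_1,\ldots,e_n$, one has $\l=\tfrac{3}{2}e_1+\tfrac{1}{2}(e_2+\cdots+e_n)$ and $\l_{n-1}=\l-(e_1+e_n)$, where $e_1+e_n$ is a positive root. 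I would check that the only dominant weights $\preccurlyeq\l$ are $\l$ and $\l_{n-1}$: since $\l_{n-1}$ is a minimal weight (see \eqref{e:mini}) nothing lies strictly below it, and a short dominance-order check on the half-integer coset rules out any further dominant weight (for instance $\l-e_1=\l_n$ fails, as $e_1$ is not in the root lattice). Note also that $\l_{n-1}\in\L(V)$, since $D_n$ is simply laced and hence Lemma \ref{l:pr} applies in every characteristic.

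Next I would record the two Weyl orbit sizes. Using $|W(D_n)|=2^{n-1}n!$ together with a stabilizer computation gives $|W(G)\cdot\l_{n-1}|=2^{n-1}$ (the half-spin orbit) and $|W(G)\cdot\l|=n\cdot 2^{n-1}$. Since every weight of $L_G(\l)$ is $W(G)$-conjugate to $\l$ or to $\l_{n-1}$, and $m_{L_G(\l)}(\l)=1$, this yields
$$\dim L_G(\l)=n\cdot 2^{n-1}+2^{n-1}\cdot m,\qquad m=m_{L_G(\l)}(\l_{n-1}).$$
Everything now reduces to the single weight multiplicity $m$.

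To compute $m$ I would invoke the rank-reduction Lemma \ref{l:sr}. Here $\l_{n-1}=\l-\sum_{\a\in S}\a$ with $S=\{\a_1,\ldots,\a_{n-2},\a_n\}$ (the coefficient of $\a_{n-1}$ in $e_1+e_n$ is zero). The subsystem generated by $S$ is the chain $\a_1-\a_2-\cdots-\a_{n-2}-\a_n$, which is of type $A_{n-1}$, so $m=m_{V'}(\mu')$ where $V'=L_X(\l|_X)$ and $X=\la U_{\pm\a}\mid\a\in S\ra$. Computing $\la\l,\a^\vee\ra$ for $\a\in S$ shows that $\l|_X$ is the weight $\o_1+\o_{n-1}$ in the natural $A_{n-1}$ labelling of $X$, and that $\mu'=\l|_X-(\a_1+\cdots+\a_{n-1})$ is obtained by subtracting each simple root of $X$ exactly once. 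This is precisely the situation of Lemma \ref{l:s816} with $a=b=1$, $i=1$ and $j=n-1$, whence
$$m=\left\{\begin{array}{ll} n-2 & \mbox{if $n\equiv 0\imod{p}$} \\ n-1 & \mbox{otherwise.}\end{array}\right.$$
Substituting into the displayed formula gives $\dim L_G(\l)=2^{n-1}(2n-1)$ when $p\nmid n$ and $2^{n-1}(2n-2)=2^n(n-1)$ when $p\mid n$, as required.

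The main obstacle here is bookkeeping rather than any single hard estimate: one must verify carefully that $\{\l,\l_{n-1}\}$ is the complete set of subdominant weights, and that the Levi subsystem on $S$ is genuinely of type $A_{n-1}$ with $\l$ restricting to $\o_1+\o_{n-1}$, so that Lemma \ref{l:s816} applies with the correct parameters. Once this identification is in place the congruence $a+b+j-i=n\equiv 0\imod{p}$ appearing in Lemma \ref{l:s816} produces exactly the $p\mid n$ dichotomy, and the dimension drops out immediately.
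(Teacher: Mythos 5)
Your proof is correct, and it assembles the dimension by a genuinely different route than the paper, although the two arguments share their key input. Both need (i) that $\l_{n-1}$ is the unique dominant weight strictly under $\l=\l_1+\l_n$, and (ii) the multiplicity $m=m_{L_G(\l)}(\l_{n-1})$, obtained by restricting to the Levi subgroup on $S=\{\a_1,\ldots,\a_{n-2},\a_n\}$ (Lemma \ref{l:sr}) and applying Lemma \ref{l:s816} to $L_{A_{n-1}}(\o_1+\o_{n-1})$; your identification of the type-$A_{n-1}$ subsystem, of $\l|_X=\o_1+\o_{n-1}$, and of the parameters $a=b=1$, $i=1$, $j=n-1$ (so that the congruence in Lemma \ref{l:s816} becomes $n\equiv 0\imod{p}$) is all accurate, as are the orbit sizes $|W(G)\cdot\l|=n2^{n-1}$ and $|W(G)\cdot\l_{n-1}|=2^{n-1}$. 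The difference lies in how the dimension is then extracted. You count Weyl orbits directly, $\dim L_G(\l)=n2^{n-1}+m\,2^{n-1}$, which is exactly the method the paper uses for the $B_n$ analogue in Lemma \ref{l:bn2}. The paper instead analyses the tensor product $M=L_G(\l_1)\otimes L_G(\l_n)$: it computes $\dim M=2n\cdot 2^{n-1}$ from the minimality of $\l_1$ and $\l_n$, shows by explicit weight counting that $\l-\a_1-\cdots-\a_{n-2}-\a_n$ has multiplicity $n$ in $M$, deduces that $M$ has exactly $\epsilon$ composition factors isomorphic to $L_G(\l_{n-1})$ (with $\epsilon=2$ if $p\mid n$, and $\epsilon=1$ otherwise) besides the unique factor $L_G(\l)$, and then solves $\dim L_G(\l)=\dim M-\epsilon\,2^{n-1}$. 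Your route is the leaner of the two, since it requires no weight computations inside $M$; the paper's route costs that extra counting but yields, as a byproduct, the full composition series of $L_G(\l_1)\otimes L_G(\l_n)$.
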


\begin{proof}
It suffices to compute $\dim L_G(\l)$, where $\l = \l_{1}+\l_{n}$. We consider the tensor product $M:=L_{G}(\l_1)\otimes L_{G}(\l_n)$,
which has a unique composition factor isomorphic to
$L_{G}(\l)$. One checks that the only subdominant weight of $L_{G}(\l)$ is
$\l_{n-1}=\l-\a_1-\a_2-\cdots-\a_{n-2}-\a_n$. Thus, any other
composition factor of $M$ is isomorphic to $L_{G}(\l_{n-1})$.
Both $\l_1$ and $\l_n$ are minimal weights (see \eqref{e:mini}) and so all
weight spaces in the corresponding irreducible modules
are $1$-dimensional, and Lemma \ref{l:dmspin} implies that $\dim M=2n(2^{n-1})$. In particular, if we set 
$$\mu_k = \l_1 - \sum_{i=1}^{k}\a_i,\;\; \nu_k = \l_n - \a_n - \sum_{i=k+1}^{n-2}\a_i,\;\;\mu_{n-1} = \l_{1} - \a_n - \sum_{i=1}^{n-2}\a_i,\;\;\nu_{n-1} = \l_n$$ 
where $0 \le k \le n-2$, then $\mu_k \in \L(L_G(\l_1))$, $\nu_k \in \L(L_G(\l_n))$ and 
$$\l-\a_1-\cdots-\a_{n-2}-\a_n= \mu_k + \nu_k$$
for all $0 \le k \le n-1$. It follows that $\l-\a_1-\cdots-\a_{n-2}-\a_n$ occurs with multiplicity $n$ in $M$.

It remains to determine the multiplicity of the weight $\l_{n-1}$ in the irreducible module
$L_{G}(\l)$. This is the same as the multiplicity of the zero weight of the irreducible $KA_{n-1}$-module $L_{A_{n-1}}(\l_1+\l_{n-1})$. By Lemma \ref{l:s816}, the zero weight has multiplicity $n-2$ if $p$ divides $n$, otherwise it is $n-1$. It follows that $M$ has two composition factors isomorphic to $L_{G}(\l_{n-1})$ if $p$ divides $n$, otherwise there is only one such factor. 
Therefore
$$\dim M=2n(2^{n-1})=\dim L_{G}(\l)+\epsilon\dim L_{G}(\l_{n-1}),$$
where $\epsilon = 2$ if $p$ divides $n$, otherwise $\epsilon =1$. So
$2n(2^{n-1})=\dim L_{G}(\l) +\epsilon 2^{n-1}$ and the result follows. 
\end{proof}

\begin{lem}\label{l:dmdim}
Let $G=D_n$ with $n \ge 3$. Then 
$$\dim L_G(2\l_{n-1}) = \dim L_G(2\l_{n}) = \left\{\begin{array}{ll}
		2^{n-1}  & \mbox{if $p=2$}  \\
		\frac{1}{2}\binom{2n}{n} & \mbox{otherwise.}
		\end{array}\right.$$
\end{lem}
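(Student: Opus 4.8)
The plan is to reduce to computing $\dim L_G(2\l_n)$ and then to split into the cases $p=2$ and $p\neq2$. Since the graph automorphism $\tau$ of $G=D_n$ interchanges the fundamental weights $\l_{n-1}$ and $\l_n$, the modules $L_G(2\l_{n-1})$ and $L_G(2\l_n)$ are interchanged by twisting with $\tau$ and so have equal dimension; hence it suffices to treat $\l=2\l_n$.

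First I would dispose of the case $p=2$. Here $2\l_n=2\cdot\l_n$ fails to be $p$-restricted, but by Steinberg's tensor product theorem $L_G(2\l_n)$ is the Frobenius twist $L_G(\l_n)^{(2)}$ of the half-spin module (that is, $L_G(p\mu)=L_G(\mu)^{(p)}$). Since a Frobenius twist preserves dimension, Lemma \ref{l:dmspin} gives $\dim L_G(2\l_n)=\dim L_G(\l_n)=2^{n-1}$, as required.

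For $p\neq2$ (including $p=0$) I would realise the module geometrically inside the exterior algebra of the natural module $W$, of dimension $2n$ and equipped with its nondegenerate symmetric form. The exterior power $\Lambda^n W$ has dimension $\binom{2n}{n}$ and highest weight $2\l_n$; using the $G$-equivariant Hodge-type operator (whose square is $\pm1$, so that its eigenspace projectors exist once $2$ is invertible) one decomposes it as a direct sum of two $G$-submodules $\Lambda^n W=\Lambda^n_+\oplus\Lambda^n_-$, each of dimension $\tfrac12\binom{2n}{n}$ and with respective highest weights $2\l_n$ and $2\l_{n-1}$. Equivalently, Weyl's dimension formula gives $\dim W_G(2\l_n)=\tfrac12\binom{2n}{n}$, matching $\dim\Lambda^n_+$. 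It then remains to show that $\Lambda^n_+$, equivalently the Weyl module $W_G(2\l_n)$, is irreducible when $p\neq2$, which would yield $\dim L_G(2\l_n)=\tfrac12\binom{2n}{n}$.

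The main obstacle is precisely this irreducibility. The dominant weights strictly below $2\l_n$ are $\l_{n-2},\l_{n-4},\dots$, so every proper composition factor of $W_G(2\l_n)$ would have highest weight of this form; I would rule them all out at once by analysing the commuting $\mathfrak{sl}_2$-action on $\Lambda^\bullet W$ generated by contraction with, and wedging by, the form bivector $\sum_i e_i\wedge f_i$. The structure constants of this $\mathfrak{sl}_2$ are integers of the shape $2n-2k$, and the obstruction to splitting the lower exterior powers off from $\Lambda^n_+$ is a power of $2$, hence invertible exactly when $p\neq2$; this identifies $\Lambda^n_+$ with the primitive (kernel-of-contraction) part of $\Lambda^n W$ in the $+$ summand, which is irreducible, so no factor $L_G(\l_{n-2k})$ occurs. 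As a consistency check, Premet's theorem (Lemma \ref{l:pr}), applicable since $e(D_n)=1$ and $2\l_n$ is $p$-restricted for $p\geq3$, already shows that $L_G(2\l_n)$ carries every weight of $W_G(2\l_n)$, which is compatible with the claimed equality of dimensions.
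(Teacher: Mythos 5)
Your reduction to $\lambda = 2\lambda_n$ via the graph automorphism, your treatment of $p=2$ via Steinberg's tensor product theorem, and the upper bound $\dim L_G(2\lambda_n) \le \dim W_G(2\lambda_n) = \frac{1}{2}\binom{2n}{n}$ all match the paper's argument. The gap is in the step that carries all the weight for $p \neq 2$, namely the irreducibility of $\Lambda^n_{+}$ (equivalently, of the Weyl module $W_G(2\lambda_n)$): the $\mathfrak{sl}_2$ you invoke does not exist for orthogonal groups. The bivector $\sum_i e_i \wedge f_i \in \Lambda^2 W$ is the invariant of a \emph{symplectic} form, not of the symmetric form defining $G={\rm SO}(W)$; for instance the element of ${\rm SO}(W)$ interchanging $e_i \leftrightarrow f_i$ for $i=1,2$ and fixing the remaining basis vectors sends $\sum_i e_i \wedge f_i$ to $\sum_i e_i \wedge f_i - 2(e_1\wedge f_1 + e_2\wedge f_2)$, so wedging with this bivector is not a map of $KG$-modules. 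Dually, contracting two slots of a tensor with the symmetric form $b$ is a symmetric operation in those slots, so it annihilates every alternating tensor when $p \neq 2$: there is no nonzero $G$-equivariant map $\Lambda^k W \to \Lambda^{k-2}W$ at all. (This absence of raising/lowering operators is exactly why exterior powers of the natural module are irreducible for orthogonal groups in characteristic $0$, in contrast to the symplectic case.) So the ``primitive part'' analysis has nothing to act on, and the accompanying claim that the splitting obstruction is a power of $2$ should itself arouse suspicion: in the symplectic setting, where the Lefschetz $\mathfrak{sl}_2$ genuinely exists, the obstructions involve integers like $n$, and dimensions do drop at odd primes, e.g.\ $\dim L_{C_n}(\lambda_2)=(n-1)(2n+1)-1$ when $p \mid n$ (Table \ref{t:dims}).

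What is still needed for $p \neq 2$ is the lower bound $\dim L_G(2\lambda_n) \ge \frac{1}{2}\binom{2n}{n}$, and the paper obtains it by induction on $n$. The base case $n=3$ is read off from L\"ubeck's tables ($\dim L_{D_3}(2\lambda_3)=10$), and for the inductive step one takes the parabolic $P=QL$ with $\Pi(L')=\{\alpha_2,\ldots,\alpha_n\}$, so $L' \cong D_{n-1}$, observes that $V=L_G(2\lambda_n)$ has exactly three $Q$-levels, and that these levels contain $KL'$-composition factors of highest weights $2\lambda_n|_{L'}$, $(\lambda_{n-1}+\lambda_n)|_{L'}$ and $2\lambda_{n-1}|_{L'}$, of dimensions $\frac{1}{2}\binom{2n-2}{n-1}$, $\binom{2n-2}{n-2}$ and $\frac{1}{2}\binom{2n-2}{n-1}$ respectively (the outer two by induction, the middle one via \cite[Table 1]{Seitz2}); summing gives the required bound. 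Your Hodge-star decomposition is a perfectly good way to exhibit the two highest weights $2\lambda_{n-1}$, $2\lambda_n$ and the two dimensions $\frac{1}{2}\binom{2n}{n}$, but on its own it only reproves the upper bound, since a priori $\Lambda^n_{+}$ could contain further composition factors $L_G(\lambda_{n-2k})$, $k \ge 1$, and ruling these out is precisely the content of the lemma.
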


\begin{proof}
Set $\l=2\l_n$ and $V = L_G(\l)$. If $p=2$ then $V=L_G(\l_n)^{(2)}$, a Frobenius twist of $L_G(\l_n)$, and thus $\dim V =\dim L_G(\l_n)=2^{n-1}$ by Lemma \ref{l:dmspin}. Now assume $p \neq 2$. By considering the corresponding Weyl module $W_G(2\l_n)$, we obtain the upper bound
$$\dim V \leq \dim W_G(2\l_n) = \frac{1}{2}\binom{2n}{n}.$$ 
(See \cite[Exercise 24.43]{Fulton} for more details.)
 
We establish equality by induction on $n$. By inspecting \cite[Table A.7]{Lubeck} we see that $\dim L_{D_3}(2\l_3)=10=\frac{1}{2}\binom{6}{3}$, so let us assume $n>3$.
Let $P=QL$ be the parabolic subgroup of $G$ with $\Pi(L')=\{\a_2,\ldots, \a_n\}$. If $\mu = \l - \sum_{i}b_i\a_i$ is a weight of $V$ then we define the \emph{$Q$-level} of $\mu$ to be the coefficient $b_1$, and then the $i$-th $Q$-level of $V$ is the sum of the weight spaces $V_{\mu}$ for weights $\mu$ with $Q$-level $i$. It is easy to see that $V$ has precisely three $Q$-levels, say $V_0$, $V_1$ and $V_2$, of respective levels 0, 1 and 2. Note that each $V_i$ is a $KL'$-module.  Now $V_i$ has a $KL'$-composition factor with highest weight $\l|_{L'}$, $(\l-\sum_{i=1}^{n-2}\a_i-\a_n)|_{L'}$, respectively $(\l-2\sum_{i=1}^{n-2}\a_i-2\a_{n})|_{L'}$, for $i=0,1$ respectively $2$. Denote these composition factors by $U_0$, $U_1$ and $U_2$, respectively. Now $\l|_{L'}=2\l_n|_{L'}$,  $(\l-\sum_{i=1}^{n-2}\a_i-\a_n)|_{L'}=(\l_{n-1}+\l_n)|_{L'}$ and $(\l-2\sum_{i=1}^{n-2}\a_i-2\a_n)|_{L'}=2\l_{n-1}|_{L'}$. By the inductive hypothesis, we have $\dim U_0=\dim U_2=\frac{1}{2}\binom{2n-2}{n-1}$. By inspecting \cite[Table 1]{Seitz2} we deduce that $\dim U_1=\dim L_{A_{2n-3}}(\lambda_{n-2})$ (see the case labelled ${\rm I}_{5}$), and so $\dim U_1=\binom{2n-2}{n-2}$. We conclude that  
$$\dim V \geq \sum_{i=0}^{2}\dim U_i = \binom{2n-2}{n-1}+\binom{2n-2}{n-2} = \frac{1}{2}\binom{2n}{n}.$$
\end{proof}

This completes the proof of Proposition \ref{p:dims}.

\section{Clifford theory}\label{ss:clifford}

Let $G,H$ and $V$ be given as in the statement of Theorem \ref{main}. Set $X=H^0$. If $V|_{X}$ is irreducible then the possibilities for $(G,H,V)$ are easily deduced from Seitz's main theorem in \cite{Seitz2}, so let us assume $V|_{X}$ is reducible and $V|_{H}$ is irreducible. Then 
Clifford theory implies that 
\begin{equation}\label{e:vx}
V|_{X}=V_1 \oplus \cdots \oplus V_m,
\end{equation}
where $m$ divides the order of $H/X$, and the $V_i$ are transitively permuted under the induced action of $H/X$. We will need the following result (see \cite[Proposition 2.6.2]{BGMT}).

\begin{prop}\label{p:niso}
If $H$ is a cyclic extension of $X$ then the irreducible $KX$-modules $V_i$ in \eqref{e:vx} are pairwise non-isomorphic.
\end{prop}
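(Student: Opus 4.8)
The statement is equivalent to the assertion that $V|_{X}$ is multiplicity-free. Indeed, by Clifford's theorem $V|_{X} = e(W_1 \oplus \cdots \oplus W_t)$, where $W_1, \ldots, W_t$ are the pairwise non-isomorphic irreducible $KX$-modules occurring in $V|_{X}$ (forming a single $H/X$-orbit), each with a common multiplicity $e$; the modules $V_i$ in \eqref{e:vx} are pairwise non-isomorphic precisely when $e=1$. The plan is to prove $e=1$. First I would reduce to the inertia group $T = \{h \in H : {}^h W_1 \cong W_1\}$: by the Clifford correspondence $V \cong \mathrm{Ind}_{T}^{H}\hat V$ for an irreducible $KT$-module $\hat V$ whose restriction to $X$ is $W_1$-homogeneous, and the multiplicity of $W_1$ in $V|_{X}$ equals its multiplicity $e$ in $\hat V|_{X}$. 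Since $T/X$ is a subgroup of the cyclic group $H/X$, it is itself cyclic, so it suffices to treat the case where $W := W_1$ is $T$-invariant and to show $\hat V|_{X} \cong W$.

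The crucial step is that a $T$-invariant absolutely irreducible $KX$-module extends to $T$ when $T/X$ is cyclic. Writing $T/X = \langle gX \rangle$ with $g^n = z \in X$, the $T$-invariance of $W$ (with representation $\rho$) supplies $J \in \mathrm{GL}(W)$ with $J\rho(x)J^{-1} = \rho(gxg^{-1})$ for all $x \in X$. Then $\rho(z)^{-1}J^n$ centralizes $\rho(X)$, so $J^n = c\,\rho(z)$ for some $c \in K^{\times}$ by Schur's lemma (here I use that $K$ is algebraically closed, so $\mathrm{End}_{KX}(W) = K$). Choosing an $n$-th root $\mu$ of $c^{-1}$, possible since $K$ is algebraically closed, and setting $\rho(g) := \mu J$ yields $\rho(g)\rho(x)\rho(g)^{-1} = \rho(gxg^{-1})$ and $\rho(g)^n = \rho(z)$, which are exactly the relations needed to extend $\rho$ to a representation $\tilde W$ of $T$ with $\tilde W|_{X} = W$.

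With the extension $\tilde W$ in hand I would finish by an evaluation argument. Set $M = \mathrm{Hom}_{KX}(W, \hat V|_{X})$; since $\mathrm{End}_{KX}(W)=K$ and $\hat V|_{X} \cong W^{\oplus e}$, this space has dimension $e$, and $T$ acts on it via $(t\cdot f)(w) = t\,f(t^{-1}w)$ (using $\tilde W$ on the source and $\hat V$ on the target), with $X$ acting trivially, so $M$ is a $K[T/X]$-module. The evaluation map $\tilde W \otimes_{K} M \to \hat V$, $w \otimes f \mapsto f(w)$, is a nonzero, hence surjective, homomorphism of $KT$-modules. Because $T/X$ is cyclic, $K[T/X]$ is a finite-dimensional commutative algebra over the algebraically closed field $K$, so every irreducible $K[T/X]$-module is one-dimensional; choosing a one-dimensional submodule $L \subseteq M$, the composite $\tilde W \otimes L \hookrightarrow \tilde W \otimes M \to \hat V$ is a nonzero $KT$-map whose source restricts to the irreducible $KX$-module $W$. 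It is therefore injective, and its image is a nonzero $KT$-submodule of the irreducible module $\hat V$, forcing $\hat V \cong \tilde W \otimes L$ and hence $\hat V|_{X} \cong W$. Thus $e=1$, as required. I expect the main obstacle to be ensuring the argument is uniform in the characteristic: when $p \mid |H/X|$ the algebra $K[T/X]$ is not semisimple, so one cannot simply invoke Maschke's theorem. The points that rescue the argument are that Schur's lemma and the extraction of the root $\mu$ require only that $K$ be algebraically closed, and that irreducible modules over any finite-dimensional commutative $K$-algebra remain one-dimensional over an algebraically closed field.
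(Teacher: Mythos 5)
Your proof is correct. Note first that the paper does not actually prove Proposition \ref{p:niso}: it quotes the result from \cite[Proposition 2.6.2]{BGMT}, so there is no in-text argument to compare against, and what you have written supplies the proof that the paper outsources; moreover it is the standard Clifford-theoretic argument for this fact. The three steps --- reduction to the inertia group $T$ of $W_1$ (whose quotient $T/X$ is again cyclic), extension of the $T$-stable module $W$ to $T$ by rescaling an intertwiner $J$ using Schur's lemma and an $n$-th root in $K^{\times}$, and the conclusion $e=1$ from the fact that irreducible modules over the commutative algebra $K[T/X]$ are one-dimensional --- are exactly the right ones, and your closing observation is the crucial point in positive characteristic: semisimplicity of $K[T/X]$ is never needed, only commutativity, so the argument is uniform in $p$. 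Two points specific to the algebraic-group setting deserve to be made explicit. First, rationality: $X=H^{0}$ is a positive-dimensional algebraic group, so $\tilde{W}$ must be a rational $KT$-module; this holds because $\tilde{\rho}$ is defined on each coset $g^{i}X$ as a fixed matrix times $\rho$, and the actions on $M=\mathrm{Hom}_{KX}(W,\hat{V}|_{X})$ and on $L$ factor through the finite group $T/X$, so they raise no rationality issues. Second, the generator $g$ need not have finite order in $T$ --- one only knows $g^{n}=z\in X$, with $z$ possibly of infinite order --- and your construction correctly imposes the relation $\rho(g)^{n}=\rho(z)$ rather than $\rho(g)^{n}=1$. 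Finally, if you prefer not to invoke induction for algebraic groups, the appeal to $\mathrm{Ind}_{T}^{H}$ can be replaced by the direct observation that $V|_{X}$ is the direct sum of the homogeneous components $h\hat{V}$, $h\in H/T$: both the multiplicity transfer and the irreducibility of $\hat{V}$ as a $KT$-module follow immediately from this decomposition.
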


\section{Subgroup structure}\label{ss:cla}

Let $W$ be a finite-dimensional vector space over an algebraically closed field $K$ of characteristic $p \ge 0$ with $\dim W \ge 2$. Let $G= {\rm Isom}(W)'$, where ${\rm Isom}(W)$ is the full isometry group of a suitable form $f$ on $W$. Here we assume $f$ is either the zero bilinear form, a symplectic form or a non-degenerate quadratic form, so $G$ is one of the groups ${\rm SL}(W)$, ${\rm Sp}(W)$ or ${\rm SO}(W)$, respectively. In the latter case, we will assume that $\dim W \ge 3$ and $\dim W \neq 4$, so $G$ is a simple algebraic group.
Let $n$ denote the rank of $G$, so $G$ is of type $A_n$, $B_n$, $C_n$ or $D_n$ in terms of the usual Lie notation. We write ${\rm GO}(W) = {\rm Isom}(W)$ when $f$ is quadratic; here ${\rm GO}(W)$ is a split extension ${\rm SO}(W)\la \tau \ra$, where $\tau$ acts on $W$ as a reflection when $p \neq  2$, and as a transvection when $p=2$. In particular, if $x \in {\rm GO}(W)$ is an involution and $p \neq 2$ then $x \in {\rm SO}(W)$ if and only if $\det(x)=1$; the analogous criterion when $p=2$ is that the Jordan normal form of $x$ on $W$ comprises an even number of unipotent Jordan blocks of size $2$ (see \cite[Section 8]{AS}).

Following \cite[Section 1]{LS}, we introduce six natural, or \emph{geometric}, collections of closed subgroups of $G$, labelled $\C_i$ for $1 \le i \le 6$, and we set $\C=\bigcup_{i}\C_i$. A rough description of the subgroups in each $\C_i$ collection is given in Table \ref{t:subs}. The main theorem of \cite{LS} provides the following description of the maximal closed subgroups of $G$.

\renewcommand{\arraystretch}{1.1}
\begin{table}
\begin{tabular}{cl} \hline
 & Rough description \\ \hline
$\C_1$ & Stabilizers of subspaces of $W$ \\
$\C_2$ & Stabilizers of orthogonal decompositions $W=\bigoplus_{i}W_i$, $\dim W_i=a$ \\
$\C_3$ & Stabilizers of totally singular decompositions $W=W_1 \oplus W_2$ \\
$\C_4$ & Stabilizers of tensor product decompositions $W=W_1 \otimes W_2$ \\
& Stabilizers of tensor product decompositions $W=\bigotimes_i W_i$, $\dim W_i=a$ \\
$\C_5$ & Normalizers of symplectic-type $r$-groups, $r \neq p $ prime  \\
$\C_6$ & Classical subgroups \\ \hline
\end{tabular}
\caption{The $\C_i$ collections}
\label{t:subs}
\end{table}
\renewcommand{\arraystretch}{1}

\begin{thm}\label{t:ls}
Let $H$ be a closed subgroup of $G$. Then one of the following holds:
\begin{itemize}\addtolength{\itemsep}{0.3\baselineskip}
\item[{\rm (i)}] $H$ is contained in a member of $\C$;
\item[{\rm (ii)}] modulo scalars, $H$ is almost simple and $E(H)$ (the unique quasisimple normal subgroup of $H$) is irreducible on $W$. Further, if $G={\rm SL}(W)$ then $E(H)$ does not fix a non-degenerate form on $W$. In addition, if $H$ is infinite then $E(H)=H^0$ is tensor-indecomposable on $W$.
\end{itemize}
\end{thm}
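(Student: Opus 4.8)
The plan is to recognise Theorem \ref{t:ls} as the algebraic-group analogue of Aschbacher's subgroup structure theorem for finite classical groups, so that the natural line of attack is an Aschbacher-style dichotomy: for a given closed subgroup $H \le G$ one shows that $H$ either preserves one of the geometric structures defining the collections $\C_1,\ldots,\C_6$---and hence lies in $\C$, giving (i)---or else no such structure survives, which forces the near-simplicity described in (ii). I would organise the argument as a sequence of reductions, at each stage either exhibiting a $\C_i$-structure stabilised by $H$ or passing to the next case. Since this is precisely the main theorem of Liebeck--Seitz \cite{LS}, the proof given in the paper will surely be a citation; what follows is a sketch of the underlying strategy.

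First I would dispose of reducibility. If $H$ acts reducibly on $W$ it fixes a proper nonzero subspace $U$; for $G={\rm SL}(W)$ this already places $H$ in $\C_1$, while for the symplectic and orthogonal types I would analyse $U$ relative to the form $f$, replacing $U$ by $U \cap U^{\perp}$ if necessary to produce either a nondegenerate or a totally singular $H$-invariant subspace, again landing in $\C_1$. A useful preliminary observation is that irreducibility of $H$ forces $H^0$ to be reductive: the fixed space of the unipotent radical of $H^0$ is $H$-invariant and nonzero, hence all of $W$, so the radical is trivial. Thus I may assume $H$ is irreducible and $H^0$ reductive, so that $W|_{H^0}$ is completely reducible.

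The heart of the argument is then Clifford-theoretic. I would decompose $W|_{H^0}$ into its homogeneous (isotypic) components; since $H$ is irreducible it permutes these transitively, so if there is more than one component I obtain an imprimitive decomposition and place $H$ in $\C_2$ (nondegenerate components) or $\C_3$ (a totally singular pair). If instead $W|_{H^0}$ is homogeneous, then $W \cong U \otimes M$ with $U$ an irreducible $H^0$-module and $M={\rm Hom}_{H^0}(U,W)$, and $H$ acts on the two tensor factors; a nontrivial such decomposition with both factors of dimension greater than $1$ produces a member of $\C_4$. After peeling off the normalisers of symplectic-type $r$-groups ($\C_5$) arising from the commuting algebra, the residual case is that $H^0$ is quasisimple and acts irreducibly and tensor-indecomposably on $W$. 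Finally I would run the form bookkeeping to obtain (ii): whether $E(H)$ stabilises a nondegenerate form on $W$ distinguishes the $\C_6$ possibility from genuine membership of the class $\ms$, and tracking $C_H(H^0)$ together with the scalars yields the ``almost simple modulo scalars'' conclusion.

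I expect the main obstacle to be the final reduction of the previous paragraph, namely the passage from ``$W|_{H^0}$ homogeneous and tensor-indecomposable'' to ``$H^0$ quasisimple with only the classical-form structure remaining''. This requires a delicate analysis of the commuting algebra and of the possible central products, together with the $\C_5$ symplectic-type normalisers, and it is where the genuine content of Liebeck--Seitz lies. Interwoven with every stage is the form bookkeeping---the distinction between totally singular and nondegenerate invariant subspaces, and the characteristic-two subtleties for orthogonal groups---which must be carried along carefully to keep the $\C_i$ labels correct throughout.
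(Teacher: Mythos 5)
You are correct, and your approach coincides with the paper's: the proof of Theorem \ref{t:ls} given there is the single line ``This is \cite[Theorem 1]{LS}'', exactly the citation you predicted. Your accompanying Aschbacher-style sketch (reducible case into $\C_1$; Clifford theory and isotypic components giving $\C_2$/$\C_3$; homogeneous case giving the tensor decompositions of $\C_4$; then $\C_5$, $\C_6$ and the class $\mathcal{S}$ via the commuting algebra and form bookkeeping) is a fair outline of the argument inside \cite{LS} itself, but no such argument appears, or is needed, in this paper.
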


\begin{proof}
This is \cite[Theorem 1]{LS}.
\end{proof}

We use the symbol $\mathcal{S}$ to denote the collection of maximal closed subgroups of $G$ that arise in part (ii) of Theorem \ref{t:ls}. In this paper we are interested in the maximal positive-dimensional subgroups $H \in \C$ such that $H/Z(G)$ is disconnected. In \cite{BGMT} we deal with the disconnected positive-dimensional subgroups in $\mathcal{S}$, using completely different methods.

For the remainder of this section we focus on the structure of the maximal, 
positive-dimensional subgroups $H$ in $\C$ with the property that $H/Z(G)$ is disconnected. Note that the local subgroups comprising the $\C_5$ family are finite, so we can discard this collection. Also recall that we may assume $H^0$ is reductive (see Remark \ref{r:conds}(j)). 
We consider each of the remaining $\C_i$ families in turn, starting with $\C_1$. As in the statement of Theorem \ref{main}, we will assume that $\dim W$ is even if $G={\rm SO}(W)$ and $p=2$.

\subsection{Class $\C_1$: Subspace subgroups}\label{ss:c1}

Here $H=G_{U}$ is the $G$-stabilizer of a proper non-zero subspace $U$ of $W$. Moreover, the maximality of $H$ implies that $U$ is either totally singular or non-degenerate, or $(G,p)=({\rm SO}(W),2)$ and $U$ is a $1$-dimensional non-singular subspace, with respect to the underlying form $f$ on $W$. If $U$ is totally singular then 
$H$ is a maximal parabolic subgroup of $G$, which is connected, so we may assume otherwise. In particular, we have $G={\rm Sp}(W)$ or ${\rm SO}(W)$. Now, if $(G,p)=({\rm SO}(W),2)$ and $U$ is $1$-dimensional and non-singular then $H$ is a connected group of type $B_{n-1}$, so we can assume $U$ is non-degenerate. Note that if $\dim U = \frac{1}{2}\dim W$ then $H$ is properly contained in a $\C_{2}$-subgroup.

If $G={\rm Sp}(W)$ then $H={\rm Sp}(U) \times {\rm Sp}(U^{\perp})$ is connected, so we reduce to the case $G={\rm SO}(W)$. First assume $G$ is of type $B_n$, so $\dim W = 2n+1$ and $p \neq 2$. Since $H=G_{U}=G_{U^{\perp}}$, we may assume $\dim U=2l$ for some $l \ge 1$. If $l=n$ then $H={\rm SO}(U)\langle z \rangle=D_{n}.2$, where $z$ acts as a reflection on $U$. 
Similarly, if $l<n$ then $H=({\rm SO}(U) \times {\rm SO}(U^{\perp}))\langle z \rangle=D_{l}B_{n-l}.2$ is disconnected, where $z$ acts as a reflection on both $U$ and $U^{\perp}$.

Finally, suppose $G$ is of type $D_n$. If $\dim U=2l+1$ is odd then $p \neq 2$ and $H/Z(G)$ is connected. For example, if $l=n-1$ then $H={\rm SO}(U) \times \langle z \rangle$, where $z$ is a central involution, so $H/Z(G)$ is connected of type $B_{n-1}$. On the other hand, if $\dim U=2l$ is even then $H=({\rm SO}(U) \times {\rm SO}(U^{\perp}))\langle z \rangle$, where $z$ acts on both $U$ and $U^{\perp}$ as a reflection if $p \neq 2$ and as a transvection if $p=2$, so $H$ is disconnected of type $D_{l}D_{n-l}.2$. Note that $H=G_{U} = G_{U^{\perp}}$, so we may assume that $l<n/2$.

\subsection{Class $\C_2$: Imprimitive subgroups}\label{ss:c2}

Suppose $W$ admits a direct sum decomposition of the form 
$$W=W_{1}\oplus W_2 \oplus \cdots \oplus W_t,$$ 
where $t>1$ and $\dim W_1 = \dim W_i$ for all $i$. If $G$ is symplectic or orthogonal then assume in addition that the $W_i$ are non-degenerate and pairwise orthogonal with respect to the underlying non-degenerate form $f$ on $W$. A $\C_2$-subgroup of $G$ is the $G$-stabilizer of such a decomposition; such a subgroup has the structure 
$$H = ({\rm Isom}(W_{1}) \wr S_{t}) \cap G,$$ 
where ${\rm Isom}(W_{1})$ denotes the full isometry group of the restriction of the form $f$ to $W_{1}$. 

Suppose $G$ is of type $A_{n}$ and $\dim W_{1}=l+1$, where $l \ge 0$, so $n+1=(l+1)t$. If $l=0$ then $H=({\rm GL}(W_{1}) \wr S_{t}) \cap G=T_{n}.S_{n+1} = N_{G}(T_n)$, where $T_n$ denotes an $n$-dimensional torus in $G$. Similarly, if $l >0$ then 
$H=((A_{l}T_{1})^{t}.S_{t}) \cap G = A_{l}^{t}T_{t-1}.S_{t}$. Also, if $G$ is of type $C_{n}$ then $\dim W_{1}=2l$ is even and $H=C_{l}^{t}.S_{t}$.

Next suppose $G$ is of type $B_{n}$, so $p \neq 2$, $t$ is odd and $\dim W_{1}=2l+1$ with $l \ge 1$ (if $l=0$ then $H$ is finite). Let $J = \langle z_{i,j} \mid 1 \le i<j \le t \rangle$ be the subgroup of $({\rm GO}(W_1) \times \cdots \times {\rm GO}(W_t)) \cap {\rm SO}(W)$ generated by the elements $z_{i,j}=(x_{1}, \ldots, x_{t})$, where $x_i=x_j=-1$ and $x_k=1$ for all $k \neq i,j$. Then $J$ is elementary abelian of order $2^{t-1}$ and $H=
(J \times B_{l}^{t}).S_{t}=(2^{t-1} \times B_{l}^{t}).S_{t}$. Finally, let us assume $G$ is of type $D_n$. If  
$\dim W_{1}=2l+1$ with $l \ge 1$ then $p \neq 2$, $t$ is even and $H=(2^{t-1} \times B_{l}^{t}).S_{t}$, as before. Now suppose $\dim W_{1}=2l$ is even. Write ${\rm GO}(W_{i})={\rm SO}(W_{i})\langle z_{i} \rangle$ for a suitable involution $z_i$, and let $J=\langle z_{i,j} \mid 1 \le i<j \le t \rangle$, where $z_{i,j}=(x_{1}, \ldots, x_{t}) \in ({\rm GO}(W_1) \times \cdots \times {\rm GO}(W_t))\cap {\rm SO}(W)$ with $x_{i}=z_i$, $x_{j}=z_{j}$ and $x_{k}=1$ for all $k \neq i,j$. Then $J$ is elementary abelian of order $2^{t-1}$ and 
$H=(D_{l}^{t}.J).S_{t}=(D_{l}^{t}.2^{t-1}).S_{t}$.

\subsection{Class $\C_3$: Stabilizers of totally singular decompositions}\label{ss:c3}

Here $G={\rm Sp}(W)$ or ${\rm SO}(W)$, and $W$ is even-dimensional. A $\C_3$-subgroup of $G$ is the $G$-stabilizer of a decomposition of the form $W=U \oplus U'$, where $U$ and $U'$ are maximal totally singular subspaces of $W$.

First suppose $G$ is of type $C_{n}$. Then $H={\rm GL}(U)\langle \tau \rangle=A_{n-1}T_{1}.2$ is disconnected, where the action of $\tau$ on $W$ interchanges the subspaces $U$ and $U'$. In particular, $\tau$ induces an involutory graph automorphism on $A_{n-1}$, and inverts the $1$-dimensional torus $T_1$. If $p=2$ then it is easy to see that $H$ is contained in a $\C_6$-subgroup of type ${\rm GO}(W)$ (see \cite[p.101]{KL}, for example), so we require the condition $p \neq 2$ for $H$ to be maximal.

Now assume $G=D_{n}$. If $n$ is odd then \cite[Lemma 2.5.8]{KL} implies that $H=G_{U} \cap G_{U'}={\rm GL}(U)=A_{n-1}T_{1}$ is connected (and non-maximal). Therefore  we may assume $n$ is even, in which case  $H={\rm GL}(U)\langle\tau\rangle=A_{n-1}T_{1}.2$, where $\tau$ is defined as before.

\subsection{Class $\C_4$: Tensor product subgroups}\label{ss:c4}

We partition the subgroups in $\C_4$ into two subcollections, labelled $\C_4(i)$ and $\C_4(ii)$, as indicated by the description of the $\C_4$-subgroups presented in Table \ref{t:subs}. 

First suppose $H \in \C_{4}(i)$. Here $H$ stabilizes a tensor product decomposition of the form $W=W_{1} \otimes W_{2}$, where $\dim W_{i}>1$, and thus $H=N_{G}(Cl(W_{1}) \otimes Cl(W_{2}))$ for some specific classical groups $Cl(W_1) \not\cong Cl(W_2)$. As explained in \cite[Section 1]{LS}, the possibilities for the central product $Cl(W_{1}) \otimes Cl(W_{2})$ when $H$ is maximal are as follows:
$${\rm SL} \otimes {\rm SL} < {\rm SL},\; {\rm Sp} \otimes {\rm SO} < {\rm Sp}\;(p \neq 2),$$
$${\rm Sp} \otimes {\rm Sp} < {\rm SO},\; {\rm SO} \otimes {\rm SO} < {\rm SO}\; (p \neq 2).$$

If $G$ is of type $A_{n}$ then $H$ is of type ${\rm SL}(W_1) \otimes {\rm SL}(W_2)$, where $\dim W_1=a$ and $\dim W_2=b$ for some integers $a,b$ with $a>b>1$ and $n+1=ab$. In particular, $H$ is connected of type $A_{a-1}A_{b-1}T_{1}$. Similarly, we find that $H$ is connected if $G$ is of type $B_n$. Next suppose $G$ is of type $C_n$, so $p \neq 2$ and
$H = N_G({\rm Sp}(W_1) \otimes {\rm SO}(W_2))$. If $\dim W_2$ is odd then $H$ is connected, so assume $\dim W_1=2a$ and $\dim W_2=2b$. Here $H=C_{a}D_{b}\langle z \rangle=C_{a}D_{b}.2$ is disconnected, where $z$ is an involution that centralizes $W_1$ and induces a reflection on $W_2$. Moreover, we may assume $\dim W_2 \ge 4$ (so that $b \ge 2$) since $H$ is contained in a $\C_2$-subgroup when $b=1$ (see \cite[Proposition 4.4.4]{KL}, for example).

Finally, let us assume $G$ is of type $D_n$. There are three cases to consider. If $H=N_G({\rm Sp}(W_1) \otimes {\rm Sp}(W_2))$ then $\dim W_1 \neq \dim W_2$ and $H$ is connected, so let us assume $p \neq 2$ and $H=N_G({\rm SO}(W_1) \otimes {\rm SO}(W_2))$, where  $\dim W_1 \neq \dim W_2$ and $\dim W_2=2b$ is even. If $\dim W_1=2a+1$ is odd then $H/Z(G)=B_{a}D_{b}$ is connected; if $z=(z_1,z_2) \in {\rm GO}(W_1) \otimes {\rm GO}(W_2)$ and $z_2$ is a reflection then $\det(z)=-1$ and thus $z \in {\rm GO}(W) \setminus {\rm SO}(W)$.
Now assume $\dim W_1=2a$ is even. Here $H=D_{a}D_{b}\langle z_1,z_2 \rangle=D_{a}D_{b}.2^2$ is disconnected, where $z_i$ acts as a reflection on $W_i$, and the maximality of $H$ implies that we may assume $a>b \ge 2$.

\vs

Now let us turn to the subgroups in $\C_{4}(ii)$. Here $W$ admits a tensor product decomposition of the form $W=W_1 \otimes \cdots \otimes W_t$, where $t>1$ and the $W_{i}$ are mutually isometric spaces with $\dim W_1 = \dim W_i>1$ for all $i$. A subgroup in  $\C_{4}(ii)$ is the $G$-stabilizer of such a decomposition, so $H$ has the form $H=N_{G}(\prod_{i}{Cl(W_{i})})$ with the central product $\prod_{i}{Cl(W_{i})}$ acting naturally on the tensor product. As noted in \cite[Section 1]{LS}, we may assume that each factor $Cl(W_i)$ is simple. The following cases arise:
$$\prod_{i}{\rm SL}(W_i)< {\rm SL}(W),\; \prod_i{\rm Sp}(W_i) < {\rm Sp}(W)\, (\mbox{$t$ odd, $p \neq 2$}),$$
$$\prod_i{\rm Sp}(W_i) < {\rm SO}(W)\, (\mbox{$t$ even or $p=2$}),$$
$$\prod_i{\rm SO}(W_i) < {\rm SO}(W)\, (\mbox{$p \neq 2$, $\dim W_{i} \neq 2,4$}).$$

First suppose $G$ is of type $A_{n}$ and $H=N_G(\prod_i{\rm SL}(W_i))$, with $\dim W_{1}=l+1$ (so $n+1=(l+1)^t$). If $l=1$ then it is easy to see that $H$ is contained in a $\C_6$-subgroup of type ${\rm Sp}(W)$ or ${\rm SO}(W)$ (according to the parity of $t$), so the maximality of $H$ implies that $l \ge 2$, and we have 
$H=(\prod_{i}{\rm GL}(W_{i}).S_t) \cap G\cong A_{l}^{t}.S_{t}$. Similarly, if $G$ is symplectic or orthogonal (with $\dim W$ even) and $H=N_G(\prod_i{\rm Sp}(W_i))$ then $H=C_{l}^{t}.S_{t}$, while if $G$ is of type $B_{n}$ then $H=B_{l}^{t}.S_{t}$. Finally, suppose $G$ is of type $D_{n}$,  $p \neq 2$ and $H = N_{G}(\prod_i{\rm SO}(W_i))$ with $\dim W_{i}=2l \ge 6$ (since ${\rm SO}(W_i)$ is simple). Then $H=\prod_i{\rm GO}(W_i).S_t = \prod_i{\rm SO}(W_i).2^{t}.S_t$ is of type $(D_l.2^t).S_t$.

\subsection{Class $\C_6$: Classical subgroups}\label{ss:c6}

The members of the $\C_6$ collection are the classical subgroups $H=N_{G}({\rm Sp}(W))$ and $N_{G}({\rm SO}(W))$ in $G={\rm SL}(W)$, and also $H=N_{G}({\rm SO}(W))$ in $G={\rm Sp}(W)$ when $p=2$.
In the latter case, $W$ is even-dimensional and $H=N_{G}({\rm SO}(W)) = {\rm GO}(W) \cap {\rm Sp}(W) = {\rm SO}(W).2$ is a disconnected subgroup of type $D_n.2$. Now suppose $G={\rm SL}(W)$. If 
$H=N_{G}({\rm Sp}(W))$ then $H/Z(G) \cong {\rm Sp}(W)$ is connected, so let us assume $H=N_{G}({\rm SO}(W))$. If $\dim W$ is odd then $p \neq 2$ and $H/Z(G) \cong {\rm SO}(W)$ is connected. 
On the other hand, if $\dim W$ is even and $p \neq 2$ then $H/Z(G) \cong {\rm PSO}(W).2$ is disconnected, and we note that $H$ is contained in $N_{G}({\rm Sp}(W))$ if $p=2$. We also note that $H=N_G(T_1) = T_1.2$ is a $\C_2$-subgroup if $\dim W = 2$, so we may assume that $\dim W \ge 4$.

\subsection{A summary}

The following proposition provides a convenient summary of the above discussion.

\begin{prop}\label{p:cdisc}
Let $H \in \C$ be a positive-dimensional maximal subgroup of $G$ such that $H/Z(G)$ is disconnected. Then the possibilities for $H$ are listed in Table  \ref{t:disc}. 
\end{prop}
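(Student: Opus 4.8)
The plan is to derive the proposition as a direct summary of the structural analysis carried out family by family in Sections~\ref{ss:c1}--\ref{ss:c6}, using the Liebeck--Seitz classification (Theorem~\ref{t:ls}) as the organizing principle. By hypothesis $H \in \C = \bigcup_{i=1}^{6}\C_i$, so $H$ belongs to one of the six geometric families, and I would treat each family in turn. First I would eliminate $\C_5$: as indicated in Table~\ref{t:subs}, these subgroups are normalizers of symplectic-type $r$-groups and are therefore finite, so they cannot be positive-dimensional. This reduces the problem to the five families $\C_1$, $\C_2$, $\C_3$, $\C_4$, $\C_6$. I would also record at the outset that $H^0$ may be taken to be reductive (Remark~\ref{r:conds}(j)).

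For each remaining family I would use the concrete description of the embedding of $H$ in $G$ to isolate exactly those configurations in which $H$ is maximal and $H/Z(G)$ is disconnected, and then read off the resulting structure. In $\C_1$ the analysis of Section~\ref{ss:c1} shows that after discarding the parabolic (totally singular) and connected cases one is left with $G = {\rm SO}(W)$ stabilizing a non-degenerate even-dimensional subspace, yielding $D_lB_{n-l}.2$ and $D_n.2$ for type $B_n$ and $D_lD_{n-l}.2$ for type $D_n$. In $\C_2$ the wreath-product structure of Section~\ref{ss:c2} produces $A_l^tT_{t-1}.S_t$, $C_l^t.S_t$, $(2^{t-1}\times B_l^t).S_t$ and $(D_l^t.2^{t-1}).S_t$, according to the type of $G$ and the parity of $t$ and $\dim W_1$. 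The family $\C_3$ gives $A_{n-1}T_1.2$, arising in type $C_n$ (with $p \neq 2$) and in type $D_n$ (with $n$ even), while $\C_4$ contributes the tensor-product normalizers $C_aD_b.2$ and $D_aD_b.2^2$ from $\C_4(i)$ together with the $S_t$-wreath subgroups $C_l^t.S_t$, $B_l^t.S_t$, $(D_l^t.2^t).S_t$ and the like from $\C_4(ii)$. Finally $\C_6$ supplies the remaining disconnected classical subgroups, most notably $D_n.2$ inside ${\rm Sp}(W)$ when $p=2$ and the subgroup of type ${\rm PSO}(W).2$ inside ${\rm SL}(W)$ when $\dim W$ is even and $p \neq 2$. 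Collecting these lists is precisely the content of Table~\ref{t:disc}.

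The step I expect to require the most care is not the enumeration itself but the precise bookkeeping of \emph{disconnectedness} and \emph{maximality}. The proposition concerns $H/Z(G)$ being disconnected, not merely $H$, so I would have to track central scalars carefully; for instance in the $\C_6$ case for type $A_n$ the relevant group is $H/Z(G) \cong {\rm PSO}(W).2$ rather than $H$ itself, and several apparently disconnected candidates collapse to connected quotients. Equally delicate is deciding which reflections and transvections of ${\rm Isom}(W)$ actually lie in $G = {\rm Isom}(W)\cap {\rm SL}(W)$ (with its index-two correction when $p=2$, $f$ is quadratic and $\dim W$ is even), since this governs whether the outer involution survives in $G$; here the determinant criterion for $p\neq 2$ and the Jordan-block parity criterion for $p=2$ (from \cite[Section~8]{AS}) are the key tools. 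Finally, I would need to verify maximality by excluding configurations in which a disconnected subgroup is properly contained in a larger geometric subgroup --- for example $\dim U = \tfrac12\dim W$ in $\C_1$ and $b=1$ in the type-$C_n$ case of $\C_4(i)$ force containment in a $\C_2$-subgroup, $l=1$ in $\C_4(ii)$ forces containment in a $\C_6$-subgroup, and a $\C_3$-subgroup of ${\rm Sp}(W)$ lies in a $\C_6$-subgroup of type ${\rm GO}(W)$ when $p=2$. Once these exclusions and the disconnectedness determinations are made consistently across all five families, the list of surviving subgroups is exactly Table~\ref{t:disc}.
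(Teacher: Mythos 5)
Your proposal is correct and follows essentially the same route as the paper: Proposition \ref{p:cdisc} is proved there precisely as a summary of the family-by-family analysis in Sections \ref{ss:c1}--\ref{ss:c6}, including the discarding of the finite $\C_5$ collection, the reductivity of $H^0$, the determinant/Jordan-block criteria from \cite{AS} for deciding when the outer involution lies in $G$, and the same maximality exclusions (half-dimensional $U$ in $\C_1$, $b=1$ in $\C_4$(i), $l=1$ in $\C_4$(ii), and the $p=2$ containment of the $\C_3$-subgroup of ${\rm Sp}(W)$ in a $\C_6$-subgroup). The delicate points you flag are exactly the ones the paper's discussion addresses.
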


\begin{rmk}\label{r:a0}
\emph{In Table \ref{t:disc}, if $G=A_n$ and $H=A_l^{t}T_{t-1}.S_t$ is a $\C_2$-subgroup then we set $A_0=1$ if $l=0$; in this case, $H=N_G(T)$ is the normalizer of a maximal torus $T$ of $G$.} 
\end{rmk}

\renewcommand{\arraystretch}{1.2}
\begin{table}
$$\begin{array}{llll} \hline
 & G & H & \mbox{Conditions} \\ \hline

\C_{1} & B_{n} & D_{n}.2 &  \\
& B_{n} & D_{l}B_{n-l}.2 & 1 \le l <n \\
& D_{n} & D_{l}D_{n-l}.2 & 1 \le l <n/2 \\
& & & \\
\C_{2} 
& A_{n} & A_{l}^{t}T_{t-1}.S_{t} & n+1=(l+1)t,\, l \ge 0,\, t \ge 2 \\
& B_{n} & (2^{t-1} \times B_{l}^{t}).S_{t} & 2n+1=(2l+1)t,\, l \ge 1,\, \mbox{$t \ge 3$ odd} \\
& C_{n} & C_{l}^{t}.S_{t} & n=lt,\, l \ge 1,\, t \ge 2 \\
& D_{n} & (2^{t-1} \times B_{l}^{t}).S_{t} & 2n=(2l+1)t,\, l \ge 1,\, \mbox{$t \ge 2$ even, $p \neq 2$} \\
& D_{n} & (D_{l}^{t}.2^{t-1}).S_{t} & n=lt,\, l \ge 1,\, t \ge 2 \\
& & & \\
\C_{3} & C_{n} & A_{n-1}T_{1}.2 & p \neq 2 \\
& D_{n} & A_{n-1}T_{1}.2 & \mbox{$n$ even} \\
& & & \\
\C_{4}(i) & C_{n} & C_{a}D_{b}.2 & n=2ab,\,  b \ge 2,\, p \neq 2 \\
& D_{n} & D_{a}D_{b}.2^{2} & n=2ab,\, a>b\ge 2,\, p \neq 2, \\
& & & \\
\C_{4}(ii) & A_{n} & A_{l}^{t}.S_{t} & n+1=(l+1)^{t},\, l\ge 2 ,\, t \ge 2 \\
& B_{n} & B_{l}^{t}.S_{t} & \mbox{$2n+1=(2l+1)^t$, $l \ge 1$, $t \ge 2$} \\
& C_{n} & C_{l}^{t}.S_{t} & \mbox{$2n=(2l)^t$, $l \ge 1$, $t \ge 3$ odd, $p \neq 2$} \\
& D_{n} & C_{l}^{t}.S_{t} & \mbox{$2n=(2l)^t$, $l \ge 1$, $t \ge 2$ even or $p=2$} \\
& D_{n} & (D_{l}^{t}.2^t).S_{t} & \mbox{$2n=(2l)^t$, $l \ge 3$, $t \ge 2$, $p \neq 2$} \\
& & & \\
\C_{6} &
A_{n} & D_{m}.2 & \mbox{$n+1=2m$, $n \ge 3$, $p \neq 2$} \\
& C_{n} & D_{n}.2 & p=2 \\ \hline
\end{array}$$
\caption{The disconnected maximal subgroups of $G$ in $\mathcal{C}$}
\label{t:disc}
\end{table}
\renewcommand{\arraystretch}{1}

\chapter{The $\C_1, \C_3$ and $\C_6$ collections}\label{s:c136}

We begin the proof of Theorem \ref{main} by considering the disconnected maximal  subgroups in the $\C_1$, $\C_3$ and $\C_6$ collections. According to Proposition \ref{p:cdisc}, the relevant possibilities for $G$ and $H$ are listed in Table \ref{t:c1,3,6s}. (As in the statement of  Theorem \ref{main}, we will assume that $(G,p) \neq (B_n,2)$ -- see Remark \ref{r:conds}(b).)

\renewcommand{\arraystretch}{1.2}
\begin{table}[h]
$$\begin{array}{lllcl} \hline
& G & H & \mbox{Collection} & \mbox{Conditions} \\ \hline
{\rm (i)} & B_n & D_n.2 & \C_1 & \\
{\rm (ii)} & B_n & D_lB_{n-l}.2 & \C_1 & 1 \leq l < n \\
{\rm (iii)} & D_n & D_lD_{n-l}.2 & \C_1 & 1 \leq l < n/2 \\
{\rm (iv)} & C_n & A_{n-1}T_1.2 & \C_3 & p \neq 2 \\
{\rm (v)} & D_n & A_{n-1}T_1.2 & \C_3 & \mbox{$n$ even} \\
{\rm (vi)} & A_{n} & D_m.2 & \C_6 & \mbox{$n+1=2m$, $n \ge 3$, $p \neq 2$} \\
{\rm (vii)} & C_n & D_n.2 & \C_6 & p=2 \\ \hline
\end{array}$$
\caption{The disconnected maximal  subgroups in the collections $\C_1$, $\C_3$ and $\C_6$}
\label{t:c1,3,6s}
\end{table}
\renewcommand{\arraystretch}{1}

In the proof of Theorem \ref{main} we adopt the notation introduced in Section \ref{ss:notation}. In particular, we fix a maximal torus $T$ and a Borel subgroup $B=UT$ of $G$, which we use to define a base $\{\a_1, \ldots, \a_n\}$ for the root system of $G$, and corresponding fundamental dominant weights $\{\l_1, \ldots, \l_n\}$. We also fix a maximal torus $T_{H^0}$ of $(H^0)'$ contained in $T$, and if $\mu$ is a weight for $T$ then $\mu|_{H^0}$ designates the restriction of $\mu$ to this subtorus $T_{H^0}$. 

\section{The main result}

\begin{prop}\label{TH:C1,3,6}
Let $V$ be an irreducible tensor-indecomposable $p$-restricted $KG$-module with highest weight $\l$, and let $H$ be a maximal $\C_1$, $\C_3$ or $\C_6$-subgroup of $G$ such that $H/Z(G)$ is disconnected. Then $V|_{H}$ is irreducible if and only if  $(G,H,\l)$ is one of the cases recorded in Table \ref{t:c1,3,6}.
\end{prop}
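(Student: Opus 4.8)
The plan is to establish Proposition \ref{TH:C1,3,6} by working through the seven cases of Table \ref{t:c1,3,6s} individually, in each case exploiting the concrete geometric embedding of $H$ in $G$ to compute the restriction $\l|_{H^0}$ explicitly in terms of fundamental weights for $(H^0)'$. Throughout I will assume, as permitted, that $V|_{H^0}$ is reducible (otherwise the triple is already covered by Seitz \cite{Seitz2}), so by Clifford theory \eqref{e:vx} we have $V|_{H^0}=V_1\oplus\cdots\oplus V_m$ with the $V_i$ transitively permuted by $H/H^0$ and all sharing a common highest weight up to $H$-conjugacy. Since $H/H^0$ is cyclic of order $2$ in every case in Table \ref{t:c1,3,6s} (each $H$ has the shape $H^0.2$), Proposition \ref{p:niso} forces the $V_i$ to be pairwise non-isomorphic, so $m=2$ and $V|_{H^0}=V_1\oplus V_2$ with $V_2\cong V_1^\sigma$ for the outer element $\sigma$.

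The core of each case is a weight-multiplicity counting argument built on Lemma \ref{l:easy}. The outer involution $\sigma$ acts on $X(T_{H^0})$ (for $\C_1$ it permutes the two $B_l$/$D_l$ spin-type weights, for $\C_3$ and $\C_6$ it induces the graph automorphism swapping $\l_{n-1}\leftrightarrow\l_n$ on the type-$A$ or type-$D$ factor), and the requirement that the two composition factors have $\sigma$-conjugate highest weights severely restricts $\l$. Concretely, I would first determine $\l|_{H^0}$ and identify which fundamental weights $\l_i$ can appear: if $a_i\neq0$ for an index $i$ whose restriction is $\sigma$-fixed in a way incompatible with a two-orbit decomposition, that case is killed immediately. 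For the surviving candidates I compare $\sum_i m_V(\mu_i)$ against $m_V(\nu)=\sum_j m_{V_j}(\nu)$ for carefully chosen distinct $T$-weights $\mu_i$ restricting to a common $T_{H^0}$-weight $\nu$; when the left side would exceed the right, irreducibility of the $V_j$ fails and the candidate is eliminated. The multiplicity inputs here are exactly Lemmas \ref{l:t1}, \ref{l:118}, \ref{l:s816}, \ref{l:bwt}, Corollary \ref{c:sat}, and the dimension data of Proposition \ref{p:dims}.

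For the positive direction — verifying that the surviving triples genuinely give $V|_H$ irreducible — I would use a dimension count: having pinned down $\dim V_1=\dim V_2$ from Proposition \ref{p:dims} or from Seitz's tables, I confirm that $V_1\oplus V_2$ accounts for all of $V$ and that the two summands are swapped by $\sigma$, so that $H$ acts transitively on $\{V_1,V_2\}$ and no proper $H$-submodule exists. In several cases (notably $(B_n,D_n.2)$ with $\l=\l_n$ the spin module, and the $\C_6$ case $(A_n,D_m.2)$) the known irreducible modules for the orthogonal subgroup restrict transparently, and the relevant restrictions are precisely those tabulated by Seitz, which is why Remark \ref{r:conds}(g) notes these as pre-existing entries in \cite[Table 1]{Seitz2}.

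The main obstacle I anticipate is the case $(B_n,D_n.2)$ with $\l$ a general $p$-restricted weight $\sum a_i\l_i$, leading to Ford's family and the congruence conditions of Remark \ref{r:conds}(d). Here the restriction $\l|_{D_n}$ does not split into an obvious short orbit, and deciding exactly when $V|_{D_n}$ has precisely two composition factors requires the delicate congruence analysis: the conditions "$a_i+a_j\equiv i-j\imod p$" between consecutive nonzero coefficients and the terminal condition "$2a_i\equiv-2(n-i)-1\imod p$" come directly from applying Lemma \ref{l:118} (the unequal-root-length cases (ii)) at the branch point $\a_n$ and from Lemma \ref{l:s816} along chains of simple roots inside the type-$A$ sub-diagram. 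Controlling these multiplicities simultaneously across an arbitrary support of $\l$, rather than for a single fundamental weight, is the genuinely hard combinatorial step, and it is where the bulk of the casework will concentrate.
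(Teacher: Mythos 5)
Your overall framework---Clifford theory giving $V|_{H^0}=V_1\oplus V_2$ with non-isomorphic summands interchanged by the outer involution (via Proposition \ref{p:niso}, since $|H:H^0|=2$ in all seven cases), weight-restriction computations, multiplicity comparisons via Lemma \ref{l:easy}, and dimension counts for sufficiency---is exactly how the paper treats cases (ii)--(vi) of Table \ref{t:c1,3,6s}, and it would carry those cases. The genuine gap is in case (i), the pair $(B_n,D_n.2)$, and it is twofold. First, the paper does \emph{not} re-derive the congruence conditions of Remark \ref{r:conds1}(a): it shows $a_n=1$ (by exhibiting $\l-\a_n$ as the highest weight of a second $KH^0$-composition factor), eliminates the boundary case $a_{n-1}=p-1$ by a separate multiplicity argument (where $\l|_{H^0}$ fails to be $p$-restricted because the $\o_n$-coefficient equals $a_{n-1}+a_n=p$, and Lemma \ref{l:118} would force a third composition factor), and then invokes Ford's main theorem \cite[Theorem 1]{Ford1} as a black box for the $p$-restricted situation, which is precisely the ``long and detailed analysis'' of \cite[Section 3]{Ford1}. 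Your plan instead proposes to rederive that entire classification---both necessity and sufficiency of the congruences---by applying Lemmas \ref{l:118} and \ref{l:s816} over an arbitrary support of $\l$. You correctly identify this as the hard combinatorial core, but you give no argument for it; that step is essentially the content of Ford's paper, so leaving it as an acknowledged plan leaves the hardest case of the proposition unproved.

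Second, your stated method for the positive direction---dimension counting against Proposition \ref{p:dims} or Seitz's tables---cannot work for the Ford family. For a general weight $\l=\sum_i a_i\l_i$ satisfying the conditions of Remark \ref{r:conds1}(a) there is no closed-form dimension formula for $L_{B_n}(\l)$ (Proposition \ref{p:dims} covers only a short list of specific weights), so one cannot verify $\dim V=\dim V_1+\dim V_2$ this way; sufficiency requires a direct comparison of the $KD_n$-composition factors, which is what Ford carries out. The same issue recurs in miniature in case (vii): there the paper first uses tensor-indecomposability (\cite[1.6]{Seitz2}) to force $a_n=0$ or $\l=\l_n$ when $p=2$, and then again cites Ford (the cases labelled ${\rm MR}_{4}$ and ${\rm U}_{6}$ in \cite{Ford1}) to conclude irreducibility of $V|_H$ for every weight $\sum_{i<n}a_i\l_i$; your proposal neither explains how the mixed-support $p=2$ weights are eliminated nor how sufficiency for an arbitrary such weight would be certified without an appeal to Ford.
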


\begin{remk}\label{r:conds1}
Some comments on the statement of Proposition \ref{TH:C1,3,6}:
\begin{itemize}\addtolength{\itemsep}{0.3\baselineskip}
\item[(a)] The family of examples arising when $(G,H) = (B_n,D_n.2)$ was found by Ford (see the case labelled ${\rm U}_2$ in \cite[Table II]{Ford1}). Here $p \neq 2$ and the required conditions on the coefficients $a_i$ in the expression $\l=\sum_{i=1}^{n}a_i\l_i$ are as follows:
\vspace{3mm}
\begin{quote}
$a_n=1$; if $a_i,a_j \neq 0$, where $i<j<n$ and $a_k=0$ for all $i<k<j$, then $a_i+a_j \equiv i-j \imod{p}$; if $i<n$ is maximal such that $a_i \neq 0$ then $2a_i \equiv -2(n-i)-1 \imod{p}$.
\end{quote}
\vspace{3mm}
In particular, if $p=0$ then $\l=\l_n$ is the only example.

\item[(b)] Consider the case $(G,H,\l) = (D_4, A_3T_1.2,\l_3)$ appearing in Table \ref{t:c1,3,6}. As noted in Remark \ref{r:conds}(h), if $\tilde{H}$ denotes the image of $H$ under a non-trivial graph automorphism of $G$ then $\l = \l_4$ is an example for the pair $(G,\tilde{H})$.

\item[(c)] The fourth column of Table \ref{t:c1,3,6} gives the restriction of the highest weight $\l$ to a suitable maximal torus of $(H^0)'$; it is convenient to denote this restriction by $\l|_{H^0}$. We adopt the standard labelling $\{\o_{1},\o_2, \ldots\}$ for the fundamental dominant weights of each factor in $H^0$. In the fifth column, $\kappa$ denotes the number of $KH^0$-composition factors in $V|_{H^0}$. Of course, any condition appearing in the final column of Table \ref{t:c1,3,6s} also applies for the relevant examples in Table \ref{t:c1,3,6}.
\end{itemize}
\end{remk}

\renewcommand{\arraystretch}{1.2}
\begin{table}[h]
$$\begin{array}{llllll} \hline
G & H & \l &  \l|_{H^0} & \kappa & \mbox{Conditions} \\  \hline

B_n & D_n.2 & \sum_{i=1}^{n} a_i\l_i & \sum_{i=1}^{n-1} a_i\o_i + (a_{n-1}+1)\o_n &  2 & \mbox{See Remark \ref{r:conds1}(a)} \\

B_n & T_1B_{n-1}.2 & \l_n &  \o_{n-1} & 2 &  \\

B_n & D_lB_{n-l}.2 & \l_n &  \o_{l} \otimes \o_{n-l} & 2 & l \ge 2 \\

D_n & T_1D_{n-1}.2 & \l_{n-\e} &  \o_{n-1-\e} & 2 & \e=0,1 \\
D_n & D_lD_{n-l}.2  & \l_{n-\e} &  \o_{l} \otimes \o_{n-l-\e} & 2 & \mbox{$2 \le l < n/2$, $\e=0,1$} \\

C_n & A_{n-1}T_1.2 & \l_1 & \o_1 & 2 & 	\\

D_n & A_{n-1}T_1.2 & \l_1 &  \o_{1} & 2 &	\\
 &  &  \l_3 & \o_3 & 2 & n=4	\\

A_{n} & D_m.2 & \l_k,\, \l_{n+1-k} & \o_{k} & 1 & 1 \le k < m-1 \\

& (n+1=2m) & \l_{m-1},\, \l_{m+1} & \o_{m-1}+\o_m & 1 & \\
& & \l_m & 2\o_{m} & 2 & \\

C_n & D_n.2 & \l_n & 2\o_{n} & 2 & \\

& &  \sum_{i=1}^{n-1} a_i\l_i &  \sum_{i=1}^{n-1}a_i\o_i +a_{n-1}\o_n & 1 & \\ \hline
\end{array}$$
\caption{The $\C_1$, $\C_3$ and $\C_6$ examples}
\label{t:c1,3,6}
\end{table}
\renewcommand{\arraystretch}{1}

\section{Proof of Proposition \ref{TH:C1,3,6}}

We will first consider case (i) of Table \ref{t:c1,3,6s}. Here we use Ford's main theorem \cite[Theorem 1]{Ford1} in a crucial way. Indeed, it is not difficult to reduce to the case where $V|_{H^0}$ has composition factors with $p$-restricted highest weights. The latter case is handled in \cite[Section 3]{Ford1}, where a long and detailed analysis is given. 

\begin{lem}\label{c1,3,6:p1}
Proposition \ref{TH:C1,3,6} holds in case (i) of Table \ref{t:c1,3,6s}.
\end{lem}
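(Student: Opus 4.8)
The plan is to reduce case (i), namely $(G,H)=(B_n,D_n.2)$, to Ford's classification in \cite{Ford1}, which is precisely the situation treated in \cite[Section 3]{Ford1} under the hypothesis that the $KH^0$-composition factors of $V$ have $p$-restricted highest weights. The essential point is therefore \emph{not} to redo Ford's delicate weight analysis, but to verify that his hypotheses apply in our setting, and then to read off the resulting list of triples, matching it against the conditions recorded in Remark \ref{r:conds1}(a) and the first line of Table \ref{t:c1,3,6}. Recall first that $p \neq 2$ here (since we assume $(G,p) \neq (B_n,2)$), that $H^0 = D_n = {\rm SO}(U)$ acts irreducibly on the natural $B_n$-module with a trivial $1$-space complement, and that $H = H^0.2$ is a cyclic (indeed $\Z/2$) extension of its connected component, so Proposition \ref{p:niso} applies.

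First I would set up the Clifford-theoretic framework of Section \ref{ss:clifford}. Assuming $V|_H$ is irreducible but $V|_{H^0}$ is reducible, equation \eqref{e:vx} gives $V|_{H^0} = V_1 \oplus \cdots \oplus V_m$ with $m \mid |H/H^0| = 2$, so $m = 2$; by Proposition \ref{p:niso} the two summands $V_1, V_2$ are non-isomorphic irreducible $KH^0$-modules interchanged by $H/H^0$. Since $H/H^0$ is induced by the graph automorphism of $D_n$ swapping $\l_{n-1}$ and $\l_n$, the highest weights of $V_1$ and $V_2$ must be interchanged by this graph automorphism; in particular their restrictions to the common torus determine $\l|_{H^0}$, which is the quantity recorded in column four of Table \ref{t:c1,3,6}. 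The computation of $\l|_{H^0}$ is a direct calculation using the embedding $D_n < B_n$: restricting $\l = \sum a_i \l_i$ to $T_{H^0}$ yields $\sum_{i=1}^{n-1} a_i \omega_i + (a_{n-1}+1)\omega_n$ after accounting for how $\l_n$ (the spin weight of $B_n$) restricts.

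Next I would confirm that the $p$-restricted hypothesis on the $KH^0$-composition factors is met, so that Ford's theorem \cite[Theorem 1]{Ford1} may be invoked. The key reduction, as the text flags, is that ``it is not difficult to reduce to the case where $V|_{H^0}$ has composition factors with $p$-restricted highest weights.'' Concretely, I expect this reduction to use Steinberg's tensor product theorem together with the fact that $\l$ itself is $p$-restricted: one shows that the restriction of a $p$-restricted weight of $B_n$ to the subsystem subgroup $D_n$ again yields $p$-restricted highest weights for the composition factors, perhaps after peeling off a Frobenius twist, using Lemma \ref{l:sr} to control multiplicities on the relevant parabolic. Once the $p$-restricted hypothesis is in place, Ford's analysis applies verbatim and produces exactly the family case ${\rm U}_2$ of \cite[Table II]{Ford1}, whose numerical conditions are the congruences $a_n = 1$, the interior condition $a_i + a_j \equiv i-j \imod{p}$, and the boundary condition $2a_i \equiv -2(n-i)-1 \imod{p}$ stated in Remark \ref{r:conds1}(a).

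The main obstacle, I expect, is the reduction to the $p$-restricted case: although the paper describes it as ``not difficult,'' making the argument precise requires carefully tracking how Frobenius twists interact with restriction from $B_n$ to $D_n$ and verifying that no composition factor of $V|_{H^0}$ acquires a weight coefficient $\ge p$. The rest is genuinely a matter of citation and bookkeeping --- invoking \cite[Section 3]{Ford1} for the hard weight combinatorics, confirming $m=2$ and non-isomorphism via Proposition \ref{p:niso}, and transcribing Ford's conditions into the form used here. I would therefore allocate the bulk of the written proof to the reduction step and the verification that the output matches Table \ref{t:c1,3,6}, while explicitly deferring the intricate weight-multiplicity arguments to Ford.
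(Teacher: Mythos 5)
Your global strategy --- reduce to Ford's classification and read off the case ${\rm U}_2$ of \cite[Table II]{Ford1} --- is indeed the paper's strategy, and your Clifford-theoretic setup (two non-isomorphic irreducible $KH^0$-summands interchanged by the graph automorphism of $D_n$, via Proposition \ref{p:niso}) is correct. The genuine gap lies exactly where you predicted the main obstacle would be: the reduction to Ford's $p$-restricted hypothesis. The mechanism you propose does not work. Writing $\l=\sum_i a_i\l_i$, the restriction to $T_{H^0}$ is $\l|_{H^0}=\sum_{i=1}^{n-1}a_i\o_i+(a_{n-1}+a_n)\o_n$ (your formula with $a_{n-1}+1$ is the special case $a_n=1$), so even though every $a_i<p$, the coefficient $a_{n-1}+a_n$ can be as large as $2p-2$: restriction from $B_n$ to the subsystem subgroup $D_n$ simply does not preserve $p$-restrictedness. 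No appeal to Steinberg's tensor product theorem, to ``peeling off a Frobenius twist,'' or to Lemma \ref{l:sr} repairs this; if a composition factor has a non-$p$-restricted highest weight, Steinberg tells you how that factor decomposes as a tensor product of twists, but it does not place you back inside the hypotheses of \cite[Theorem 1]{Ford1}.

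What the paper actually does, and what is absent from your proposal, are two concrete arguments. First, one proves $a_n\le 1$: if $a_n\ge 1$ then $\mu=\l-\a_n\in\L(V)$ by Lemma \ref{l:t1}, and since $\mu+m\b$ is not a weight of $V$ for any simple root $\b$ of $H^0$ and $m\ge1$, \eqref{e:mt} shows a non-zero vector of $T$-weight $\mu$ is fixed by the Borel subgroup of $H^0$, so $\mu$ affords the highest weight of a $KH^0$-composition factor; Clifford theory then forces $\mu|_{H^0}=\l|_{H^0}+\o_{n-1}-\o_n$ to equal $(s_{\a_n}.\l)|_{H^0}=\l|_{H^0}+a_n(\o_{n-1}-\o_n)$, whence $a_n=1$. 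After this step, $\l|_{H^0}$ is $p$-restricted --- so Ford applies, yielding ${\rm U}_2$ when $a_n=1$ and no examples when $a_n=0$ --- \emph{except} in the boundary case $p\ne0$, $a_{n-1}=p-1$, $a_n=1$, which Ford cannot see. Second, that boundary case is eliminated by hand: with $\nu=\l-\a_{n-1}-\a_n$, Lemma \ref{l:118}(ii) gives $m_V(\nu)=2$, whereas $\nu|_{H^0}=\mu|_{H^0}-\b_{n-1}$ occurs with multiplicity at most $1$ in the composition factor afforded by $\mu=\l-\a_n$ and not at all in the one afforded by $\l$ (the difference $\l|_{H^0}-\nu|_{H^0}=\frac{1}{2}(\b_{n-1}+\b_n)$ is not in the root lattice of $D_n$); by Lemma \ref{l:easy} this would force a third $KH^0$-composition factor, contradicting $|H:H^0|=2$. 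Without these two arguments your proof covers only the configurations that your (invalid) reduction happens to reach; the citation of Ford and the transcription of his conditions into Remark \ref{r:conds1}(a) are fine, but they are the easy part.
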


\begin{proof}
Here $G=B_n$, $H^0=D_{n}$ and up to conjugacy we have
$$H^0=\la U_{\pm \a_{1}},\ldots,U_{\pm \a_{n-1}},U_{\pm (\a_{n-1}+2\a_n)} \ra$$
and $H=H^0\la s_{\a_n} \ra$, where the simple reflection $s_{\a_n}$ interchanges the roots $\a_{n-1}$ and $\a_{n-1}+2\a_{n}$, inducing an involutory graph automorphism on $H^0$. Set $\Pi(H^0)=\{\b_1, \ldots, \b_n\}$, where $\b_i = \a_i$ for all $i<n$, and $\b_{n} = \a_{n-1}+2\a_n$. Let $\{\o_{1},\ldots,\o_{n}\}$ be corresponding fundamental dominant weights of $H^0$. Note that if $\mu = \sum_{i=1}^{n}{b_i\l_i}$ is a weight for $T$ then
$$\mu|_{H^0}
= \sum_{i=1}^{n-1}\la \mu,\a_{i} \ra \o_{i} + \la \mu,\a_{n-1}+2\a_n \ra \o_{n}
= \sum_{i=1}^{n-1} b_{i}\o_{i}+(b_{n-1}+b_{n})\o_{n}.$$

Suppose $V|_{H}$ is irreducible and $\mu \in \L(V)$ affords the highest weight of a $KH^0$-composition factor.
Then Clifford theory implies that either $\mu|_{H^0} = \l|_{H^0}$,
or 
$$\mu|_{H^0} = (s_{\a_n}.\l)|_{H^0}=\l|_{H^0}+a_n(\o_{n-1}-\o_{n}).$$

If $a_n=0$ then $\l|_{H^0}$ is a $p$-restricted weight for $H^0$, so the main theorem of \cite{Ford1} applies; by inspecting \cite[Tables I, II]{Ford1} we quickly deduce that there are no compatible examples with $a_n=0$. Now assume
$a_n \geq 1$.
Then $\mu=\l-\a_{n} \in \L(V)$ (see Lemma \ref{l:t1}) and $\mu$ affords the highest weight of a $KH^0$-composition factor (indeed, by \eqref{e:mt} we observe that a non-zero $T$-weight vector $v \in V_{\mu}$ is fixed by the Borel subgroup $B_{H^0} = \la T, U_{\b_i} \mid 1 \le i \le n \ra$ of $H^0$).
Since $\mu|_{H^0}=\l|_{H^0}+\o_{n-1}-\o_{n}$, we deduce that $\mu|_{H^0}=(s_{\a_n}.\l)|_{H^0}$ and $a_n=1$.

If $p=0$ or $a_{n-1}<p-1$, then $\l|_{H^0}$ is $p$-restricted and Ford's main theorem in \cite{Ford1} implies that the only example is the case labelled ${\rm U}_2$ in \cite[Table II]{Ford1}. We record this case in Table \ref{t:c1,3,6}, with the precise conditions on the coefficients $a_i$ given in Remark \ref{r:conds1}(a).

Finally, suppose $p \neq 0$ and $a_{n-1}=p-1$. Set $\nu = \l-\a_{n-1}-\a_{n} \in \L(V)$ and note that
$$\nu|_{H^0} = \l|_{H^0}+\o_{n-2}-\o_{n-1}-\o_{n} = \mu|_{H^0}+\o_{n-2}-2\o_{n-1} = \mu|_{H^0}-\b_{n-1}.$$
Then Lemma \ref{l:118} yields $m_{V}(\nu)=2$, but the $T_{H^0}$-weight $\nu|_{H^0}$ has multiplicity $1$ in the $KH^0$-composition factor with highest weight $\mu|_{H^0}$, and it does not occur in the factor with highest weight $\l|_{H^0}$. 
Therefore $\nu|_{H^0}$ must occur in a third $KH^0$-composition factor (see Lemma \ref{l:easy}), which is impossible as $|H:H^0|=2$.
\end{proof}

\begin{lem}\label{c1,3,6:p2}
Proposition \ref{TH:C1,3,6} holds in case (ii) of Table \ref{t:c1,3,6s}.
\end{lem}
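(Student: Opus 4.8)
Here $G=B_n$ with $p\neq2$, and $H=D_lB_{n-l}.2$ is the $\C_1$-stabilizer of a non-degenerate $2l$-dimensional subspace $U$ of $W$, where $1\le l<n$. The plan is to exploit Clifford theory exactly as in Lemma~\ref{c1,3,6:p1}. The connected component $H^0=D_l\times B_{n-l}$ sits inside $G$ via the standard embedding, and the component group is generated by an involution $z$ acting as a reflection on both $U$ and $U^\perp$. I would first set $\Pi(H^0)=\{\b_1,\dots,\b_n\}$ in terms of the $\a_i$, with the $D_l$-factor supported on $\b_1,\dots,\b_l$ and the $B_{n-l}$-factor on $\b_{l+1},\dots,\b_n$, and record the restriction formula
\[
\mu|_{H^0}=\sum_{i=1}^{n}\la\mu,\a_i\ra\,\omega_i
\]
rewritten in the fundamental weights $\{\omega_i\}$ of $H^0$, paying attention to the fact that the $D_l$-node $\b_{l-1},\b_l$ fork and the $B_{n-l}$-factor has its short root at the end. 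The graph automorphism induced by $z$ swaps the two spin-type fundamental weights $\omega_{l-1},\omega_l$ of the $D_l$-factor and fixes all other $\omega_i$.

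\textbf{The Clifford-theoretic restriction on $\l$.} Assume $V|_H$ is irreducible but $V|_{H^0}$ is reducible, so that $z$ permutes the $KH^0$-composition factors non-trivially. If $\mu\in\L(V)$ affords the highest weight of a composition factor, then by Clifford theory $\mu|_{H^0}$ is $H$-conjugate to $\l|_{H^0}$, i.e.\ either $\mu|_{H^0}=\l|_{H^0}$ or $\mu|_{H^0}=(z.\l)|_{H^0}$. Since $z$ only interchanges $\omega_{l-1}$ and $\omega_l$, the difference $\l|_{H^0}-(z.\l)|_{H^0}$ is a multiple of $\omega_{l-1}-\omega_l$, which in terms of the coefficients of $\l=\sum a_i\l_i$ forces $a_{l-1}$ and $a_l$ to control the splitting. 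The key step is to show that the only way $z$ can move a composition factor is if the restriction $\l|_{H^0}$ is already $D_l$-asymmetric in the spin coordinates, and then to argue (using Lemma~\ref{l:t1} to produce the weight $\l-\a_j$ affording a genuine second highest weight, together with the multiplicity bound of Lemma~\ref{l:easy}) that this pins $\l$ down to $\l=\l_n$. The spin weight $\l_n$ restricts to $\omega_l\otimes\omega_{n-l}$ (the tensor product of the spin modules of $D_l$ and $B_{n-l}$), and $z$ swaps the two half-spin constituents, giving exactly $\kappa=2$ factors permuted transitively; this is the row recorded in Table~\ref{t:c1,3,6}. I would separate out the degenerate subcase $l=1$, where $D_1=T_1$ is a torus and $H=T_1B_{n-1}.2$, restricting $\l_n$ to $\omega_{n-1}$ of the $B_{n-1}$-factor.

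\textbf{The main obstacle.} The hard part will be ruling out all other $p$-restricted $\l$: one must show that no $\l$ other than $\l_n$ can have every $KH^0$-composition-factor highest weight lie in a single $z$-orbit while keeping $V|_{H^0}$ reducible. The clean way is to observe that if $a_j\neq0$ for some $j\notin\{l-1,l\}$, then $\l-\a_j\in\L(V)$ by Lemma~\ref{l:t1} and restricts to $\l|_{H^0}-\b_j$, which is neither $\l|_{H^0}$ nor $(z.\l)|_{H^0}$ (since $z$ fixes $\omega_j$); this contradicts Clifford theory unless that weight fails to afford a composition-factor highest weight, which one checks via the Borel-fixity argument of \eqref{e:mt}. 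Iterating this forces all $a_j=0$ except possibly at the fork, and a final multiplicity computation in the spirit of Lemma~\ref{l:118}—comparing $m_V(\l-\b_{l-1}-\b_l)$ against the per-factor multiplicities bounded by Lemma~\ref{l:easy}—eliminates the remaining candidates and isolates $\l=\l_n$. Keeping careful track of the short root of $B_{n-l}$ and the fork of $D_l$ in all these restriction calculations is where the bookkeeping is most delicate.
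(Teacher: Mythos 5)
Your overall Clifford-theoretic frame (two $KH^0$-composition factors swapped by the graph involution of the $D_l$ factor, reduction of the possible $\lambda$, dimension count for $\lambda_n$) matches the paper's, but your central reduction step is invalid. Clifford theory constrains only the \emph{highest weights} of the $KH^0$-composition factors, not arbitrary weights of $V$. For $j\neq l$ the simple root $\alpha_j$ of $G$ is itself a simple root of $H^0$ in this embedding, so the weight $\lambda-\alpha_j$ never affords the highest weight of a $KH^0$-composition factor: since $0<a_j<p$, the ($1$-dimensional) space $V_{\lambda-\alpha_j}$ is moved by $U_{\alpha_j}\leq B_{H^0}$ back into $V_{\lambda}$ (this is exactly what the check via \eqref{e:mt} reveals), and $\lambda|_{H^0}-\beta_j$ is simply an interior weight of the factor $L_{H^0}(\lambda|_{H^0})$. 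Hence no contradiction arises and your iteration never starts. Worse, if the step were valid one could apply it with $j=n$ (note $\alpha_n$ restricts to the short simple root of the $B_{n-l}$ factor, and $n\notin\{l-1,l\}$) and thereby ``eliminate'' $\lambda=\lambda_n$ --- the very example the lemma must produce.

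The paper's proof instead subtracts chains of simple roots passing through $\alpha_l$, the unique simple root of $G$ \emph{not} lying in $\Sigma(H^0)$ (the forked node of the $D_l$ factor is $\alpha_{l-1}+2(\alpha_l+\cdots+\alpha_n)$, not any $\alpha_j$). Concretely: if $a_l\neq0$ one uses $\lambda-\alpha_l$; if $a_i\neq0$ with $i<l$ maximal, $\lambda-\alpha_i-\cdots-\alpha_l$; if $a_i\neq0$ with $l<i<n$ minimal, $\lambda-\alpha_l-\cdots-\alpha_i$. For these weights the Borel-fixity argument genuinely applies (adding any positive root of $H^0$ leaves $\Lambda(V)$), so they do afford highest weights of composition factors, and their restrictions carry a spurious $+\omega_{2,1}$ or $-\omega_{2,i-l}$ term, giving the contradiction. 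This reduces matters to $\lambda=a_n\lambda_n$; then $\lambda-\alpha_l-\cdots-\alpha_n$ affords the second highest weight, which must equal $(\sigma.\lambda)|_{H^0}$, forcing $a_n=1$, and irreducibility for $\lambda_n$ follows from the dimension identity of Lemma~\ref{l:dmspin}. Finally, your treatment of $l=1$ is only a statement of the answer: there the involution inverts the central torus $X_1=T_1$ and centralizes $X_2=B_{n-1}$, so there is no graph automorphism to exploit and the two $KX_2$-composition factors are isomorphic; the paper needs a genuinely different argument (grading $V$ by the $\alpha_1$-coefficient to show only two levels occur, then using $-\lambda=w_0\lambda\in\Lambda(V)$ to force $\lambda=\lambda_n$), which your proposal does not supply.
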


\begin{proof}
Here $G=B_n$ and $H^0 = X_1X_2$, where $X_{1}=D_{l}$ and $X_{2}=B_{n-l}$. To begin with, let us assume $l=1$, in which case $H=H^0\la \sigma \ra$ with $\sigma$ an involution that inverts the $1$-dimensional central torus $X_1$, and centralizes the $X_2$ factor. Up to conjugacy, we may assume that $X_2=\la U_{\pm\a_2},\ldots,U_{\pm\a_n}\ra$. 

Now $X_{1}$ acts as scalars on the $KH^0$-composition factors of $V$, each of which is an irreducible $KX_2$-module. Therefore, $V|_{X_2}$ has precisely two composition factors, which are interchanged by $\sigma$ (so they are isomorphic as $KX_2$-modules).
For each $j \in \mathbb{N}_0$, $X_2$ preserves the subspace $V_j$ of $V$, which is defined to be the sum of the $T$-weight spaces in $V$ 
corresponding to weights of the form $\lambda-j\a_1-\sum_{i=2}^nc_i\a_i$ with $c_i \in \mathbb{N}_0$. By Lemma \ref{l:pr} and saturation (see \cite[Section 13.4]{Hu1}), if $j>0$ and $V_j \neq 0$ then $V_{j-1} \neq 0$, and we deduce that every $T$-weight in $\L(V)$ is of this form, with $j=0$ or $1$. But $w_0\lambda=-\lambda=\lambda-2\lambda$ is a weight of $V$, where $w_0$ is the longest word in the Weyl group of $G$, and so by writing the fundamental dominant weights as linear combinations of the simple roots (see \cite[Table 1]{Hu1}, for example), we deduce that $\lambda = 
\lambda_n$ is the only possibility. Then $V|_{X_2}$ has two composition factors, each of highest weight $\lambda_n|_{X_2}$, and the central torus $X_1$ acts with weight $1$ on one of the factors and with weight $-1$ on the second factor. Hence, $V$ is indeed an irreducible $KH$-module, and this case is recorded in Table \ref{t:c1,3,6}.

For the remainder we may assume $l \ge 2$.
Up to conjugacy, we have
$$X_{1}=\la U_{\pm \a_{1}},\dots,U_{\pm \a_{l-1}},U_{\pm (\a_{l-1}+2(\a_l+\dots+\a_n))} \ra,
\; X_{2}=\la U_{\pm \a_{l+1}},\dots,U_{\pm \a_{n}} \ra$$
and $H=H^0\la \s \ra$,
where $\s=s_{\a_{l}} \cdots s_{\a_{n-1}}s_{\a_{n}}s_{\a_{n-1}} \cdots s_{\a_{l}}$ induces an involutory graph automorphism on $X_1$.

Let $\{\o_{1,1},\ldots,\o_{1,l}\}$ and $\{\o_{2,1},\ldots,\o_{2,n-l}\}$ be fundamental dominant weights for $X_1$ and $X_2$ corresponding to the  bases of the respective root systems given above. In particular, if
$\mu = \sum_{i=1}^{n}{b_i\l_i}$ is a weight for $T$ then
$$\mu|_{H^0} = \sum_{i=1}^{l-1} b_{i}\o_{1,i}+(b_{l-1}+2b_{l}+\dots+2b_{n-1}+b_{n})\o_{1,l}+\sum_{i=1}^{n-l} b_{l+i}\o_{2,i}.$$

Suppose $V|_{H}$ is irreducible and $\mu \in \L(V)$ affords the highest weight of a $KH^0$-composition factor.
Then either $\mu|_{H^0} = \l|_{H^0}$
or
$$\mu|_{H^0} = (\s.\l)|_{H^0}=\l|_{H^0}+(2a_{l}+\dots+2a_{n-1}+a_{n})(\o_{1,l-1}-\o_{1,l}).$$

If $a_l \geq 1$ then $\mu=\l-\a_{l} \in \L(V)$ and $\mu$ affords the highest weight of a $KH^0$-composition factor, but this is not possible since $\mu|_{H^0}=\l|_{H^0}+\o_{1,l-1}-\o_{1,l}+\o_{2,1}$. It follows that $a_l=0$.

Next let $i \in \{1,\dots,l-1\}$ be maximal such that $a_i \neq 0$.
Then $\mu_i=\l-\a_{i}-\a_{i+1}-\dots-\a_{l} \in \L(V)$ and $\mu_i$ affords the highest weight of a $KH^0$-composition factor. Again, this is a contradiction since
$$\mu_{i}|_{H^0}=\l|_{H^0}+\sum_{j=1}^{l} c_{1,j}\o_{1,j}+\sum_{j=1}^{n-l} c_{2,j}\o_{2,j}$$
with $c_{2,1}=1$. Similarly, if $i \in \{l+1,\dots,n-1\}$ is minimal such that $a_i \neq 0$ then $\mu_i=\l-\a_{l}-\a_{l+1}-\dots-\a_i \in \L(V)$ and $\mu_i$ affords the highest weight of a $KH^0$-composition factor, but this is also impossible since
$$\mu_{i}|_{H^0}=\l|_{H^0}+\sum_{j=1}^{l} c_{1,j}\o_{1,j}+\sum_{j=1}^{n-l} c_{2,j}\o_{2,j}$$
with $c_{2,i-l}=-1$.

We have now reduced to the case $\l=a_n\l_n$.
Here $\mu=\l-\a_{l}-\a_{l+1}-\dots-\a_n \in \L(V)$ and $\mu$ affords the highest weight of a $KH^0$-composition factor.
Since $\mu|_{H^0} = \l|_{H^0}+\o_{1,l-1}-\o_{1,l}$, we deduce that $\mu|_{H^0}=(\s.\l)|_{H^0}$ and $a_n=1$.
Hence $\l=\l_n$ and $V|_{H}$ is irreducible as we have
$$\dim{L_{B_n}(\l_n)}=2\cdot \dim{L_{D_l}(\l_l)} \cdot \dim{L_{B_{n-l}}(\l_{n-l})}$$
by Lemma \ref{l:dmspin}. This case is recorded in Table \ref{t:c1,3,6}.
\end{proof}

\begin{lem}\label{c1,3,6:p3}
Proposition \ref{TH:C1,3,6} holds in case (iii) of Table \ref{t:c1,3,6s}.
\end{lem}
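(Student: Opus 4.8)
The statement to prove is Lemma~\ref{c1,3,6:p3}, which asserts that Proposition~\ref{TH:C1,3,6} holds in case (iii): here $G=D_n$, $H^0 = D_lD_{n-l}$ with $1 \le l < n/2$, and $H = H^0\la z \ra$ where $z$ acts as a reflection on each of the two non-degenerate summands $U$ and $U^\perp$ of $W$. My plan is to follow the same template used in the proofs of Lemmas~\ref{c1,3,6:p1} and~\ref{c1,3,6:p2}: fix an explicit realization of the two simple factors $X_1 = D_l$ and $X_2 = D_{n-l}$ as subsystem subgroups generated by appropriate root subgroups of $G$, identify the coset representative $z$ with a product of reflections inducing the correct graph/diagram action, and then use Clifford theory together with the weight-restriction formula to pin down the possible highest weights $\l$.

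First I would record the root-subgroup descriptions of $X_1$ and $X_2$ inside $G=D_n$ and write down the coset element $z$ explicitly as a word in the simple reflections, determining exactly how it acts on the fundamental weights $\{\o_{1,i}\}$ of $X_1$ and $\{\o_{2,j}\}$ of $X_2$. Since $z$ acts as a reflection on both $U$ and $U^\perp$, on each $D$-factor it should induce the graph automorphism swapping the two end nodes (equivalently swapping $\o_{\cdot,l-1}\leftrightarrow\o_{\cdot,l}$ on $X_1$ and $\o_{\cdot,n-l-1}\leftrightarrow\o_{\cdot,n-l}$ on $X_2$), but the product of the two reflections lies in $\mathrm{SO}(W)=G^0$ precisely when it is trivial on the spinor data — this is the source of the conditions $\e=0,1$ and the appearance of $\l_{n-1},\l_n$. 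I would then write the general restriction formula $\mu|_{H^0} = \sum b_i\o_{1,i} + \cdots$ for a $T$-weight $\mu=\sum b_i\l_i$, analogous to the displayed formulas in the previous two proofs.

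Next, assuming $V|_H$ is irreducible but $V|_{H^0}$ reducible, Clifford theory gives that the highest weights of the $KH^0$-composition factors are $H$-conjugate, so each such $\mu|_{H^0}$ equals either $\l|_{H^0}$ or $(z.\l)|_{H^0}$. The engine of the argument is Lemma~\ref{l:t1}: if a coefficient $a_i\ne 0$ for an ``interior'' node, then $\l-\a_i\in\L(V)$ affords the highest weight of a composition factor, and I compute $\l-\a_i|_{H^0}$ to show it is incompatible with being $z$-conjugate to $\l|_{H^0}$ (typically because some $\o_{2,j}$-coefficient becomes $+1$ or $-1$, which the graph action on the \emph{other} factor cannot produce). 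Sweeping through the nodes this way should force all $a_i=0$ except for $a_{n-1}$ or $a_n$, reducing to $\l=\l_{n-\e}$ with $\e\in\{0,1\}$; I would handle the case $l=1$ (where $X_1=D_1=T_1$ is a central torus) separately, as in Lemma~\ref{c1,3,6:p2}, using the central-torus weight argument to confirm that the two $KX_2$-composition factors are genuinely swapped by $z$. Finally, for the surviving spin-type weights I verify irreducibility by the dimension count $\dim L_{D_n}(\l_{n-\e}) = 2\cdot\dim L_{D_l}(\o_l)\cdot\dim L_{D_{n-l}}(\o_{n-l-\e})$ via Lemma~\ref{l:dmspin}, exactly paralleling the end of Lemma~\ref{c1,3,6:p2}.

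The main obstacle I anticipate is bookkeeping the action of $z$ on the spinor nodes correctly. Because $D$-type factors have two ``half-spin'' fundamental weights at the forked end of the diagram, and because $z$ must lie in $G^0=\mathrm{SO}(W)$ rather than the full orthogonal group, getting the precise correspondence between the reflection $z$ and whether it fixes or swaps $\o_{n-1}\leftrightarrow\o_n$ on each factor — and hence justifying both the $\e=0,1$ dichotomy and the claim that the two composition factors are interchanged (so that $V|_H$ really is irreducible, not a direct sum of two $H$-invariant pieces) — requires careful tracking of determinants and of how $z$ permutes the standard basis of $W$. This is the step where the $D_n$ case genuinely differs from the $B_n$ cases already treated, and where I would spend the most care rather than appealing directly to the earlier lemmas.
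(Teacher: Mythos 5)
Your overall strategy is the same as the paper's: an explicit subsystem realization of $X_1X_2$, the coset element realized as a product of reflections inducing simultaneous graph automorphisms on the two factors, the Clifford-theoretic constraint that every composition-factor highest weight restricts to $\l|_{H^0}$ or $(\s.\l)|_{H^0}$, a separate central-torus argument when $l=1$, and a closing dimension count via Lemma \ref{l:dmspin}. However, the engine of your reduction, as you state it, would fail. You claim that if $a_i \neq 0$ at an interior node then $\l - \a_i$ affords the highest weight of a $KH^0$-composition factor and that computing $(\l-\a_i)|_{H^0}$ yields the contradiction. This is true only for $i=l$, the unique simple root of $G$ not lying in $\Sigma(H^0)$. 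For every other node, $\a_i$ is itself a simple root of $X_1$ or of $X_2$: then $f_{\a_i}v^+$ is not fixed by $U_{\a_i} \le B_{H^0}$ (indeed $e_{\a_i}f_{\a_i}v^+ = a_iv^+ \neq 0$ since $0<a_i<p$), so $\l-\a_i$ affords no maximal vector for $B_{H^0}$; and in any case $(\l-\a_i)|_{H^0} = \l|_{H^0}-\b_{1,i}$ (or $\l|_{H^0}-\b_{2,i-l}$) lies under $\l|_{H^0}$, so Clifford theory places no constraint on it at all --- the $\pm1$ coefficient on the other factor that you invoke simply does not appear for these weights.

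The repair, which is exactly what the paper does, is to use root strings passing through the deleted node $\a_l$: first kill $a_l$ using $\l-\a_l$; then for $i<l$ maximal with $a_i\neq 0$ take $\mu_i = \l-(\a_i+\a_{i+1}+\cdots+\a_l)$, and for $l<i\le n-2$ minimal with $a_i\neq 0$ take $\mu_i = \l-(\a_l+\cdots+\a_i)$. These weights do afford maximal vectors for $B_{H^0}$ (by \eqref{e:mt}, since adding any positive root of $H^0$ takes them outside $\L(V)$), and it is precisely their restrictions that carry the coefficient $+\o_{2,1}$, respectively $-\o_{2,i-l}$, which cannot be matched by $\l|_{H^0}$ or $(\s.\l)|_{H^0}$ --- the incompatibility you correctly anticipate, but attached to the wrong weights. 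With this correction the reduction to $\l = a_{n-1}\l_{n-1}+a_n\l_n$, and then to $\l_{n-1}$ or $\l_n$ via the strings $\l-(\a_l+\cdots+\a_{n-1})$ and $\l-(\a_l+\cdots+\a_{n-2}+\a_n)$ (which also force $a_{n-1},a_n\le1$ and not both nonzero), goes through as you outline, and the remainder of your plan coincides with the paper's proof.
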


\begin{proof}
This is very similar to the previous lemma. Here $G=D_n$ and $H^0 = X_1X_2$, where $X_{1}=D_{l}$, $X_{2}=D_{n-l}$ and $1 \le l < n/2$. First assume $l=1$, so $H=H^0\la \sigma \ra$ with $\sigma$ an involution that inverts the $1$-dimensional central torus $X_1$, and acts as a graph automorphism on the $X_2$ factor. Up to conjugacy, we may assume that $X_2=\la U_{\pm\a_2},\dots,U_{\pm\a_n}\ra$.  

Now $X_{1}$ acts as scalars on the $KH^0$-composition factors of $V$, each of which is an irreducible $KX_2$-module, so $V|_{X_2}$ has precisely two composition factors, which are interchanged by $\sigma$. By arguing as in the proof of previous lemma (the case $l=1$), we quickly deduce that $\lambda = \lambda_{n-1}, \l_n$ are the only possibilities. Now $L_G(\lambda_{n-1})|_{X_2}$ has precisely two composition
factors; namely, the two distinct spin modules for $X_2$, which are interchanged by $\sigma$. Therefore $L_G(\lambda_{n-1})$ is an irreducible $KH$-module. The same argument applies for $L_G(\lambda_{n})$. These cases are recorded in Table \ref{t:c1,3,6}.

Now suppose $l \ge 2$. Up to conjugacy, we have
\begin{align*}
X_{1} & =\la U_{\pm \a_{1}},\dots,U_{\pm \a_{l-1}},U_{\pm (\a_{l-1}+2(\a_l+\dots+\a_{n-2})+\a_{n-1}+\a_{n})} \ra \\
X_{2} & =\la U_{\pm \a_{l+1}},\dots,U_{\pm \a_{n}} \ra
\end{align*}
and $H=H^0\la \s \ra$,
where $\s=s_{\a_{l}} \cdots s_{\a_{n-2}}s_{\a_{n-1}}s_{\a_{n}}s_{\a_{n-2}} \cdots s_{\a_{l}}$ induces a graph automorphism on both $X_1$ and $X_2$. Let $\{\o_{1,1},\ldots,\o_{1,l}\}$ and $\{\o_{2,1},\ldots,\o_{2,n-l}\}$ be the fundamental dominant weights corresponding to the above bases of the root systems of $X_1$ and $X_2$, respectively, so if $\mu = \sum_{i=1}^{n}{b_i\l_i}$ is a weight for $T$ then
$$\mu|_{H^0} = \sum_{i=1}^{l-1} b_{i}\o_{1,i}+(b_{l-1}+2b_{l}+\dots+2b_{n-2}+b_{n-1}+b_{n})\o_{1,l}+\sum_{i=1}^{n-l} b_{l+i}\o_{2,i}.$$

Suppose $V|_{H}$ is irreducible and $\mu \in \L(V)$ affords the highest weight of a $KH^0$-composition factor.
Then either $\mu|_{H^0} = \l|_{H^0}$
or
\begin{align*}
\mu|_{H^0} = (\s.\l)|_{H^0}= & \; \l|_{H^0}+(2a_{l}+\dots+2a_{n-2}+a_{n-1}+a_{n})(\o_{1,l-1}-\o_{1,l}) \\
& \; +(a_n-a_{n-1})(\o_{2,n-l-1}-\o_{2,n-l}).
\end{align*}

If $a_l \neq 0$ then $\mu=\l-\a_{l} \in \L(V)$ and $\mu$ affords the highest weight of a $KH^0$-composition factor, but this is not possible since $\mu|_{H^0}=\l|_{H^0}+\o_{1,l-1}-\o_{1,l}+\o_{2,1}$. Therefore $a_l=0$.

Next let $i \in \{1,\dots,l-1\}$ be maximal such that $a_i \neq 0$.
Then $\mu_i=\l-\a_{i}-\a_{i+1}-\dots-\a_l \in \L(V)$ and $\mu_i$ affords the highest weight of a $KH^0$-composition factor. This is a contradiction since
$$\mu_{i}|_{H^0}=\l|_{H^0}+\sum_{j=1}^{l} c_{1,j}\o_{1,j}+\sum_{j=1}^{n-l} c_{2,j}\o_{2,j}$$
with $c_{2,1}=1$. Similarly, suppose $i \in \{l+1,\dots,n-2\}$ is minimal such that $a_i \neq 0$.
Then $\mu_i=\l-\a_{l}-\a_{l+1}-\dots-\a_i \in \L(V)$ and $\mu_i$ affords the highest weight of a $KH^0$-composition factor. Again, this is impossible as
$$\mu_{i}|_{H^0}=\l|_{H^0}+\sum_{j=1}^{l} c_{1,j}\o_{1,j}+\sum_{j=1}^{n-l} c_{2,j}\o_{2,j}$$
with $c_{2,i-l}=-1$. We have now reduced to the case $\l=a_{n-1}\l_{n-1}+a_n\l_n$.

If $a_{n-1} \neq 0$, then $\mu=\l-\a_{l}-\a_{l+1}-\dots-\a_{n-2}-\a_{n-1} \in \L(V)$ and $\mu$ affords the highest weight of a $KH^0$-composition factor.
Since 
$$\mu|_{H^0} = \l|_{H^0}+\o_{1,l-1}-\o_{1,l}-\o_{2,n-l-1}+\o_{2,n-l},$$ 
we deduce that $\mu|_{H^0}=(\s.\l)|_{H^0}$, $a_{n-1}=1$ and $a_n=0$.
Hence $\l=\l_{n-1}$ and thus $V|_{H}$ is irreducible since Lemma \ref{l:dmspin} yields
$$\dim{L_{D_n}(\l_{n-1})}=2\cdot \dim{L_{D_l}(\l_l)}\cdot \dim{L_{D_{n-l}}(\l_{n-l-1})}.$$
Similarly, if $a_{n} \neq 0$ then $\l=\l_{n}$ is the only possibility, and once again we deduce that $V|_{H}$ is irreducible. The examples $\l=\l_{n-1}$ and $\l_n$ are listed in Table \ref{t:c1,3,6}.
\end{proof}

\begin{lem}\label{c1,3,6:p4,5}
Proposition \ref{TH:C1,3,6} holds in cases (iv) and (v) of Table \ref{t:c1,3,6s}.
\end{lem}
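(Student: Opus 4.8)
The plan is to treat cases (iv) and (v) uniformly, exploiting that $H^0 = A_{n-1}T_1 = {\rm GL}(U)$ is the Levi factor of the maximal parabolic of $G$ stabilising the totally singular subspace $U$, obtained by deleting the end-node $\a_n$, so that $\Pi(X)=\{\a_1,\dots,\a_{n-1}\}$ where $X=(H^0)'=A_{n-1}$. As recorded in Section \ref{ss:c3}, the outer involution $\tau\in H\setminus H^0$ interchanges $U$ and $U'$, inducing the graph automorphism $\o_i\mapsto\o_{n-i}$ on $X$ and inverting $T_1=Z(H^0)^\circ$. For a weight $\mu$ let $\ell(\mu)$ denote the weight by which $T_1$ acts on the $\mu$-weight space (the \emph{level}), normalised so that $T_1$ acts with weight $+1$ on $U$; then $\ell(\a_i)=0$ for $i<n$ and $\ell(\a_n)=2$, so $\mathcal{L}(G)=\mathfrak{g}_{-2}\oplus\mathfrak{g}_0\oplus\mathfrak{g}_2$ is a three-term grading (the unipotent radical of the parabolic is abelian). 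Since every weight of $V$ lies under $\l$, the top level is $d:=\ell(\l)$; for $C_n$ we always have $w_0=-1$, and for $D_n$ this holds as $n$ is even, so $\L(V)$ is symmetric and the bottom level is $-d$. In particular $V|_{H^0}$ is reducible for every non-trivial $V$, so by Clifford theory \eqref{e:vx} together with Proposition \ref{p:niso} I may assume $V|_{H^0}=V_1\oplus V_2$ with $V_1\not\cong V_2$ interchanged by $\tau$.

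The key step is that $T_1$ is central in $H^0$, so each irreducible $KH^0$-module $V_i$ is homogeneous for $T_1$, i.e. affords a single level. Hence the number of distinct levels occurring in $V$ is at most the number of $KH^0$-composition factors, namely two; by the symmetry above these must be exactly $\{d,-d\}$ with $d\ge 1$. I would then invoke the fact that $V$ is irreducible as an $\mathcal{L}(G)$-module (as $\l$ is $p$-restricted) to rule out $d\ge2$: if the only levels were $d$ and $-d$, then the top-level subspace $V^{(d)}$ would satisfy $\mathfrak{g}_2V^{(d)}\subseteq V^{(d+2)}=0$, $\mathfrak{g}_0V^{(d)}\subseteq V^{(d)}$ and $\mathfrak{g}_{-2}V^{(d)}\subseteq V^{(d-2)}=0$ (the last because $d-2\notin\{d,-d\}$ when $d\ge2$), so $V^{(d)}$ would be a proper non-zero $\mathcal{L}(G)$-submodule, contradicting irreducibility. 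Therefore $d=\ell(\l)=1$.

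It then remains to solve $\ell(\l)=1$ over dominant weights. One computes $\ell(\l_i)=i$ for $C_n$, while for $D_n$ one has $\ell(\l_i)=i$ for $i\le n-2$, $\ell(\l_{n-1})=(n-2)/2$ and $\ell(\l_n)=n/2$. This forces $\l=\l_1$ in both cases, together with the extra solution $\l=\l_3=\l_{n-1}$ exactly when $n=4$ (note $\ell(\l_4)=2$, which is precisely why $\l_4$ does not occur for $H$). For the converse I would check irreducibility directly: for $\l=\l_1$ we have $V=W$ and $V|_{H^0}=L(\o_1)\oplus L(\o_{n-1})$ (the submodules $U$ and $U'$), at levels $\pm1$ and interchanged by $\tau$; for $\l=\l_3$ with $G=D_4$ the half-spin module restricts as $\Lambda^1U\oplus\Lambda^3U=L(\o_1)\oplus L(\o_3)$ of dimension $4+4=2^{3}$ (consistent with Lemma \ref{l:dmspin}), again interchanged by $\tau$. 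In each case the two factors are non-isomorphic over $H^0$ and fused by $H$, so $V|_H$ is irreducible, recovering the relevant rows of Table \ref{t:c1,3,6}.

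The main obstacle I anticipate is making the grading/submodule step watertight: one must confirm that for both $C_n$ and $D_n$ the deleted end-node gives an abelian unipotent radical (equivalently $\ell(\a)\in\{0,\pm2\}$ for all roots $\a$), and that $V$ is genuinely irreducible as an $\mathcal{L}(G)$-module in every $p$-restricted case, so that $V^{(d)}$ really forces a contradiction. A secondary subtlety is the exceptional behaviour at $n=4$, where the coincidence $\l_3=\l_{n-1}$ and the triality of $D_4$ single out $\l_3$ for $H$ over $\l_4$ (which belongs to the graph-twisted subgroup $\tilde H$, cf.\ Remark \ref{r:conds1}(b)); and the low-rank isogenies such as $C_2\cong B_2$ and $D_3\cong A_3$ should be checked so that the stated rank conditions exclude no genuine example.
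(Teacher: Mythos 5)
Your proposal is correct, and its skeleton coincides with the paper's: both exploit the fact that the central torus $T_1$ of $H^0$ acts by a single character on each irreducible $KH^0$-summand, so that Clifford theory caps the number of occupied $T_1$-levels at two, and both use $w_0=-1$ (automatic for $C_n$, and valid for $D_n$ since $n$ is even) to see that the occupied levels are symmetric about zero, forcing $\ell(\l)=1$ and hence $\l=\l_1$, or $\l_3$ when $G=D_4$. The one genuine difference is the mechanism that pins the levels down: the paper simply quotes the argument of Lemma \ref{c1,3,6:p2} (the case $l=1$), which runs the same count in terms of the coefficient of the deleted simple root and uses Lemma \ref{l:pr} together with saturation of the Weyl module weights to show the occupied levels form an interval; you instead invoke Curtis' theorem ($V$ is irreducible as an $\mathcal{L}(G)$-module because $\l$ is $p$-restricted) together with the abelian grading $\mathcal{L}(G)=\g_{-2}\oplus\g_0\oplus\g_2$, so that a gap between the top level and the next occupied one would produce a proper non-zero $\mathcal{L}(G)$-submodule. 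Your variant is self-contained and avoids the hypothesis $p=0$ or $p>e(G)$ of Lemma \ref{l:pr} (harmless here, since $p\neq2$ is assumed in case (iv) and $e(G)=1$ in case (v), but it is a cleaner dependency); the paper's version buys brevity by reusing an argument already written out. Your converse verifications (the natural module splitting as $U\oplus U'$, and the half-spin module of $D_4$ splitting as $\Lambda^1 U\oplus\Lambda^3 U$ at levels $\pm1$, fused by $\tau$) agree with the paper's, including the triality caveat of Remark \ref{r:conds1}(b).
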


\begin{proof}
Here $G=C_n$ or $D_n$, and $(H^0)' =A_{n-1}$. According to Proposition \ref{p:cdisc}, we may assume $n$ is even if $G=D_n$. Set $J=(H^0)'$ and observe that $J =\la U_{\pm \a_{1}},\dots,U_{\pm \a_{n-1}} \ra$, up to conjugacy in ${\rm Aut}(G)$. Referring to Remark \ref{r:conds}(h) (or Remark \ref{r:conds1}(b)), we may assume that $J$ is as given.
The $1$-dimensional central torus $T_1<H^0$ acts as scalars on the $KH^0$-composition factors of $V$, each of which is an irreducible $KJ$-module. Therefore $V|_J$ has exactly two composition factors, and by arguing as in the proof of Lemma \ref{c1,3,6:p2} (the case $l=1$) we deduce that either $\l=\l_1$, or $G=D_4$ and $\lambda =\lambda_3$. (Note that if $G = D_4$ and we take $J = \la U_{\pm \a_{1}}, U_{\pm \a_{2}}, U_{\pm \a_4} \ra$, then the relevant weights are $\lambda_1$ and $\lambda_4$; see Remark \ref{r:conds1}(b).)
In the latter case, the two $KJ$-composition factors are interchanged by the outer automorphism in $N_G(H^0)$, and so $V|_{H}$ is indeed irreducible. This case is recorded in Table \ref{t:c1,3,6}.
\end{proof}

Next let us consider case (vi) in Table \ref{t:c1,3,6s}. Here $G=A_n$ and $H=D_m.2$, where $n = 2m-1 \ge 3$ and $p \neq 2$. 
Suppose $V|_{H}$ is irreducible, but $V|_{H^0}$ is reducible. Then $V = V_1 \oplus V_2$, where $V_1$ and $V_2$ are irreducible $KH^0$-modules with respective $T_{H^0}$-highest weights 
\begin{equation}\label{e:mu1mu2}
\mu_1 = \sum_{i=1}^{m}c_i\o_i,\;\; \mu_2 = \sum_{i=1}^{m-2}c_i\o_i + c_{m}\o_{m-1}+c_{m-1}\o_{m}.
\end{equation}
Without loss of generality, we may assume that $\mu_1 = \l|_{H^0}$. The following lemma applies in this situation, where we take $\{\b_1, \ldots, \b_m\}$ to be a set of simple roots for $H^0=D_m$, corresponding to the fundamental dominant weights $\{\o_1, \ldots, \o_m\}$.

\begin{lem}\label{l:andm}
Suppose $\mu \in \Lambda (V)$ and $\mu|_{H^0} = \mu_1-\beta_m+\beta_{m-1}$.  Then $\mu_2=\mu|_{H^0}$. 
\end{lem}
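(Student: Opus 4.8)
The plan is to show that the weight $\mu|_{H^0} = \mu_1 - \b_m + \b_{m-1}$ cannot occur in $V_1$, so that it must be a weight of $V_2$, and then to exploit the rigidity of the dominance order near the fork of the $D_m$ diagram to upgrade the resulting inequality $\mu|_{H^0} \preccurlyeq \mu_2$ to an equality.

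First I would record the root data for $H^0 = D_m$. The diagram is simply laced and the two end nodes $\b_{m-1}, \b_m$ are each joined only to $\b_{m-2}$ and not to one another, so reading off the Cartan matrix gives $\b_{m-1} = -\o_{m-2} + 2\o_{m-1}$ and $\b_m = -\o_{m-2} + 2\o_m$; hence $\b_{m-1} - \b_m = 2(\o_{m-1} - \o_m)$. Substituting $\mu_1 = \sum_{i=1}^m c_i \o_i$ from \eqref{e:mu1mu2}, I obtain
$$\mu|_{H^0} = \mu_1 - \b_m + \b_{m-1} = \sum_{i=1}^{m-2} c_i\o_i + (c_{m-1}+2)\o_{m-1} + (c_m - 2)\o_m.$$

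Next I would argue that $\mu|_{H^0}$ is not a weight of $V_1$: since $\mu_1 - \mu|_{H^0} = \b_m - \b_{m-1}$ is not a non-negative integral combination of the simple roots of $H^0$, we have $\mu|_{H^0} \not\preccurlyeq \mu_1$, whereas every weight of the irreducible module $V_1$ is $\preccurlyeq \mu_1$. Because $\mu \in \L(V)$, its restriction lies in $\L(V|_{H^0}) = \L(V_1) \cup \L(V_2)$ (recall $V|_{H^0} = V_1 \oplus V_2$), so $\mu|_{H^0}$ must be a weight of $V_2$ and therefore $\mu|_{H^0} \preccurlyeq \mu_2$.

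Finally I would extract equality. Comparing the displayed expression for $\mu|_{H^0}$ with $\mu_2$ from \eqref{e:mu1mu2},
$$\mu_2 - \mu|_{H^0} = (c_m - c_{m-1} - 2)(\o_{m-1} - \o_m) = \tfrac12 (c_m - c_{m-1} - 2)(\b_{m-1} - \b_m),$$
so in the simple-root basis the coefficients of $\b_{m-1}$ and $\b_m$ are negatives of one another. The constraint $\mu|_{H^0} \preccurlyeq \mu_2$ forces both to be non-negative, hence both zero, giving $\mu_2 = \mu|_{H^0}$. The only genuine point is this last step: because $\o_{m-1} - \o_m$ is exactly half the difference of two simple roots, any nonzero multiple of it has mixed signs in the simple-root basis, so the partial-order condition coming from $V_2$ is rigid enough to collapse an inequality to an equality. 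Notably, no information about weight multiplicities is required.
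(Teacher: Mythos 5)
Your proof is correct and follows essentially the same route as the paper's: both arguments show that $\mu|_{H^0}$ is not under $\mu_1$, hence must lie in $\Lambda(V_2)$, and then use the relation $\omega_{m-1}-\omega_{m} = (\beta_{m-1}-\beta_{m})/2$ to force the resulting dominance inequality to collapse to equality, since a nonzero multiple of $\beta_{m-1}-\beta_{m}$ cannot have non-negative coefficients on both simple roots. The only difference is organizational — the paper compares two expressions for $\mu_2-\mu_1$, whereas you work directly with $\mu_2-\mu|_{H^0}$ — which does not change the substance of the argument.
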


\begin{proof}
Let $\nu=\mu|_{H^0}$ and observe that $\nu \not \in \Lambda(V_1)$ since $\nu$ is not under $\mu_1$ (that is, $\nu$ is not of the form $\mu_1 - \sum_{i}d_i\b_i$ with $d_i \in \mathbb{N}_0$). Therefore  $\nu\in \Lambda(V_2)$, so $\nu=\mu_2-\sum_{i} k_i\beta_i$ for some $k_i \in \mathbb{N}_0$, whence
$$\mu_2-\mu_1=\sum_{i=1}^{m-2} k_i\beta_i+(k_{m-1}+1)\beta_{m-1}+(k_{m}-1)\beta_{m}.$$ 
Now $\o_{m-1}-\o_m=(\beta_{m-1}-\beta_m)/2$ (see \cite[Table 1]{Hu1}), so using \eqref{e:mu1mu2} we deduce that  
$$\mu_2-\mu_1=\frac{1}{2}(c_m-c_{m-1})(\beta_{m-1}-\beta_m).$$
Therefore $k_{m-1}+1=-(k_m-1)$, and $k_i=0$ for all $1\leq i \leq m-2$, so $k_i=0$ for all $i$, and the result is proved.
\end{proof}

\begin{lem}\label{c1,3,6:p6}
Proposition \ref{TH:C1,3,6} holds in case (vi) of Table \ref{t:c1,3,6s}.
\end{lem}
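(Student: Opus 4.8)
The plan is to combine Clifford theory with the weight data for the natural embedding $H^0=D_m<G=A_n$ (where $n=2m-1$, $p\ne2$), just as in Lemmas \ref{c1,3,6:p2} and \ref{c1,3,6:p3}, the new ingredient being Lemma \ref{l:andm}. First I would record the restriction dictionary: writing $\{\b_1,\dots,\b_m\}$ for the simple roots of $H^0=D_m$ and $\{\o_1,\dots,\o_m\}$ for the corresponding fundamental weights, the natural embedding gives $\a_i|_{H^0}=\b_i$ and $\a_{n+1-i}|_{H^0}=\b_i$ for $1\le i\le m-1$, while $\a_m|_{H^0}=\b_m-\b_{m-1}$. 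Consequently, letting $\gamma$ denote the graph automorphism of $D_m$ interchanging $\o_{m-1}$ and $\o_m$, every $\l_i|_{H^0}$ with $i\ne m$ is $\gamma$-fixed, whereas $\l_m|_{H^0}=2\o_m$. Writing $\l=\sum_i a_i\l_i$ and $\mu_1=\l|_{H^0}=\sum_i c_i\o_i$, it follows that $c_m-c_{m-1}=2a_m$.

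If $V|_{H^0}$ is irreducible, then, as noted at the start of Section \ref{ss:clifford}, the triple is read off from Seitz \cite{Seitz2}; for this embedding these are precisely the exterior powers $\l=\l_k$ with $k\ne m$, giving the first three rows of the $(A_n,D_m.2)$ block of Table \ref{t:c1,3,6}. So I would then assume $V|_{H^0}$ is reducible, i.e. $\mu_1\ne\gamma(\mu_1)$, equivalently $c_{m-1}\ne c_m$, equivalently (as $p\ne2$) $a_m\ne0$. By Clifford theory \eqref{e:vx} there are then exactly two composition factors $V_1,V_2$, interchanged by the outer involution of $H$; normalising $\mu_1=\l|_{H^0}$ to be the highest weight of $V_1$, the highest weight of $V_2$ is $\mu_2=\gamma(\mu_1)$, as in \eqref{e:mu1mu2}.

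Next I would pin down $a_m$. Since $a_m\ne0$, Lemma \ref{l:t1} gives $\l-\a_m\in\L(V)$, and the dictionary yields $(\l-\a_m)|_{H^0}=\mu_1-\b_m+\b_{m-1}$. Thus the hypothesis of Lemma \ref{l:andm} is satisfied, so $\mu_2=\mu_1-\b_m+\b_{m-1}$; comparing with $\mu_2-\mu_1=\tfrac12(c_m-c_{m-1})(\b_{m-1}-\b_m)$ from the proof of that lemma forces $c_m-c_{m-1}=2$, i.e. $a_m=1$. It remains to show $a_j=0$ for all $j\ne m$, so that $\l=\l_m$, and this is the main obstacle. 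The strategy is a contradiction argument: assuming some $a_j\ne0$ with $j\ne m$, I would exhibit a weight $\nu=\l-\sum_i k_i\a_i\in\L(V)$ affording the highest weight of a $KH^0$-composition factor whose restriction $\nu|_{H^0}$ is dominant and differs from both $\mu_1$ and $\mu_2$ in some coordinate $\o_i$ with $i\le m-2$; since $\mu_2=\gamma(\mu_1)$ agrees with $\mu_1$ in all such coordinates, and Clifford theory permits only the two $H$-conjugate highest weights $\mu_1,\mu_2$ (as $|H:H^0|=2$), the existence of such a $\nu$ is the desired contradiction. The delicate point is verifying that $\nu$ genuinely affords an $H^0$-highest weight: because $\b_i$ ($i<m$) lifts to the bona fide $G$-roots $\a_i$ and $\a_{n+1-i}$, the raising operator $e_{\b_i}$ need not annihilate a weight vector of $V_\nu$ — in contrast to Lemmas \ref{c1,3,6:p2} and \ref{c1,3,6:p3}, where the relevant simple roots of $G$ lay outside the root system of $H^0$. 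I would therefore choose the root chains to run through $\a_m$, use \eqref{e:mt} together with the multiplicity bound of Lemma \ref{l:easy} to control the weight spaces involved, and treat $j<m$ directly, reducing the case $j>m$ to it via the graph automorphism of $G$ (under which $D_m.2$ is stable and $\l_k\mapsto\l_{n+1-k}$).

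Finally I would confirm that $\l=\l_m$ does yield an irreducible restriction. Here $V=L_G(\l_m)=\L^m W$ has dimension $\binom{2m}{m}$, while by Proposition \ref{p:dims} (Lemma \ref{l:dmdim}) the two candidate factors $L_{D_m}(2\o_m)$ and $L_{D_m}(2\o_{m-1})=\gamma\bigl(L_{D_m}(2\o_m)\bigr)$ each have dimension $\tfrac12\binom{2m}{m}$, since $p\ne2$. Their dimensions sum to $\dim V$, so $V|_{H^0}=L_{D_m}(2\o_m)\oplus L_{D_m}(2\o_{m-1})$ with the two summands interchanged by the outer involution of $H$, whence $V|_H$ is irreducible; this is the case $\l=\l_m$ recorded in Table \ref{t:c1,3,6}.
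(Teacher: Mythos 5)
Your proposal reproduces the easy parts of the paper's argument faithfully: the restriction dictionary for $D_m<A_{2m-1}$, the appeal to Clifford theory and Proposition \ref{p:niso} to get two non-isomorphic factors and hence $a_m\neq 0$, the derivation of $a_m=1$ from Lemmas \ref{l:t1} and \ref{l:andm}, and the closing dimension count via Lemma \ref{l:dmdim} showing that $\l=\l_m$ genuinely works. But the heart of the lemma --- proving $a_j=0$ for all $j\neq m$ --- is left as an unexecuted plan, and the plan as stated does not go through. You propose to exhibit a weight $\nu\in\L(V)$ \emph{affording the highest weight of a $KH^0$-composition factor} whose restriction disagrees with $\mu_1,\mu_2$ in some coordinate $\o_i$, $i\le m-2$. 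As you yourself flag, certifying that a vector of weight $\nu$ is a maximal vector for $B_{H^0}$ is precisely what fails in this embedding: $H^0=D_m$ is not a subsystem subgroup of $A_{2m-1}$, each simple root $\b_i$ ($i<m$) of $H^0$ is the restriction of two distinct simple roots of $G$, and \eqref{e:mt} together with Lemma \ref{l:easy} does not by itself produce a vector annihilated by all the raising operators $e_{\b_i}$. You offer no construction overcoming this obstacle, so the central claim of the proof is unsupported.

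The paper's actual argument avoids maximal vectors altogether and runs a two-sided multiplicity count, organised as an induction on $k$ for the pairs of coefficients $a_{m\pm k}$. In the base case, assuming $a_{m-1}\neq 0$, one first compares the $T$-weights $\l-\a_{m-1}-2\a_m$ and $\l-2\a_m-\a_{m+1}$ (to force $a_{m+1}=0$) and then $\l-\a_{m-1}-\a_m$ and $\l-\a_m-\a_{m+1}$, all restricting to a common $T_{H^0}$-weight $\nu$; Lemmas \ref{l:t1} and \ref{l:s816} give a lower bound for $m_V(\nu)$, while the explicit shapes of $\mu_1$ and $\mu_2=\gamma(\mu_1)$ in \eqref{e:mu1mu2} give an upper bound for $m_{V_1}(\nu)+m_{V_2}(\nu)$ (for instance $\nu$ is not under $\mu_1$, or is $W(H^0)$-conjugate to $\mu_2-\b_{m-k}$ and so has multiplicity at most $1$ in $V_2$). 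Lemma \ref{l:easy} then yields a contradiction except when $a_{m-1}=p-2$, and that residual case dies because $c_m=p$ forces $\nu\notin\L(V_1)$. The inductive step uses the longer chains $\nu_i=\l-\sum_{j=m}^{m+k-i}\a_j-\sum_{j=1}^{i}\a_{m-k+j-1}$ (running through $\a_m$, as you anticipated) with the same counting scheme, Lemma \ref{l:s816} supplying the multiplicities on both sides. Some argument of this kind --- bounding multiplicities in $V$ from below and in $V_1\oplus V_2$ from above, rather than producing a third highest weight vector --- is what your proposal is missing; without it you have established only the endpoints of the lemma.
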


\begin{proof}
Here $G=A_{2m-1}$ and $H^0 = D_m$, with $m \ge 2$ and $p \neq 2$. As above, let $\{\b_1, \ldots, \b_{m}\}$ be a set of simple roots for $H^0$ and let $\{\o_1, \ldots, \o_m\}$ be the  corresponding fundamental dominant weights. The $A_{m-1}$ parabolic subgroup of $H^0$, corresponding to the simple roots
$\{\beta_1,\dots,\beta_{m-1}\}$, embeds in an $A_{m-1}\times A_{m-1}$
parabolic subgroup
of $G$, and up to conjugacy we may assume that this gives the root
restrictions $\a_i|_{H^0} = \b_i$ and $\a_{m+i}|_{H^0} = \b_{m-i}$ for all $1 \le i \le m-1$. By considering the action of the Levi factors of these parabolics on $W$ (the natural $KG$-module), we deduce that the weight $\lambda_1-\alpha_1-\cdots-\alpha_m$ must restrict
to $\lambda_1|_{H^0}-\beta_1-\cdots-\beta_{m-2}-\beta_m$, which yields 
$\alpha_m|_{H^0} = \beta_m-\beta_{m-1}$.

Now $\lambda_i=i\lambda_1-\sum_{j=1}^{i-1}(i-j)\alpha_j$ for all $1 \le i \le 2m-1$ (see \cite[Table 1]{Hu1}), so using the above root restrictions (together with the fact that $\l_1|_{H^0} = \o_1$) we get
$$\l_i|_{H^0} = \o_{i},\; \l_{m+j}|_{H^0} = \o_{m-j},\; \l_{m-1}|_{H^0} = \l_{m+1}|_{H^0} = \o_{m-1}+\o_{m},\; \l_{m}|_{H^0} = 2\o_m,$$
for all $1 \le i \le m-2$ and $2 \le j \le m-1$. It follows that 
$$\lambda|_{H^0} = \sum_{j=1}^{m-2}(a_j+a_{2m-j})\o_j+(a_{m-1}+a_{m+1})\o_{m-1}+(a_{m-1}+a_{m+1}+2a_m)\o_{m}.$$
 
Suppose $V|_{H}$ is irreducible. If $V|_{H^0}$ is irreducible then the main theorem of \cite{Seitz2} implies that $\l = \l_k$ with $k \neq m$, and these cases are recorded in Table \ref{t:c1,3,6}. For the remainder let us assume $V|_{H^0}$ is reducible. As above, write $V = V_1 \oplus V_2$, where each $V_i$ is an irreducible $KH^0$-module with highest weight $\mu_i$ (as given in \eqref{e:mu1mu2}), and the $V_i$ are interchanged under the action of the graph automorphism of $H^0$. Without loss of generality, we may assume that $\mu_1 = \l|_{H^0}$, so 
$$\mu_2=\sum_{j=1}^{m-2}(a_j+a_{2m-j})\o_j+(a_{m-1}+a_{m+1}+2a_m)\o_{m-1}+(a_{m-1}+a_{m+1})\o_m.$$
By Proposition \ref{p:niso}, $V_1$ and $V_2$ are non-isomorphic $KH^0$-modules, so $\mu_1 \neq \mu_2$ and thus $a_m \neq 0$. Therefore $\lambda-\alpha_m \in \Lambda(V)$ and so Lemma \ref{l:andm} implies that $\mu_2=\mu_1-\beta_m+\beta_{m-1}$ since 
$(\lambda-\alpha_m)|_{H^0}=\mu_1-\beta_m+\beta_{m-1}$. This yields
$$\mu_2=\sum_{j=1}^{m-2}(a_j+a_{2m-j})\o_j+(a_{m-1}+a_{m+1}+2)\o_{m-1}+(a_{m-1}+a_{m+1}+2a_m-2)\o_m$$ 
and we deduce that $a_m=1$.   

Next we claim that $a_{i}=a_{2m-i}=0$ for all $1\leq i\leq m-1$, so we reduce to the case $\l=\l_m$. We proceed by induction on $m-i$. To establish the base case we need to show that $a_{m-1}=a_{m+1}=0$. Suppose $a_{m-1}\neq 0$. If $a_{m+1} \neq 0$ then $\lambda-\alpha_{m-1}-2\alpha_m$ and $\lambda-2\alpha_m-\alpha_{m+1}$ are weights of $V$, which both restrict to the same 
$T_{H^0}$-weight $\nu=\mu_1-2\beta_m+\beta_{m-1}=\mu_2-\beta_m$. Now $\nu$ does not occur in $V_1$ (since $\nu$ is not under $\mu_1$), and its multiplicity in $V_2$ is at most $1$. By Lemma \ref{l:easy}, this is incompatible with $V=V_1\oplus V_2$, so we must have $a_{m+1}=0$. 
Now consider the weights $\lambda-\alpha_{m-1}-\alpha_m$ and $\lambda-\alpha_m-\alpha_{m+1}$ in $\Lambda(V)$. Since $a_{m+1}=0$, $\lambda-\alpha_m-\alpha_{m+1}$ is conjugate to $\lambda-\alpha_m$ and thus Lemma \ref{l:t1} implies that this weight has multiplicity $1$ in $V$. Similarly, Lemma \ref{l:s816} implies that  $\lambda-\alpha_{m-1}-\alpha_m$ has multiplicity $2-\e$ in $V$, where $\e=1$ if $p$ divides $a_{m-1}+2$, otherwise $\e=0$. Both of these weights restrict to the $T_{H^0}$-weight $\nu=\mu_1-\beta_m=\mu_2-\beta_{m-1}$, so $\nu$ occurs in $V$ with multiplicity at least $3-\e$. Now $\nu$ has multiplicity at most $1$ in both $V_1$ and $V_2$, so Lemma \ref{l:easy} implies that $\e=1$ (since $V=V_1\oplus V_2$). Therefore $p$ divides $a_{m-1}+2$, hence 
$a_{m-1}=p-2$ since $\l$ is $p$-restricted. Therefore $p>0$ and $c_m=p$ (see \eqref{e:mu1mu2}), so $\nu \not \in \Lambda(V_1)$ and the multiplicity of $\nu$ in $V_1 \oplus V_2$ is at most $1$. This final contradiction implies that $a_{m-1}=0$, and a symmetric argument yields $a_{m+1}=0$. This establishes the base case in the induction. In particular, we have reduced to the case $\l=\l_2$ when $m=2$.

Now assume $m \ge 3$ and $2\leq k \leq m-1$, and suppose that $a_{m-k+j}=a_{m+k-j}=0$ for all $1\leq j \leq k-1$. To complete the proof of the claim, we need to show that $a_{m-k}=a_{m+k}=0$. Seeking a contradiction, let us assume $a_{m-k}\neq 0$. If $a_{m+k} \neq 0$ then 
$\lambda-\sum_{j=m-k}^{m-1}\alpha_j-2\alpha_m$ and $\lambda-2\alpha_m-\sum_{j=m+1}^{m+k}\alpha_j$ are weights of $V$, which both restrict to the same $T_{H^0}$-weight $\nu=\mu_1-\sum_{j=m-k}^{m-2}\beta_j+\beta_{m-1}-2\beta_{m}=\mu_2-\sum_{j=m-k}^{m-2}\beta_j-\beta_m$.  Now $\nu$ does not occur in $V_1$ (since $\nu$ is not under $\mu_1$). Furthermore, since  $a_{m-k}\neq 0$ and $\mu_2=\sum_{j=1}^{m-k}(a_j+a_{2m-j})\o_j+2\o_{m-1}$ by the induction hypothesis, $\nu$ is conjugate to $\mu_2-\beta_{m-k}$, so the multiplicity of $\nu$ in $V_2$ is at most $1$. Therefore $m_{V}(\nu) \ge 2$ but the multiplicity of $\nu$ in $V_1\oplus V_2$ is at most $1$. This is a contradiction, and we conclude that $a_{m+k}=0$. 
  
We have $a_{m-k}\neq 0$ and $a_m=1$, so $\nu_i=\lambda-\sum_{j=m}^{m+k-i}\alpha_j-\sum_{j=1}^{i} \alpha_{m-k+j-1}$ is a weight of $V$ for all $0\leq i \leq k$, and each $\nu_i$ restricts to the same $T_{H^0}$-weight $\nu=\mu_1-\sum_{j=m-k}^{m-2}\beta_j-\beta_m=\mu_2-\sum_{j=m-k}^{m-1}\beta_j$.  Since $a_m=1$, the induction hypothesis implies that 
each weight $\nu_i$ with $i<k$ is conjugate to $\lambda-\alpha_{m-k}-\alpha_m$, hence $m_{V}(\nu_i)=1$ for all $i<k$. In addition, Lemma \ref{l:s816} implies that $\nu_k$ occurs in $V$ with multiplicity $k$ if $k+1+a_{m-k}$ is divisible by $p$, otherwise with multiplicity $k+1$. It follows that $m_V(\nu) \ge 2k$ if $p$ divides $k+1+a_{m-k}$, otherwise $m_V(\nu) \ge 2k+1$. We now calculate the multiplicity of $\nu$ in $V_1 \oplus V_2$. By  Lemma \ref{l:s816}, the weight $\mu_1-\sum_{j=m-k}^{m-2}\beta_j-\beta_m \in \Lambda(V_1)$ occurs in $V_1$ with multiplicity $k-1$ if $p$ divides $k+1+a_{m-k}$, and multiplicity $k$ otherwise.  Similarly, $\mu_2-\sum_{j=m-k}^{m-1}\beta_j$ has multiplicity $k-1$ in $V_2$ if $k+1+a_{m-k}$ is divisible by $p$, otherwise the multiplicity is $k$. We conclude that $m_{V_1\oplus V_2}(\nu) < m_{V}(\nu)$, which is a contradiction. Therefore  
$a_{m-k}=0$, and a symmetric argument yields $a_{m+k}=0$. This completes the proof of the claim.
  
We have now reduced to the case $\l=\l_m$, so $V=\Lambda^m(W)$, $\mu_1=2\o_m$ and $\mu_2=2\o_{m-1}$. To complete the proof of the lemma it remains to show that $V=V_1 \oplus V_2$. By Lemma \ref{l:dmdim} we have
$$\dim V = \dim \Lambda^m(W)=\binom{2m}{m} = 2\cdot \dim L_{D_m}(2\o_m) = \dim V_1 + \dim V_2$$
and the result follows. We record the case $\l=\l_m$ in Table \ref{t:c1,3,6}.
\end{proof}

\begin{lem}\label{c1,3,6:p7}
Proposition \ref{TH:C1,3,6} holds in case (vii) of Table \ref{t:c1,3,6s}.
\end{lem}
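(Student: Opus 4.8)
The plan is to combine Seitz's classification in the connected case with the exceptional isogeny between $C_n$ and $B_n$ in characteristic $2$ (Remark \ref{r:conds}(b)); the novel feature, compared with cases (i)--(vi), is that the tensor-indecomposability hypothesis on $V$, rather than a failure of $H$-irreducibility, is what removes most weights with $a_n \neq 0$. First I would record the embedding: here $G = C_n$, $p = 2$ and $H = N_G(\mathrm{SO}(W)) = \mathrm{SO}(W).2$, so that $H^0 = D_n$ is generated by the short root subgroups of $G$. Taking $\b_i = \a_i$ for $i < n$ and $\b_n = \a_{n-1} + \a_n$ as a base of $H^0$, with corresponding fundamental weights $\o_i$, and writing $\sigma$ for the generator of $H/H^0$ (which induces the graph automorphism of $D_n$ interchanging $\o_{n-1}$ and $\o_n$), one computes the restriction
$$\Big(\sum_{i=1}^{n} b_i\l_i\Big)\Big|_{H^0} = \sum_{i=1}^{n-1} b_i\o_i + (b_{n-1} + 2b_n)\o_n.$$
Thus $\l|_{H^0}$ is fixed by $\sigma$ exactly when $a_n = 0$, while $\l_n|_{H^0} = 2\o_n$.

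The crux is a structural fact about $p$-restricted irreducibles of $C_n$: if $\l = \l' + \l_n$ with $\l' = \sum_{i<n} a_i\l_i \neq 0$, then $L_{C_n}(\l) \cong L_{C_n}(\l') \otimes L_{C_n}(\l_n)$ is tensor-decomposable. I would prove this by a dimension count. By Seitz's theorem (the connected case; cf. Remark \ref{r:conds}(g)) the module $L_{C_n}(\l')$, having $a_n = 0$, restricts irreducibly to $H^0$, so $\dim L_{C_n}(\l') = \dim L_{D_n}(\l'|_{H^0})$. Since $\l|_{H^0} = \l'|_{H^0} + 2\o_n$, Steinberg's theorem gives $L_{D_n}(\l|_{H^0}) = L_{D_n}(\l'|_{H^0}) \otimes L_{D_n}(\o_n)^{(2)}$, and together with its $\sigma$-conjugate $L_{D_n}(\l'|_{H^0}) \otimes L_{D_n}(\o_{n-1})^{(2)}$ these two irreducible $KH^0$-modules have total dimension $2^n \dim L_{C_n}(\l') = \dim L_{C_n}(\l') \cdot \dim L_{C_n}(\l_n)$, using $\dim L_{C_n}(\l_n) = 2^n$ (from the isogeny and Lemma \ref{l:dmspin}, or \cite{Lubeck}). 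As these are composition factors of $V|_{H^0}$ (note $(\sigma.\l)|_{H^0}$ is $W(C_n)$-conjugate to $\l$, hence a weight of $V$), while $L_{C_n}(\l)$ sits inside $L_{C_n}(\l') \otimes L_{C_n}(\l_n)$ as the composition factor of highest weight $\l$, the two resulting dimension inequalities collapse to equalities, proving the claim. By our standing hypothesis that $V$ is tensor-indecomposable, this disposes of every $\l$ with $a_n = 1$ apart from $\l = \l_n$.

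It then remains to verify irreducibility in the two surviving families, both recorded in Table \ref{t:c1,3,6}. If $a_n = 0$ then $\l|_{H^0}$ is $\sigma$-fixed and $V|_{H^0}$ is irreducible by Seitz, whence $V|_H$ is irreducible; this gives the rows $\l = \sum_{i<n} a_i\l_i$. If $\l = \l_n$ then $\l|_{H^0} = 2\o_n$ and $(\sigma.\l)|_{H^0} = 2\o_{n-1}$ are distinct, each affording a $KH^0$-composition factor of dimension $2^{n-1}$ by Lemma \ref{l:dmdim}; as $\dim L_{C_n}(\l_n) = 2^n$ we conclude $V|_{H^0} = L_{D_n}(2\o_n) \oplus L_{D_n}(2\o_{n-1})$, with the two summands interchanged by $\sigma$, so $V|_H$ is irreducible.

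The step I expect to be the main obstacle is the structural fact in the second paragraph: establishing $\dim L_{C_n}(\l_n) = 2^n$ and the tensor factorization $L_{C_n}(\l'+\l_n) \cong L_{C_n}(\l') \otimes L_{C_n}(\l_n)$, and recognising that in characteristic $2$ the weights with $a_n = 1$ and $\l \neq \l_n$ are eliminated by tensor-indecomposability rather than by the weight-multiplicity collisions (via Lemma \ref{l:easy}) used throughout cases (i)--(vi). This is precisely the point at which the isogeny with $B_n$ must be invoked.
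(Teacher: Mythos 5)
Your proof is correct and arrives at exactly the paper's list of examples, but by a genuinely different route. The paper settles this case almost entirely by citation: the dichotomy ``$a_n=0$ or $\l=\l_n$'' comes from quoting Seitz's characteristic-$2$ tensor-factorization result \cite[1.6]{Seitz2}, the family with $a_n=0$ is then read off from the case labelled ${\rm MR}_4$ in \cite[Table I]{Ford1}, and $\l=\l_n$ from the case ${\rm U}_6$ in \cite[Table II]{Ford1}. You instead reprove the factorization $L_{C_n}(\l'+\l_n)\cong L_{C_n}(\l')\otimes L_{C_n}(\l_n)$ from scratch, sandwiching $\dim V$ between a lower bound (the two non-isomorphic $KH^0$-composition factors $L_{D_n}(\l'|_{H^0})\otimes L_{D_n}(\o_n)^{(2)}$ and its $\s$-conjugate, obtained from Seitz's connected-case theorem plus Steinberg's tensor product theorem) and the upper bound $\dim L_{C_n}(\l')\cdot 2^n$ coming from $L_{C_n}(\l)$ being a composition factor of $L_{C_n}(\l')\otimes L_{C_n}(\l_n)$; and you dispose of $\l=\l_n$ by the analogous count using Lemma \ref{l:dmdim}. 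What this buys is near self-containment: the only external input is Seitz's connected classification (which the paper itself invokes for precisely these triples in Remark \ref{r:conds}(g)), and Ford's tables are not needed at all; the cost is the extra dimension bookkeeping. One step you should make explicit: your parenthetical justification that $(\s.\l)|_{H^0}$ affords the highest weight of a $KH^0$-composition factor is not sufficient as stated, since being a weight of $V$ does not by itself produce a composition factor. The claim is nevertheless true here, by the argument used in the proof of case (i) (Lemma \ref{c1,3,6:p1}): the positive roots of $H^0$ are short roots of $G$, and no short root $\b$ satisfies $\a_n-\b\in\mathbb{N}_0\Pi(G)$, so $\l-\a_n+m\b\notin\L(V)$ for all $m\in\mathbb{N}$; hence by \eqref{e:mt} any non-zero vector in $V_{\l-\a_n}$ is a maximal vector for $B_{H^0}$. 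With that detail inserted, both of your dimension inequalities are valid and the argument goes through.
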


\begin{proof}
Here $G=C_n$, $p=2$ and $H^0 = D_{n}$. Up to conjugacy we have 
$$H^0=\la U_{\pm \a_{1}},\dots,U_{\pm \a_{n-1}},U_{\pm (\a_{n-1}+\a_n)} \ra$$ 
and $H=H^0\la s_{\a_n} \ra$.
Let $\{\o_{1},\ldots,\o_{n}\}$ be the set of fundamental dominant weights for $H^0$ corresponding to this base of its root system, so if
$\mu = \sum_{i=1}^{n}{b_i\l_i}$ is a weight for $T$ then
$$\mu|_{H^0}= \sum_{i=1}^{n-1} b_{i}\o_{i}+(b_{n-1}+2b_{n})\o_{n}.$$
(Note that $\a_{n-1}+\a_{n} \in \Sigma(G)$ is a short root.)

Suppose $V|_{H}$ is irreducible. Since $V$ is a tensor-indecomposable $KG$-module,  \cite[1.6]{Seitz2} implies that either  $a_n=0$ or $\l=\l_n$. If $a_n=0$ then all composition factors of $V|_{H^0}$ have $p$-restricted highest weights, so the configuration $(C_n,D_n,V)$ must arise as one of the examples in \cite{Ford1}. By considering the case labelled ${\rm MR}_4$ in \cite[Table I]{Ford1}, we see that $V|_{H}$ is irreducible for any highest weight of the form $\l=\sum_{i}a_i\l_i$ with $a_n=0$. 
On the other hand, if
$\l=\l_n$ then $V$ is a spin module for $G$ and $V|_{H^0}$ is a sum of two Frobenius twists of spin modules for $H^0$; this example is labelled ${\rm U}_6$ in \cite[Table II]{Ford1}. These cases are listed in Table \ref{t:c1,3,6}.
\end{proof}

This completes the proof of Proposition \ref{TH:C1,3,6}. In particular, we have established Theorem \ref{main} for the relevant subgroups in the $\C_1$, $\C_3$ and $\C_6$ collections.

\chapter{Imprimitive subgroups}\label{s:c2}

Let us now turn to the imprimitive subgroups comprising the $\C_2$ collection. Recall from Section \ref{ss:c2} that such a subgroup arises as the stabilizer  in $G$ of a direct sum decomposition
$$W = W_{1}\oplus W_{2} \oplus \cdots \oplus W_{t}$$
of the natural $KG$-module $W$, where $\dim W_{1}=\dim W_{i}$ for all $i$. Moreover, if $G$ is symplectic or orthogonal then we require the $W_{i}$ to be non-degenerate and pairwise orthogonal with respect to the underlying non-degenerate form on $W$.
The particular cases to be considered are listed in Table \ref{t:c2s} (see Proposition \ref{p:cdisc}).

\renewcommand{\arraystretch}{1.2}
\begin{table}[h]
$$\begin{array}{lllcl} \hline
& G & H & & \mbox{Conditions} \\ \hline
{\rm (i)} & A_n & A_l^tT_{t-1}.S_t & n+1 = (l+1)t & \mbox{$l \ge 0$, $t \ge 2$} \\
{\rm (ii)} & B_n & (2^{t-1}\times B_l^t).S_t & 2n+1=(2l+1)t & \mbox{$l \ge 1$, $t \ge 3$ odd} \\
{\rm (iii)} & C_n & C_l^t.S_t & n=lt & \mbox{$l \ge 1$, $t \ge 2$} \\
{\rm (iv)} & D_n & (2^{t-1}\times B_l^t).S_t & 2n=(2l+1)t & \mbox{$l \ge 1$, $t \ge 2$ even, $p \neq 2$} \\
{\rm (v)} & D_n & (D_l^t.2^{t-1}).S_t & n=lt & \mbox{$l \ge 1$, $t \ge 2$} \\ \hline
\end{array}$$
\caption{The collection $\C_2$}
\label{t:c2s}
\end{table}
\renewcommand{\arraystretch}{1}

\section{The main result} 

\begin{prop}\label{TH:C2}
Let $V$ be an irreducible tensor-indecomposable $p$-restricted $KG$-module with highest weight $\l$ and let $H$ be a maximal $\C_{2}$-subgroup of $G$. Then $V|_{H}$ is irreducible if and only if $(G,H,\l)$ is one of the cases recorded in Table \ref{t:c2}.
\end{prop}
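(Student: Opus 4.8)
The plan is to adapt the Clifford-theoretic strategy already used for the $\C_1$, $\C_3$ and $\C_6$ collections. If $V|_{H^0}$ is irreducible then $(G,H,V)$ is read off directly from Seitz's classification in \cite{Seitz2}, so I assume throughout that $V|_{H^0}$ is reducible while $V|_{H}$ is irreducible. By \eqref{e:vx} the $KH^0$-composition factors of $V$ then have highest weights forming a single orbit under the component group $H/H^0$. The decisive structural feature of a $\C_2$-subgroup is that $(H^0)'=X_1\cdots X_t$ is a central product of $t$ isomorphic classical factors (of type $A_l$, $B_l$, $C_l$ or $D_l$), possibly together with a central torus in the $A_n$ case, and that $H/H^0$ induces the full symmetric group $S_t$ permuting these factors, decorated by the sign twists (for types $B_l$) or graph twists (for types $D_l$) recorded in Section \ref{ss:c2}.

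First I would fix a maximal torus $T_{H^0}=T_1\cdots T_t\subseteq T$ and a compatible Borel subgroup, and compute the restriction map $\mu\mapsto\mu|_{H^0}$ explicitly for each of the five cases of Table \ref{t:c2s}, exactly as in the proofs of Lemmas \ref{c1,3,6:p2} and \ref{c1,3,6:p3}. Writing $\l|_{H^0}=\mu^{(1)}+\cdots+\mu^{(t)}$ with $\mu^{(j)}$ supported on $X_j$, the $S_t$-action permutes the summands $\mu^{(j)}$, so Clifford theory forces the projections to form a single $S_t$-orbit up to the sign or graph twists. The heart of the argument is then to bound the support of $\l$. For each simple root $\a_j$ of $G$ with $a_j\neq0$ I would inspect $\l-\a_j\in\L(V)$ (Lemma \ref{l:t1}) and, where needed, the longer weights $\l-(\a_i+\cdots+\a_j)$, deciding via \eqref{e:mt} when they afford highest weights of $KH^0$-composition factors, and then comparing their restrictions with the admissible $H$-conjugates of $\l|_{H^0}$. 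The \emph{connecting} simple roots, which bridge adjacent blocks $W_j$ and $W_{j+1}$, are decisive: subtracting one transfers weight between blocks and produces a $T_{H^0}$-weight that cannot be $H$-conjugate to $\l|_{H^0}$ unless the interior coefficients of $\l$ vanish. This reduction should eliminate all weights except the natural and spin-type candidates and the exceptional families.

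The case $l=0$ (where $(H^0)'=1$ and $H=N_G(T)$) is handled separately: here every nonzero weight space of $V$ is a $KH^0$-composition factor, so $V|_{H}$ is irreducible precisely when $\L(V)$ is a single $W(G)$-orbit with all multiplicities $1$, giving exactly the minuscule weights $\l_k$ for $A_n$. For $l\ge1$ the reduction should leave only $\l_1$ and $\l_n$ for type $A$, $\l_1$ and $\l_n$ for type $B$, and $\l_1$, $\l_{n-1}$, $\l_n$ for the two $D_n$ cases, together with the characteristic-$2$ weight $\l_n$ and the exceptional family $\l_{n-1}+a\l_n$ for $C_n$, and the characteristic-$2$ weights $\l_1+\l_{n-1}$, $\l_1+\l_n$ for $D_n$ with $(t,p)=(2,2)$.

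Finally, for each surviving candidate I would confirm (ir)reducibility. For the minuscule, exterior-power and spin weights this is a transitivity-plus-dimension check: $S_t$ permutes the relevant factors transitively, the pertinent weight multiplicities are $1$, and a dimension count using Lemmas \ref{l:dmspin}, \ref{l:s114} and Proposition \ref{p:dims} identifies $V$ with the full sum of conjugate factors. The delicate survivors are the $C_n$ family $\l=\l_{n-1}+a\l_n$ with $2a+3\equiv0\imod{p}$, where I would use Lemma \ref{c2:l3} (and Remark \ref{r:sz}) to pin down the critical multiplicity $m_V(\mu)=1$, and the $D_n$ characteristic-$2$ family, where the count rests on $\dim L_{D_n}(\l_1+\l_n)$ from Lemma \ref{l:dm}; in both cases Lemma \ref{l:easy} is the tool that rules out a forbidden third composition factor. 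I expect the main obstacle to be exactly this multiplicity bookkeeping across the transitively permuted factors in arbitrary rank — extending Ghandour's finite-rank combinatorics to $t$ blocks of unbounded size — together with separating the genuinely irreducible configurations from the near-misses in small rank and in characteristic $2$, where additional weight coincidences occur.
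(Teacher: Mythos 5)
Your overall framework --- Clifford theory, associated permutations in $S_t$, elimination of highest weights by restricting weight strings, then dimension counts for the survivors --- is indeed the skeleton of the paper's proof (Sections \ref{ss:prel_c2}--\ref{ss:pc2}, organized around the invariant $h(\mu|_{H^0})$ and Lemma \ref{l:eh}), and your treatment of $l=0$ matches the paper exactly. But there are genuine gaps. First, in cases (ii) and (iv) of Table \ref{t:c2s} the subgroup $H^0=B_l^t$ is \emph{not} of maximal rank in $G$, so weight-restriction combinatorics alone cannot decide which vectors afford highest weights of $KH^0$-composition factors: distinct $T$-weights collapse to the same $T_{H^0}$-weight, and the decisive eliminations (e.g.\ $a_n\ge 2$ when $G=B_n$, $l=1$, or $a_{n-1}a_n\neq 0$ when $G=D_n$) require explicit root-subgroup embeddings (obtained inductively from \cite[Claim 8]{Test2}) and the construction, via the PBW basis, of maximal vectors for $B_{H^0}$ that are nontrivial linear combinations $bx-ay$ of $T$-weight vectors; your plan to decide these questions ``via \eqref{e:mt}'' and restriction comparisons cannot produce such vectors. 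Moreover, your reduction to the short survivor lists silently uses facts that themselves need proof: the irreducibility of the spin modules restricted to $(2^{t-1}\times B_l^t).S_t$ (Lemma \ref{l:c2_special}, an induction alternating between $B$- and $D$-type ambient groups), and, to kill the candidates $\l_1+\l_n$, $\l_2+\l_n$ and $\l_2+\l_{n-1}$, a lower bound on the number of spin-type composition factors in $L_G(\l_1+\l_n)|_{H^0}$ (Lemma \ref{l:c2_special_2}, proved by analyzing the tensor product $L_G(\l_1)\otimes L_G(\l_n)$); nothing in your proposal plays either role.

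Second, the hardest single step --- showing $V|_H$ \emph{is} irreducible for $G=C_n$, $t=2$, $\l=\l_{n-1}+a\l_n$ with $2a+3\equiv 0\imod{p}$ --- is not the multiplicity bookkeeping you describe. Lemma \ref{c2:l3} gives $m_V(\mu)=1$ for one particular weight, and Lemma \ref{l:easy} then feeds the \emph{necessity} arguments (forcing the congruence, and forcing $t=2$ via a parity argument on the coefficient of $\a_{n-l}$ together with the multiplicity computation of Lemma \ref{l:118}), but neither yields irreducibility. The paper's proof of irreducibility (Lemma \ref{l:c2cn}) is a long contradiction argument: assuming $V|_H$ reducible, it manufactures a $B_J$-maximal vector outside $\la v^+\ra$ and $\la w^+\ra$ by a duality argument in $V^*\cong V$ using annihilators, then takes such a vector of minimal level and rules out every possible weight of $e_\b w$ for $\b$ of maximal height. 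Your citation of Remark \ref{r:sz} does point at a viable alternative route --- the dimension formulas of \cite{SZ} give $\dim V=\frac{1}{2}(p^n-1)=\dim V_1+\dim V_2$, exactly as in Remark \ref{r:sz2} --- but that is an identity between the dimensions of three different irreducible modules, not a consequence of $m_V(\mu)=1$; if you intend that route you must invoke \cite[Theorem 0.1]{SZ} in full, and you still owe the converse statements (that $t>2$, and $(l,a)=(1,0)$ with $t\ge 3$, and $2a+3\not\equiv 0\imod{p}$ all fail), which occupy most of Lemma \ref{l:c2cn} before the irreducibility argument even begins.
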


\begin{remk}\label{r:condds}
Let us make a couple of comments on the statement of Proposition \ref{TH:C2}.
\begin{itemize}\addtolength{\itemsep}{0.3\baselineskip}
\item[(a)] The required conditions for the case $(G,H)=(C_n,C_l^t.S_t)$ in Table \ref{t:c2} (with $\l = \l_{n-1} + a\l_n$ and $n=lt$) are as follows:
\vspace{3mm}
\begin{quote}
$t=2$ and either $(l,a) = (1,0)$, or $0 \le a <p$ and $2a+3 \equiv 0 \imod{p}$.
\end{quote}
\vspace{3mm}
Note that if $l=1$ then $\l|_{H^0} = (a+1)\o_{1,1}+a\o_{2,1}$.
\item[(b)] In the fourth column of Table \ref{t:c2}, we give the restriction of $\l$ to a suitable maximal torus of $(H^0)'$ (as before, we denote this restriction by $\l|_{H^0}$) in terms of a set of fundamental dominant weights $\{\o_{i,1}, \ldots, \o_{i,l}\}$ for the $i$-th factor $X_i$ in $(H^0)' = X_1\cdots X_t$. In addition, in the fifth column $\kappa$ denotes the number of $KH^0$-composition factors in $V|_{H^0}$. Any condition appearing in the final column of Table \ref{t:c2s} also applies for any of the relevant examples in Table \ref{t:c2}.
\end{itemize}
\end{remk}

\renewcommand{\arraystretch}{1.2}
\begin{table}[h]\small
$$\begin{array}{llllll} \hline
G & H & \l & \l|_{H^0} & \kappa & \mbox{Conditions} \\  \hline
A_n & A_l^tT_{t-1}.S_t & \l_1 & \omega_{1,1} & t & \\
& & \l_n & \omega_{t,l} & t & \\
& & \l_k & - & \binom{n+1}{k} & l = 0,\, 2 \le k \le n-1 \\

B_n & (2^{t-1}\times B_l^t).S_t & \l_1 & \omega_{1,1} & t & \\
& & \l_n & \omega_{1,l} + \cdots + \omega_{t,l} & 2^{(t-1)/2} & \\

C_n & C_l^t.S_t & \l_1 & \omega_{1,1} & t & \\
& & \l_n & \omega_{1,l} + \cdots + \omega_{t,l} & 1 & p = 2 \\
& & \l_{n-1}+a\l_n & (a+1)\o_{1,l} +\o_{2,l-1}+a\o_{2,l} & 2 & \mbox{See Remark \ref{r:condds}(a)} \\

D_n & (2^{t-1}\times B_l^t).S_t & \l_1 & \o_{1,1} & t & \\
& & \l_{n-1},\,\l_{n} & \o_{1,l} + \cdots + \o_{t,l} & 2^{(t-2)/2} &  \\
D_n & (D_l^t.2^{t-1}).S_t & \l_1 & \o_{1,1} & t & \\
 & & \l_{n-1},\,\l_n  & \o_{1,l} + \cdots + \o_{t,l} & 2^{t-1} & \\
& & \l_{1}+\l_{n-1} & \o_{1,1}+\o_{1,l} + \o_{2,l-1} & 4 & (t,p)=(2,2),\, \mbox{$l \ge 3$ odd} \\
& & \l_{1}+\l_{n} & \o_{1,1}+\o_{1,l} + \o_{2,l} & 4 & (t,p)=(2,2),\, \mbox{$l \ge 3$ odd} \\ \hline
\end{array}$$
\caption{The $\C_2$ examples}
\label{t:c2}
\end{table}
\renewcommand{\arraystretch}{1}

\section{Preliminaries}\label{ss:prel_c2}

Let $H$ be a $\C_{2}$-subgroup of $G$ and assume that $(H^0)'$ is semisimple (that is, assume $l \ge 1$ in case (i) of Table \ref{t:c2s}, and $l \ge 2$ in case (v)). Write $(H^0)' = X_{1}\cdots X_{t}$. Let $\{\b_{i,1}, \ldots, \b_{i,l}\}$ be a set of simple roots for $X_i$ and let $\{\o_{i,1},\ldots,\o_{i,l}\}$ be a corresponding set of fundamental dominant weights for $X_i$. Let $\l=\sum_{i=1}^{n}a_{i}\l_{i}$ denote the highest weight of $V$ and suppose $\l|_{X_{i}} = \sum_{j=1}^{l}{a_{i,j}\o_{i,j}}$ for each $i$, so
$$\l|_{H^0} = \sum_{i=1}^{t}{\sum_{j=1}^{l}{a_{i,j}\o_{i,j}}}.$$
Suppose $V|_{H}$ is irreducible and assume for now that we are not in case (v) of Table \ref{t:c2s}. 
If $\mu = \sum_{i=1}^{n}b_{i}\l_{i}$ is a weight of $V$ that affords the highest weight of a composition factor of $V|_{H^0}$ then Clifford theory implies that there exists a permutation $\s \in S_t$ such that
\begin{equation}\label{e:star}
\mu|_{H^0} = \sum_{i=1}^{t}\sum_{j=1}^{l}{a_{i,j}\o_{\s(i),j}}.
\end{equation}
We call $\s$ the \emph{associated permutation} of $\mu$. Write $\mu|_{X_{i}} = \sum_{j=1}^{l}{b_{i,j}\o_{i,j}}$ and define $h(\mu|_{H^0}) = \sum_{i=1}^t{h(\mu|_{X_{i}})}$, where
$$h(\mu|_{X_{i}}) = \sum_{j=1}^{l}{b_{i,j}}$$
is the sum of the coefficients of the fundamental dominant weights of $X_i$ in the above expression for $\mu|_{X_{i}}$.

It will be useful in our later analysis to observe that if a subset $S$ of $\{1, \ldots, t\}$ is $\s$-invariant (that is, if $\s$ fixes $S$ setwise) then we have an equality of multisets
\begin{equation}\label{e:S}
\{h(\mu|_{X_{i}}) \mid i \in S\} = \{h(\l|_{X_{i}}) \mid i \in S\}.
\end{equation}
In particular, we have
\begin{equation}\label{e:h}
\sum_{i \in S}{h(\mu|_{X_{i}})} = \sum_{i \in S}{h(\l|_{X_{i}})},
\end{equation}
and so in the special case
$S=\{1, \ldots, t\}$, we get $h(\mu|_{H^0}) = h(\l|_{H^0})$.

The situation in case (v) of Table \ref{t:c2s} is very similar. Suppose $l \ge 2$ and fix a labelling of simple roots so that the standard graph automorphism of $X_{i}$ swaps the simple roots $\b_{i,l-1}$ and $\b_{i,l}$. If we assume $V|_{H}$ is irreducible and $\mu \in \L(V)$ affords the highest weight of a
$KH^0$-composition factor then there exists $\s \in S_t$ and a collection of permutations $\{\rho_1, \ldots, \rho_t\}$ in $S_{l}$ such that $\rho_i = (l-1,l)$ if $\rho_{i}\neq 1$, and precisely an even number $k \ge 0$ of the $\rho_i$ are non-trivial, and we have
$$\mu|_{X_{\s(i)}} = \sum_{j=1}^{l-2}{a_{i,j}\o_{\s(i),j}}+a_{i,l-1}\o_{\s(i),\rho_i(l-1)}+a_{i,l}\o_{\s(i),\rho_i(l)}$$
for all $i$. In particular, we note that \eqref{e:S} and \eqref{e:h} hold for any $\s$-invariant subset $S$ of $\{1, \ldots, t\}$.

For easy reference, let us record this general observation.

\begin{lem}\label{l:eh}
Suppose $V|_{H}$ is irreducible and $\mu \in \L(V)$ affords the highest weight of a composition factor of $V|_{H^0}$. If $\s \in S_t$ is the associated permutation of $\mu$ and $S$ is a $\s$-invariant subset of $\{1, \ldots, t\}$ then
$$\sum_{i \in S}{h(\mu|_{X_{i}})} = \sum_{i \in S}{h(\l|_{X_{i}})}.$$
In particular, we have
$h(\mu|_{H^0}) = h(\l|_{H^0})$.
\end{lem}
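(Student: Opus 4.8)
The plan is to read the conclusion straight off the Clifford-theoretic descriptions of $\mu|_{H^0}$ already recorded in \eqref{e:star} (for cases (i)--(iv)) and its analogue in case (v). The function $h$ is designed precisely so as to be insensitive to the permutations appearing in those descriptions, and this invariance is the entire content of the lemma; indeed the argument is essentially the observation made in the paragraphs preceding the statement, and I would simply formalize it.

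First I would treat the generic case, where $\mu|_{H^0}$ is given by \eqref{e:star}. Comparing coefficients of $\o_{k,j}$ on both sides of that identity, the coefficient $b_{k,j}$ of $\o_{k,j}$ in $\mu|_{X_k}$ equals $a_{\s^{-1}(k),j}$. Summing over $j$ then gives
$$h(\mu|_{X_{\s(i)}}) = \sum_{j=1}^{l} a_{i,j} = h(\l|_{X_i})$$
for every $i$. The key point is simply that $h$ adds up all of the coefficients, so reindexing the factors by $\s$ carries $h(\l|_{X_i})$ to $h(\mu|_{X_{\s(i)}})$ verbatim.

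Next I would invoke the $\s$-invariance of $S$: since $\s$ fixes $S$ setwise, the map $i \mapsto \s(i)$ restricts to a bijection of $S$, so summing the displayed identity over $i \in S$ yields
$$\sum_{k \in S} h(\mu|_{X_k}) = \sum_{i \in S} h(\mu|_{X_{\s(i)}}) = \sum_{i \in S} h(\l|_{X_i}),$$
which is the desired equality. The multiset version \eqref{e:S} follows in exactly the same way from the factor-wise identities, and specializing to $S = \{1, \ldots, t\}$ recovers $h(\mu|_{H^0}) = h(\l|_{H^0})$ as in \eqref{e:h}.

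Finally, case (v) requires only a cosmetic modification. There the description of $\mu|_{X_{\s(i)}}$ additionally involves a permutation $\rho_i \in S_l$ which merely interchanges the weights $\o_{\s(i),l-1}$ and $\o_{\s(i),l}$ inside the $\s(i)$-th factor. Since $h$ sums the coefficients over all $j$, this internal reshuffling leaves $h(\mu|_{X_{\s(i)}}) = \sum_{j=1}^{l} a_{i,j} = h(\l|_{X_i})$ unchanged, and the argument of the previous paragraph applies verbatim. I do not expect any genuine obstacle: the only point to verify carefully is that neither the cross-factor permutation $\s$ nor the intra-factor permutations $\rho_i$ affect the total coefficient sum $h$, and both facts are immediate from the definition of $h$.
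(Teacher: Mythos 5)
Your proposal is correct and follows exactly the paper's own reasoning: the lemma is recorded there as an immediate consequence of the Clifford-theoretic descriptions \eqref{e:star} and its case-(v) analogue, with the observation that $h$ sums all coefficients and hence is unaffected both by the cross-factor permutation $\s$ (after reindexing over the $\s$-invariant set $S$) and by the intra-factor transpositions $\rho_i$. Your formalization of the reindexing step and the case-(v) modification matches the paper's argument verbatim.
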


\section{Proof of Proposition \ref{TH:C2}}\label{ss:pc2}

\begin{lem}\label{c2:p1}
Proposition \ref{TH:C2} holds in case (i) of Table \ref{t:c2s}.
\end{lem}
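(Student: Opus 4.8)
The plan is to dispose of the torus case $l=0$ first, and then treat $l\ge1$ by a content-transport argument built on the Clifford constraint \eqref{e:star}. When $l=0$ we have $(H^0)'=1$, so $H^0=T_n$ is a maximal torus and $H=N_G(T_n)=T_n.S_{n+1}$. Here $V|_{H^0}$ is the direct sum of the one-dimensional $T$-weight spaces, permuted by $H/H^0=W(G)=S_{n+1}$ according to the Weyl action on $\L(V)$. I would first argue that irreducibility of $V|_H$ forces $\L(V)$ to be a single $W(G)$-orbit with all weight multiplicities equal to $1$: a second orbit, or a weight space of dimension $\ge2$, produces a proper $N_G(T_n)$-submodule. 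Since $e(A_n)=1$, Corollary~\ref{c:sat} shows every subdominant weight of $W_G(\l)$ already lies in $\L(V)$, so the single-orbit requirement forces $\l$ to be a minimal weight; by \eqref{e:mini} these are exactly $\l_1,\dots,\l_n$. Conversely each such $\l$ gives $V=\Lambda^k(W)$, whose weights form one orbit with multiplicities $1$, and $N_G(T_n)$ permutes the weight lines transitively, so $V|_H$ is irreducible with $\kappa=\binom{n+1}{k}$. This is the $l=0$ row of Table~\ref{t:c2}.

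Now assume $l\ge1$, so $(H^0)'=X_1\cdots X_t$ with $X_i\cong A_l$, and adopt the notation of Section~\ref{ss:prel_c2}; writing $\b_{i,j}=\a_{(i-1)(l+1)+j}$, the \emph{gap} roots $\a_g$ with $g\in\{l+1,\dots,(t-1)(l+1)\}$ are precisely the simple roots of $G$ lying outside $(H^0)'$. If $V|_{H^0}$ is irreducible the result follows from \cite{Seitz2} and yields nothing here (as $(H^0)'$ is already reducible on $W$), so I assume $V|_{H^0}$ is reducible and $V|_H$ irreducible. The invariant I would track is the multiset $P(\l)=\{\!\{\,\l|_{X_i}\,\}\!\}$ of block-restrictions: by \eqref{e:star} every $\mu$ affording the highest weight of a $KH^0$-composition factor satisfies $P(\mu)=P(\l)$ (Lemma~\ref{l:eh} being the coarse consequence on the sums $h$). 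The engine is to manufacture such $\mu=\l-\gamma$ for suitable sums $\gamma=\a_p+\dots+\a_q$ of consecutive simple roots: saturation (Corollary~\ref{c:sat}, available since $e(A_n)=1$, together with Lemma~\ref{l:t1}) places $\mu$ in $\L(V)$ whenever $\la\l,\gamma^\vee\ra>0$, and $f_\gamma v^+$ is a $B_{H^0}$-maximal vector — hence $\mu$ affords a composition-factor highest weight — provided that at each endpoint $\a_p,\a_q$ the corresponding $\mathfrak{sl}_2$-lowering of $v^+$ vanishes, which holds when the endpoint is a gap root, or is the top active node of its block.

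With this tool I would run a chain of reductions, each time exhibiting a $\mu$ with $P(\mu)\ne P(\l)$. Taking $\gamma=\a_g$ shows $a_g=0$ for every gap (the restriction $\a_g|_{H^0}=-\o_{i,l}-\o_{i+1,1}$, for the blocks $X_i,X_{i+1}$ flanking the gap, raises $h$ by $2$). Taking $\gamma$ to run from one gap to the next across an interior block $X_c$ ($2\le c\le t-1$) shows that no interior block is active. For the boundary blocks, transporting the largest active node $j_1$ of $X_1$ rightward to the first gap forces first $j_1=1$ and then its coefficient to be $1$, so $X_1$ is either inactive or carries exactly $\o_{1,1}$; the mirror argument, via the graph automorphism of $A_n$, does the same for $X_t$ and $\o_{t,l}$. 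Finally a transport across all the gaps rules out both boundary blocks being active simultaneously (it merges the two units into a single block carrying $\o_1+\o_l$). Hence exactly one block is active and $\l\in\{\l_1,\l_n\}$; for these $V=W=\bigoplus_iW_i$ and $V\cong W^*=\bigoplus_i\Lambda^lW_i$, whose $t$ summands are cyclically permuted by $S_t$, so $V|_H$ is irreducible with $\kappa=t$, completing Table~\ref{t:c2}.

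I expect the main obstacle to be the boundary analysis and, above all, the case $t=2$, where there are no interior blocks to absorb a transported unit. There the coarse invariant $h$ is insufficient — it cannot, for instance, separate $\{\!\{2\o_1,\o_l\}\!\}$ from $\{\!\{\o_1,\o_1+\o_l\}\!\}$ — so one must retain the full pattern-multiset $P$, and must check the maximality hypothesis on each $f_\gamma v^+$ with care (this is exactly why the boundary nodes $\o_{1,1}$ and $\o_{t,l}$ behave differently from all others). Lemma~\ref{l:easy} is kept in reserve for any borderline configuration in which two distinct weights share an $H^0$-restriction that is available in only one composition factor.
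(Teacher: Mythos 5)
Your proposal reaches the correct classification by what is at bottom the paper's own method (Clifford theory, maximal vectors of the form $\lambda-(\alpha_p+\cdots+\alpha_q)$ anchored at gap roots, comparison of block patterns), but several execution choices genuinely differ and are worth recording. Where the paper cites \cite[Lemma 2.4]{g_paper} for $l=0$, you prove it directly. For interior blocks the paper transports a \emph{minimally chosen} active node to the adjacent gap and certifies maximality of the weight vector by showing $\mu+m\beta\notin\Lambda(V)$ for all block roots $\beta$; your gap-to-gap string is cleaner and uniform in $l$, because both endpoints being gap roots forces $[e_\beta,f_\gamma]=0$ for every block root $\beta$ --- but it obliges you to prove $f_\gamma v^+\neq 0$, which saturation does \emph{not} give (it only gives $\lambda-\gamma\in\Lambda(V)$), and the naive $\mathfrak{sl}_2$ argument fails precisely when $p\mid\langle\lambda,\gamma^\vee\rangle$, which can happen for a gap-to-gap string over a block with several active nodes. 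This is fillable in type $A$ (apply $e_\delta$ for $\delta$ a prefix of $\gamma$ ending just before an active node, then the complementary suffix, landing on $f_{\alpha_j}v^+\neq0$; group-level fixedness is automatic since $\gamma$ is multiplicity-free, so $V_{\mu+m\beta}=0$ for $m\ge2$), but it must be said. Finally, your boundary claim as literally stated (``forces first $j_1=1$ and then its coefficient to be $1$'') is not what the single transport gives when $t=2$: there it yields only the cross-constraint $\lambda|_{X_2}=(a_1-1)\omega_{2,1}$. As your closing paragraph anticipates, retaining the full pattern multiset and running the mirror transport closes this: for $(l,t)=(1,2)$ the two constraints $a_1=a_3+1$ and $a_3=a_1+1$ are jointly contradictory, so at most one boundary block is active, with coefficient $1$. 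This is a genuinely different endgame from the paper's, which instead exhibits a third $KH^0$-composition factor (afforded by $\lambda-2\alpha_1-2\alpha_2$ when $a_1\ge2$) and contradicts $|H:H^0|=2$; both are valid, the paper's being shorter, yours avoiding any counting of composition factors or multiplicity input.
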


\begin{proof}
Here $G=A_n$, $n+1=(l+1)t$ and $l \ge 0$. If $l=0$ then $H$ is the stabilizer of a direct sum decomposition of $W$ into $1$-spaces and thus $H=N_{G}(T)$ is the normalizer of a maximal torus $T$ of $G$. By \cite[Lemma 2.4]{g_paper}, $V|_{H}$ is irreducible if and only if $\l$ is \emph{minimal} (see the paragraph preceding Lemma \ref{l:dmspin}), and so the only examples are $\l=\l_k$ for all $1 \le k \le n$ (see \eqref{e:mini}).

For the remainder we may assume $l \ge 1$, so $H^0 = X_1 \cdots X_tT_{t-1}$ and up to conjugacy we have
$$X_{i}=\la U_{\pm \a_{(i-1)(l+1)+1}}, \ldots, U_{\pm \a_{(i-1)(l+1)+l}} \ra \cong A_{l}.$$
Let $\{\o_{i,1},\ldots,\o_{i,l}\}$ be the fundamental dominant weights of $X_i$ corresponding to this base of its root system, so if
$\mu = \sum_{i=1}^{n}{b_i\l_i}$ is a weight for $T$ then
$$\mu|_{X_{i}} = \sum_{j=1}^{l}\la \mu,\a_{(i-1)(l+1)+j}\ra \o_{i,j} = \sum_{j=1}^{l}{b_{(i-1)(l+1)+j}\o_{i,j}}$$
and thus
$$\mu|_{H^0} = \sum_{i=1}^{t}{\sum_{j=1}^{l}{b_{(i-1)(l+1)+j}\o_{i,j}}}.$$
Let $\l = \sum_{i=1}^n{a_i\l_i}$ denote the highest weight of $V$ and assume $V|_{H}$ is irreducible.

First suppose $a_{m(l+1)} \neq 0$ for some $m \in \{1, \ldots, t-1\}$. Then
$$\mu = \l - \a_{m(l+1)}  = \l+\l_{m(l+1)-1}-2\l_{m(l+1)}+\l_{m(l+1)+1} \in \L(V)$$
affords the highest weight of a $KH^0$-composition factor and
$$\mu|_{H^0} = \l|_{H^0}+\o_{m,l}+\o_{m+1,1}.$$
Therefore $h(\mu|_{H^0}) = h(\l|_{H^0})+2$, which contradicts Lemma \ref{l:eh}.
Similarly, if $l=1$ and $a_{2m+1}\neq 0$ with $m \in \{1, \ldots, t-2\}$ then $\mu = \l-\a_{2m}-\a_{2m+1}-\a_{2m+2} \in \L(V)$ affords the highest weight of a 
$KH^0$-composition factor and once again we reach a contradiction since $h(\mu|_{H^0}) = h(\l|_{H^0})+2$.

Now assume $l \ge 2$. Let $r \in \{1, \ldots, l-1\}$ be minimal such that $a_{m(l+1)-r} \neq 0$ for some $m \in \{1, \ldots, t-1\}$. Then
$$\mu = \l - \a_{m(l+1)-r} - \a_{m(l+1)-r+1} -  \cdots - \a_{m(l+1)} \in \L(V)$$
affords the highest weight of a $KH^0$-composition factor, but
$h(\mu|_{H^0}) = h(\l|_{H^0})+1$. Similarly, if $r \in \{1, \ldots, l-1\}$ is minimal such that $a_{m(l+1)+r} \neq 0$ for some $m \in \{1, \ldots, t-1\}$  then
$\mu  = \l -  \a_{m(l+1)} - \cdots - \a_{m(l+1)+r} \in \L(V)$
affords the highest weight of a $KH^0$-composition factor, but again we have $h(\mu|_{H^0}) = h(\l|_{H^0})+1$, which is not possible. Notice that for any $l \ge 1$ we have now reduced to the case 
$$\l=a_1\l_1+a_n\l_n.$$

Suppose $a_1 \neq 0$. Then $\mu = \l - \a_1-\a_2 - \cdots - \a_{l+1} \in \L(V)$
affords the highest weight of a $KH^0$-composition factor, with associated permutation $\s \in S_t$, say (see \eqref{e:star}). If $l \ge 2$ then $\s(1)=2$ is the only possibility and thus $a_1=1$. Similarly, if $l=1$ and $t \ge 3$ then either $\s(1)=2$,  or $\s(1)=t$ and $\s(i)=1$ for some $i \in \{2, \ldots, t-1\}$. Once again, we conclude that $a_1=1$. Finally, if $(l,t)=(1,2)$ and $a_1 \ge 2$ then $\nu = \l - 2\a_1-2\a_2 \in \L(V)$ affords the highest weight of a $KH^0$-composition factor, which implies that $V|_{H^0}$ has at least three distinct composition factors (since $\l$ and $\l-\a_1-\a_2$ also afford highest weights of $KH^0$-composition factors). This is a contradiction since $|H:H^0|=2$.

Similarly, if $a_n \neq 0$ then
$\mu  = \l-\a_{n-l} - \cdots - \a_{n} \in \L(V)$
affords the highest weight of a $KH^0$-composition factor, and by considering $\mu|_{H^0}$ we quickly deduce that $a_n=1$ is the only possibility when $(l,t) \neq (1,2)$. On the other hand, if $(l,t)=(1,2)$ and $a_3 \ge 2$ then $\nu = \l - 2\a_2-2\a_3 \in \L(V)$ also affords the highest weight of a $KH^0$-composition factor, so there are at least three composition factors, but this cannot happen since $|H:H^0|=2$. We have now reduced to the cases $\l=\l_1$, $\l_n$ and $\l_1+\l_n$.

Of course, the highest weights $\l=\l_1$ and $\l_n$ are genuine examples since $L_G(\l_1)$ is the natural $KG$-module, and $L_G(\l_{n})$ its dual. Finally, if $\l=\l_1+\l_n$ then $\mu = \l -\a_1 - \cdots - \a_{n-l} \in \L(V)$ affords the highest weight of a $KH^0$-composition factor, but it is easy to see that there is no compatible associated permutation.
\end{proof}

Before we continue the general analysis of the cases in Table \ref{t:c2s}, let us first deal with two special cases that arise when $H$ is of type $(2^{t-1} \times B_l^t).S_t$.

\begin{lem}\label{l:c2_special}
Suppose $G=B_n$ or $D_n$, and let $H$ be a $\C_2$-subgroup of type $(2^{t-1} \times B_l^t).S_t$ as in cases (ii) and (iv) of Table \ref{t:c2s}. Let $V$ be a spin module for $G$, so 
$V=L_G(\l_n)$ if $G = B_n$, and $V=L_G(\l_{n-1})$ or $L_G(\l_n)$ if $G=D_n$. Then $V|_{H}$ is irreducible, and $V|_{H^0}$ is the sum of $2^{\lfloor (t-1)/2\rfloor}$ irreducible $KH^0$-modules
each with highest weight $\sum_{i=1}^t\o_{i,l}$, that is, the tensor product of the spin
modules for each of the simple factors of $H^0$.
\end{lem}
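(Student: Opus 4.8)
The plan is to determine the $KH^0$-module structure of $V$ by a direct weight calculation, and then to deduce the irreducibility of $V|_H$ from the Clifford algebra of the underlying form. Throughout, recall $p \neq 2$ (automatic for $G=B_n$, imposed in case (iv)), and that each $W_i$ is non-degenerate of odd dimension $2l+1$, so ${\rm SO}(W_i) = B_l$ has a unique spin module $\Delta_i := L_{B_l}(\o_{i,l})$ of dimension $2^l$. The external tensor product $U := \Delta_1\otimes\cdots\otimes\Delta_t$ is an irreducible $KH^0$-module of highest weight $\sum_i\o_{i,l}$.

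First I would pin down $\l|_{H^0}$ and the relevant multiplicities. Choosing coordinates $\e_1,\dots,\e_n$ for $X(T)_{\mathbb R}$ in the usual way, the spin module $V$ has the weights $\tfrac12(\pm\e_1\pm\cdots\pm\e_n)$, each of multiplicity $1$ (all sign patterns for $G=B_n$; those with an even number of minus signs for the chosen half-spin module of $G=D_n$). A maximal torus $T_{H^0}$ of $H^0=\prod_i{\rm SO}(W_i)$ has dimension $lt$ and affords on $W$ the nonzero weights $\pm\delta^{(i)}_j$ ($1\le i\le t$, $1\le j\le l$) together with the zero weight of multiplicity $t$. Comparing with the $T$-action on $W$ determines the restriction map $X(T)\to X(T_{H^0})$: precisely $lt$ of the directions $\e_k$ restrict (up to sign) to the $\delta^{(i)}_j$, while the remaining $n-lt$ directions restrict to $0$ (these number $(t-1)/2$ for $B_n$ and $t/2$ for $D_n$). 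Hence, with a compatible choice of positive systems, $\l$ restricts to the dominant weight $\sum_i\o_{i,l}$, and each weight $\tfrac12\sum_{i,j}\pm\delta^{(i)}_j$ of $U$ is the image of exactly $2^{\lfloor(t-1)/2\rfloor}$ weights of $V$: for $B_n$ the $(t-1)/2$ collapsed signs are free, while for $D_n$ the $t/2$ collapsed signs are constrained by the half-spin parity condition, leaving $2^{t/2-1}$ choices. Since all weights have multiplicity $1$, the $T_{H^0}$-character of $V$ equals $2^{\lfloor(t-1)/2\rfloor}$ times that of $U$, and $\sum_i\o_{i,l}$ is the unique maximal weight occurring. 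To upgrade this character identity to an isomorphism $V|_{H^0}\cong 2^{\lfloor(t-1)/2\rfloor}\cdot U$ of $KH^0$-modules, I would invoke the standard factorisation of a spin module over an orthogonal direct sum $W=W_1\perp\cdots\perp W_t$ — equivalently the graded algebra isomorphism ${\rm Cl}(W)\cong{\rm Cl}(W_1)\otimes\cdots\otimes{\rm Cl}(W_t)$, valid since $p\neq2$ — which realises $V$ as a direct sum of copies of $U$.

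The main obstacle is the irreducibility of $V|_H$: since every $KH^0$-constituent of $V$ is isomorphic to $U$, the constituents are \emph{not} permuted by $H/H^0$ in distinct isomorphism classes, so Proposition~\ref{p:niso} and the usual Clifford-theoretic criterion do not apply. Instead I would work inside ${\rm End}_{H^0}(V)\cong M_m(K)$, where $m=2^{\lfloor(t-1)/2\rfloor}$, and show that the elements of $H\setminus H^0$ generate this algebra together with $H^0$. The decisive elements are the involutions $z_{i,j}\in H$ generating the $2^{t-1}$-part of $H$, each acting as $-1$ on $W_i\perp W_j$; in the Clifford algebra $z_{i,j}$ acts on $V$, up to a scalar, as $c_ic_j$, where $c_i$ is the volume element (the product of an orthonormal basis) of the odd-dimensional space $W_i$. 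The key structural facts are that each $c_i$ centralises $H^0$ (it commutes with ${\rm Cl}^0(W_k)\supseteq{\rm Spin}(W_k)$ for every $k$), that the $c_i$ pairwise anticommute and square to nonzero scalars, so that $c_1,\dots,c_t$ generate a copy of the Clifford algebra ${\rm Cl}_t(K)$ inside ${\rm End}_{H^0}(V)$, with the $z_{i,j}=c_ic_j$ generating its even part ${\rm Cl}_t^0(K)$. Over the algebraically closed field $K$ of characteristic $\neq 2$, this even part is $M_m(K)$ when $t$ is odd, and $M_m(K)\oplus M_m(K)$ when $t$ is even, the two blocks being separated by the central element $c_1\cdots c_t$, which acts on the half-spin module $V$ as a scalar and so projects the image onto a single block. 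In either case the image of ${\rm Cl}_t^0(K)$ is a nonzero homomorphic image of a semisimple algebra landing in the simple algebra ${\rm End}_{H^0}(V)=M_m(K)$, hence by a dimension count equals all of it. The double centralizer theorem then shows that the subalgebra of ${\rm End}_K(V)$ generated by $H^0$ together with the $z_{i,j}$ is all of ${\rm End}_K(V)$, so $\langle H^0,\,z_{i,j}\rangle\le H$ — and a fortiori $H$ — acts irreducibly on $V$. Thus the heart of the argument is this Clifford-algebra computation; the $S_t$-part of $H$ plays no role in securing irreducibility, and the value $\kappa=2^{\lfloor(t-1)/2\rfloor}$ is already delivered by the weight calculation.
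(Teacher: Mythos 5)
Your proof is correct, but it follows a genuinely different route from the paper's. The paper argues by induction on $t$: it splits $W = U_1 \perp U_2$ with $U_1 = W_1\oplus\cdots\oplus W_{t-1}$ and $U_2 = W_t$, and at each step quotes an irreducibility result already in hand for the intermediate subgroup $N_G({\rm SO}(U_1)\times{\rm SO}(U_2))$ --- Seitz's theorem \cite{Seitz2} when $t$ is even (so that ${\rm SO}(U_1)\times{\rm SO}(U_2)$ is a connected $B\times B$ pair acting on $V$ as a tensor product of two spin modules), and Proposition \ref{TH:C1,3,6} for the disconnected $\C_1$-subgroup $({\rm SO}(U_1)\times{\rm SO}(U_2))\la \tau \ra$ when $t$ is odd, with $\tau$ chosen inside $H$; the inductive hypothesis applied to $N_{{\rm SO}(U_1)}(B_l^{t-1})$ then finishes. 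You instead work directly in the Clifford algebra: the graded factorisation ${\rm Cl}(W)\cong{\rm Cl}(W_1)\otimes\cdots\otimes{\rm Cl}(W_t)$ yields the homogeneous decomposition $V|_{H^0}\cong U^{\oplus m}$ in one stroke, and irreducibility of $V|_H$ follows because the lifts of the involutions $z_{i,j}$, acting as the products $c_ic_j$ of volume elements, generate the full centralizer ${\rm End}_{H^0}(V)\cong M_m(K)$ (namely the image of the even Clifford algebra ${\rm Cl}_t^0(K)$, cut down to a single block by the central element $c_1\cdots c_t$ when $t$ is even), after which Burnside and the tensor decomposition of ${\rm End}_K(V)$ conclude. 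The paper's route is economical and stays inside the weight-theoretic framework used throughout; your route is self-contained, uniform in the parity of $t$, explains the value $\kappa = 2^{\lfloor(t-1)/2\rfloor}$ conceptually as the dimension of a spin module for the rank-$t$ Clifford algebra on $c_1,\dots,c_t$, and isolates the finer fact that $H^0$ together with the elementary abelian subgroup $\la z_{i,j}\ra$ of order $2^{t-1}$ already acts irreducibly, the $S_t$ playing no role. Your opening observation is also apt: since all $KH^0$-constituents of $V$ are isomorphic, the non-isomorphism criterion of Proposition \ref{p:niso} is unavailable here, which is exactly why some extra input (the paper's induction through larger subgroups, or your twisted-group-algebra computation) is required. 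Two harmless imprecisions: the subalgebra generated by $c_1,\dots,c_t$ is a quotient of ${\rm Cl}_t(K)$ rather than a copy (for $t$ odd the full ${\rm Cl}_t(K)$ has dimension $2m^2$ and cannot embed in ${\rm End}_{H^0}(V)$), though your argument only uses the even part, where the statement via images is correct; and the identification of the abstract module $L_G(\l_n)$ with the Clifford-theoretic spin module in every characteristic, which you use implicitly, deserves a word --- it follows, for instance, from the dimension count of Lemma \ref{l:dmspin}.
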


\begin{proof}
Here $p \neq 2$, $l \ge 1$, $t \ge 2$ and $H$ stabilizes a direct sum decomposition
$$W = W_1 \oplus W_2 \oplus \cdots \oplus W_t$$
of the natural $KG$-module $W$, where $\dim W_i = 2l+1$ for all $i$. 
Moreover, $G=B_n$ if $t$ is odd, otherwise $G=D_n$ (see Table \ref{t:c2s}). 
We proceed by induction on $t$.

If $t=2$ then the main theorem of \cite{Seitz2} implies that $V$ is an irreducible $KH^0$-module, with $T_{H^0}$-highest weight $\o_{1,l}+\o_{2,l}$ (see the case labelled ${\rm IV}_{2}$ in \cite[Table 1]{Seitz2}), so let us assume $t>2$. Set 
$$U_1 = W_1 \oplus W_2 \oplus \cdots \oplus W_{t-1},\;\; U_2 = W_t$$
and note that 
$$H^0 = B_{l}^{t-1}.B_{l} < {\rm SO}(U_1) \times {\rm SO}(U_2),$$
where $\dim U_1 = \dim W - 2l-1$ and $\dim U_2 = 2l+1$. 

Suppose $t \ge 4$ is even. Here $\dim U_1$ is odd and $N_G({\rm SO}(U_1) \times {\rm SO}(U_2)) = {\rm SO}(U_1) \times {\rm SO}(U_2)$ acts irreducibly on $V$ by the main theorem of \cite{Seitz2}. More precisely, ${\rm SO}(U_1) \times {\rm SO}(U_2)$ acts on $V$ as the tensor product of two spin modules. By induction we see that $N_{{\rm SO}(U_1)}(B_l^{t-1})$ acts irreducibly on the spin module for ${\rm SO}(U_1)$, and there are precisely 
$2^{t/2-1}$ distinct $K(B_l^{t-1})$-composition factors on this module, each with highest weight $\sum_{i=1}^{t-1}\o_{i,l}$. Since ${\rm SO}(U_2)=B_l$, the irreducibility of $V|_{H}$ follows immediately, and so does the desired description of the $KH^0$-composition factors of $V$.

Finally, suppose $t$ is odd. Now $\dim U_1$ is even and Proposition \ref{TH:C1,3,6} implies that $N_G({\rm SO}(U_1) \times {\rm SO}(U_2)) = ({\rm SO}(U_1)\times {\rm SO}(U_2))\la \tau \ra$ acts irreducibly on $V$, where $\tau$ acts as a reflection on both $U_1$ and $U_2$. Moreover, the connected component ${\rm SO}(U_1)\times {\rm SO}(U_2)$ has precisely two composition factors on $V$ (each of which is the tensor product of appropriate spin modules), which are interchanged by $\tau$. 
We can choose $\tau \in H$, so it suffices
to show that $N_{{\rm SO}(U_1)}(B_l^{t-1})$ acts irreducibly on the two spin modules for ${\rm SO}(U_1)$, and that there are precisely 
$2^{(t-3)/2}$ distinct $K(B_l^{t-1})$-composition factors on each of these modules, all with highest weight $\sum_{i=1}^{t-1}\o_{i,l}$. But this follows from the inductive hypothesis.
\end{proof}

\begin{lem}\label{l:c2_special_2}
Suppose $G=B_n$ or $D_n$, and let $H$ be a $\C_2$-subgroup of type $(2^{t-1} \times B_l^t).S_t$ as in cases (ii) and (iv) of Table \ref{t:c2s}. Let $V = L_G(\l_1+\l_n)$. Then $V|_{H^0}$ has at least $(t-2)2^{\lfloor (t-1)/2\rfloor}$ composition factors of highest weight $\sum_{i=1}^t\o_{i,l}$.
\end{lem}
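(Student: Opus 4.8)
The plan is to realize $V = L_G(\lambda_1+\lambda_n)$ as a composition factor of the tensor product $M = L_G(\lambda_1)\otimes L_G(\lambda_n)$ of the natural module and a spin module, and then to count the $KH^0$-composition factors isomorphic to $L_{H^0}(\mu)$, where $\mu = \sum_{i=1}^t \omega_{i,l}$, inside $M|_{H^0}$, subtracting off those contributed by the remaining $KG$-composition factors of $M$. The subtraction is legitimate because the multiplicity $[\,\cdot : L_{H^0}(\mu)\,]$ of $L_{H^0}(\mu)$ as a composition factor is additive in the Grothendieck group.

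First I would pin down the $KG$-structure of $M$. The only subdominant weight of $\lambda_1+\lambda_n$ is the spin weight $\lambda_n$ (if $G=B_n$) or $\lambda_{n-1}$ (if $G=D_n$), so every composition factor of $M$ other than $V$ is isomorphic to the corresponding spin module $S'$; comparing $\dim M$ with $\dim V$ via Lemmas \ref{l:bn2} and \ref{l:dm} (using $\dim S' = 2^{n-1}$ or $2^n$) then shows that $S'$ occurs with multiplicity $c\in\{1,2\}$, giving $[M] = [V] + c\,[S']$.

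Next I would compute the $L_{H^0}(\mu)$-multiplicity of $M|_{H^0}$. Writing $L_G(\lambda_1) = W_1\oplus\cdots\oplus W_t$ with $X_k$ acting on $W_k$ as its natural module and trivially on the other summands, and using Lemma \ref{l:c2_special} to write $L_G(\lambda_n)|_{H^0} = 2^{\lfloor(t-1)/2\rfloor}\bigotimes_{i=1}^t S_i$ with $S_i = L_{X_i}(\omega_{i,l})$, I see that $M|_{H^0}$ is a direct sum of $t\cdot 2^{\lfloor(t-1)/2\rfloor}$ summands, each isomorphic to $W_k\otimes\bigotimes_i S_i$. The crux is the local claim that each such summand contains a composition factor isomorphic to $\bigotimes_i S_i$, equivalently that the $X_k$-module $W_k\otimes S_k$ (natural tensor spin) contains the spin module $S_k$ as a composition factor. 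I expect this to be the main obstacle, and would establish it uniformly in $l$ and $p$ via Clifford multiplication $W_k\otimes S_k\to S_k$: this is a nonzero $\mathrm{Spin}_{2l+1}$-equivariant map (here $p\neq 2$), and it is surjective since $S_k$ is irreducible, so $S_k$ is a quotient of $W_k\otimes S_k$. Alternatively, for $l\ge 2$ the dimension formula behind Lemma \ref{l:bn2} shows $L_{B_l}(\omega_1)\otimes L_{B_l}(\omega_l)$ has one or two composition factors isomorphic to $L_{B_l}(\omega_l)$, and the case $l=1$ follows from the decomposition of $L_{A_1}(2)\otimes L_{A_1}(1)$. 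Granting the claim, each summand contributes at least one factor of highest weight $\mu$, so $[M|_{H^0} : L_{H^0}(\mu)] \ge t\cdot 2^{\lfloor(t-1)/2\rfloor}$.

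Finally, Lemma \ref{l:c2_special} applies to either spin module and gives $S'|_{H^0} = 2^{\lfloor(t-1)/2\rfloor}\bigotimes_i S_i$, whence $[S'|_{H^0} : L_{H^0}(\mu)] = 2^{\lfloor(t-1)/2\rfloor}$. Applying $[\,\cdot:L_{H^0}(\mu)\,]$ to $[M|_{H^0}] = [V|_{H^0}] + c\,[S'|_{H^0}]$ and using $c\le 2$ yields
$$[V|_{H^0} : L_{H^0}(\mu)] = [M|_{H^0}:L_{H^0}(\mu)] - c\cdot 2^{\lfloor(t-1)/2\rfloor} \ge (t-c)2^{\lfloor(t-1)/2\rfloor} \ge (t-2)2^{\lfloor(t-1)/2\rfloor},$$
which is exactly the asserted bound.
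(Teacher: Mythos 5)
Your proposal is correct and follows essentially the same route as the paper's proof: both realize $V$ inside $M=L_G(\l_1)\otimes L_G(\l_n)$, identify all remaining $KG$-composition factors of $M$ as one or two copies of the spin module, decompose $M|_{H^0}$ into $t\cdot 2^{\lfloor (t-1)/2\rfloor}$ summands of the form $W_k\otimes\bigotimes_i S_i$ via Lemma \ref{l:c2_special}, and then subtract the spin contributions. The only minor differences are that you determine the multiplicity $c$ by comparing dimensions (Lemmas \ref{l:bn2} and \ref{l:dm}) where the paper compares multiplicities of the unique subdominant weight, and you supply an explicit justification (Clifford multiplication, or alternatively the dimension count behind Lemma \ref{l:bn2}) for the step --- asserted without proof in the paper --- that each summand $W_k\otimes\bigotimes_i S_i$ contains a composition factor of highest weight $\sum_{i=1}^{t}\o_{i,l}$.
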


\begin{proof}
We consider the tensor product $M=L_G(\l_1)\otimes L_G(\l_n)$. Set $\mu =  \l_1+\l_n-\sum_{i=1}^n\a_i$ if $G=B_n$, and $\mu =  \l_1+\l_n-\sum_{i=1}^{n-2}\a_i-\a_n$ if $G = D_n$. (Note that $\mu = \l_n$ if $G=B_n$, and $\mu=\l_{n-1}$ if $G=D_n$.)
Then $\mu$ occurs with multiplicity $n+\delta$ in $M$, where $\delta=1$ if $G=B_n$, and $\delta=0$ if $G=D_n$. By applying Lemmas \ref{l:s816} -- \ref{l:sr}, we see that $m_V(\mu) = n-\epsilon$, where
$$\epsilon=\left\{\begin{array}{ll} 
0& \mbox{if $G = B_n$, $p \nmid 2n+1$}\\
1&\mbox{if $G=B_n$,  $p \mid 2n+1$}\\
1&\mbox{if $G=D_n$, $p \nmid n$}\\
2&\mbox{if $G=D_n$,  $p \mid n$}\\
\end{array}\right.$$

Note that $\mu$ is the only subdominant weight to $\l_1+\l_n$ occurring in $M$.
By comparing the above multiplicities, we deduce that $M$ has $1$ or $2$ $KG$-composition factors with highest weight $\mu$, 
in addition to the composition factor isomorphic to $V$. Moreover, there are no other $KG$-composition factors. 

Now we consider the action of $H^0$ on the two tensor factors of $M$. The natural $KG$-module $L_G(\l_1)$ decomposes as a sum of $t$ $(2l+1)$-dimensional (natural) modules for each of the factors $B_l$. Let $W_i$ denote the natural module for the $i$-th factor of $H^0$.
By Lemma \ref{l:c2_special}, the spin module $L_G(\l_n)$ decomposes as the sum of $2^{\lfloor (t-1)/2 \rfloor}$ irreducible
$KH^0$-modules with highest weight $\sum_{i=1}^t\omega_{i,l}$.
Now each of the $2^{\lfloor (t-1)/2 \rfloor}$ tensor factors 
$W_i\otimes L_{H^0}(\sum_{i=1}^t\omega_{i,l})$ has a $KH^0$-composition factor with highest weight 
$\sum_{j=1}^t\omega_{j,l}$. So $M$ has at least 
$t\cdot2^{\lfloor (t-1)/2 \rfloor}$ such composition factors. We conclude that $V|_{H^0}$ has at least 
$(t-2)2^{\lfloor (t-1)/2 \rfloor}$
such composition factors, as required.
\end{proof}

\begin{lem}\label{c2:p2}
Proposition \ref{TH:C2} holds in case (ii) of Table \ref{t:c2s}.
\end{lem}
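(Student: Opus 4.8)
The plan is to run the same height-function machinery used for case (i) in Lemma \ref{c2:p1}, now complicated by the orthogonal block structure. First I would record the embedding: writing $W = W_1 \perp \cdots \perp W_t$ with each $W_i$ a non-degenerate $(2l+1)$-dimensional space, the factor $X_i = {\rm SO}(W_i) = B_l$ acts naturally on $W_i$ and trivially on the other blocks. The subtlety, absent in the $A_n$ case, is that the $t$ anisotropic lines of the $W_i$ merge: since $t$ is odd the maximal torus $T_{H^0}$ has rank $lt$, which is $(t-1)/2$ less than the rank $n$ of $G$, so the $(t-1)/2$ extra $\varepsilon$-coordinates of $T$ restrict to $0$ on $T_{H^0}$. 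I would fix a base of $\Sigma(G)$ compatible with the block ordering so that $\lambda_1|_{H^0} = \omega_{1,1}$ and $\lambda_n|_{H^0} = \sum_{i=1}^t\omega_{i,l}$ (the product of the spin weights), and compute the restriction $\alpha_j|_{H^0}$ of every simple root. The key qualitative feature, exactly as in case (i), is that simple roots lying strictly inside a block restrict to simple roots of that block, while the roots crossing a junction between two consecutive blocks (and those meeting the combined anisotropic part) restrict to combinations of negative height.

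With these restrictions in hand, I would assume $V|_H$ irreducible, write $\lambda = \sum_i a_i\lambda_i$, and for each position where $a_i \neq 0$ exhibit a weight $\mu = \lambda - (\alpha_r + \cdots + \alpha_s) \in \L(V)$ (using Lemma \ref{l:t1} and saturation) that affords the highest weight of a $KH^0$-composition factor but whose restriction satisfies $h(\mu|_{H^0}) = h(\lambda|_{H^0}) + 1$ or $+2$. This contradicts Lemma \ref{l:eh} and forces all such $a_i$ to vanish, reducing to $\lambda = a_1\lambda_1 + a_n\lambda_n$. Next, as in case (i), applying the same construction to a minimal root-chain starting at position $1$ (respectively ending at position $n$) and matching $\mu|_{H^0}$ against the permuted form \eqref{e:star} dictated by the associated permutation $\sigma$, I would force $a_1 \le 1$ and $a_n \le 1$; where the permutation bookkeeping is not decisive I would fall back on a weight-multiplicity count via Lemma \ref{l:easy} to rule out a third composition factor. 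This leaves the three candidates $\lambda_1$, $\lambda_n$ and $\lambda_1+\lambda_n$.

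Finally I would dispose of the three survivors. The natural module $\lambda = \lambda_1$ is irreducible since $H$ permutes the blocks $W_i$ transitively. For the spin module $\lambda = \lambda_n$, irreducibility of $V|_H$ (and the value $\kappa = 2^{(t-1)/2}$) is exactly the content of Lemma \ref{l:c2_special}. To eliminate $\lambda = \lambda_1 + \lambda_n$ I would invoke Lemma \ref{l:c2_special_2}: it produces at least $(t-2)2^{\lfloor (t-1)/2\rfloor} \ge 1$ composition factors of $V|_{H^0}$ of highest weight $\sum_{i=1}^t\omega_{i,l}$, but this weight is distinct from, and in fact strictly below, every $\sigma$-conjugate $\omega_{k,1}+\sum_i\omega_{i,l}$ of $\lambda|_{H^0}$, so it cannot be the highest weight of any Clifford component $V_i$ — contradicting the fact that under the irreducibility hypothesis every $KH^0$-composition factor is $H$-conjugate to $V_1$. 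Thus only $\lambda_1$ and $\lambda_n$ survive, and both are recorded in Table \ref{t:c2}.

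The main obstacle I anticipate is the root-restriction bookkeeping at the start: because each block $B_l$ carries its own short root while $G=B_n$ has a single short simple root, and because the anisotropic lines merge, the restrictions $\alpha_j|_{H^0}$ near the block junctions and near the combined anisotropic space are genuinely more delicate than in the $A_n$ (or the symplectic $C_n$) situation. Getting their heights exactly right — so that the contradictions in the reduction step actually materialise — is where the care is needed.
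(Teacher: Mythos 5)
Your skeleton does match the paper's in outline: the same embedding data (obtained via \cite[Claim 8]{Test2}, with the short simple root of each factor $X_i$ embedded diagonally as a product of two root subgroups of $G$), reduction of $\l$ via the $h$-function and Lemma \ref{l:eh}, Lemma \ref{l:c2_special} for the spin module, and Lemma \ref{l:c2_special_2} to eliminate $\l_1+\l_n$. However, there are two genuine gaps. First, your claimed reduction to $\l=a_1\l_1+a_n\l_n$ by height bookkeeping is false when $l=1$: the coefficient $a_2$ cannot be removed this way, since $\l_2|_{H^0}=2\o_1+2\o_2$ and no chain $\l-(\a_r+\cdots+\a_s)$ produces an $h$-violation for it. The paper accordingly only reduces to $\l=a_1\l_1+a_2\l_2+a_n\l_n$ in the case $l=1$, and must then dispose of $\l_2$ by a dimension count ($\dim V=n(2n+1)$ against $9$-dimensional composition factors, using that the number of homogeneous summands forces $9\binom{t}{2}<\dim V<18\binom{t}{2}$, a contradiction), and of $\l_2+\l_n$ by a separate argument that restricts $V$ to a chain $J\leqs Y=D_{n-1}\leqs L$ and applies Lemma \ref{l:c2_special_2} to $J$ viewed as the connected component of a $\C_2$-subgroup of $Y$. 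Neither of these cases, nor their methods, appears in your proposal.

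Second, and more fundamentally, the bound $a_n\le 1$ cannot be obtained by permutation bookkeeping or by a multiplicity count via Lemma \ref{l:easy}. When $a_n\ge 2$ the dangerous weight is $\mu=\l-\a_{n-1}-2\a_n$, whose restriction $\l|_{H^0}-2\o_t$ lies \emph{below} $\l|_{H^0}$. Such a weight is perfectly compatible with Clifford theory unless one can show that it actually affords the highest weight of a $KH^0$-composition factor: the weight $\l|_{H^0}-2\o_t$ genuinely occurs (with multiplicity $1$) inside each composition factor whose highest weight is conjugate to $\l|_{H^0}$, and since the number of such factors is not bounded by anything at your disposal, comparing multiplicities as in Lemma \ref{l:easy} yields no contradiction. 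The paper's proof at this point leaves the level of weights entirely: it first computes $m_V(\mu)=2$ (via Lemma \ref{l:sr} and a restriction through a $D_3$ subgroup), then uses the PBW basis to write $V_\mu=\la f_{\a_{n-1}+\a_n}f_{\a_n}v^+,\, f_{\a_{n-1}+2\a_n}v^+\ra$ and solves explicitly for scalars $a,b$ so that $w=af_{\a_{n-1}+\a_n}f_{\a_n}v^++bf_{\a_{n-1}+2\a_n}v^+$ is fixed by every $U_{\b_i}$, i.e.\ is a maximal vector for $B_{H^0}$; only then does Lemma \ref{l:eh} deliver the contradiction. This Lie-algebra construction, which is reused verbatim to get $a_n\le 1$ in the case $l\ge 2$, is the crux of the lemma and is missing from your argument.
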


\begin{proof}
Here $G=B_n$, $t \ge 3$ is odd, $H=(2^{t-1}\times B_l^t).S_t$, $n = lt+(t-1)/2$ and $p \neq 2$. 
We may obtain expressions for the root subgroups of $H^0$ by taking
a maximal rank subsystem subgroup of $G$ of type $D_{n-l}B_l$, to which we 
apply the construction of \cite[Claim 8]{Test2} and induction within the $D_{n-l}$ factor.

We begin with the case $l=1$. Here $n=(3t-1)/2$ and $H^0 = X_1 \cdots X_t$ with $X_{i}\cong B_1$. We may choose simple roots $\{\b_1,\ldots, \b_t\}$ for $H^0$ so that $x_{\pm\b_{t}}(c) = x_{\pm\a_n}(c)$ and $x_{\pm\b_{i}}(c) = x_{\pm\gamma_{i}}(c)x_{\pm\delta_{i}}(c)$ for all $i<t$ and $c \in K$, where
$$\gamma_{2k-1} = \a_{3k-2}+\a_{3k-1}+2\sum_{j=3k}^{n}\a_j,\;\; \delta_{2k-1} = \a_{3k-2}+\a_{3k-1}$$
and
$$\gamma_{2k} = \a_{3k-1}+2\sum_{j=3k}^{n}\a_j,\;\; \delta_{2k} = \a_{3k-1}.$$

Let $\o_{i}$ be the fundamental dominant weight of $X_{i}$ corresponding to the simple root $\b_i$. With the above choice of embedding, we deduce that if $\mu = \sum_{i=1}^{n}{b_i\l_i}$ is a weight for $T$ then $\mu|_{X_{i}} = v_{i}(\mu)\o_i$, where $v_{t}(\mu) = b_n$ and
$$v_{2k-1}(\mu) = b_n+2\sum_{j=3k-2}^{n-1}b_j,\;\; v_{2k}(\mu) = b_n+2\sum_{j=3k-1}^{n-1}b_j.$$
Let $\l=\sum_{i=1}^{n}a_i\l_i$ be the highest weight of $V$ and set $v_{i}=v_{i}(\l)$ for all $1 \le i \le t$. Assume $V|_{H}$ is irreducible.

Suppose $a_{3k} \neq 0$, where $k \ge 1$. Then $\mu = \l - \a_{3k} \in \L(V)$ affords the highest weight of a $KH^0$-composition factor, but this contradicts Lemma \ref{l:eh} since $\mu|_{H^0} = \l|_{H^0}+2\o_{2k+1}$. Similarly, if $a_{3k+1} \neq 0$, where $k \ge 1$ and $3k+1<n$ then $\l - \a_{3k}-\a_{3k+1} \in \L(V)$ affords the highest weight of a $KH^0$-composition factor, but once again this is not possible since this weight restricts to $\l|_{H^0} +2\o_{2k+2}$. Also, if $a_{3k-1} \neq 0$ and $k \ge 2$ then $\mu = \l - \a_{3k-3} - \a_{3k-2} - \a_{3k-1}-\a_{3k} \in \L(V)$ affords the highest weight of a $KH^0$-composition factor, but this contradicts Lemma \ref{l:eh} since $\mu|_{H^0} = \l|_{H^0}+2\o_{2k+1}$. We have now reduced to the case
$$\l = a_1\l_1 + a_2\l_2 + a_{n}\l_n.$$

Suppose $a_n \ge 2$. Then $\mu =
\lambda-\alpha_{n-1} - 2\alpha_n \in \L(V)$ and we claim that $m_{V}(\mu)=2$. To see 
this, let $L$ be the Levi subgroup of $G$ with derived subgroup 
$$Y=\la U_{\pm \a_{n-1}},U_{\pm\a_{n}}\ra \cong B_2.$$ 
By Lemma \ref{l:sr}, we have $m_{V}(\mu) = m_{V'}(\mu')$, where $V'=L_{Y}(a_n\eta_2)$, 
$\mu' = \mu|_{Y}$  and $\eta_1,\eta_2$ are the fundamental dominant weights for $Y$ 
corresponding to $\a_{n-1}$ and $\a_n$, respectively. 
By \cite[Table 1]{Seitz2}, the irreducible $KD_3$-module with highest
weight $\zeta = a_n\zeta_3$ restricts irreducibly to the natural $B_2$ subgroup
to give the module with highest weight $a_n\eta_2$, where $\zeta_1,\zeta_2,\zeta_3$ are 
fundamental dominant weights for $D_3$. Now, the weights of $L_{D_3}(\zeta)$ that
 restrict to $B_2$ to give $\mu'$ are 
$\zeta-\alpha'_1-2\alpha'_3$
and $\zeta-\alpha'_1-\alpha'_2-\alpha'_3$, where the $\alpha'_i$ are appropriate simple 
roots of $D_3$. By Lemma \ref{l:s114}, both of these weights occur with multiplicity $1$, whence $m_{V}(\mu)=2$ as claimed. 

By the PBW theorem (see \cite[Section 17.3]{Hu1}), a basis for the weight space $V_{\mu}$ is given by $\{f_{\a_{n-1}+\a_n}f_{\a_n}v^+,f_{\a_{n-1}+2\a_n}v^+\}$, where $v^+ \in V$ is a maximal vector for the fixed Borel subgroup of $G$. Set $w = af_{\a_{n-1}+\a_n}f_{\a_n}v^++bf_{\a_{n-1}+2\a_n}v^+$ for scalars $a,b \in K$, and apply the generating elements of the Borel subgroup $B_{H^0} = \la T_{H^0}, U_{\b_i} \mid 1 \le i \le t\ra$ of $H^0$.
We have $U_{\b_i}w=w$ for $i<t$, and 
\begin{align*}
x_{\b_t}(c)w & = w+ ce_{\a_n}w \\
& =  w+c(a[e_{\a_n}, f_{\a_{n-1}+\a_n}]f_{\a_n}v^+ +
af_{\a_{n-1}+\a_n}a_nv^+ +b[e_{\a_n}, f_{\a_{n-1}+2\a_n}]v^+)
\end{align*}
for all $c \in K$. Therefore
$$x_{\b_t}(c)w =w+c(a\gamma f_{\a_{n-1}+\a_n}v^++af_{\a_{n-1}+\a_n}a_nv^++b\delta f_{\a_{n-1}+\a_n}v^+)$$ 
for some non-zero scalars $\gamma,\delta$. In particular, if we set $b = -a(\gamma+a_n)/\delta$ then $U_{\b_t}w = w$, so $w$ is a maximal vector for $B_{H^0}$ of $T_{H^0}$-weight $\mu|_{H^0}$ and thus $\mu$ affords the highest weight of a $KH^0$-composition factor of $V$. This contradicts Lemma \ref{l:eh} since $\mu|_{H^0} = \l|_{H^0} - 2\o_t$. We conclude that $a_n \le 1$.

Next suppose $a_1+a_2 \neq 0$. Then $\mu = \l-\a_1-\a_2-\a_3 \in \L(V)$ affords the
 highest weight of a $KH^0$-composition factor and $\mu|_{H^0} = \l|_{H^0} - 2\o_1+2\o_3$. 
By considering the possibilities for the associated permutation we quickly deduce that 
$a_1+a_2=1$.

Now assume $a_1a_n \neq 0$, so $\l=\l_1+\l_n$. Then $\l|_{H^0} = 3\o_1+\sum_{i=2}^t\o_i$. However, Lemma \ref{l:c2_special_2} implies that $V|_{H^0}$ has at least one composition factor with highest weight $\sum_{i=1}^{t}\o_i$ (recall that $t \ge 3$), which is not conjugate to $\l|_{H^0}$. This contradiction eliminates the case $\l=\l_1+\l_n$.

Next suppose $a_2a_n\ne 0$, so $\lambda=\lambda_2+\lambda_n$ and $\lambda|_{H^0} = 3\omega_1+3\omega_2+\sum_{i=3}^t\omega_i$. Set $J = \langle U_{\pm\b_i} \mid 2\le i\le t\rangle$. We will consider the restriction $V|_J$. Note that $J\leqs L=\langle U_{\pm\a_k} \mid 2\le k\le n\rangle$, the derived subgroup of a Levi factor of a parabolic subgroup of $G$. More precisely, we have $J\leqs Y\leqs L$, where $Y=D_{n-1}$ is the stabilizer in $L$ of a non-degenerate $1$-space in the natural module for $L$. To simplify the notation, let us write 
$\{\eta_1,\ldots,\eta_{n-1}\}$ for the base of the root system of $Y$, and $\{\rho_1,\ldots,\rho_{n-1}\}$ for the associated fundamental dominant weights. By \cite[Proposition 2.11]{Jantzen}, $V|_L$ has a composition factor with highest weight $(\l_2+\l_n)|_L$, which yields a $KY$-composition factor of $V|_{Y}$ with highest weight $\rho_1+\rho_{n-1}$. We can now restrict this to the 
subgroup $J$, which is the connected component of a $\C_2$-subgroup of $Y$ of type $(2^{t-2} \times B_1^{t-1}).S_{t-1}$. By applying Lemma \ref{l:c2_special_2}, we see that 
$L_Y(\rho_1+\rho_{n-1})|_J$ has a composition factor with highest weight $\sum_{i=2}^t\omega_{i}$, but this contradicts the form of the highest weights of the  $KH^0$-composition factors of $V$.

We have now reduced to the cases $\l=\l_1$, $\l_2$ and $\l_n$. The case $\l=\l_1$ is an example since $V$ is simply the natural module for $G$. Similarly, $V|_{H}$ is irreducible if $\l=\l_n$ by Lemma \ref{l:c2_special}. Finally, suppose $\l=\l_2$. Here $\dim V = n(2n+1)$ by Proposition \ref{p:dims},
and each $KH^0$-composition factor is $9$-dimensional. By considering the permuting action of the symmetric group $S_t$ we see that there are at least $\binom{t}{2}$ distinct composition factors. However, there cannot be exactly $\binom{t}{2}$ factors since $\dim V > 9\binom{t}{2}$, but if there are more then $H$ must permute the homogeneous summands of $V|_{H^0}$ and this is not possible since $\dim V < 18\binom{t}{2}$. We conclude that $V|_{H}$ is reducible if $\l=\l_2$.

\vs

For the remainder we may assume $l \ge 2$. Write $H^0 = X_1 \cdots X_t$, where $X_i \cong B_l$, and let $\{\b_{i,1}, \ldots, \b_{i,l}\}$ be a set of simple roots for $X_{i}$. As before, we obtain expressions for the root groups of $H^0$ via induction and the construction of \cite[Claim 8]{Test2}. 
In this way we get
$$x_{\pm\b_{i,j}}(c) = x_{\pm\a_{(i-1)l+\lfloor (i-1)/2 \rfloor +j}}(c),\;\; x_{\pm\b_{i,l}}(c) = x_{\pm\gamma_{i}}(c)x_{\pm\delta_{i}}(c),$$
for all $c \in K$, $i<t$ and $1 \le j \le l-1$ with
$$\gamma_{2k-1} = \sum_{j=0}^{l}\a_{(2k-1)l+k+j-1},\;\; \delta_{2k-1} = \gamma_{2k-1}+2\sum_{j=k(2l+1)}^n\a_j$$
and
$$\gamma_{2k} = \a_{k(2l+1)-1},\;\; \delta_{2k} = \gamma_{2k}+2\sum_{j=k(2l+1)}^n\a_j.$$
For $i=t$ we have
$$x_{\pm\b_{t,j}}(c) = x_{\pm\a_{n-l+j}}(c)$$
for all $c \in K$ and $1 \le j \le l$.
It follows that if $\mu=\sum_{i=1}^{n}{b_{i}\l_{i}} \in \L(V)$ then
$$\mu|_{X_i} = \sum_{j=1}^{l-1}b_{(i-1)l+\lfloor (i-1)/2 \rfloor +j}\o_{i,j}+v_i(\mu)\o_{i,l},\;\; \mu|_{X_t} = \sum_{j=1}^{l}b_{n-l+j}\o_{t,j}$$
for all $i<t$, where
$$v_{2k-1}(\mu) = b_n + 2\sum_{j=(2k-1)l+k-1}^{n-1}b_j,\;\; v_{2k}(\mu) = b_n + 2\sum_{j=k(2l+1)-1}^{n-1}b_j.$$
We set $v_t = a_n$, and $v_i = v_i(\l)$ for all $1 \le i < t$.

Suppose $a_{ml+\lfloor m/2 \rfloor} \neq 0$ for some $m \in \{1, \ldots, t-1\}$. If $m=2k$ is even then $\mu = \l - \a_{2kl+k} \in \L(V)$ affords the highest weight of a $KH^0$-composition factor, but this contradicts Lemma \ref{l:eh} since $\mu|_{H^0} = \l|_{H^0}+\o_{2k+1,1}$. Similarly, if $m=2k-1$ is odd then $\mu = \l - \a_{(2k-1)l+k-1} \in \L(V)$ affords 
the highest weight of a $KH^0$-composition factor and
$$\mu|_{H^0} = \l|_{H^0}+\o_{2k-1,l-1}-2\o_{2k-1,l}+\o_{2k,1}.$$
Let $\s \in S_t$ be the associated permutation. If $\s(i) \ge 2k$ for some $i<2k$ then $v_{2k-1} = v_{2k}$, but this is not possible since we are assuming $a_{(2k-1)l+k-1} \neq 0$. Therefore $\{1, \ldots, 2k-1\}$ is $\s$-invariant, but this is ruled out by Lemma \ref{l:eh}.

For the time being, let us assume $l \ge 3$. Suppose $r \in \{1, \ldots, l-2\}$ is minimal such that $a_{l-r} \neq 0$. Then $\mu = \l - \a_{l-r} - \a_{l-r+1} - \cdots - \a_{l} \in \L(V)$ affords the highest weight of a 
$KH^0$-composition factor and
$$\mu|_{H^0} = \l|_{H^0} + \o_{1,l-r-1} - \o_{1,l-r} + \o_{2,1}.$$
This contradicts Lemma \ref{l:eh}. Similarly, if  $r \in \{1, \ldots, l-2\}$ is minimal such that $a_{n-l+r} \neq 0$ then $\mu = \l - \a_{n-l} - \a_{n-l+1} - \cdots - \a_{n-l+r} \in \L(V)$ affords the highest weight of a $KH^0$-composition factor and
$\mu|_{H^0} = \l|_{H^0} - \o_{t,r}+\o_{t,r+1}$.
If $\s \in S_t$ denotes the associated permutation then clearly $\s(t) \neq t$, whence $v_{t} = v_{t-1}$ but this is not possible since $a_{n-l+r} \neq 0$. Finally, if $a_{n-1} \neq 0$ then $\mu = \l - \a_{n-l} - \cdots - \a_{n-1} \in \L(V)$ affords the highest weight of a $KH^0$-composition factor, but this contradicts Lemma \ref{l:eh} since $\mu|_{H^0} = \l|_{H^0} -\o_{t,l-1}+2\o_{t,l}$.

Next, for each odd integer $m \in \{1, \ldots, t-2\}$, say $m=2k-1$, suppose $s \in \{1, \ldots, l-2\}$ is minimal such that $a_{(2k-1)l+k-1+s} \neq 0$. Then
$$\mu = \l - \a_{(2k-1)l+k-1} - \a_{(2k-1)l+k} - \cdots - \a_{(2k-1)l+k-1+s} \in \L(V)$$
affords the highest weight of a $KH^0$-composition factor, contradicting Lemma \ref{l:eh} since
$$\mu|_{H^0} = \l|_{H^0}+\o_{2k-1,l-1}-2\o_{2k-1,l}-\o_{2k,s}+\o_{2k,s+1}.$$
Similarly, if $s \in \{1, \ldots, l-2\}$ is minimal such that $a_{(2k-1)l+k-1-s} \neq 0$ then
$$\mu = \l - \a_{(2k-1)l+k-1-s} - \a_{(2k-1)l+k-s} - \cdots - \a_{(2k-1)l+k-1} \in \L(V)$$
affords the highest weight of a $KH^0$-composition factor, but this is ruled out by Lemma \ref{l:eh} since
$$\mu|_{H^0} = \l|_{H^0}+\o_{2k-1,l-s-1}-\o_{2k-1,l-s}+\o_{2k,1}.$$
For each even integer $m \in \{1, \ldots, t-1\}$, say $m=2k$, suppose $s \in \{1, \ldots, l-1\}$ is minimal such that $a_{2kl+k-s} \neq 0$ then
$$\mu = \l - \a_{2kl+k-s} - \a_{2kl+k-s+1} - \cdots - \a_{2kl+k} \in \L(V)$$
affords the highest weight of a $KH^0$-composition factor, with associated permutation $\s \in S_t$. If $s \ge 2$ then this contradicts Lemma \ref{l:eh} since 
$$\mu|_{H^0} = \l|_{H^0} +\o_{2k,l-s}-\o_{2k,l-s+1}+\o_{2k+1,1}.$$ 
On the other hand, if $s=1$ then 
$$\mu|_{H^0} = \l|_{H^0} +\o_{2k,l-1}-2\o_{2k,l}+\o_{2k+1,1}.$$ 
However, if $\s(i)>2k$ for some $i \le 2k$ then $v_{2k}=v_{2k+1}$, which is not possible since $a_{2kl+k-1} \neq 0$. Therefore $\{1, \ldots, 2k\}$ is $\s$-invariant, but this contradicts Lemma \ref{l:eh}.

Finally, suppose $t \ge 5$ and $m \in \{2, \ldots, t-3\}$ is the largest even integer, $m=2k$ say, such that $a_{2kl+k +s} \neq 0$ for some $s \in \{1, \ldots, l-2\}$. Assume 
$s \in \{1, \ldots, l-2\}$ is minimal with respect to the property $a_{2kl+k +s} \neq 0$.
Then
$$\mu = \l - \a_{2kl+k} - \a_{2kl+k+1} - \cdots - \a_{2kl+k+s} \in \L(V)$$
affords the highest weight of a $KH^0$-composition factor and
$\mu|_{H^0} = \l|_{H^0} - \o_{2k+1,s} + \o_{2k+1,s+1}$. Let $\s \in S_t$ be the associated permutation. By the maximality of $m$, and our earlier deductions, we have $a_{(i-1)l+\lfloor (i-1)/2\rfloor+s}=0$ for all $i>2k+1$, whence $\s(2k+1)\le 2k$ and thus $v_{2k+1} = v_{2k}$. This is a contradiction since $a_{2kl+k+s} \neq 0$. 
Therefore, for $l \ge 3$, we have reduced to the case $\l = a_1\l_1+a_n\l_n$.

We can also reduce to $\l = a_1\l_1+a_n\l_n$ when $l=2$. To see this, first recall that we have already established $a_{2m+\lfloor m/2 \rfloor}=0$ for all $m \in \{1, \ldots, t-1\}$. Suppose $a_{5k+1} \neq 0$ for some $k \ge 1$. Then $\mu = \l - \a_{5k} - \a_{5k+1} \in \L(V)$ and $\mu|_{H^0} = \l|_{H^0}-\o_{2k+1,1}+2\o_{2k+1,2}$, but this contradicts Lemma \ref{l:eh} since $\mu$ affords the highest weight of a $KH^0$-composition factor. Similarly, if $a_{5k-1} \neq 0$ then $\mu = \l - \a_{5k-1} - \a_{5k} \in \L(V)$ and $\mu|_{H^0} = \l|_{H^0}+\o_{2k,1}-2\o_{2k,2}+\o_{2k+1,1}$ is the highest weight of a $KH^0$-composition factor. If $\s \in S_t$ denotes the associated permutation then $\{1,\ldots, 2k\}$ is $\s$-invariant since $a_{5k-1} \neq 0$, but this contradicts Lemma \ref{l:eh}. Finally, if $a_{5k-2} \neq 0$ for some $k$ then
$$\mu = \l - \a_{5k-3} - \a_{5k-2} - \a_{5k-1} - \a_{5k} \in \L(V)$$
affords the highest weight of a $KH^0$-composition factor and
$$\mu|_{H^0} = \l|_{H^0}+\o_{2k-1,1}-2\o_{2k-1,2}+\o_{2k+1,1}.$$
Here $\{1, \ldots, 2k-1\}$ has to be $\s$-invariant, but once again this contradicts Lemma \ref{l:eh}.

Finally, let us assume $\l = a_1\l_1+a_n\l_n$ and $l \ge 2$. 
If $a_1 \neq 0$ then $\l - \a_1-\cdots-\a_l \in \L(V)$ affords the 
highest weight of a $KH^0$-composition factor and 
$\mu|_{H^0} = \l|_{H^0} - \o_{1,1} + \o_{2,1}$. 
If $\s \in S_t$ is the associated permutation then $\s(1)=2$ is the
 only possibility, whence $a_1=1$. 
Similarly, by arguing as in the case $l=1$, we deduce that $a_n \le 1$. 
In particular, if $a_{1}a_n \neq 0$ then $\l=\l_1+\l_n$ and $\l|_{H^0} = \o_{1,1}+\sum_{i=1}^t\o_{i,l}$. But Lemma \ref{l:c2_special_2} shows that $V|_{H^0}$ has at least one composition factor with highest weight $\sum_{i=1}^t\o_{i,l}$, and this is a contradiction. 

We have now reduced to the case $\l=\l_1$ or $\l_n$. If $\l=\l_1$ then $V$ is the natural module for $G$ and $V|_{H}$ is irreducible. By Lemma \ref{l:c2_special}, the same conclusion also holds when $\l=\l_n$. These cases are recorded in Table \ref{t:c2}.
\end{proof}

\begin{lem}\label{c2:p3}
Let $G=C_n$ and let $H$ be a $\C_2$-subgroup of type $C_{l}^t.S_t$ as in case (iii) of Table \ref{t:c2s}. Suppose $\l \neq \l_{n-1}+a\l_n$ for any $a\ge 0$. Then $V|_{H}$ is irreducible if and only if $\l = \l_1$, or $p=2$ and $\l = \l_n$.
\end{lem}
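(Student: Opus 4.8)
The plan is to follow the strategy used for the other imprimitive cases, combining the height function of Section~\ref{ss:prel_c2} with the multiset constraints coming from the associated permutation. First I would fix the embedding $(H^0)' = X_1\cdots X_t$ with $X_i\cong C_l$: up to conjugacy $X_i$ is generated by the short root subgroups $U_{\pm\a_{(i-1)l+1}},\ldots,U_{\pm\a_{il-1}}$ together with the long root subgroup $U_{\pm 2e_{il}}$, so that the roots of $G$ lying outside $H^0$ are exactly the $t-1$ ``gluing'' roots $\a_{il}$ ($1\le i<t$). Writing $\o_{i,l}$ for the fundamental weight of $X_i$ attached to the long root, a direct calculation gives, for $\mu=\sum_k b_k\l_k\in\L(V)$,
$$\mu|_{X_i}=\sum_{j=1}^{l-1}b_{(i-1)l+j}\,\o_{i,j}+v_i(\mu)\,\o_{i,l},\qquad v_i(\mu)=\sum_{k\ge il}b_k,$$
which is the basic bookkeeping device for the whole argument. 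The ``only if'' direction is the reduction carried out below; the ``if'' direction amounts to verifying the two surviving weights $\l_1$ and (for $p=2$) $\l_n$.

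Assume $V|_H$ is irreducible. For each coefficient $a_j\neq 0$ I would produce, using Lemma~\ref{l:t1} and saturation, a weight $\mu=\l-(\a_p+\cdots+\a_q)\in\L(V)$ crossing exactly one gluing root and affording the highest weight of a $KH^0$-composition factor, and then test it against Lemma~\ref{l:eh}. A short computation shows that the height change of such a chain equals $-\mathbf{1}(p\equiv 1 \bmod l)+\mathbf{1}(q\equiv 0\bmod l)$; hence the boundary coefficients $a_{il}$ ($i<t$) and all ``generic'' interior coefficients $a_j$ with $j\not\equiv 0,1\pmod l$ and $j\le n-l$ are forced to vanish, since for these the height strictly increases, contradicting Lemma~\ref{l:eh}. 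This leaves the ``first-interior'' coefficients $a_{(i-1)l+1}$ and the whole of the final block $a_{n-l+1},\ldots,a_{n-1},a_n$, for which the total height is preserved. For these I would invoke the refined, $\s$-invariant form of Lemma~\ref{l:eh} together with the multiset identity \eqref{e:S}: matching $\mu|_{X_i}=\l|_{X_i}-\o_{i,1}$ against the multiset $\{\l|_{X_k}\}$ forces the block restrictions to be essentially equal and collapses the support of $\l$ towards the ends of the diagram, exactly as in the $B_n$ analysis of Lemma~\ref{c2:p2}. After excluding $\l=\l_{n-1}+a\l_n$ by hypothesis, bounding $a_1,a_n\le 1$ by explicit maximal-vector and multiplicity arguments (Lemmas~\ref{l:118} and~\ref{l:s816}, as in the $a_n\ge 2$ step of Lemma~\ref{c2:p2}), and eliminating the mixed cases $a_1a_n\neq 0$ and $a\l_1$ ($a\ge 2$) by exhibiting an extra $KH^0$-composition factor or by a dimension comparison, I expect to reduce to $\l=\l_1$ or $\l=a_n\l_n$.

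The case $\l=a_n\l_n$ I would dispatch uniformly by a clean dichotomy. Here $\l|_{H^0}=a_n\sum_{i=1}^t\o_{i,l}$ is $S_t$-invariant, so if $V|_H$ is irreducible then Clifford theory forces every $KH^0$-composition factor to have this same highest weight, i.e.\ $V|_{H^0}$ is $\bigl(a_n\sum_i\o_{i,l}\bigr)$-isotypic. The key observation is that $\l$ is the \emph{only} $T$-weight of $V$ restricting to $\l|_{H^0}$ (solving $v_i(\mu)=a_n$ with vanishing interior coefficients forces $\mu=a_n\l_n$) and $m_V(\l)=1$; hence the isotypic multiplicity is $1$ and $V|_{H^0}=L_{H^0}(a_n\sum_i\o_{i,l})=\bigotimes_i L_{C_l}(a_n\o_{i,l})$ must itself be irreducible. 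This happens precisely when $\dim L_{C_n}(a_n\l_n)=\prod_i\dim L_{C_l}(a_n\o_{i,l})$, which by the dimension data in \cite{Lubeck} holds if and only if $a_n=1$ and $p=2$ (both sides then equal $2^n$) and fails otherwise. This simultaneously eliminates $\l_n$ for $p\neq 2$ and all $a_n\ge 2$, and verifies $\l_n$ for $p=2$. Finally $\l=\l_1$ gives $V=W$, whose restriction $V|_{H^0}=\bigoplus_i W_i$ is a sum of $t$ natural modules transitively permuted by $S_t\le H$, so $V|_H$ is irreducible with $\kappa=t$.

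The main obstacle is the middle portion of the reduction: the boundary and generic interior coefficients fall out immediately from the height function, but the first-interior coefficients and the terminal block are height-neutral, so they can be controlled only through the associated permutation and the multiset identity \eqref{e:S}, and it is here that one must carefully track which $\s$-invariant subsets arise and rule out the spurious matchings. Keeping this bookkeeping under control uniformly in $l$ and $t$---including the degenerate case $l=1$, where there are no interior short roots and the entire argument must be run through the long roots and the associated permutations rather than the height---is the delicate part. By contrast, once $\l$ is pinned to $\{\l_1,\l_n\}$ the final dichotomy is clean, thanks to the multiplicity-one observation above.
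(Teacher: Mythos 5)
Your setup (the embedding of $(H^0)'=C_l^t$ via the long roots $\gamma_i=2(\a_{il}+\cdots+\a_{n-1})+\a_n$, the counters $v_i(\mu)=\sum_{k\ge il}b_k$) and your reduction machinery (height function plus associated permutations) coincide with the paper's, and your endgame observation for $\l=a\l_n$ --- that $H$ has maximal rank, so $\l$ is the unique $T$-weight restricting to $\l|_{H^0}$, whence the $S_t$-invariance of $\l|_{H^0}$ together with $m_V(\l)=1$ forces $V|_{H^0}$ itself to be irreducible --- is correct and attractive. However, there is a genuine gap exactly where you flag the argument as ``delicate'': for $l=1$ and $\l=\l_k$ with $2\le k\le n-2$ (which the lemma asserts are \emph{reducible}), your tools provably cannot produce a contradiction. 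Indeed $\a_k|_{H^0}=\o_k-\o_{k+1}$, so $\mu=\l-\a_k$ restricts to $\o_1+\cdots+\o_{k-1}+\o_{k+1}$, which is precisely the image of $\l|_{H^0}=\o_1+\cdots+\o_k$ under the transposition $(k,k+1)\in S_t$: the height is preserved and the multiset identity \eqref{e:S} holds for every $\s$-invariant subset, so neither Lemma \ref{l:eh} nor the refined permutation bookkeeping excludes these weights. The paper needs an additional idea here: by \cite[Theorem 15]{Zal}, the dominant weight $\l_{k-2}=\l-\a_{k-1}-\a_n-2\sum_{i=k}^{n-1}\a_i$ lies in $\L(V)$, and it affords the highest weight of a $KH^0$-composition factor whose height is $k-2<k$, contradicting Lemma \ref{l:eh}. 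Without this step (or a substitute) your reduction does not close in the $l=1$ case.

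A second gap is the justification of your final dichotomy. Your reduction correctly shows that for $\l=a\l_n$ irreducibility of $V|_H$ is equivalent to irreducibility of $V|_{H^0}$, but this cannot then be settled ``by the dimension data in \cite{Lubeck}'': L\"{u}beck's tables do not contain $\dim L_{C_n}(a\l_n)$ for general $a\ge 2$ (nor for $a=1$ with $p$ odd and $n$ large), and no closed formula for these dimensions is known. The correct continuation of your argument is to quote Seitz's classification \cite{Seitz2} of irreducible connected triples, which returns $\l=\l_n$, $p=2$ as the only possibility --- this is what the paper does for $p=2$, while for $p\neq 2$ it disposes of all $a\ge 1$ earlier by noting that $\l-\a_{n-1}-\a_n$ is dominant, hence lies in $\L(V)$ by Lemma \ref{l:pr}, and affords a composition-factor highest weight whose height drops by $2$. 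Relatedly, your elimination of the mixed weights $\l_j+a\l_n$ is silent about $p=2$, where Lemma \ref{l:pr} is unavailable; the paper handles that case via tensor-indecomposability \cite[1.6]{Seitz2}, which forces $p\neq 2$ whenever $a_n\neq 0$ and $\l\neq\l_n$.
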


\begin{proof}
Here $n=lt$ and $H^0 = X_1 \cdots X_t$, where up to conjugacy we have
$$X_{i}=\la U_{\pm \a_{(i-1)l+1}}, \ldots, U_{\pm \a_{(i-1)l+l-1}}, U_{\pm \gamma_{i}} \ra \cong C_{l}$$
with $\gamma_{t}=\a_n$ and
$\gamma_{i}=2(\a_{il}+ \cdots +\a_{n-1})+\a_n$
for $i<t$.
Let $\{\o_{i,1},\ldots,\o_{i,l}\}$ be the fundamental dominant weights corresponding to this base of the root system of $X_i$, so if
$\mu = \sum_{i=1}^{n}{b_i\l_i}$ is a weight for $T$ then
\begin{equation}\label{e:sp}
\mu|_{X_{i}} = \sum_{j=1}^{l-1}\la \mu, \a_{(i-1)l+j}\ra \o_{i,j} + \la \mu, \gamma_{i}\ra\o_{i,l}  = \sum_{j=1}^{l-1}{b_{(i-1)l+j}\o_{i,j}}+v_i(\mu)\o_{i,l},
\end{equation}
where
$$v_{i}(\mu)=  \la \mu, \gamma_{i}\ra = \sum_{j=il}^{n}b_j.$$
Let $\l=\sum_{i=1}^{n}a_i\l_i$ be the highest weight of $V$ and set $v_{i}=v_{i}(\l)$ for all $i$. Assume $V|_{H}$ is irreducible.

Consider the case $l=1$. Here $n=t$ and for now we will assume $t \ge 3$. Let $\o_{i}=\o_{i,1}$, so \eqref{e:sp} reads $\mu|_{X_{i}} = v_{i}(\mu)\o_{i}$, with $v_{i}(\mu) = \sum_{j=i}^{n}{b_j}$.

Let us assume $a_{j} \neq 0$ for some $j<n$. Then $\mu = \l-\a_j \in \L(V)$ affords the highest weight of a $KH^0$-composition factor and $\mu|_{H^0} = \l|_{H^0}-\o_{j}+\o_{j+1}$. By considering the possible associated permutations, we quickly deduce that $a_j=1$.
Next suppose $\l$ is of the form
$$\l = \l_j+\l_{j+k}+a_{j+k+1}\l_{j+k+1} + \cdots + a_n\l_n,$$
where $j,k \ge 1$ and $j+k <n$. Then
$\mu = \l-\a_j - \cdots - \a_{j+k} \in \L(V)$
affords the highest weight of a $KH^0$-composition factor and
$\mu|_{H^0} = \l|_{H^0}-\o_{j}+\o_{j+k+1}$. Let $\s \in S_t$ be the associated permutation and fix $r \le j$ such that $\s(r) \ge j$. If $\s(r)=j$ then $v_r=v_j-1$, which is absurd since $v_r \ge v_j$. If $j<\s(r) \le j+k$ then $v_j = v_{j+k}$, which is impossible since $v_j = v_{j+k}+1$. Therefore $\s(r) \ge j+k+1$, but once again we reach a contradiction since $v_r > v_{j+k+1}+1$. For $l=1$ (with $t \ge 3$) we have now reduced to the case
$\l=a_j\l_j+a_n\l_n$ with $a_j \le 1$.

Let $\l=\l_j+a_n\l_n$. By the hypothesis of the lemma, we may assume $j<n-1$. Suppose $a_n \neq 0$. Since $V$ is tensor-indecomposable, \cite[1.6]{Seitz2} implies that $p \neq 2$, so $\mu = \l - \a_{n-1} -\a_{n} \in \L(V)$ by Lemma \ref{l:pr}, since it is a dominant weight for $T$. Moreover, $\mu$ affords the highest weight of a $KH^0$-composition factor, but $h(\mu|_{H^0}) = h(\l|_{H^0})-2$ and this contradicts Lemma \ref{l:eh}.

To complete the analysis of the case $l=1$ (with $t \ge 3$) we may assume $\l= \l_j$ or $a_n\l_n$, where $1 \le j \le n-2$. If $\l=\l_1$ then $V$ is the natural $KG$-module and thus $V|_{H}$ is irreducible. Next suppose $\l=\l_j$ with $2 \le j \le n-2$. Set $\l_0 = 0$. Then
$$\mu = \l-\a_{j-1}-\a_{n}-2\sum_{i=j}^{n-1}\a_i = \l_{j-2} \in \L(V)$$
(see \cite[Theorem 15]{Zal}) and
$\mu$ affords the highest weight of a $KH^0$-composition factor, but this contradicts Lemma \ref{l:eh} since $h(\mu|_{H^0}) = h(\l|_{H^0})-2$. Finally, let us assume $\l=a_n\l_n$. As before, if $p \neq 2$ then $\mu = \l - \a_{n-1} -\a_{n} \in \L(V)$ affords the highest weight of a $KH^0$-composition factor and once again we reach a contradiction via Lemma \ref{l:eh}. Therefore we may assume $p=2$ and $a_n=1$. Here $V|_{H^0}$ is irreducible (see \cite{Seitz2}), so the case $\l=\l_n$ with $p=2$ is recorded in Table \ref{t:c2}.

Finally, let us assume $(l,t)=(1,2)$. If $a_1 \neq 0$ then $\mu = \l-\a_1\in \L(V)$ affords the highest weight of a $KH^0$-composition factor and we deduce that $a_1=1$ since $\mu|_{H^0} = \l|_{H^0}-\o_1+\o_2$. However, the case $a_1=1$ is excluded by the hypothesis of the lemma, so we have reduced to the case $a_1=0$ and $a_2 \neq 0$. Here $\mu = \l - \a_1 - \a_2$
is dominant. In particular, if $p \neq 2$ then $\mu \in \L(V)$ and $\mu$ affords the highest weight of a $KH^0$-composition factor. However, $h(\mu|_{H^0}) = h(\l|_{H^0})-2$ so we may assume $p=2$ and $a_2=1$. Here $V|_{H^0}$ is irreducible, so $\l=\l_2$ with $p=2$ is an example as before.

\vs

For the remainder, we may assume $l \ge 2$. If $a_{ml}\neq 0$ for some $m \in \{1,\ldots, t-1\}$ then $\mu = \l-\a_{ml}\in \L(V)$ affords the highest weight of a $KH^0$-composition factor, but this contradicts Lemma \ref{l:eh} since $\mu|_{H^0} = \l|_{H^0}+\o_{m,l-1}-\o_{m,l}+\o_{m+1,1}$. Similarly, if
$r \in \{1, \ldots, l-2\}$ is minimal such that $a_{l-r}\neq 0$ then $\mu = \l-\a_{l-r}-\a_{l-r+1} - \cdots - \a_{l}\in \L(V)$
affords the highest weight of a $KH^0$-composition factor, but
$\mu|_{H^0} = \l|_{H^0}+\o_{1,l-r-1}-\o_{1,l-r}+\o_{2,1}$ and this contradicts Lemma \ref{l:eh}. Now if $r \in \{1, \ldots, l-2\}$ is minimal such that $a_{l+r} \neq 0$ then
$\mu = \l-\a_{l}-\a_{l+1} - \cdots - \a_{l+r} \in \L(V)$
affords the highest weight of a $KH^0$-composition factor and
$$\mu|_{H^0} = \l|_{H^0}+\o_{1,l-1}-\o_{1,l}-\o_{2,r}+\o_{2,r+1}.$$
If $\s \in S_t$ is the associated permutation then $\s(1)>1$ and thus $v_{1}=v_{2}$, which is a contradiction since $a_{l+r} \neq 0$. We now have $a_{i}=0$ for all $i \in \{2, \ldots, 2l-2\}$.

Next suppose $s \in \{2l-1, \ldots, n-2\}$ is minimal such that $a_{s} \neq 0$. First assume $l$ does not divide $s+1$, say $s=kl+r$ for some $k \in \{2, \ldots, t-1\}$ and $r \in \{1, \ldots, l-2\}$ (recall that $a_{ml}=0$ for all $m \in \{1, \ldots, t-1\}$, so $r\ge 1$). Then
$\mu = \l-\a_l-\a_{l+1} - \cdots - \a_{kl+r} \in \L(V)$
affords the highest weight of a $KH^0$-composition factor and
$$\mu|_{H^0} = \l|_{H^0}+\o_{1,l-1}-\o_{1,l}-\o_{k+1,r}+\o_{k+1,r+1}.$$
If $i \le k$ and $\s(i)>k$ then $v_{k}=v_{k+1}$, but this is not possible since $a_{kl+r}\neq 0$. Therefore $\{1, \ldots, k\}$ is $\s$-invariant and there exists $i \in \{2, \ldots, k\}$ such that $\s(i)=1$. Then $v_{i}=v_{1}-1$ and thus $a_{l}+\cdots +a_{il-1} =  1$, but this is absurd since $a_{j} = 0$ for all $j \in \{2, \ldots, kl+r-1\}$ by the minimality of $s$.

Now assume $l$ does divide $s+1$, say $s = kl-1$ for some $k \in \{2, \ldots, t-1\}$. Then $\mu = \l-\a_{l} - \cdots - \a_{kl} \in \L(V)$
affords the highest weight of a $KH^0$-composition factor, but $\mu|_{H^0} = \l|_{H^0}+\o_{1,l-1}-\o_{1,l}+\o_{k+1,1}$ and this contradicts Lemma \ref{l:eh}. We have now reduced to the case
$\l = a_1\l_1+a_{n-1}\l_{n-1}+a_n\l_n$.

If $a_1 \neq 0$ then $\mu = \l - \a_1 - \cdots - \a_l \in \L(V)$
affords the highest weight of a $KH^0$-composition factor and $\mu|_{H^0} = \l|_{H^0}-\o_{1,1}+\o_{2,1}$. Then $\s(1)=2$ is the only possibility, where $\s$ is the associated permutation, and thus $a_1=1$.
Similarly, if $a_{n-1}\neq 0$ then
$\mu = \l - \a_{l} - \cdots - \a_{n-1} \in \L(V)$
affords the highest weight of a $KH^0$-composition factor and
$$\mu|_{H^0} = \l|_{H^0}+\o_{1,l-1}-\o_{1,l}-\o_{t,l-1}+\o_{t,l}.$$
Here $\s(t)=1$ is the only possibility, so $a_{n-1}=1$ and $a_{1}=0$.

In view of the hypothesis of the lemma, it remains to deal with the case $\l=a_1\l_1+a_n\l_n$ with $a_1 \le 1$. First assume $a_1=0$, so $a_n \neq 0$ and $\l-\a_{n-1}-\a_{n} = \l+\l_{n-2}-\l_{n}$ is a dominant weight. If $p \neq 2$ then Lemma \ref{l:pr} implies that 
$\l-\a_{n-1}-\a_{n} \in \L(V)$ and we deduce that 
$\mu = \l - \a_{l}- \cdots - \a_{n}\in \L(V)$
affords the highest weight of a $KH^0$-composition factor and
\begin{equation}\label{e:eqa}
\mu|_{H^0} = \l|_{H^0}+\o_{1,l-1}-\o_{1,l}+\o_{t,l-1}-\o_{t,l}.
\end{equation}
Clearly, this weight is not conjugate to $\l|_{H^0}$, which is a 
contradiction. Therefore we may assume $p=2$ and $\l=\l_n$. Here $V|_{H^0}$ is irreducible by \cite[Theorem 1]{Seitz2}, and we record this example in Table \ref{t:c2}.

Finally, suppose $\l=\l_1+a_n\l_n$. If $a_n=0$ then $\l=\l_1$ and $V$ is the natural module for $G$, so let us assume $a_n \neq 0$, in which case $p \neq 2$ since $V$ is tensor-indecomposable (see \cite[1.6]{Seitz2}). As above, $\mu = \l - \a_{l}- \cdots - \a_{n} \in \L(V)$
affords the highest weight of a $KH^0$-composition factor, and \eqref{e:eqa} holds.
Let $\s \in S_t$ denote the associated permutation and fix $i \ge 2$ such that $\s(i)=1$. If $l \ge 3$ then we get $a_{il-1} = 1$, which is a contradiction. Similarly, if $l=2$ then $a_{2i-1} = 2$, which is equally absurd. This completes the proof of the lemma.
\end{proof}

\begin{lem}\label{l:c2cn}
Proposition \ref{TH:C2} holds in case (iii) of Table \ref{t:c2s}.
\end{lem}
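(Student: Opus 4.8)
The plan is to combine Lemma~\ref{c2:p3} with a direct treatment of the one remaining family of highest weights. By Lemma~\ref{c2:p3}, every irreducible triple with $\l \neq \l_{n-1}+a\l_n$ has already been found (namely $\l=\l_1$, and $\l=\l_n$ when $p=2$), so it remains only to decide, for $\l=\l_{n-1}+a\l_n$ with $a\ge 0$, exactly when $V|_H$ is irreducible. First I would compute $\l|_{H^0}$ from the restriction formula \eqref{e:sp}: one finds $\l|_{X_i}=(1+a)\o_{i,l}$ for $i<t$ and $\l|_{X_t}=\o_{t,l-1}+a\o_{t,l}$, degenerating to $\l|_{X_t}=a\o_{t,1}$ when $l=1$. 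Thus the $S_t$-orbit of $\l|_{H^0}$ consists of the $t$ weights $\nu^{(k)}$ obtained by placing this ``special'' factor-weight in position $k$, and Clifford theory forces the $KH^0$-composition factors of an irreducible $V|_H$ to have highest weights exactly $\nu^{(1)},\dots,\nu^{(t)}$. When $t=2$, Proposition~\ref{p:niso} then gives $V|_{H^0}=L(\nu^{(1)})\oplus L(\nu^{(2)})$, so that irreducibility is equivalent to the single dimension identity $\dim V=\dim L(\nu^{(1)})+\dim L(\nu^{(2)})$.

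The next step is to locate the arithmetic condition. Both $\l$ (for $C_n$) and the special weight $\o_{l-1}+a\o_l$ (for $C_l$) sit in the configuration governed by Lemma~\ref{l:118}(ii): taking the long root $\a_n$ and the short root $\a_{n-1}$, so $\la\l,\a_n\ra=a$ and $\la\l,\a_{n-1}\ra=1$, the weight $\l-\a_{n-1}-\a_n$ has multiplicity $1$ exactly when $2a+3\equiv 0\imod{p}$ and multiplicity $2$ otherwise, with the sharper statement supplied by Lemma~\ref{c2:l3} and Remark~\ref{r:sz}. I would feed these multiplicities into Lemma~\ref{l:easy}. The decisive point is whether the relevant lowering crosses a ``junction'' between blocks. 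For $l=1$ the root $\a_{n-1}$ is a junction root, so $\mu=\l-\a_{n-1}-\a_n$ restricts to a $T_{H^0}$-weight that is subdominant to a \emph{single} $\nu^{(k)}$ with multiplicity $1$ there, whereas $m_V(\mu)=2$ when $2a+3\not\equiv 0\imod{p}$; this contradicts Lemma~\ref{l:easy} and eliminates every such configuration, in particular ruling out $t\ge 3$ whenever the congruence fails. Here I would use repeatedly that $\o_l-\o_{l-1}$ is not a non-negative integral combination of the simple roots of $C_l$, which is what makes $\mu|_{H^0}$ fail to be subdominant to the ``wrong'' factors.

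For the residual cases the argument is via dimensions and the $SZ$ theorem of Remark~\ref{r:sz}. When $2a+3\equiv 0\imod{p}$ one has $\dim L_{C_m}(\o_{m-1}+a\o_m)=(p^m-1)/2$ with all weight multiplicities $1$, and a companion computation gives $\dim L_{C_l}((1+a)\o_l)=(p^l+1)/2$ (note $1+a\equiv(p-1)/2$). Comparing $\dim V=(p^{lt}-1)/2$ with $t\cdot\dim L_{C_l}(\o_{l-1}+a\o_l)\cdot\bigl(\dim L_{C_l}((1+a)\o_l)\bigr)^{t-1}$ shows the two agree only for $t=2$, so $t\ge 3$ is excluded also in the congruence case; and for $t=2$ the identity $2\cdot\tfrac{p^l+1}{2}\cdot\tfrac{p^l-1}{2}=\tfrac{p^{2l}-1}{2}$ confirms $V|_{H^0}=L(\nu^{(1)})\oplus L(\nu^{(2)})$, hence $V|_H$ is irreducible. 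The natural-module case $(l,a)=(1,0)$ is immediate, and assembling these facts yields precisely the conditions of Remark~\ref{r:condds}(a) and the entries of Table~\ref{t:c2}.

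The hardest part will be the bookkeeping separating $l=1$ from $l\ge 2$ and $t=2$ from $t\ge 3$. For $l\ge 2$ the weight $\l-\a_{n-1}-\a_n$ restricts \emph{into} the special factor $X_t$, where (by Lemma~\ref{l:sr}) its $C_n$-multiplicity coincides with its multiplicity in the corresponding composition factor, so the obstruction is absorbed and the multiplicity argument must be replaced by a junction-crossing witness or by the dimension count above. I therefore expect the two most delicate computations to be (i) establishing the auxiliary value $\dim L_{C_l}((1+a)\o_l)=(p^l+1)/2$ and verifying the dimension mismatch that excludes $t\ge 3$ in the congruence case, and (ii) pinning down, for $l\ge 2$ and $t=2$, a weight whose restriction is subdominant to a single $\nu^{(k)}$ yet whose $C_n$-multiplicity diverges from the factorized composition-factor multiplicity when $2a+3\not\equiv 0\imod{p}$, thereby forcing the congruence.
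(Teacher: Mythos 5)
Your skeleton is right (reduction via Lemma~\ref{c2:p3}, the computation of $\l|_{H^0}$, the Clifford setup with exactly $t$ factors, and the identification of $2a+3\equiv 0\imod{p}$ through Lemma~\ref{l:118}(ii)), but the proof has a genuine gap precisely at the point you leave open in item (ii): the \emph{necessity} of the congruence is only established when $l=1$ and $a>0$. Your witness $\l-\a_{n-1}-\a_n$ works only when $\a_{n-1}$ is a junction root, i.e.\ when $l=1$; for $l\ge 2$ both $\a_{n-1}$ and $\a_n$ lie in $\Sigma(X_t)$ and, as you observe yourself, the multiplicity obstruction is absorbed by Lemma~\ref{l:sr} — and you never produce the replacement witness. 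Moreover, even for $l=1$, Lemma~\ref{l:118} requires \emph{both} $\la\l,\a_n\ra=a$ and $\la\l,\a_{n-1}\ra=1$ to be positive, so the subcase $a=0$ is not covered by your argument; in particular $(l,a)=(1,0)$ with $t\ge 3$ and $p\neq 3$ is left untreated (your ``immediate'' dismissal of $(l,a)=(1,0)$ only concerns $t=2$). The missing idea — the crux of the paper's necessity argument — is to push your $l=1$ witness across the junction: take $\nu=\l-\a_{n-l}-\a_{n-l+1}-\cdots-\a_{n-1}-\a_n$. Via Lemma~\ref{l:sr} applied to the subsystem $\la U_{\pm\a_i}\mid n-l\le i\le n\ra\cong C_{l+1}$ and Weyl-conjugacy inside it, $m_V(\nu)=m_V(\l-\a_{n-1}-\a_n)$, which is $2$ unless $2a+3\equiv 0\imod{p}$; but since $\nu$ involves the junction root $\a_{n-l}\notin\Sigma(H^0)$ with coefficient $1$, the weight $\nu|_{H^0}$ lies under the unique composition factor afforded by $\mu=\l-\a_{n-l}-\cdots-\a_{n-1}$, where it equals $\mu|_{H^0}-\b_{t,l}$ and has multiplicity at most $1$. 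Lemma~\ref{l:easy} then forces the congruence for every $l$ and $t$ when $a>0$. For $a=0$, $l\ge2$ one needs the separate witness $\l-\a_{n-l}-\cdots-\a_{n-2}-2\a_{n-1}-\a_n$ (multiplicity $2-\delta_{3,p}$ by the zero weight of $L_{C_3}(\l_2)$, forcing $p=3$), and for $(l,a)=(1,0)$, $t\ge 3$ the analogous zero-weight argument; without these, the case $2a+3\not\equiv 0\imod{p}$ with $l\ge 2$ is simply never ruled out.

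Where your proposal is complete it genuinely diverges from the paper, to its advantage in length. For sufficiency at $t=2$ the paper gives a long self-contained argument with $B_{H^0}$-maximal vectors and the duality $V^*\cong V$; your dimension count using the two Suprunenko--Zalesskii formulas is exactly the alternative the authors record in Remark~\ref{r:sz2}, so it is legitimate (modulo the short verification that $\l-\a_l-\cdots-\a_{n-1}$ really affords a $B_{H^0}$-maximal vector, so both predicted factors occur and the dimension identity forces $V|_{H^0}=L(\nu^{(1)})\oplus L(\nu^{(2)})$ with the two non-isomorphic summands swapped by $H$). Likewise, your exclusion of $t\ge 3$ in the congruence case by comparing $\frac{1}{2}(p^{lt}-1)$ with $t\cdot\frac{1}{2}(p^l-1)\cdot\bigl(\frac{1}{2}(p^l+1)\bigr)^{t-1}$ is a valid substitute for the paper's parity-of-coefficients argument with the weight $\l-\a_{n-2l}-2\a_{n-2l+1}-\cdots-2\a_{n-1}-\a_n$, and the strict inequality does hold for all $t\ge 3$. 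But both of these steps presuppose the congruence, so the gap above is structural, not cosmetic.
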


\begin{proof}
In view of the previous lemma, we may assume $\l=\l_{n-1}+a\l_n$ for some $a \ge 0$. We need to prove that $V|_{H}$ is irreducible if and only if one of the following holds:
\begin{itemize}\addtolength{\itemsep}{0.3\baselineskip}
\item[(a)] $\l=\l_1$;
\item[(b)] $t=2$, $a <p$, $(l,a) \neq (1,0)$ and $2a+3 \equiv 0 \imod{p}$.
\end{itemize}
This is clear if $\l=\l_1$, so for the remainder let us assume $\l \neq \l_1$. We continue with the notation introduced in the proof of the previous lemma. Note that $H$ is a maximal rank subgroup of $G$; in particular, $\l \in \L(V)$ is the unique $T$-weight restricting to $\l|_{H^0}$.

Suppose $V|_{H}$ is irreducible. There are exactly
$t$ distinct composition factors of $V|_{H^0}$, with highest weights
of the form
$$(a+1)\omega_{1,l}+(a+1)\o_{2,l}+\cdots+(a+1)\omega_{t-1,l}+
\omega_{t,l-1}+a\omega_{t,l}$$
(where $\o_{t,l-1}=0$ if $l=1$), and the distinct permutations of this weight under the natural
action of $S_t$. The highest weights of these composition factors are afforded
by $\l$ and $\l-\a_{il}-\a_{il+1}-\cdots-\a_{n-1}$, where $1\leq i\leq t-1$. 

First assume $(l,a) = (1,0)$. If $t=2$ then $n=2$ and $\l=\l_1$, so we may assume $t \ge 3$ (and thus $n \ge 3$). Set $\nu = \l - \a_{n-2}-2\a_{n-1}-\a_n$. Then $m_V(\nu)$ coincides with the multiplicity of the zero weight of $L_{C_3}(\l_2)$, so $m_V(\nu) = 2-\delta_{3,p}$ since $\dim L_{C_3}(\l_2) = 14-\delta_{3,p}$ (see Table \ref{t:dims}). The composition factors of $V|_{H^0}$ are afforded by the weights 
$$\l, \; \l - \a_1 - \cdots - \a_{n-1}, \; \l - \a_2 - \cdots - \a_{n-1}, \; \ldots, \l - \a_{n-1},$$ 
but by considering the root system for $H^0$ given in the proof of Lemma \ref{c2:p3}, we quickly deduce that $\nu$ does not occur in any of the $KH^0$-composition factors afforded by these weights. This is a contradiction, and we have eliminated the case $(l,a)=(1,0)$ (with $t \ge 3$).

In the remaining cases we claim that $2a+3\equiv0 \imod{p}$. First assume $a \neq 0$.
By Lemma \ref{l:118}, the weight $\nu=\l-\a_{(t-1)l}-\a_{(t-1)l+1}-
\cdots-\a_n$ occurs with multiplicity $2$ in $V$, unless $2a+3\equiv 0 \imod{p}$,
in which case $m_{V}(\nu)=1$. Moreover, this weight can only
occur in the $KH^0$-composition factor afforded by $\mu=\l-\a_{(t-1)l}-\a_{(t-1)l+1}-\cdots-\a_{n-1}$.
Since
$$\mu|_{H^0}=
(a+1)\omega_{1,l}+\cdots+(a+1)\omega_{t-2,l}+ \omega_{t-1,l-1}+
a\omega_{t-1,l}+(a+1)\omega_{t,l}$$
and $\nu=\mu-\a_n$, it follows that $m_{V}(\nu)=1$ and thus 
$2a+3\equiv0 \imod{p}$, as claimed.

Now assume $a=0$. We have already considered the case $l=1$, so let us assume $l \ge 2$ (so $n \ge 4$). As before, let $\nu = \l - \a_{n-2}-2\a_{n-1}-\a_n$ and let $\nu'$ be the conjugate weight $\nu' = \l - \a_{n-l} - \cdots - \a_{n-2} - 2\a_{n-1} - \a_n$, so $m_{V}(\nu') = 2-\delta_{3,p}$. By inspecting the root system of $H^0$ given in the proof of Lemma \ref{c2:p3}, we see that $\nu'$ can only be in the $KH^0$-composition factor afforded by $\eta = \l - \a_{n-l} - \cdots - \a_{n-1}$. Now $\nu' = \eta - \a_{n-1} - \a_n$ and
$$\eta|_{H^0} = \o_{1,l}+ \cdots + \o_{t-2,l}+(\o_{t-1,1}+\o_{t-1,l-1}) + \o_{t,l},$$
so $\nu'$ has multiplicity $1$ in the $KH^0$-composition factor afforded by $\eta$. We conclude that  $m_V(\nu')=1$ and thus $p=3$, as required.

Next we show that $t=2$ if $V|_{H}$ is irreducible. This follows from our earlier analysis when $(l,a) = (1,0)$, so let us assume otherwise. Then $2a+3\equiv0 \imod{p}$, so $p \neq 2$ and all weights occurring in the Weyl module $W_G(\l)$ also occur as weights of $V$, by Lemma \ref{l:pr}. Suppose $t>2$ (and so $n\geq 3$). Then the weight
$$\mu=\l-\a_{n-2l}-2\a_{n-2l+1}-\cdots-2\a_{n-1}-\a_n$$
occurs with
non-zero multiplicity in $V$. As the coefficient of $\a_{n-2l}$ in $\l-\mu$ is $1$, while the 
corresponding coefficient of $\a_{n-2l-1}$ is $0$, we see that $\mu$ must occur in the
$KH^0$-composition factor with highest weight afforded by 
$\nu=\l-\a_{n-2l}-\a_{n-2l+1}-\a_{n-2l+2}-\cdots-\a_{n-1}$. But for any weight $\mu$ in this summand we observe that the coefficient of $\a_{n-l}$ in $\nu-\mu$ is even, which is incompatible with the form of $\nu$. Hence $t=2$ as claimed.

Clearly, if $(l,a,t) = (1,0,2)$ then $\l=\l_1$ and $V|_{H}$ is irreducible, so to complete the proof of the lemma it remains to show that $V|_{H}$ is irreducible when $t=2$ and $2a+3\equiv0 \imod{p}$ (where $l \ge 2$ if $a=0$).
Here $p\ne 2$ and $n=2l$. Also note that $p=3$ if $a=0$. 
Set $J=H^0$ and write $J=X_1X_2$, where
$$X_1=\langle U_{\pm\a_1},U_{\pm\a_2},\ldots, U_{\pm\a_{l-1}},
U_{\pm \gamma}\rangle,\;\; 
X_2=\langle  U_{\pm\a_{l+1}}, \ldots, U_{\pm\a_{n}} \rangle$$
and $\gamma=2(\a_{l}+\cdots +\a_{n-1})+\a_n$. Set
$$\Pi(X_1)=\{\a_i,\gamma\mid 1\leq i\leq l-1\}, \;\; \Pi(X_2)=\{\a_i\mid l+1\leq i\leq n\}$$
and
$$\Pi(J)=\Pi(X_1)\cup \Pi(X_2), \;\; \Sigma(J)=\Z\Pi(J)\cap \Sigma(G), \;\; \Sigma^+(J)=\Z\Pi(J)\cap \Sigma^+(G).$$

Let $v^+ \in V$ be a maximal vector for the fixed Borel subgroup $B$ of $G$.
Then $v^+$ is also a maximal vector for the Borel subgroup $B_J:= B\cap J$
of $J$. Hence, $V$ has a $KJ$-composition factor with highest weight $\l|_{J}$.
Let $\mu=\l-\a_l-\a_{l+1}-\cdots-\a_{n-1}$ and note that $m_{V}(\mu)=1$ since $\mu$ is conjugate to $\l$.  We also note that if $m \in \mathbb{N}$ then $\mu+m\b\not\in\L(V)$ for all $\b\in\Pi(J)$, so if $0\ne w^+\in V$
has weight $\mu$ then $U_\b w^+=w^+$
for all $\b\in\Pi(J)$ (see \eqref{e:mt}).
Hence, $\mu$ affords the highest weight of a second
$KJ$-composition factor of $V$. Let $U\subseteq V$ be the $KJ$-submodule of $V$
generated by $v^+$ and $w^+$. Note that
$$\l|_{J}=(a+1)\omega_{1,l}+\omega_{2,l-1}+a\omega_{2,l},\;\; \mu|_{J}=\omega_{1,l-1}+a\omega_{1,l}+(a+1)\omega_{2,l}$$
and thus
$(\l-\mu)|_{J} = \frac{1}{2}(\gamma-\a_{n})$. In particular, $U$ is the
direct sum of $KJv^+$ and $KJw^+$.

Seeking a contradiction, suppose that $V|_{H}$ is reducible, so $J$ has more than two
composition factors in its action on $V$.
We first show that there exists a maximal vector
$w\in V$, with respect to $B_J$, such that $w\not\in\langle v^+\rangle$ and
$w\not\in\langle w^+\rangle$. Let us suppose that no such 
$w\in V$ exists. Let $w_0$ be the longest word
in the Weyl group of $G$ and note that $w_0$ is represented in the group $J$ (the longest
word is just the element $-1$). In particular,
$w_0v^+$ is a vector in $U$ of weight $-\l$, and $w_0w^+$
is
a vector in $U$ of weight $-\mu$. Now define
$f,g\in V^*$ by setting
$$f(w_0v^+)=g(w_0w^+)=1,\;\; f|_{V_\eta}=g|_{V_{\eta'}} = 0$$
for all weights $\eta\ne-\l$ and $\eta' \ne -\mu$.

Now let $\b\in\Pi(J)$ and $c\in K$. Then
$x_\b(c)f(w_0v^+)=f(x_\b(-c)w_0v^+)=1$, since $x_\b(-c)w_0v^+\in w_0v^++\sum_{m \in \mathbb{N}}V_{-\l+m\b}$, and for $\eta\ne-\l$ and 
$v_\eta\in V_\eta$,
$x_\b(c)f(v_\eta)=f(x_\b(-c)v_\eta)$,
which is equal to the coefficient of $w_0v^+$ in $x_\b(-c)v_\eta$. But as
$-\l=w_0\l$
is the lowest weight of $V$, $x_\b(-c)v_\eta$ has a trivial projection into the
 weight space $V_{-\l}$ and $f(x_\b(-c)v_\eta)=0$. So $x_\b(c)f=f$ and $f\in V^*$
is a maximal vector for $J$ (of weight $\l$) not belonging to ${\rm Ann}_{V^*}(U)$.

Similarly, $x_\b(c)g(w_0w^+)=g(x_\b(-c)w_0w^+)=1$, and for $\eta\ne-\mu$
and $v_\eta\in V_\eta$,
$x_\b(c)g(v_\eta)=g(x_\b(-c)v_\eta)$,
which is equal to the coefficient of $w_0w^+$ in $x_\b(-c)v_\eta$.
If this coefficient is non-zero then $\eta+m\b = -\mu$ for some $m \in \mathbb{N}$, so
$\eta = -\mu-m\b$ and thus $w_0(-\mu-m\b)=\mu+m\b$ is a weight of $V$.
But as remarked above, this is not the
case for $\b\in\Pi(J)$. So $x_\b(c)g=g$ and $g\in V^*$ is a maximal vector for $J$ 
(of weight $\mu$)
not lying in ${\rm Ann}_{V^*}(U)$. As ${\rm Ann}_{V^*}(U)$ is a $KJ$-submodule
of $V^*$, there exists an irreducible $KJ$-submodule and so a maximal
vector. Since $V^*\cong V$, $\l$ and $\mu$ each occur with multiplicity $1$
in $V^*$. Moreover, we have just seen that the weights $\l$ and $\mu$ do not
occur
in ${\rm Ann}_{V^*}(U)$. Hence, $V^*$ has a $KJ$-maximal vector of
 weight
different from
$\l$ and $\mu$. But $V^*\cong V$, contradicting our assumption that no such maximal vector exists in $V$. 

Now, let $w\in V$ be a maximal vector with respect to $B_J$ such that
$w\not\in\langle v^+\rangle$ and $w\not\in\langle w^+\rangle$. 
As $J$ is a maximal
rank subgroup of $G$, $w$ is a weight vector of the $KG$-module $V$; choose 
such a $w$
of weight $\nu=\l-\sum_{i=1}^{n}c_i\a_i$, with $\sum_{i=1}^{n}c_i$ minimal.
(We call  $\sum_{i=1}^{n}c_i$ the {\em level} of $w$.)
As $w\not\in\langle v^+\rangle$, $w$ is not a maximal vector with respect to
$B$. Since $\l$ is a $p$-restricted weight, $V$ is an irreducible
$K\mathcal{L}(G)$-module
and so $v^+$ is the unique maximal vector with respect to the Lie algebra $\mathcal{L}(B)$.
For $\alpha\in\Sigma^+(G)$, let $e_\alpha$ span the root space 
$\mathcal{L}(U_\alpha)$.
As $e_{\a_i}w=0$ for all $i \neq l$,
we have  $e_{\a_l}w\ne0$. 

Choose $\b\in\Sigma^+(G)$ of maximal height such that
$e_\b w\ne0$; write $\b=\sum_{i=1}^{n} d_i\a_i$.
Note that $d_l=1$. Indeed, if $d_l=0$ then $\b\in\Sigma^+(J)$ and $e_\b w=0$, while 
if $d_l=2$, then
$e_\b\in\langle e_{\a_i},
e_\gamma\mid 1\leq i\leq l-1\rangle\subseteq\mathcal{L}(B_J)$ and thus $e_\b w=0$.
Therefore $d_l=1$ and $d_i\in\{0,1\}$ for all $i<l$.

For now let us assume $l=1$, so $n=2$, $a \neq 0$ and 
the two composition factors of $V|_J$ are afforded by $\l|_J$ and $(\l-\alpha_1)|_J$. 
The above remarks indicate that $\beta = \alpha_1$
or $\beta = \alpha_1+\alpha_2$. First assume $\beta = \a_1$, so $v  := e_{\alpha_1}w\ne 0$. If $v\in\langle w^+\rangle$ then $w$ is of weight $\l-2\alpha_1$, which is absurd since $\l-2\alpha_1 \not\in \L(V)$. Similarly, if $v\in \langle v^+\rangle$ then 
$w$ is of weight $\l-\alpha_1$, but $m_V(\l-\alpha_1) = 1$ and thus $w\in\langle w^+\rangle$, contradicting our choice of $w$. Therefore $v\not\in \langle v^+\rangle$ and $v\not\in\langle w^+\rangle$. In particular, since the level of $v$ is smaller than that of $w$, it follows that $v$ is not a maximal vector with respect to $B_J$.
Since $e_{2\alpha_1+\alpha_2}v = 0$ we must
have 
$$e_{\alpha_2}v = e_{\alpha_2}e_{\alpha_1}w = ce_{\alpha_1+\alpha_2}w\ne 0$$ 
for some $c\in K$, but this contradicts our choice of $\beta$. 

Now assume $\beta=\alpha_1+\alpha_2$, so  
$v': = e_{\alpha_1+\alpha_2}w\ne 0$.
Arguing as above we deduce that $v'$ does not lie in $\langle v^+\rangle$ nor in 
$\langle w^+\rangle$. Indeed, if $v' \in \langle v^+\rangle$ then $w$ would have weight $\l-\alpha_1-\alpha_2$, which has
multiplicity $1$ in $V$ and occurs with non-zero multiplicity in the $KJ$-composition
factor afforded by $\l-\alpha_1$. Similarly, if $v' \in \langle w^+\rangle$ then $w$ would be of weight $\l-2\alpha_1-\alpha_2$, which also has multiplicity $1$ in $V$ since it is conjugate to $\l-\alpha_1-\alpha_2$, and it occurs
with non-zero multiplicity in the $KJ$-composition factor afforded by $\l$. In both cases, this contradicts the fact that $w$ is a maximal vector with respect to $B_J$.
As the level of $v'$ is smaller than that of $w$, our choice of $w$ implies that $v'$ is not  a maximal vector with respect to $B_J$. However, we have  
$$e_{\alpha_2}v' = e_{2\alpha_1+\alpha_2}v' = 0$$ 
and this yields the desired contradiction.

For the remainder of the proof, we may assume that $l\geq 2$ and so $n\geq 4$.
Suppose $e_\b w\in\langle v^+\rangle$. Then $w$ is
of weight $\l-\b$, which we observe is conjugate to
$\l-\a_l-d_{l+1}\a_{l+1}-\cdots-d_{n}\a_{n}$. If $d_{n}=0$, then $d_i\in\{0,1\}$ for all $i$ and $\l-\b$ is conjugate to $\l-\a_{n-1}$,
and hence occurs with multiplicity $1$ in $V$. As $\l-\b=\mu-\a_i-\a_{i+1}-\cdots-\a_{l-1}$, 
this weight occurs
with multiplicity $1$
in the $KJ$-composition factor with highest weight $\mu|_J$, hence $w$ cannot
be of weight $\l-\b$ and thus $d_{n}=1$. One checks that in this case, $\l-\b$
is
conjugate to $\l-\a_{n-1}-\a_{n}$ or $\l-\a_{n-2}-2\a_{n-1}-\a_{n}$.
By Lemmas \ref{l:118}(ii) and \ref{c2:l3}, both of these weights
have multiplicity $1$ in $V$.  But $\l-\b$ occurs with multiplicity $1$ in the
$KJ$-composition
factor afforded by $\mu$. So again, we see that $w$ cannot be a
maximal vector as chosen. Hence $e_\b w\not\in\langle v^+\rangle$.

Similarly, let us assume $e_\b w\in\langle w^+\rangle$, so $w$ is of weight $\mu-\b$.
Recall that $\mu=\l-\a_l-\a_{l+1}-\cdots-\a_{n-1}$. Note that $d_{n}\ne0$,
as otherwise
$\mu-\b=\l-\a_l-\cdots-\a_{n-1}-\b$ is not a weight of $V$. (One can easily
see this by restricting to the $A_{n-1}$ Levi factor of $G$.)
Thus $d_{n}=1$ and $d_i\ne0$ for all $l\leq i\leq n$. Moreover, if $d_i=2$, then
$d_{i+1}=2$ for all $l+1\leq i\leq n-2$. One checks that for all
such $\b$, the weight $\mu-\b$ is conjugate to $\l-\a_{n-2}-2\a_{n-1}-\a_{n}$.
By Lemma \ref{c2:l3}, this weight occurs with multiplicity $1$ in $V$.
The weight $\mu-\b$ cannot occur in the $KJ$-composition factor
of highest weight $\mu$, as the coefficient $d_l$ of $\a_l$ in $\b$ is $1$.
On the other hand, $\mu-\b$ does occur in the $KJ$-composition factor of highest weight
$\l$ since
$$\mu-\b=\l-\a_i-\cdots-\a_{l-1}-2\a_l-(d_{l+1}+1)\a_{l+1}-
\cdots-(d_{n-1}+1)\a_{n-1}-\a_{n}$$ 
for some $i\leq l$. Therefore
$$\mu-\b = \l-(\a_i+\cdots+\a_{l-1}+
\gamma)-(1-\delta_{n,k})(\a_k+\cdots+\a_{n-1})$$ 
for some
$k$ with $l+1\leq k\leq n$, and this weight occurs with multiplicity $1$ in the
$KJ$-composition
factor afforded by $\l$. This contradicts the choice of $w$ as a maximal vector with respect to $B_J$. 

We have now established that $e_\b w\not\in\langle v^+\rangle$ and
$e_\b w\not\in\langle w^+\rangle$, where
$$\b=\a_i+\cdots+\a_l+d_{l+1}\a_{l+1}+\cdots+d_{n-1}\a_{n-1}+d_{n}\a_{n}.$$
Note that $\gamma+\b\not\in\Sigma(G)$, and so
$e_\gamma e_\b w = e_\b e_\gamma w = 0$. For $i\ne l$, if
$\a_i+\b\not\in\Sigma(G)$, then as with $\gamma$, we have $e_{\a_i}e_\b w = 0$.
Finally, for $i\ne l$, if $\a_i+\b \in\Sigma(G)$ then $e_{\a_i}e_\b w = e_\b e_{\a_i}w + ce_{\a_i+\b} w = ce_{\a_i+\b} w$ for some $c \in K$. Our choice of $\b$ (of maximal
height such that $e_\b w\ne 0$), implies that $ce_{\a_i+\b} w=0$.

But we have now shown that $e_r e_\b w=0$ for all $r\in\Pi(J)$, that is,
$e_\b w$ is a maximal vector with respect to $B_J$, not lying in
$\langle v^+\rangle$,
nor in $\langle w^+\rangle$, contradicting
our choice of $w$ of minimal level. In view of this final contradiction, we conclude that $V|_{H}$ is irreducible.
\end{proof}

\begin{remk}\label{r:sz2}
We can use \cite{SZ} to give an alternative proof of the irreducibility of $V|_{H}$ in the previous lemma. In \cite{SZ}, it is shown that 
$$\dim V_{C_m}(\lambda_{m-1}+\frac{1}{2}(p-3)\lambda_m) = \frac{1}{2}(p^m-1)$$ 
and 
$$\dim V_{C_m}(\frac{1}{2}(p-1)\lambda_m)= \frac{1}{2}(p^m+1).$$ 
Let $G=C_n$ with $n=2l$, let $H$ be a $\C_2$-subgroup of type $C_lC_l.2$ and set 
$V=V_{G}(\lambda_{n-1}+\frac{1}{2}(p-3)\lambda_n)$. If we restrict $V$ to $H^0=C_lC_l$, we have composition factors 
$$V_{C_l}(\lambda_{l-1}+\frac{1}{2}(p-3)\lambda_l)
\otimes V_{C_l}(\frac{1}{2}(p-1)\lambda_l)$$
and
$$V_{C_l}(\frac{1}{2}(p-1)\lambda_l)
\otimes V_{C_l}(\lambda_{l-1}+\frac{1}{2}(p-3)\lambda_l),$$ 
as described in the proof of Lemma \ref{l:c2cn}. From the above dimension formulae, we deduce that 
\begin{align*}
\dim V = & \dim (V_{C_l}(\lambda_{l-1}+\frac{1}{2}(p-3)\lambda_l)
\otimes V_{C_l}(\frac{1}{2}(p-1)\lambda_l)) \\
& + \dim (V_{C_l}(\frac{1}{2}(p-1)\lambda_l)
\otimes V_{C_l}(\lambda_{l-1}+\frac{1}{2}(p-3)\lambda_l))
\end{align*}
and thus $H$ acts irreducibly on $V$.
\end{remk}

\begin{lem}\label{c2:p4}
Proposition \ref{TH:C2} holds in case (iv) of Table \ref{t:c2s}.
\end{lem}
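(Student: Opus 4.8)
The plan is to follow the analysis of case~(ii) in Lemma~\ref{c2:p2} very closely, since the two situations are nearly identical: in both we have $H^0 = X_1\cdots X_t$ with each $X_i \cong B_l$, and the only real difference is the structure of $G$ at the end of its Dynkin diagram. Here $G = D_n$, $t \ge 2$ is even, $p \neq 2$ and $2n = (2l+1)t$, and the target conclusion from Table~\ref{t:c2} is that $V|_{H}$ is irreducible if and only if $\l \in \{\l_1,\l_{n-1},\l_n\}$.

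First I would fix explicit expressions for the root subgroups of $H^0$ inside $G$. As in Lemma~\ref{c2:p2} and the proof of Lemma~\ref{l:c2_special}, these are obtained by peeling off the last factor $X_t \cong B_l$ (acting on the summand $W_t$ of dimension $2l+1$), applying the construction of \cite[Claim 8]{Test2} to embed it via the orthogonal decomposition $W = W_t \perp W_t^{\perp}$, and inducting within the complementary factor acting on $W_t^{\perp}$. Since $t$ is even, $\dim W_t^{\perp} = (t-1)(2l+1)$ is odd, so the complementary factor has the case~(ii) structure $(2^{t-2}\times B_l^{t-1}).S_{t-1}$ and the induction is available. This yields formulas for $\mu|_{X_i}$, and in particular functions $v_i(\mu)$, in terms of the coefficients $b_i$ of a weight $\mu = \sum_i b_i\l_i$, entirely analogous to those in Lemma~\ref{c2:p2}. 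The case $l=1$ would be treated first and separately, exactly as in that lemma.

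With these formulas in hand I would assume $V|_{H}$ irreducible and run the elimination argument from Lemma~\ref{c2:p2}: for each nonzero coefficient $a_i$ of $\l = \sum_i a_i\l_i$ at an interior node, I exhibit a weight $\mu = \l - (\a_i + \cdots) \in \L(V)$ (using Lemma~\ref{l:t1} and saturation) that affords the highest weight of a $KH^0$-composition factor, compute $\mu|_{H^0}$, and apply Lemma~\ref{l:eh} (invariance of $h$ on $\s$-invariant index sets) to force $a_i = 0$. The bookkeeping for the interior nodes is essentially unchanged, so these coefficients vanish and $\l$ is reduced to a combination of $\l_1$, $\l_{n-1}$ and $\l_n$.

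The genuinely new feature, and the step I expect to be the main obstacle, is the fork of the $D_n$ diagram at the two spin nodes $\a_{n-1},\a_n$, where the last factor $X_t$ now straddles the fork rather than sitting at a single short end node as in the $B_n$ case. Here I would argue by a local computation analogous to the $a_n \ge 2$ analysis of Lemma~\ref{c2:p2}, restricting to a small Levi factor (of type $D_3$) to pin down the relevant weight multiplicities, in order to bound $a_{n-1}$ and $a_n$ and leave $\l = \l_{n-1}$ and $\l = \l_n$ as the only surviving spin-node possibilities. To eliminate the mixed cases $\l_1+\l_{n-1}$ and $\l_1+\l_n$ I would invoke Lemma~\ref{l:c2_special_2}: the associated tensor-product module forces $V|_{H^0}$ to acquire composition factors of highest weight $\sum_{i=1}^t \o_{i,l}$, which are not $S_t$-conjugate to $(\l_1+\l_{n-\e})|_{H^0}$, a contradiction. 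After these reductions only $\l = \l_1,\l_{n-1},\l_n$ remain; the case $\l=\l_1$ is the natural module, and Lemma~\ref{l:c2_special} shows that each of the two spin modules $L_G(\l_{n-1})$ and $L_G(\l_n)$ restricts irreducibly to $H$. These are precisely the entries of Table~\ref{t:c2}, completing the proof.
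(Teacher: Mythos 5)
Your outline does track the paper's own architecture for case (iv) (root subgroups via \cite[Claim 8]{Test2} and induction, the $h$-function sieve of Lemma~\ref{l:eh}, Lemma~\ref{l:c2_special} for the spin modules), but three of your stated steps fail as described. First, Lemma~\ref{l:c2_special_2} is \emph{vacuous} when $t=2$: its conclusion is ``at least $(t-2)2^{\lfloor (t-1)/2\rfloor}=0$ composition factors''. Since $t=2$ occurs here for every $l\ge 2$ (then $n=2l+1$), your proposed elimination of $\l_1+\l_{n-1}$ and $\l_1+\l_n$ collapses in exactly that case; the paper instead compares $\dim V=2^{n-1}(2n-\b)$ from Lemma~\ref{l:dm} with the dimension $2^{n-1}(n-\a)$ of each $KH^0$-composition factor (computed via Lemma~\ref{l:bn2}) and checks no admissible multiple of the latter equals the former. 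Second, your claim that the interior-node sieve reduces $\l$ to a combination of $\l_1,\l_{n-1},\l_n$ is too strong: $h$-invariance cannot kill $a_2$ when $l=1$ (the argument only yields $a_1+a_2\le 1$), so $\l_2$ survives and must be eliminated by a dimension count, while $\l_2+\l_{n-1}$ and $\l_2+\l_n$ need a separate restriction argument (to $J\le B_{n-2}\le D_{n-1}$, where Lemma~\ref{l:c2_special_2} is applied to a \emph{smaller} imprimitive subgroup); similarly, for $t=2$ and $l\ge 2$ the coefficient $a_{n-2}$ survives the sieve and requires multiplicity comparisons via Lemmas~\ref{l:118} and \ref{l:easy}.

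Third, and most seriously, the fork analysis is not a weight-multiplicity computation. The essential feature is that $\a_{n-1}$ and $\a_n$ have the \emph{same} restriction $\b_{t,l}$ to $T_{H^0}$, so distinct $T$-weights such as $\l-\a_{n-1}$ and $\l-\a_n$ restrict identically; the paper then shows that for suitable scalars a linear combination $bx-ay$ of vectors in the two $T$-weight spaces is a maximal vector for $B_{H^0}$, producing a $KH^0$-composition factor of highest weight $\l|_{H^0}-\b_{t,l}$, whose $h$-value is wrong --- contradicting Lemma~\ref{l:eh}. This explicit maximal-vector construction (the analogue of the PBW computation in Lemma~\ref{c2:p2}, which your proposal cites only for its multiplicity content) cannot be replaced by Lemma~\ref{l:easy}: if $\l=a_{n-1}\l_{n-1}+a_n\l_n$ with $a_{n-1}a_n\neq 0$, then $\l|_{H^0}$ is $S_t$-invariant, so \emph{all} composition factors may share the highest weight $\l|_{H^0}$ (Proposition~\ref{p:niso} is unavailable since $H/H^0\cong 2^{t-1}.S_t$ is not cyclic), and the observation $m_V(\l|_{H^0}-\b_{t,l})\ge 2$ then yields no contradiction whenever there are at least two factors. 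The same mechanism, not a Levi multiplicity count, is what bounds $a_{n-1},a_n\le 1$; so to complete your plan you must carry over these vector-level arguments, which is precisely where the $D_n$ case genuinely departs from the $B_n$ template.
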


\begin{proof}
Here $G=D_n$, $t$ is even, $H=(2^{t-1}\times B_l^t).S_t$, $n = lt+t/2$ and $p \neq 2$. 
To obtain expressions for the root subgroups of $H^0$ we use induction and the description of the embedding $B_l^2 < D_{2l+1}$ given in \cite[Claim 8]{Test2}.  

We begin with the case $l=1$. Here $n=3t/2$, $H^0 = X_1 \cdots X_t$ and we may assume $t \ge 4$. We may choose simple roots $\{\b_1,\ldots, \b_t\}$ for $H^0$ such that
$$x_{\pm\b_{t-1}}(c) = x_{\pm(\a_{n-2}+\a_{n-1})}(c)x_{\pm(\a_{n-2}+\a_n)}(c), \;\;
x_{\pm\b_{t}}(c)=x_{\pm\a_{n-1}}(c)x_{\pm\a_n}(c)$$
and $x_{\pm\b_{i}}(c) = x_{\pm\gamma_{i}}(c)x_{\pm\delta_{i}}(c)$ for all $i<t-1$, where $c \in K$,
$$\gamma_{2k-1} = \a_{3k-2}+\a_{3k-1}+\a_{n-1}+\a_{n}+2\sum_{j=3k}^{n-2}\a_j,\;\; \delta_{2k-1} = \a_{3k-2}+\a_{3k-1}$$
and
$$\gamma_{2k} = \a_{3k-1}+\a_{n-1}+\a_n+2\sum_{j=3k}^{n-2}\a_j,\;\; \delta_{2k} = \a_{3k-1}.$$
Let $\o_{i}$ be the fundamental dominant weight of $X_{i}$ corresponding to the simple root $\b_i$. From the above description of the root subgroups of $H^0$, it follows that if $\mu = \sum_{i=1}^{n}{b_i\l_i}$ is a weight for $T$ then $\mu|_{X_{i}} = v_{i}(\mu)\o_i$, where $v_{t}(\mu) = b_{n-1}+b_n$ and
$$v_{2k-1}(\mu) = b_{n-1}+b_{n}+2\sum_{j=3k-2}^{n-2}b_j,\;\; v_{2k}(\mu) = b_{n-1}+b_{n}+ 2\sum_{j=3k-1}^{n-2}b_j.$$
Let $\l=\sum_{i=1}^{n}a_i\l_i$ be the highest weight of $V$, and set $v_{i}=v_{i}(\l)$ for all $i$.

Suppose $a_{3k} \neq 0$, where $3k<n$. Then $\mu = \l-\a_{3k} \in \L(V)$ affords the highest weight of a $KH^0$-composition factor, but this contradicts Lemma \ref{l:eh} since $\mu|_{H^0} = \l|_{H^0} +2\o_{2k+1}$. Similarly, if
$a_{3k+1} \neq 0$ and $k>0$ then
$\mu = \l - \a_{3k} - \a_{3k+1} \in \L(V)$ and
$\mu|_{H^0} = \l|_{H^0} +2\o_{2k+2}$ is the highest weight of a $KH^0$-composition factor. Again, this contradicts Lemma \ref{l:eh}. Also, if  $a_{3k-1} \neq 0$ with $k \ge 2$ and $3k-1 \le n-4$ then
$$\mu = \l - \a_{3k-3}-\a_{3k-2}-\a_{3k-1}-\a_{3k}-\a_{3k+1} \in \L(V)$$
affords the highest weight of a $KH^0$-composition factor, but $\mu|_{H^0} = \l|_{H^0}+2\o_{2k+2}$ and this contradicts Lemma \ref{l:eh}. We have now reduced to the case
$$\l = a_1\l_1+a_2\l_2+a_{n-1}\l_{n-1}+a_n\l_n.$$

If $a_1+a_2 \neq 0$ then $\mu = \l - \a_1 - \a_2 - \a_3 \in \L(V)$ affords the highest weight of a $KH^0$-composition factor and
$\mu|_{H^0} = \l|_{H^0}-2\o_1+2\o_3$. If $\s \in S_t$ is the associated permutation then $\s(1)=3$ is the only possibility, whence $a_1+a_2=1$. 

Next we claim that $a_{n-1}a_{n}=0$. Seeking a contradiction, suppose that $a_{n-1}a_n \neq 0$. Set $w = af_{\alpha_n}v^++bf_{\alpha_{n-1}}v^+ \in V_{\l-\a_n} \oplus V_{\l-\a_{n-1}}$, where $a,b \in K$ are scalars and $v^+ \in V$ is a maximal vector for the fixed Borel subgroup of $G$. Then $U_{\b_i}w = w$ for all $i<t$,
and 
$$x_{\b_t}(c)w = w+c(e_{\a_{n-1}}af_{\a_{n-1}}v^+ + e_{\a_n}bf_{\a_n}v^+) = w+c(aa_{n-1}v^+ + ba_nv^+).$$ 
So choosing $b = -aa_{n-1}/a_n$, we see that $w$ is a maximal vector with respect to the fixed Borel subgroup $B_{H^0}$ of $H^0$ defined in terms of the above root subgroups for $H^0$. Now $(\l-\a_n)|_{H^0} = (\l-\a_{n-1})|_{H^0} = \l|_{H^0}-\b_t$, so it follows that $V|_{H^0}$ has a composition factor with highest weight $\l|_{H^0}-\b_t$. But this contradicts Lemma \ref{l:eh}, hence $a_{n-1}a_{n}=0$ as claimed.

Next suppose $a_{n-1}>1$, so $a_n=0$. Then 
$\lambda-\a_{n-3}-2\a_{n-2}-2\a_{n-1},\l-\a_{n-3}-2\a_{n-2}-\a_{n-1}-\a_n\in\Lambda(V)$, and both weights restrict to $\l|_{H^0}-\b_{t-1}$.
Let $x,y$ be non-zero vectors in the respective $T$-weight spaces. Set  
$\mu = \lambda-\a_{n-3}-\a_{n-2}-\a_{n-1}\in\Lambda(V)$ and note that $m_V(\mu)=1$, say  $V_{\mu} = \la v \ra$. Now $U_{\b_i}x = x$ and $U_{\b_i}y = y$ for all $i \neq t-1$, and 
$x_{\b_{t-1}}(c)x = x_{\a_{n-2}+\a_{n-1}}(c)x$, $x_{\b_{t-1}}(c)y = x_{\a_{n-2}+\a_{n}}(c)y$. 
In particular, since  $x$ and $y$ 
cannot afford maximal vectors of $KH^0$-composition factors of $V$ (by Lemma \ref{l:eh}), it follows that   
$x_{\a_{n-2}+\a_{n-1}}(c)x\neq x$ and
$x_{\a_{n-2}+\a_{n}}(c)y\neq y$. Hence, there exists $a,b\in K^*$ with $x_{\a_{n-2}+\a_{n-1}}(c)x = x+acv$ and 
$x_{\a_{n-2}+\a_{n}}(c)y=y+bcv$. Then $bx-ay$ is a maximal vector with respect to the Borel subgroup $B_{H^0}$, but this contradicts Lemma \ref{l:eh} because $h(\l|_{H^0}) \neq h(\l|_{H^0}-\b_{t-1})$. We conclude that $a_{n-1}\leq 1$. An entirely similar argument shows that $a_n\leq 1$. Therefore  $a_{n-1}+a_n\leq 1$.

If $\l=\l_1+\l_n$ then Lemma \ref{l:c2_special_2} implies that $V|_{H^0}$ has a composition factor with highest weight $\sum_{i=1}^t\o_i$ (recall that $t \ge 4$). But this weight is not conjugate to $\l|_{H^0}$, so we can eliminate the case $\l=\l_1+\l_n$. By symmetry, the same argument also rules out $\l=\l_1+\l_{n-1}$. 

Next suppose $\l = \l_2+\l_{n-1}$. Consider the semisimple subgroup
 $J = \langle U_{\pm\b_i} \mid 2 \leq i\leq t\rangle\leqs H^0$.
On the one hand, by restricting each composition factor of $V|_{H^0}$ to this subgroup, we see that each composition factor of $V|_J$ has highest weight of the form 
\begin{equation}\label{e:vJ}
3\omega_{k}+\sum_{i=2,i \ne k}^t\omega_{i} \;\; \mbox{ or } \;\; 3\omega_r + 3 \omega_s + \sum_{i=2, i \ne r,s}^{t}\omega_i
\end{equation}
where $2\leq k,r,s\leq t$, $r \neq s$. On the other hand, $J\leqs L = \langle U_{\pm\alpha_i} \mid  2\leq i\leq n\rangle \cong D_{n-1}$, the derived subgroup of a Levi
factor of a parabolic subgroup of $G$. Therefore \cite[Proposition 2.11]{Jantzen} implies that $V|_{L}$ has a composition factor afforded by $\lambda$, that is,
a composition factor of highest weight $(\l_2+\l_{n-1})|_L$. Next observe that $J\leqs Y \leqs L$, where $Y=B_{n-2}$ is the stabilizer  in $L$ 
of a non-degenerate $1$-space in the natural module for $L$. To simplify the notation, let us 
write 
$\{\eta_1,\dots,\eta_{n-2}\}$ for the base of the root system of $Y$, and $\{\rho_1,\dots,\rho_{n-2}\}$ 
for the associated fundamental dominant weights.
Then the $KL$-composition factor afforded by $\lambda$ restricted to $Y$ has a composition factor with highest weight 
$\rho_1+\rho_{n-2}$.
In addition, since $J$ is the connected component of a $\C_2$-subgroup $(2^{t-2} \times B_1^{t-1}).S_{t-1}$ of $Y$, Lemma \ref{l:c2_special_2} implies that $V|_J$ has a composition factor with highest weight $\sum_{i=2}^t\omega_{i}$. But this is incompatible with the form of the highest weights of the composition factors of $V|_{J}$ given in \eqref{e:vJ}. 

An entirely similar argument also eliminates the case $\l=\l_2+\l_{n}$. We have now reduced to the cases $\l = \l_1, \l_2,\l_{n-1}$ and $\l_n$.

If $\l=\l_1$ then $V$ is the natural $KG$-module and $V|_{H}$ is irreducible. Next suppose $\l=\l_2$, so $\dim V = n(2n-1)$ (see Proposition \ref{p:dims}, and recall that $p \neq 2$) and each $KH^0$-composition factor is $9$-dimensional. By applying the permutations in $S_t$ we see that there are at least $\binom{t}{2}$ distinct composition factors. There cannot be exactly $\binom{t}{2}$ since $n(2n-1)>9\binom{t}{2}$. Similarly, if there are more than $\binom{t}{2}$ factors then $\dim V \ge 18\binom{t}{2}$ since $H$ permutes the homogeneous summands of $V|_{H^0}$, but this cannot happen since $n(2n-1)< 18\binom{t}{2}$. This eliminates the case $\l=\l_2$. Finally, the cases $\l=\l_{n-1}$ and $\l_n$ provide irreducible examples by Lemma \ref{l:c2_special}.

\vs

For the remainder we will assume $l \ge 2$.
As before, we have $H^0 = X_1 \cdots X_t$, where $X_i \cong B_l$. Let $\{\b_{i,1}, \ldots, \b_{i,l}\}$ be a set of simple roots for $X_{i}$. Up to conjugacy, we may define the corresponding root subgroups of $X_{i}$ as follows:
$$x_{\pm \b_{i,j}}(c) = x_{\pm \a_{(i-1)l+j+\lfloor (i-1)/2\rfloor}}(c),\;\; x_{\pm \b_{i,l}}(c) = x_{\pm \gamma_i}(c)x_{\pm \delta_i}(c)$$
for all $c \in K$, $1 \le i \le t$ and $1 \le j \le l-1$, where
$$\gamma_{2k-1} = \sum_{j=0}^{l}\a_{(2k-1)l+k+j-1},\;\; \gamma_{2k} = \a_{k(2l+1)-1}$$
and
$$\delta_{2k-1} = \left\{\begin{array}{ll}
\gamma_{2k-1} + \displaystyle 2\sum_{j=k(2l+1)}^{n-2}\a_j+\a_{n-1}+\a_{n} & \mbox{if $2k < t$} \\
\displaystyle \sum_{j=0}^{l-1}\a_{(t-1)l+t/2+j-1}+\a_n & \mbox{if $2k=t$,}
\end{array}\right.$$
and
$$\delta_{2k} = \left\{\begin{array}{ll}
\a_{k(2l+1)-1}+\displaystyle 2\sum_{j=k(2l+1)}^{n-2}\a_j+\a_{n-1}+\a_{n} & \mbox{if $2k<t$} \\
\a_n & \mbox{if $2k=t$.}
\end{array}\right.$$

Consequently, if $\mu = \sum_{i}b_i\l_i$ is a weight for $T$ then 
$$\mu|_{X_i} = \sum_{j=1}^{l-1}b_{(i-1)l+j+\lfloor (i-1)/2\rfloor}\o_{i,j}+v_i(\mu)\o_{i,l}$$
for all $1 \le i \le t$, where
$$v_{2k-1}(\mu) = 2\sum_{j=(2k-1)l+k-1}^{n-2}b_j+b_{n-1}+b_n$$
and
$$v_{2k}(\mu) = 2\sum_{j=k(2l+1)-1}^{n-2}b_j+b_{n-1}+b_n$$
(so that $v_{t}(\mu) = b_{n-1}+b_n$). We set $v_i = v_i(\l)$ for all $i$. 

It will be useful to observe that if $t=2$ and $\a=\sum_{i=1}^nc_i\a_i$ with $c_i \in \mathbb{N}_0$ for all $i$, then
\begin{equation}\label{e:rho}
\mbox{$\a|_{H^0} = 0$ if and only if $\a=0$.}
\end{equation}
In particular, if $t=2$ then $\l$ is the unique weight of $\Lambda(V)$ that restricts to $\l|_{H^0}$, so if $V|_{H^0}$ is reducible then $V$ has exactly two non-isomorphic $KH^0$-composition factors (see Proposition \ref{p:niso}).

Suppose $a_{ml+\lfloor m/2 \rfloor} \neq 0$ for some $m \in \{1, \ldots, t-1\}$. If $m=2k$ is even then $\mu = \l - \a_{2kl+k} \in \L(V)$ affords the highest weight of a $KH^0$-composition factor, but this contradicts Lemma \ref{l:eh} since $\mu|_{H^0} = \l|_{H^0}+\o_{2k+1,1}$. Similarly, if $m=2k-1$ is odd then $\mu = \l - \a_{(2k-1)l+k-1} \in \L(V)$ affords the   highest weight of a $KH^0$-composition factor and
$$\mu|_{H^0} = \l|_{H^0}+\o_{2k-1,l-1}-2\o_{2k-1,l}+\o_{2k,1}.$$
Let $\s \in S_t$ be the associated permutation. If $\s(i) \ge 2k$ for some $i<2k$ then $v_{2k-1} = v_{2k}$, but this is not possible since we are assuming $a_{(2k-1)l+k-1} \neq 0$. Therefore $\{1, \ldots, 2k-1\}$ is $\s$-invariant, but this is ruled out by Lemma \ref{l:eh}.

For the time being, let us assume $l=2$, so $n=5t/2$. We deal first with the 
special case $l=t=2$, so $n=5$. As noted above (see \eqref{e:rho}), if $V|_{H^0}$ is reducible then $V$ has exactly two non-isomorphic $KH^0$-composition factors.

By the above argument we have $a_2=0$. Suppose $a_1 \neq 0$. Then 
$\l - \a_1-\a_2 \in \L(V)$
affords the highest weight of a $KH^0$-composition factor and we
quickly deduce that $(a_1,a_3) = (1,0)$ is the only possibility. 
Similarly, 
if $a_3 \neq 0$ then by considering $\l - \a_2-\a_3 \in \L(V)$ we 
see that $(a_1,a_3)=(0,1)$.

Now suppose $a_4+a_5\ne 0$. If $V|_{H^0}$ is irreducible, then 
\cite[Table 1]{Seitz2} indicates that the only examples are $\l=\l_4$ and 
$\l_5$, as recorded in Table \ref{t:c2}. Now assume $V|_{H^0}$ is reducible.
Since 
$$\l|_{H^0} = a_1\o_{1,1}+(2a_3+a_4+a_5)\o_{1,2} + a_3\o_{2,1}+(a_4+a_5)\o_{2,2}$$
the reducibility of $V|_{H^0}$ implies that $a_1+a_3\ne 0$.
Therefore, we may assume that
$\lambda=\lambda_1+a_4\lambda_4+a_5\lambda_5$ or 
$\lambda_3+a_4\lambda_4+a_5\lambda_5$, in which case the second 
$KH^0$-composition factor is afforded
by the restriction of $\mu = \lambda-\alpha_1-\alpha_2$ or 
$\mu = \lambda-\alpha_2-\alpha_3$, respectively.

If $a_4a_5\ne 0$ then $\mu_1=\l-\a_4$ and $\mu_2=\l-\a_5$ are weights that restrict to $\l|_{H^0}-\b_{2,2}$.
Since $\mu-\mu_1$ and $\mu-\mu_2$ do not restrict to a sum of roots in 
$\Sigma(H^0)$,
it follows that $\mu_1$ and $\mu_2$ must occur in the $KH^0$-composition
factor afforded by $\lambda$. But the weight 
$\lambda|_{H^0}-\beta_{2,2}$ occurs with 
multiplicity $1$ in this composition factor, which is a contradiction. We conclude that $a_4a_5=0$. 

Suppose $\lambda=\lambda_1+a_4\lambda_4$ with $a_4 \ge 1$. Let $\nu_1 = \lambda-\alpha_1-\alpha_2-\alpha_3-\alpha_4$, $\nu_2 = 
\lambda-\alpha_1-\alpha_2-\alpha_3-\alpha_5$ and observe that 
$m_{V}(\nu_1) = 3$ if $a_4+4$ is divisible by $p$, otherwise $m_{V}(\nu_1) = 4$ (see Lemmas \ref{l:s816} and \ref{l:sr}). We also note that $m_{V}(\nu_2)=1$. Now $\nu_1$ and $\nu_2$ both
restrict to $\lambda|_{H^0}-\beta_{1,1}-\beta_{1,2} = \mu|_{H^0}-\beta_{2,1}-\beta_{2,2}$, and by Lemma \ref{l:118} this weight has multiplicity $2$ (respectively, $1$) 
in the $KH^0$-composition factor afforded by $\lambda$ when $p$ does not divide $a_4+4$ (respectively, $p$ divides $a_4+4$), and the same multiplicity in the composition factor afforded by $\mu$. 
By comparing these multiplicities, we reach a contradiction via Lemma \ref{l:easy}. 
An entirely similar argument applies if $\lambda=\lambda_1+a_5\lambda_5$ and $a_5 \ge 1$.

Next assume $\lambda=\lambda_3+a_4\lambda_4$.  
Since the weights $\lambda-\alpha_3-\alpha_4$ and $\lambda-\alpha_3-\alpha_5$
both restrict to $\lambda|_{H^0}-\beta_{2,1}-\beta_{2,2}$, which has multiplicity at most $2$ in the $KH^0$-composition factor with highest weight $\l|_{H^0}$, and these weights do not 
occur in the composition factor afforded by $\mu = \l - \a_2-\a_3$, Lemma \ref{l:118} implies that $a_4=p-2$. Therefore, $\l|_{H^0} = p\o_{1,2}+\o_{2,1}+(p-2)\o_{2,2}$ and $\mu|_{H^0} = \o_{1,1}+(p-2)\o_{1,2}+p\o_{2,2}$. But $\rho=\lambda-\alpha_2-\alpha_3-\alpha_4 \in \L(V)$ and $\rho|_{H^0} = \l|_{H^0}-\b_{1,2} = \mu|_{H^0}-\b_{2,2}$, so $\rho|_{H^0}$ is not a weight of the $KH^0$-composition factor afforded by $\lambda$, nor in the one afforded by $\mu$. This contradicts Lemma \ref{l:easy}. The case $\lambda=\lambda_3+a_5\lambda_5$ is entirely similar. 
We have now reduced to the cases $\l=\l_1$ and $\l=\l_3$. If $\l=\l_1$ then $V$ is simply the natural module for $G$, and this case is recorded in Table \ref{t:c2}. On the other hand, if $\l=\l_3$ then $\dim V = 120$ (see \cite[Table A.42]{Lubeck}), but each $KH^0$-composition factor has dimension $50$, so $V|_{H}$ is reducible when $\l=\l_3$ since $2 \cdot 50 < 120$. This completes our analysis of the case $l=t=2$.

Next suppose $l=2$ and $t \ge 4$. Recall that we have already shown that $a_{2m+\lfloor m/2 \rfloor}=0$ for all $m \in \{1, \ldots, t-1\}$. In particular,
$a_{5k}=0$ for all $k<n/5$. Suppose $a_{5k+1} \neq 0$ for some $k \ge 1$. Then $\mu = \l - \a_{5k} - \a_{5k+1} \in \L(V)$ and $\mu|_{H^0} = \l|_{H^0}-\o_{2k+1,1}+2\o_{2k+1,2}$, but this contradicts Lemma \ref{l:eh} since $\mu|_{H^0}$ is the highest weight of a $KH^0$-composition factor. Similarly, if $a_{5k-1} \neq 0$ and $k<n/5$ then $\mu = \l - \a_{5k-1} - \a_{5k} \in \L(V)$ and $\mu|_{H^0} = \l|_{H^0}+\o_{2k,1}-2\o_{2k,2}+\o_{2k+1,1}$ is the highest weight of a $KH^0$-composition factor. If $\s \in S_t$ denotes the associated permutation then $\{1,\ldots, 2k\}$ is $\s$-invariant since $a_{5k-1} \neq 0$, but this contradicts Lemma \ref{l:eh}. Finally, if $a_{5k-2} \neq 0$ and $k<n/5$ then
$$\mu = \l - \a_{5k-3} - \a_{5k-2} - \a_{5k-1} - \a_{5k} \in \L(V)$$
affords the highest weight of a $KH^0$-composition factor and
$$\mu|_{H^0} = \l|_{H^0}+\o_{2k-1,1}-2\o_{2k-1,2}+\o_{2k+1,1}.$$
Here $\{1, \ldots, 2k-1\}$ has to be $\s$-invariant, but once again this contradicts Lemma \ref{l:eh}. 

For $l=2$ we have now reduced to the case
\begin{equation}\label{e:lam}
\l = a_1\l_1+a_{n-2}\l_{n-2}+a_{n-1}\l_{n-1}+a_n\l_n
\end{equation}
(with $t \ge 4$). We can also reduce to this configuration when $l >2$. To see this, let us assume $l>2$ and recall that $a_{ml+\lfloor m/2 \rfloor}=0$ for all $m \in \{1, \ldots , t-1\}$. 

For each odd integer $m \in \{1, \ldots, t-1\}$, say $m=2k-1$, suppose $s \in \{1, \ldots, l-2\}$ is minimal such that $a_{(2k-1)l+k-1+s} \neq 0$. Then
$$\mu = \l - \a_{(2k-1)l+k-1} - \a_{(2k-1)l+k} - \cdots - \a_{(2k-1)l+k-1+s} \in \L(V)$$
affords the highest weight of a $KH^0$-composition factor, but this contradicts Lemma \ref{l:eh} since
$$\mu|_{H^0} = \l|_{H^0}+\o_{2k-1,l-1}-2\o_{2k-1,l}-\o_{2k,s}+\o_{2k,s+1}.$$
Similarly, if $s \in \{1, \ldots, l-2\}$ is minimal such that $a_{(2k-1)l+k-1-s} \neq 0$ then
$$\mu = \l - \a_{(2k-1)l+k-1-s} - \a_{(2k-1)l+k-s} - \cdots - \a_{(2k-1)l+k-1} \in \L(V)$$
affords the highest weight of a $KH^0$-composition factor, but this is also ruled out by Lemma \ref{l:eh} since
$$\mu|_{H^0} = \l|_{H^0}+\o_{2k-1,l-s-1}-\o_{2k-1,l-s}+\o_{2k,1}.$$
Notice that if $t=2$ then we have reduced to the case \eqref{e:lam}, so let us assume $t \ge 4$.

For each even integer $m \in \{2, \ldots, t-2\}$, say $m=2k$, suppose $s \in \{1, \ldots, l-1\}$ is minimal such that $a_{2kl+k-s} \neq 0$ then
$$\mu = \l - \a_{2kl+k-s} - \a_{2kl+k-s+1} - \cdots - \a_{2kl+k} \in \L(V)$$
affords the highest weight of a $KH^0$-composition factor, with associated permutation $\s \in S_t$. If $s \ge 2$ then this contradicts Lemma \ref{l:eh} since 
$$\mu|_{H^0} = \l|_{H^0} +\o_{2k,l-s}-\o_{2k,l-s+1}+\o_{2k+1,1}.$$ 
On the other hand, if $s=1$ then 
$$\mu|_{H^0} = \l|_{H^0} +\o_{2k,l-1}-2\o_{2k,l}+\o_{2k+1,1}.$$ 
However, if $\s(i)>2k$ for some $i \le 2k$ then $v_{2k}=v_{2k+1}$, which contradicts the assumption $a_{2kl+k-1} \neq 0$. Therefore $\{1, \ldots, 2k\}$ is $\s$-invariant, but this contradicts Lemma \ref{l:eh}.

Finally, suppose $m \in \{2, \ldots, t-2\}$ is the largest even integer, $m=2k$ say, such that $a_{2kl+k +s} \neq 0$ for some $s \in \{1, \ldots, l-2\}$. Assume 
$s \in \{1, \ldots, l-2\}$ is minimal such that $a_{2kl+k +s} \neq 0$.
Then
$$\mu = \l - \a_{2kl+k} - \a_{2kl+k+1} - \cdots - \a_{2kl+k+s} \in \L(V)$$
affords the highest weight of a $KH^0$-composition factor and
$\mu|_{H^0} = \l|_{H^0} - \o_{2k+1,s} + \o_{2k+1,s+1}$. Let $\s \in S_t$ be the associated permutation. By the maximality of $m$, and our earlier analysis, we have $a_{(i-1)l+\lfloor (i-1)/2\rfloor+s}=0$ for all $i>2k+1$, whence $\s(2k+1)\le 2k$ and thus $v_{2k+1} = v_{2k}$. This is a contradiction since $a_{2kl+k+s} \neq 0$. 

This justifies the claim, and we have reduced to the configuration for $\l$ given in \eqref{e:lam}. (Also recall that we may assume $t \ge 4$ if $l=2$.)

If $a_1 \neq 0$ then $\mu = \l - \a_1 - \a_2 - \cdots - \a_l \in \L(V)$ affords the highest weight of a $KH^0$-composition factor and $\mu|_{H^0} = \l|_{H^0}-\o_{1,1}+\o_{2,1}$. We quickly deduce that $a_1=1$ is the only possibility. Next suppose $a_{n-1}a_n \neq 0$. Set $\mu_1 = \l - \a_{n-1}$, $\mu_2 = \l - \a_{n}$ and note that $m_V(\mu_i)=1$ and $\mu_i|_{H^0} = \l|_{H^0} - \b_{t,l}$, $i=1,2$. Fix non-zero vectors $x \in V_{\mu_1}$ and $y \in V_{\mu_2}$. Then $x_{\b_{i,j}}(c)x = x$ and  $x_{\b_{i,j}}(c)y = y$
for $(i,j) \neq (t,l)$, and we have $x_{\b_{t,l}}(c)x = x+ca v^+$ and $x_{\b_{t,l}}(c)y = y+cb v^+$, for some scalars $a,b \in K$.
If $ab=0$ then $x$ or $y$ is a maximal vector for the Borel subgroup $B_{H^0}$, but this contradicts Lemma \ref{l:eh} since $h(\mu_i|_{H^0}) \neq h(\l|_{H^0})$. Similarly, if $ab \neq 0$ then $w = bx-ay$ is non-zero ($x$ and $y$ are 
linearly independent), and $w$ is a maximal vector for $B_{H^0}$. Once again, we reach a contradiction via Lemma \ref{l:eh}. Hence $a_{n-1}a_n=0$.

For now, let us assume $t>2$. If $a_{n-2} \neq 0$ then $\mu = \l - \a_{n-2l-1} - \a_{n-2l} - \cdots - \a_{n-2} \in \L(V)$ affords the highest weight of a $KH^0$-composition factor, but this contradicts Lemma \ref{l:eh} since $\mu|_{H^0} = \l|_{H^0}-\o_{t,l-1}+2\o_{t,l}$. 

Next suppose $a_{n-1} \ge 2$. Then $\mu_1 = \l - \a_{n-2l-1} - \a_{n-2l} - \cdots - \a_{n-2} - 2\a_{n-1}$ and $\mu_2 =  \l - \a_{n-2l-1} - \a_{n-2l} - \cdots - \a_{n}$ are both weights of $V$ with $m_V(\mu_i)=1$ (it is easy to see this via Lemma \ref{l:sr}, working in the $A_{n-1}$ Levi factor). Fix non-zero vectors $x \in V_{\mu_1}$ and $y \in V_{\mu_2}$. Also set 
$\nu = \l - \a_{n-2l-1} - \a_{n-2l} - \cdots - \a_{n-1} \in \L(V)$ and note that $m_{V}(\nu)=1$, say $V_{\nu} = \la w \ra$. Now $x_{\b_{i,j}}(c)x = x$ and  $x_{\b_{i,j}}(c)y = y$
for $(i,j)\neq (t,l)$. By Lemma \ref{l:eh}, neither $\mu_1$ nor $\mu_2$ afford the highest weight of a $KH^0$-composition factor (both weights restrict to $\l|_{H^0}-\b_{t,l}$), so $x_{\b_{t,l}}(c)x\ne x$ and $x_{\b_{t,l}}(c)y\ne y$. Therefore $x_{\b_{t,l}}(c)x = x_{\a_{n-1}}(c) x = x+caw$ and $x_{\b_{t,l}}(c)y = x_{\a_{n}}(c) y = y+cbw$ for some $a,b\in K^*$. But this means that  $0 \neq bx-ay$ is the maximal vector of a $KH^0$-composition factor with highest weight $\l|_{H^0} - \b_{t,l}$, contradicting Lemma \ref{l:eh}. We conclude that $a_{n-1} \le 1$. An entirely similar argument shows that $a_n \le 1$, so for $t>2$ we have reduced to the following cases:
\begin{equation}\label{e:cases}
\l_1, \l_{n-1}, \l_n, \l_1+\l_{n-1}, \l_1+\l_n.
\end{equation}

Now assume $t=2$ and $l \ge 3$. Recall that $\l$ is the unique weight in $\L(V)$ that restricts to $\l|_{H^0}$ (see \eqref{e:rho}), so if $V|_{H^0}$ is reducible then $V|_{H^0}$ has exactly two non-isomorphic composition factors.

First observe that if $a_{n-2} \neq 0$ then $\mu = \l - \a_{l} - \cdots - \a_{n-2} \in \L(V)$ affords the highest weight of a $KH^0$-composition factor, and we have
$$\mu|_{H^0} = \l|_{H^0} + \o_{1,l-1}-2\o_{1,l} -\o_{2,l-1}+2\o_{2,l}$$
so $(a_{1},a_{n-2})=(0,1)$ is the only possibility. 
Suppose $a_{n-1}=a_n=0$. If $a_{n-2}=0$ then $\l=\l_1$ so let us assume $a_{n-2}\neq 0$. By the previous observation we have $\l = \l_{n-2}$. Now $\mu_1 = \l - \a_{n-2}-\a_{n-1}$ and $\mu_2 = \l - \a_{n-2}-\a_n$ are both weights of $V$ with multiplicity $1$ and which restrict to $\l|_{H^0} - \b_{2,l-1}  - \b_{2,l}$. Given the form of $\mu|_{H^0}$ above, it follows that 
$\mu_1$ and $\mu_2$ only occur in the $KH^0$-composition factor afforded by $\l$, but this is not possible (by Lemma \ref{l:easy}) since the $T_{H^0}$-weight $\l|_{H^0} - \b_{2,l-1}  - \b_{2,l}$ has multiplicity $1$ in this composition factor. Therefore, we may assume that $a_{n-1}+a_n \ge 1$. Without loss of generality, we will assume $a_{n-1} \neq 0$ (so $a_{n}=0$ since $a_{n-1}a_n=0$; an entirely similar argument applies if we assume $a_n \neq 0$). 

Suppose $a_{n-2} \neq 0$. By the previous analysis we have $\l = \l_{n-2}+a_{n-1}\l_{n-1}$. Define $\mu, \mu_1$ and $\mu_2$ as in the previous paragraph, so the $\mu_i$ are weights of $V$ that only occur in the $KH^0$-composition factor afforded by $\l$. Now both weights restrict to $\l|_{H^0} - \b_{2,l-1}  - \b_{2,l}$, which has multiplicity at most $2$ in the $KH^0$-composition factor afforded by $\l$ (see Lemma \ref{l:118}), so Lemma \ref{l:easy} implies that $m_{V}(\mu_1) = 1$ and thus $a_{n-1} = p-2$ by Lemma \ref{l:118}(i). Let $\nu = \lambda - \a_{l} - \cdots - \a_{n-1} \in \L(V)$.
 Now $\nu|_{H^0} = \l|_{H^0} - \b_{1,l} = \mu|_{H^0} - \b_{2,l}$, but neither $\l|_{H^0} - \b_{1,l}$ nor $\mu|_{H^0} -\b_{2,l}$ are weights in the respective $KH^0$-composition factors since the condition $a_{n-1}=p-2$ implies that $\l|_{H^0} = p\o_{1,l}+\o_{2,l-1}+(p-2)\o_{2,l}$ and $\mu|_{H^0} = \o_{1,l-1}+(p-2)\o_{1,l}+p\o_{2,l}$. This contradicts Lemma \ref{l:easy},  so we conclude that $a_{n-2}=0$.

Next suppose $a_{n-1} \ge 2$, so $\l = a_1\l_1+a_{n-1}\l_{n-1}$ with $a_1 \le 1$. If  $a_1=0$ then $\l|_{H^0} = a_{n-1}(\o_{1,l}+\o_{2,l})$ and we see that $\l$ is the only weight in $\L(V)$ that affords the highest weight of a $KH^0$-composition factor. Therefore $V|_{H^0}$ is irreducible, and by inspecting \cite[Table 1]{Seitz2}, we deduce that $\l=\l_{n-1}$ is the only possibility. Now assume $a_1 \neq 0$. Here $V|_{H^0}$ is reducible and we calculate that there are precisely two $KH^0$-composition factors, which are afforded by $\l$ and $\mu=\l-\a_1 - \cdots - \a_l$. Set
$$\mu_1 = \l - \a_{l} - \cdots - \a_{n-3} - 2\a_{n-2} - \a_{n-1} - \a_{n}$$
and
$$\mu_2 = \l - \a_{l} - \cdots - \a_{n-3} - 2\a_{n-2} - 2\a_{n-1}.$$
Then $\mu_1$ and $\mu_2$ are weights of $V$ that restrict to $\l|_{H^0} - \b_{1,l}-\b_{2,l-1}-\b_{2,l}$. Both of these weights can only occur in the $KH^0$-composition factor afforded by $\l$ (indeed, note that $\l|_{H^0}-\mu|_{H^0} = \o_{1,1} - \o_{2,1} = \sum_{i=1}^{l}(\b_{1,i}-\b_{2,i})$), but the $T_{H^0}$-weight $\l|_{H^0} - \b_{1,l}-\b_{2,l-1}-\b_{2,l}$ has multiplicity $1$ in this factor. This contradiction implies that $a_{n-1} \le 1$. Similarly, if we assume $a_{n} \neq 0$ then an entirely similar argument yields $a_{n} \le 1$. Therefore, we have also reduced to the cases labelled \eqref{e:cases} when $t=2$ (with $l \ge 3$).

Clearly, the case $\l=\l_1$ is an example, while Lemma \ref{l:c2_special} implies that the highest weights $\l_{n-1}$ and $\l_n$ also provide examples with $V|_{H}$ irreducible.
Finally, suppose $\l=\l_1+\l_{n-1}$. Here Lemma \ref{l:dm} states that 
\begin{equation}\label{e:star2}
\dim V = \left\{\begin{array}{ll}
2^n(n-1) & \mbox{if $p \mid n$} \\
2^{n-1}(2n-1) & \mbox{otherwise,}
\end{array} \right.
\end{equation} 
and we have $\l|_{H^0} = \o_{1,1}+\sum_{i=1}^{t}\o_{i,l}$. By applying Lemma \ref{l:bn2} we see that $\dim L_{B_l}(\o_{1,1}+\o_{1,l}) = d$, where
$$d= \left\{\begin{array}{ll} 2^{l}(2l-1) & \mbox{if $p \mid 2l+1$} \\
2^{l+1}l & \mbox{otherwise,} \end{array}\right.$$
whence each $KH^0$-composition factor has dimension $d\cdot 2^{l(t-1)}$. (Note that if $p$ divides $2l+1$ then $p$ also divides $n$ since $n$ is divisible by $2l+1$.) 

Suppose $t=2$, so $n=2l+1$. Here each $KH^0$-composition factor has dimension $2^{n-1}(n-\a)$, where $\a=2$ if $p$ divides $n$, otherwise $\a=1$. In particular, if $V|_{H}$ is irreducible then $\dim V =2^{n-1}(n-\a)$ or $2^{n}(n-\a)$, but this is incompatible with \eqref{e:star2}, so we can eliminate the case $\l=\l_1+\l_{n-1}$ when $t=2$. Finally, if $t \ge 4$ then Lemma \ref{l:c2_special_2} implies that $V|_{H^0}$ has a composition factor with highest weight $\sum_{i=1}^{t}\o_{i,l}$. But this is not conjugate to $\l|_{H^0}$, which is a contradiction. The case $\l=\l_1+\l_n$ is entirely similar.
\end{proof}

\begin{lem}\label{c2:p5}
Proposition \ref{TH:C2} holds in case (v) of Table \ref{t:c2s}.
\end{lem}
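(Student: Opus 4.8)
The argument follows the template of Lemmas \ref{c2:p1}--\ref{c2:p4}, now incorporating the graph-twist permutations $\rho_i$ described in Section \ref{ss:prel_c2}. First I would dispose of the case $l=1$. Here $t=n$, each factor $D_1$ is a $1$-dimensional torus, and $H=(D_1^n.2^{n-1}).S_n=N_G(T)$ is the normalizer of a maximal torus of $G=D_n$ (since $2^{n-1}.S_n=W(D_n)$). By \cite[Lemma 2.4]{g_paper}, $V|_H$ is irreducible if and only if $\l$ is minimal, so \eqref{e:mini} gives exactly $\l=\l_1,\l_{n-1},\l_n$, matching the $l=1$ entries of Table \ref{t:c2}. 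For the remainder assume $l\ge 2$, write $H^0=X_1\cdots X_t$ with $X_i\cong D_l$, and fix an explicit embedding $D_l^t<D_n$ (using a maximal-rank subsystem subgroup together with induction within the last $D_l$ factor, as in Lemma \ref{c2:p4}). This yields a formula for each restriction $\mu|_{X_i}$ in terms of the coefficients $b_i$ of a $T$-weight $\mu=\sum_i b_i\l_i$, in which the last fundamental weight $\o_{i,l}$ of each factor absorbs the branch roots $\a_{n-1},\a_n$ of $D_n$.

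The bulk of the proof is the elimination phase. Exactly as before, whenever an interior coefficient $a_i$ is nonzero I would subtract an appropriate chain of simple roots to produce a weight $\mu\in\L(V)$ affording the highest weight of a $KH^0$-composition factor whose $h$-value differs from $h(\l|_{H^0})$, contradicting Lemma \ref{l:eh}; when the associated permutation $\s$ is forced to fix an initial segment $\{1,\dots,k\}$ setwise, this likewise violates Lemma \ref{l:eh}. The new feature relative to case (iv) is that the graph automorphisms of the $D_l$ factors permit the twists $\rho_i=(l-1,l)$, so I must track restrictions involving the exchange $\o_{i,l-1}\leftrightarrow\o_{i,l}$ and check compatibility of $\s$-invariance with the numbers $v_i$. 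This reduces $\l$ to the form $a_1\l_1+a_{n-1}\l_{n-1}+a_n\l_n$, after which a further round of chain-subtraction arguments (parallel to the final paragraphs of Lemma \ref{c2:p4}, using maximal vectors built from $f_{\a_{n-1}}v^+$ and $f_{\a_n}v^+$) forces $a_1\le 1$ and $a_{n-1}+a_n\le 1$, leaving the candidates $\l_1,\l_{n-1},\l_n,\l_1+\l_{n-1},\l_1+\l_n$.

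It then remains to test each candidate. For $\l=\l_1$ the module is natural and $V|_H$ is irreducible with $\kappa=t$. For the spin modules $\l=\l_{n-1},\l_n$, restricting to $H^0$ decomposes $V$ as a sum of tensor products of half-spin modules $L_{D_l}(\o_{i,l-1})$ or $L_{D_l}(\o_{i,l})$, each tensor product of dimension $2^{n-t}$; comparing with $\dim V=2^{n-1}$ from Lemma \ref{l:dmspin} gives exactly $2^{t-1}$ composition factors, all conjugate under the even graph-twists to $L_{H^0}(\sum_i\o_{i,l})$. The subgroup $2^{t-1}$ of simultaneous even graph-twists already permutes these factors transitively, so $V|_H$ is irreducible with $\kappa=2^{t-1}$, as recorded.

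The main obstacle is the boundary cases $\l=\l_1+\l_{n-1}$ and $\l_1+\l_n$, which survive only when $(t,p)=(2,2)$ and $l\ge 3$ is odd. For general $(t,p)$ I would eliminate them as in Lemma \ref{c2:p4}: if $t\ge 4$ then Lemma \ref{l:c2_special_2} forces a composition factor of highest weight $\sum_i\o_{i,l}$ not conjugate to $\l|_{H^0}$, while for $t=2$ with $p\ne 2$ the dimension $\dim V$ from Lemma \ref{l:dm} is incompatible with the dimension of each $KH^0$-factor. The delicate point is to confirm that when $(t,p)=(2,2)$ and $l\ge 3$ is odd these two weights genuinely give irreducible restrictions. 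Here $n=2l$, $p=2\mid n$, so $\dim V=2^n(n-1)$ by Lemma \ref{l:dm}; since $l$ is odd we have $p\nmid l$, so each of the four candidate $KH^0$-composition factors $L_{D_l}(\o_{1,1}+\o_{1,l})\otimes L_{D_l}(\o_{2,l})$ and its $2.S_2$-conjugates has dimension $2^{l-1}(2l-1)\cdot 2^{l-1}=2^{2l-2}(2l-1)$, and $4\cdot 2^{2l-2}(2l-1)=\dim V$. I expect the crux to be showing that $V|_{H^0}$ has exactly these four factors and that $2.S_2$ permutes them transitively, the parity condition on $l$ being precisely what guarantees (via the longest element of $W(D_l)$ acting as $-\s$ rather than $-1$) that the graph-twist together with the factor swap acts transitively.
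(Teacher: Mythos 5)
Your overall strategy (the $l=1$ torus-normalizer reduction, the Lemma \ref{l:eh} elimination phase with the twists $\rho_i$, and the final dimension counts for $\l_1,\l_{n-1},\l_n$ and $\l_1+\l_{n-\e}$) matches the paper, but there are two genuine gaps. First, your reduction to the five candidates $\l_1,\l_{n-1},\l_n,\l_1+\l_{n-1},\l_1+\l_n$ is too strong: the weight $\l_{n-1}+\l_n$ (and, when $l=2$, also $\l_1+\l_{n-1}+\l_n$) survives the chain-subtraction phase, and the mechanism you propose to kill it --- a maximal vector built from $f_{\a_{n-1}}v^+$ and $f_{\a_n}v^+$ as in the final paragraphs of Lemma \ref{c2:p4} --- cannot work here. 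In case (v) the subgroup $H$ has maximal rank, so $T_{H^0}=T$ and restriction of $T$-weights is injective; in particular $\a_{n-1}|_{H^0}=\b_{t,l-1}$ and $\a_n|_{H^0}=\b_{t,l}$ are \emph{distinct} roots of the factor $X_t$ (recall $\gamma_t=\a_n$ is a simple root of $X_t$), so $f_{\a_{n-1}}v^+$ and $f_{\a_n}v^+$ lie in different $T_{H^0}$-weight spaces and no linear combination of them is a weight vector. The collapsing of $\a_{n-1}$ and $\a_n$ to a common restriction is exactly the feature of case (iv) that made that trick available. The paper instead eliminates $\l=\l_{n-1}+\l_n$ by a multiplicity computation: $\mu=\l-\a_l-\cdots-\a_n$ has $m_V(\mu)=3-\delta_{2,p}$ (via Lemma \ref{l:sr}, reducing to the zero weight of an $A_3$-module), yet $\mu$ can only occur in the two composition factors afforded by $\mu_1=\l-\a_l-\cdots-\a_{n-1}$ and $\mu_2=\l-\a_l-\cdots-\a_{n-2}-\a_n$, where it has multiplicity at most $1$ in each (and multiplicity $0$ when $p=2$); either way this contradicts Lemma \ref{l:easy}.

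Second, your elimination of $\l_1+\l_{n-1}$ for $t\ge4$ invokes Lemma \ref{l:c2_special_2}, but that lemma concerns $\C_2$-subgroups of type $(2^{t-1}\times B_l^t).S_t$, i.e.\ cases (ii) and (iv) with $B_l$ factors; it says nothing about $(D_l^t.2^{t-1}).S_t$, so this step is unsupported (and your case split $t\ge4$ versus $t=2$ also leaves odd $t\ge3$ untreated, which is allowed in case (v), unlike case (iv) where $t$ is even). The paper handles all $t\ge2$ at once by a counting argument that exploits the injectivity of weight restriction: the number of $KH^0$-composition factors is exactly the number $N=2^{t-1}t$ of distinct conjugates of $\l|_{H^0}$ under $2^{t-1}.S_t$, each factor has dimension $2^{l-1}(2l-\a)\cdot2^{(t-1)(l-1)}$ with $\a\in\{1,2\}$ according to whether $p\nmid l$ or $p\mid l$ (Lemmas \ref{l:dmspin} and \ref{l:dm}), and comparison with $\dim V=2^{n-1}(2n-\b)$ shows irreducibility holds if and only if $t\a=\b$, forcing $(t,\a,\b)=(2,1,2)$, i.e.\ $(t,p)=(2,2)$ with $l$ odd. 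This is also where the parity condition on $l$ really enters --- through the dimension formula, not through the action of the longest element of $W(D_l)$ as you conjectured.
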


\begin{proof}
Here $G=D_n$, $H=(D_l^t.2^{t-1}).S_t$, $n=lt$ and $n \ge 4$.
Note that $H$ is a maximal rank subgroup of $G$, so $T_{H^0} = T$.
If $l=1$ then $H=N_{G}(T)$ is the normalizer of a maximal torus of $G$, so $\l$ is \emph{minimal} (see \cite[Lemma 2.4]{g_paper} and the paragraph preceding Lemma \ref{l:dmspin}), and so the only examples are $\l=\l_1$, $\l_{n-1}$ and $\l_{n}$ (see \eqref{e:mini}). For the remainder we may assume $l \ge 2$.

Up to conjugacy we have $H^0 = X_1 \cdots X_t$, where
$$X_i = \la U_{\pm \a_{(i-1)l+1}}, U_{\pm \a_{(i-1)l+2}}, \ldots, U_{\pm \a_{(i-1)l+l-1}}, U_{\pm \gamma_{i}} \ra,$$
$\gamma_{t}=\a_n$ and
$$\gamma_{i} =\a_n+\a_{n-1}+\a_{il-1}+2\sum_{j=il}^{n-2}{\a_j}$$
for all $i<t$.
In particular, if $\mu = \sum_{i=1}^{n}{b_i\l_i}$ is a weight for $T$ then
$$\mu|_{X_{i}} = \sum_{j=1}^{l-1}{b_{(i-1)l+j}\o_{i,j}}+v_i(\mu)\o_{i,l}$$
with $v_t(\mu) = b_n$ and
$$v_i(\mu) = b_n+b_{n-1}+b_{il-1}+2\sum_{j=il}^{n-2}{b_j}$$
for $i<t$.
Clearly, two weights $\mu,\nu \in \L(V)$ have the same restriction $\mu|_{H^0} = \nu|_{H^0}$ if and only if $\mu=\nu$. Also note that the standard graph automorphism of $X_{i}$ acts on the set of simple roots of $X_i$ by swapping $\a_{(i-1)l+l-1}$ and $\gamma_i$. We set $v_{i}=v_{i}(\l)$ for all $i$.

Suppose $\mu \in \L(V)$ affords the highest weight of a composition factor of $V|_{H^0}$. Recall from Section \ref{ss:prel_c2} that there exists $\s \in S_t$ and a collection of permutations $\{\rho_1, \ldots, \rho_t\}$ in $S_{l}$ such that $\rho_i = (l-1,l)$ if $\rho_{i}\neq 1$, and precisely an even number $k \ge 0$ of the $\rho_i$ are non-trivial, and we have
$$\mu|_{X_{\s(i)}} = \sum_{j=1}^{l-2}{a_{(i-1)l+j}\o_{\s(i),j}}+a_{(i-1)l+l-1}\o_{\s(i),\rho_i(l-1)}+v_{i}\o_{\s(i),\rho_i(l)}.$$

For now, let us assume $l \ge 3$. If $a_{ml} \neq 0$ for some $m \in \{1, \ldots, t-1\}$ then
$\mu = \l - \a_{ml} \in \L(V)$ affords the highest weight of a $KH^0$-composition factor but
$\mu|_{H^0} = \l|_{H^0}+\o_{m,l-1}-\o_{m,l}+\o_{m+1,1}$ and this contradicts Lemma \ref{l:eh}.

Next suppose $r \in \{1, \ldots, l-2\}$ is minimal such that $a_{ml-r} \neq 0$ for some $m \in \{1, \ldots, t-1\}$, so $\mu = \l-\a_{ml-r} - \a_{ml-r+1} - \cdots - \a_{ml} \in \L(V)$
affords the highest weight of a $KH^0$-composition factor. If $r>1$ then $h(\mu|_{H^0}) = h(\l|_{H^0})+1$, which is a contradiction. Now assume $r=1$, so
$$\mu|_{H^0} = \l|_{H^0}+\o_{m,l-2}-\o_{m,l-1}-\o_{m,l}+\o_{m+1,1}.$$
Let $\s \in S_t$ be the associated permutation and suppose $\s(i)>m$ for some $i \le m$.   
If $\rho_{i}$ is trivial then $v_{i}=v_{\s(i)}$ and thus $v_{m}=v_{m+1}$. Similarly, if $\rho_{i} \neq 1$ then $v_{i} = a_{\s(i)l-1} \le v_{\s(i)}$ and so again we get $v_{m}=v_{m+1}$, which is not possible since $a_{ml-1} \neq 0$. Therefore $\{1, \ldots, m\}$ is $\s$-invariant, but this contradicts Lemma \ref{l:eh}.

Now suppose $r \in \{1, \ldots, l-2\}$ is minimal such that $a_{ml+r} \neq 0$ for some $m \in \{1, \ldots, t-1\}$. Here $\mu = \l-\a_{ml}  - \cdots - \a_{ml+r} \in \L(V)$
affords the highest weight of a $KH^0$-composition factor. For now, let us assume $(m,r) \neq (t-1,l-2)$, so $ml+r \le n-3$ and $\mu = \l+\l_{ml+r+1}-\l_{ml+r}-\l_{ml}+\l_{ml-1}$. If $r=l-2$ then
$$\mu|_{H^0} = \l|_{H^0}+\o_{m,l-1}-\o_{m,l}-\o_{m+1,l-2}+\o_{m+1,l-1}+\o_{m+1,l}$$
and this contradicts Lemma \ref{l:eh}. Now assume $r<l-2$, so
$$\mu|_{H^0} = \l|_{H^0}+\o_{m,l-1}-\o_{m,l}-\o_{m+1,r}+\o_{m+1,r+1}.$$
Let $\s \in S_t$ be the associated permutation and observe that
$\{1, \ldots, m\}$ is $\s$-invariant, so $\s(i)=m$ for some $i<m$.
If $\rho_i=1$ then $v_{i}=v_{m}-1$ and thus
$$2\sum_{j=il}^{ml-1}{a_j} = a_{ml-1}-a_{il-1}-1,$$
which is absurd since we have already established $a_{ml-1}=a_{il-1}=0$. Similarly, if $\rho_i \neq 1$ then $v_{i} = a_{ml-1}+1$, so $v_{i}=1$ since $a_{ml-1}=0$. Therefore $v_{m} \le 1$, but this is not possible since $a_{ml+r}\neq 0$. Finally, if $(m,r) = (t-1,l-2)$ then
$$\mu|_{H^0} = \l|_{H^0}+\o_{t-1,l-1}-\o_{t-1,l}-\o_{t,l-2}+\o_{t,l-1}+\o_{t,l},$$
which contradicts Lemma \ref{l:eh}. Notice that we have now reduced to the case
$$\l = a_1\l_1+a_{n-1}\l_{n-1}+a_n\l_n.$$

If $a_1 \neq 0$ then
$\mu = \l - \a_1 - \cdots - \a_{l} \in \L(V)$
affords the highest weight of a $KH^0$-composition factor and $\mu|_{H^0} = \l|_{H^0}-\o_{1,1}+\o_{2,1}$, so $a_1=1$ is the only possibility.

Next suppose $a_{n-1}\neq 0$. Then $\mu = \l - \a_{l}- \cdots - \a_{n-1} \in \L(V)$
affords the highest weight of a $KH^0$-composition factor and
$$\mu|_{H^0} = \l|_{H^0}+\o_{1,l-1}-\o_{1,l}-\o_{t,l-1}+\o_{t,l}.$$
By considering the associated permutation $\s \in S_t$ we quickly deduce that $a_{n-1}=1$.
Moreover, if $a_{n-1}a_n \neq 0$ then $\s(t)=1$ is the only possibility, so $a_1=0$.

Similarly, if $a_n \neq 0$ then $\mu = \l - \a_{l}- \cdots - \a_{n-2} - \a_n \in \L(V)$
affords the highest weight of a $KH^0$-composition factor and
$$\mu|_{H^0} = \l|_{H^0}+\o_{1,l-1}-\o_{1,l}+\o_{t,l-1}-\o_{t,l}.$$
It follows that $a_n=1$ is the only possibility. We have now reduced to the following specific list of cases:
$$\l_1, \l_{n-1}, \l_{n}, \l_1+\l_{n-1}, \l_{1}+\l_{n}, \l_{n-1}+\l_{n}.$$

Suppose $\l=\l_{n-1}+\l_{n}$. Then
$\mu_1 = \l-\a_{l}-\cdots - \a_{n-1}$ and $\mu_2 = \l-\a_{l}-\cdots - \a_{n-2} - \a_n$ are weights of $V$
that afford the highest weights of $KH^0$-composition factors.
Set $\mu = \l-\a_{l}-\cdots - \a_{n} \in \L(V)$ and note that $\mu|_{H^0}$ is not conjugate to $\l|_{H^0}$, so $\mu$ does not afford the highest weight of a $KH^0$-composition factor. Since
$m_{V}(\mu)=m_{V}(\l-\a_{n-2}-\a_{n-1}-\a_{n})$, Lemma \ref{l:sr} implies that
$m_{V}(\mu)$ is equal to the multiplicity of the zero weight in the action of $A_{3}$ on the non-trivial irreducible constituent of its Lie algebra. In other words, $m_{V}(\mu) = 3-\delta_{2,p}$.

By inspecting the above root groups for $H^0$, we deduce that $\mu$ can only occur in the $KH^0$-composition factors afforded by $\mu_1$ and $\mu_2$. If $p \neq 2$ then $\mu$ has multiplicity $1$ in each factor so $m_{V}(\mu) =2$, which is a contradiction. Now assume $p=2$. Since  
$$\mu|_{H^0} = \mu_1|_{H^0} + \o_{t,l-2}-2\o_{t,l} = \mu_2|_{H^0} + \o_{t,l-2} - 2\o_{t,l-1}$$
we observe that $\mu = \mu_1-\a_{n}$ is not a weight of the $KH^0$-composition factor afforded by $\mu_1$, and nor is $\mu = \mu_2-\a_{n-1}$ a weight of the factor afforded by $\mu_2$. This final contradiction eliminates the case $\l = \l_{n-1}+\l_n$. 

Now let $\l=\l_{1}+\l_{n-1}$ and suppose $\nu$ is the highest weight of a $KH^0$-composition factor. If $V|_{H}$ is irreducible then $\nu$ is conjugate to $\l|_{H^0}$ and the corresponding $KH^0$-composition factor is of the form
$$U \otimes V_1 \otimes \cdots \otimes V_{t-1},$$
where $U=L_{D_{l}}(\l_1+\l_{l-1})$ or $L_{D_{l}}(\l_1+\l_{l})$, and each $V_i$ is a spin module for $D_{l}$.
In view of Lemmas \ref{l:dmspin} and \ref{l:dm}, each composition factor has dimension
$$2^{l-1}(2l-\a)\cdot 2^{(t-1)(l-1)},$$
where $\a = 2$ if $p$ divides $l$, otherwise $\a=1$. By applying all possible permutations in $S_{t}$, and all possible combinations of transpositions $\{\rho_{1}, \ldots, \rho_{t}\}$, we calculate that there are precisely $N$ distinct conjugates of $\l|_{H^0}$, where
$$N = \sum_{i=1}^{t}{\frac{t!}{(i-1)!(t-i)!}} = 2^{t-1}t.$$
Moreover, this is precisely the number of $KH^0$-composition factors since we have previously observed that two weights in $V$ have the same restriction to $T_{H^0}$ if and only if they are equal. Now Lemma \ref{l:dm} states that
$$\dim V = 2^{n-1}(2n-\b),$$
where $\b=2$ if $p$ divides $n$, otherwise $\b=1$. Since
$$N\cdot 2^{l-1}(2l-\a)\cdot 2^{(t-1)(l-1)} = 2^{n-1}(2n-t\a)$$
it follows that $V|_{H}$ is irreducible if and only if $t\a=\b$, so we must have $t=2$, $\a=1$ and $\b=2$, which implies that $l$ is odd and $p=2$. This is recorded in Table \ref{t:c2}. The case $\l=\l_1+\l_n$ is entirely similar.

Finally, let us assume $\l=\l_1, \l_{n-1}$ or $\l_n$. If $\l=\l_1$ then $V$ is the natural $KG$-module and $V|_{H}$ is irreducible. Next suppose $\l=\l_{n-1}$ and $\mu$ is the highest weight of a $KH^0$-composition factor. If $V|_{H}$ is irreducible then $\mu|_{H^0}$ is conjugate to $\l|_{H^0}$ and the corresponding composition factor is a tensor product of $t$ spin modules for $D_{l}$, and therefore has dimension $2^{t(l-1)}$.
By applying all possible permutations in $S_{t}$, and all possible combinations of transpositions $\{\rho_{1}, \ldots, \rho_{t}\}$, we calculate that there are precisely $2^{t-1}$ distinct conjugates of $\l|_{H^0}$ and we conclude that $V|_{H}$ is irreducible since
$$2^{t-1}\cdot 2^{t(l-1)} = 2^{n-1} = \dim V.$$
An entirely similar argument applies when $\l=\l_n$. Again, these cases are listed in Table \ref{t:c2}.

To complete the proof of the lemma we may assume $l=2$. Suppose $a_{2m} \neq 0$ with $m<t$. Then $\mu=\l-\a_{2m}\in \L(V)$ affords the highest weight of a $KH^0$-composition factor, but
$$\mu|_{H^0} = \l|_{H^0}+\o_{m,1}-\o_{m,2}+\o_{m+1,1}+\o_{m+1,2}$$
and this contradicts Lemma \ref{l:eh}. Similarly, if $a_{2m+1}\neq 0$ for some $m \in \{1, \ldots, t-2\}$ then $\mu = \l-\a_{2m}-\a_{2m+1}-\a_{2m+2} \in \L(V)$ affords the highest weight of a $KH^0$-composition factor, but this is not possible since
$$\mu|_{H^0} = \l|_{H^0}+\o_{m,1}-\o_{m,2}+\o_{m+2,1}+\o_{m+2,2}.$$
We have now reduced to the case
$\l=a_1\l_1+a_{n-1}\l_{n-1}+a_{n}\l_{n}$. Arguing as before, we deduce that $a_{i} \le 1$ for all $i$, so the remaining possibilities for $\l$ are the following:
$$\l_{1},\l_{n-1}, \l_{n}, \l_{1}+\l_{n-1}, \l_{1}+\l_{n}, \l_{n-1}+\l_{n}, \l_{1}+\l_{n-1}+\l_{n}.$$

Here the first three possibilities give examples as before, while our earlier argument in the case $l \ge 3$ rules out the case $\l=\l_{n-1}+\l_n$.

Let $\l=\l_1+\l_{n-1}$ and assume $t \ge 3$. If $V|_{H}$ is irreducible then each composition factor of $V|_{H^0}$ is of the form $U \otimes V_1 \otimes \cdots \otimes V_{t-1}$, where $U = 1 \otimes 2$ or $2 \otimes 1$, and $V_i=1 \otimes 0$ or $0 \otimes 1$ for all $i$ (as modules for $D_2=A_1A_1$). In particular, $\dim U=6-2\delta_{2,p}$ and $\dim V_i=2$ for all $i$, so
$\dim V = 2^{t-1}t \cdot 2^{t-1}(6-2\delta_{2,p})$ since there are exactly $2^{t-1}t$ distinct conjugates of $\l|_{H^0}$, as before. However, Lemma \ref{l:dm} gives $\dim V = 2^{2t-1}(4t-\b)$, where $\b=2$ if $p$ divides $n$, otherwise $\b=1$, which is a contradiction since $t \ge 3$. Similar reasoning applies when $t=2$, and the case $\l=\l_1+\l_n$ is ruled out in an entirely similar fashion.

Finally, suppose $\l=\l_{1}+\l_{n-1}+\l_{n}$. Here $\mu= \l-\a_{2}- \cdots - \a_{n-1} \in \L(V)$
affords the highest weight of a $KH^0$-composition factor and
$$\mu|_{H^0} = \l|_{H^0}+\o_{1,1}-\o_{1,2}-\o_{t,1}+\o_{t,2}.$$
Now $\l|_{X_{1}} = \o_{1,1}+3\o_{1,2}$, while $\mu|_{X_{1}} = 2\o_{1,1}+2\o_{1,2}$ and $\mu|_{X_{i}} = 2\o_{i,2}$ for all $i>1$. Therefore $\mu|_{H^0}$ is not conjugate to $\l|_{H^0}$, so the case $\l=\l_1+\l_{n-1}+\l_{n}$ can also be eliminated. 
\end{proof}

This completes the proof of Proposition \ref{TH:C2}.

\chapter{Tensor product subgroups, I}\label{s:c4i}

In this section we prove Theorem \ref{main} in the case where $H$ is a tensor product subgroup in the 
$\C_{4}(i)$ collection. Recall from Section \ref{ss:c4} that such a subgroup $H$ stabilizes a tensor product decomposition
$$W = W_{1}\otimes W_{2}$$
of the natural $KG$-module $W$,  and $H$ is of the form $H=N_G(Cl(W_1) \otimes Cl(W_2))$ for certain non-isomorphic classical groups $Cl(W_1)$ and $Cl(W_2)$. Here $Cl(W_1) \otimes Cl(W_2)$ denotes the image of the central product $Cl(W_1) \circ Cl(W_2)$ acting naturally on the tensor product. The relevant cases are listed in Table \ref{t:c4is} (see
Proposition \ref{p:cdisc}).

\renewcommand{\arraystretch}{1.2}
\begin{table}[h]
$$\begin{array}{llll} \hline
& G & H & \mbox{Conditions} \\ \hline
{\rm (i)} & C_n & C_aD_b.2 & \mbox{$n = 2ab$, $b \ge 2$, $p\ne2$} \\
{\rm (ii)} & D_n & D_aD_b.2^2 & \mbox{$n=2ab$, $a>b \ge 2$, $p\ne2$} \\
\hline
\end{array}$$
\caption{The disconnected $\C_{4}(i)$ subgroups}
\label{t:c4is}
\end{table}
\renewcommand{\arraystretch}{1}

\section{The main result}

\begin{prop}\label{TH:C4I}
Let $V$ be an irreducible tensor-indecomposable $p$-restricted $KG$-module with
highest weight $\l$ and let $H$ be a disconnected maximal 
$\C_{4}(i)$-subgroup of $G$. Then $V|_{H}$ is irreducible if and only if $(G,H,\l)$ is one of the cases recorded in Table \ref{t:c4i}.
\end{prop}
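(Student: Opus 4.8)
The plan is to treat the two configurations of Table~\ref{t:c4is} in turn, and in each case to prove that $V|_H$ is irreducible if and only if $\l=\l_1$; the entries of Table~\ref{t:c4i} are precisely these natural-module cases. The forward implication for $\l=\l_1$ is immediate: here $V=W=W_1\otimes W_2$, so $V|_{(H^0)'}=W_1\otimes W_2$ is the irreducible tensor product of the natural modules for the two classical factors, whence $V|_{H^0}$, and a fortiori $V|_H$, is irreducible. The substance is the reverse implication, for which I would first fix the embedding $(H^0)'=X_1X_2\hookrightarrow G$ concretely as in Section~\ref{ss:c4} (with $X_1=C_a$ or $D_a$ and $X_2=D_b$), diagonalise a maximal torus $T_{H^0}\le T$ compatibly with the tensor decomposition, and record the restriction map $\mu\mapsto\mu|_{H^0}$ sending a $T$-weight $\mu=\sum_i b_i\l_i$ to its expression $\sum_j b_{1,j}\o_{1,j}+\sum_j b_{2,j}\o_{2,j}$ in the fundamental weights of $X_1$ and $X_2$. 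In particular I would compute $\l_1|_{H^0}=\o_{1,1}+\o_{2,1}$ and, more importantly, the restrictions $\a_i|_{H^0}$ of the simple roots, since these control how the weight poset of $V$ maps into that of each factor.

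The key structural observation is that, since $X_1\not\cong X_2$, the factors cannot be permuted: the disconnecting symmetries are exactly the graph automorphisms of the orthogonal factors ($D_b$ alone in case (i); both $D_a$ and $D_b$, acting independently, in case (ii)). Hence, by Clifford theory, if $V|_{H^0}$ is reducible then the highest weights of its composition factors are obtained from $\l|_{H^0}$ by these graph twists, which merely interchange $\o_{2,b-1}\leftrightarrow\o_{2,b}$ (and, in case (ii), independently $\o_{1,a-1}\leftrightarrow\o_{1,a}$). Consequently $V|_{H^0}$ has at most $2$ composition factors in case (i) and at most $4$ in case (ii), and $\l|_{H^0}$ must be moved by the relevant graph automorphism. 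I would next dispose of the case in which $V|_{H^0}$ is already irreducible: then $V|_H$ is automatically irreducible, but the connected irreducible tensor-product triples are classified by Seitz~\cite{Seitz2}, and inspecting the relevant entries of \cite[Table~1]{Seitz2} shows that $\l=\l_1$ is the only possibility compatible with our hypotheses (indeed, for $\l_2$ or $2\l_1$ the module already decomposes over $X_1X_2$, e.g.\ $\Lambda^2(W_1\otimes W_2)$ splits, so these fall into the reducible case).

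It then remains to show that no $\l\neq\l_1$ yields an irreducible restriction with $V|_{H^0}$ reducible. The engine is as in the earlier sections: whenever $a_i\neq0$, Lemma~\ref{l:t1} gives $\l-\a_i\in\L(V)$ with multiplicity one, and via \eqref{e:mt} a suitable weight $\l-(\a_i+\cdots)$ affords the highest weight of a $KH^0$-composition factor; computing its restriction to $T_{H^0}$ and comparing it against the (at most four) graph-twist conjugates of $\l|_{H^0}$ forces a contradiction unless the coefficient vanishes. Running this argument root by root, together with multiplicity comparisons through Lemmas~\ref{l:118}, \ref{l:s816} and \ref{l:sr} and the Clifford-theoretic bound of Lemma~\ref{l:easy}, should eliminate every nonzero $a_i$ and reduce to $\l=\l_1$. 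I expect the main obstacle to be exactly the weight-restriction bookkeeping for the tensor embedding: unlike the imprimitive ($\C_2$) situation, where each simple root of $G$ restricts into a single factor, here the tensor structure forces certain ``mixing'' simple roots to restrict to combinations involving both $X_1$ and $X_2$, so the clean additive invariant $h(\cdot)$ of Lemma~\ref{l:eh} is no longer available and each candidate weight must be tracked through both factors simultaneously. Controlling these cross-terms, and the characteristic-dependent multiplicity behaviour governed by Lemmas~\ref{l:118} and \ref{l:s816}, will be the delicate part.
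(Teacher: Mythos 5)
Your skeleton agrees with the paper's: $\l=\l_1$ gives the natural module, the case where $V|_{H^0}$ is irreducible is referred to Seitz \cite{Seitz2}, and Clifford theory confines the highest weights of the $KH^0$-composition factors to graph twists of $\l|_{H^0}$ (at most two in case (i), at most four in case (ii)), the factors $X_1\not\cong X_2$ being unpermutable. The gap lies in your elimination engine. You propose to show, whenever $a_i\neq 0$, that a suitable weight $\l-(\a_i+\cdots)$ \emph{affords} the highest weight of a $KH^0$-composition factor ``via \eqref{e:mt}'', and then to compare its restriction with the graph twists. But an \eqref{e:mt}-argument of this type requires expressing the simple root subgroups of $H^0$ in terms of root subgroups of $G$, exactly as is done for the $\C_2$-subgroups in Section~\ref{s:c2}; for tensor product embeddings this information is not available (the paper says so explicitly at the start of Section~\ref{s:c4ii}: for the $\C_4$ families one ``cannot immediately see when a particular weight affords the highest weight of a $KH^0$-composition factor'', and the arguments must be ``purely at the level of weights''). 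You correctly flag this bookkeeping as the delicate point, but you leave it unresolved, and with the tools you list the root-by-root elimination cannot be carried out.

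The idea you are missing is the paper's one-sided dominance constraint on \emph{all} weights of $V$, obtained by restricting to the factor that the component group does not move. In case (i) the outer involution centralizes $X_1=C_a$, so both $KH^0$-composition factors have the same $X_1$-highest weight, namely $\l|_{X_1}$; hence \emph{every} $\mu\in\L(V)$ satisfies $\mu|_{X_1}=\l|_{X_1}-\sum_j n_j\b_j$ with $n_j\in\mathbb{N}_0$ (equation \eqref{restriction1}). In case (ii) the same holds with $\l$ possibly replaced by $\s_1\l$, and $(\l-\s_1\l)|_{X_1}$ is a multiple of $(\b_{a-1}-\b_a)/2$ (see \eqref{restriction2}). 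Now if some $a_k\neq 0$ with $k\geq 2$, Lemmas~\ref{l:t1} and \ref{l:pr} (applied inside $A_{2a-1}$ Levi factors) produce a weight of $V$ whose restriction to $X_1$ equals $\l|_{X_1}$ \emph{plus} a positive root of $X_1$ --- for instance $(\l-\a_{2ja})|_{X_1}=\l|_{X_1}+\b_0$, where $\b_0$ is the highest root of $\Sigma(X_1)$ --- which violates the constraint outright; no decision about which weights afford highest weights is ever needed. This reduces everything to $\l=a_1\l_1$, whose restriction is fixed by the graph twists, and that contradicts the irreducibility of $V|_H$ in the reducible-restriction case (in case (i) via Proposition~\ref{p:niso}). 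This restriction-to-the-fixed-factor argument is what replaces the additive invariant $h(\cdot)$ of Lemma~\ref{l:eh}, and it is absent from your proposal.
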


\begin{remk}\label{f:c4i}
In Table \ref{t:c4i} we adopt the standard labelling $\{\o_{1},\o_2, \ldots\}$ for the fundamental dominant weights of each factor in $H^0$. In the final column, $\kappa$ denotes the number of $KH^0$-composition factors in $V|_{H^0}$. In particular, we note that $V|_{H}$ is irreducible if and only if $V$ is the natural module for $G$.
\end{remk}

\renewcommand{\arraystretch}{1.2}
\begin{table}[h] \small
$$\begin{array}{lllll} \hline
G  & H & \l & \l|_{H^0} & \kappa  \\  \hline
C_n & C_aD_b.2 & \l_1 & \o_{1} \otimes \o_1 & 1  \\
D_n & D_aD_b.2^2 & \l_1 & \o_{1} \otimes \o_1 & 1  \\ \hline
\end{array}$$
\caption{The $\C_4(i)$ examples}
\label{t:c4i}
\end{table}
\renewcommand{\arraystretch}{1}

\section{Proof of Proposition \ref{TH:C4I}}

\begin{lem}\label{l:c4i1}
Proposition \ref{TH:C4I} holds in case (i) of Table \ref{t:c4is}.
\end{lem}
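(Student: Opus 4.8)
The plan is to follow the standard template for these problems: handle the case $V|_{H^0}$ irreducible by quoting Seitz \cite{Seitz2}, and treat the case $V|_{H^0}$ reducible by a Clifford-theoretic weight analysis. First I would recall from Section \ref{ss:c4} the precise structure: here $W = W_1 \otimes W_2$ with $\dim W_1 = 2a$ and $\dim W_2 = 2b$, $H^0 = C_aD_b$ acts as ${\rm Sp}(W_1)\otimes{\rm SO}(W_2)$, and $H = H^0\langle z\rangle$ with $z$ centralizing $W_1$ and inducing a reflection on $W_2$. Thus $z$ acts trivially on the $C_a$ factor and as the standard graph automorphism of $D_b$ (interchanging $\o_{b-1}$ and $\o_b$, and for $b=2$ swapping the two $A_1$ factors). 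I would then fix compatible maximal tori $T_{H^0} \subseteq T$, identify the $n=2ab$ positive weights $e_1,\ldots,e_n$ of $G$ with the restricted weights $\epsilon_i \pm \delta_j$ (the $\epsilon_i$ for $C_a$, the $\delta_j$ for $D_b$) ordered into $a$ blocks of size $2b$, and record the resulting formula for $\lambda|_{H^0}$. In particular the natural module $L_G(\lambda_1) = W$ restricts to the irreducible $\o_1 \otimes \o_1$, which is the single entry of Table \ref{t:c4i}. A structural point to keep in mind throughout is that $H^0$ has rank $a+b < n$, so the restriction map $X(T) \to X(T_{H^0})$ collapses many weights, and Lemma \ref{l:easy} will be the principal bookkeeping tool.

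If $V|_{H^0}$ is irreducible, then $(G,H^0,V)$ is one of the configurations in Seitz's classification, and inspecting \cite[Table 1]{Seitz2} for $G = C_n$ with $H^0 = {\rm Sp}(W_1)\otimes{\rm SO}(W_2)$ shows that $\lambda = \lambda_1$ is the only possibility. Since $W|_{H^0} = \o_1 \otimes \o_1$ is genuinely irreducible, this recovers exactly the example of Table \ref{t:c4i}.

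The substance is the case where $V|_{H^0}$ is reducible but $V|_H$ is irreducible. By Clifford theory \eqref{e:vx} together with $|H:H^0| = 2$, we have $V|_{H^0} = V_1 \oplus V_2$ with $V_2 \cong V_1^z$, and by Proposition \ref{p:niso} the two factors are non-isomorphic. Writing $\mu_1 = \lambda|_{H^0}$ with $C_a$-component $\sigma$ and $D_b$-component $\tau = \sum_j t_j\o_j$, the fact that $z$ fixes $C_a$ and graph-twists $D_b$ forces $\mu_2 = z\cdot\mu_1$ to have the same $\sigma$ and $D_b$-component $\bar\tau$ (the swap of $t_{b-1}$ and $t_b$); reducibility is then equivalent to $t_{b-1} \ne t_b$. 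I would now run the familiar ``afford the highest weight of a $KH^0$-composition factor'' argument used in Lemmas \ref{c1,3,6:p1} and \ref{c2:p1}: for a suitable simple root $\alpha_j$ with $a_j \ne 0$, the weight $\lambda - \alpha_j$ (and more generally $\lambda$ minus a short chain of roots) lies in $\Lambda(V)$ with multiplicity $1$ by Lemma \ref{l:t1}, and by \eqref{e:mt} affords the highest weight of a composition factor. Since the block-boundary roots restrict so as to alter the $C_a$-component $\sigma$ (which must be common to $\mu_1$ and $\mu_2$), any such new highest weight whose restriction is neither $\mu_1$ nor $\mu_2$ would produce an impossible third composition factor. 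This eliminates all the coefficients $a_j$ attached to block boundaries and to the final node, and the remaining within-block coefficients are removed using multiplicity comparisons via Lemmas \ref{l:118} and \ref{l:sr} fed into Lemma \ref{l:easy}. The upshot is that $\lambda$ is driven down to $\lambda = a_1\lambda_1$; but then the $D_b$-component of $\mu_1$ is $a_1\o_1$, which is fixed by the graph automorphism, so $\mu_1 = \mu_2$ and $V|_{H^0}$ is in fact irreducible, contradicting the reducibility hypothesis. Hence the reducible case yields no examples.

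The main obstacle is precisely this reducible case, and the difficulty is twofold. First, because $H^0$ is not of maximal rank, one cannot read off the highest weights of the composition factors from dominant $T$-weights, so every multiplicity assertion must be routed through the explicit collapse of the restriction map and Lemma \ref{l:easy}. Second, making the incompatibility arguments precise requires the exact restriction pattern of the simple roots: the interior roots of a block restrict to $D_b$-roots $\delta_s - \delta_{s+1}$ (and the block-middle root to $2\delta_b$), while the boundary roots $e_{2bi} - e_{2bi+1}$ and $\alpha_n = 2e_n$ restrict to expressions moving the $C_a$-coordinates. Matching these restrictions against the graph-automorphism difference $\mu_1 - \mu_2 = (t_b - t_{b-1})\delta_b$, and handling the borderline cases where $\lambda - \alpha_j$ has multiplicity $2$ via Lemma \ref{l:118}, is where the genuine work lies.
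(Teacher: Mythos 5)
Your skeleton (quote Seitz when $V|_{H^0}$ is irreducible; use Clifford theory plus Proposition \ref{p:niso} when it is reducible, ending with the observation that $\l=a_1\l_1$ forces $\s$-stability of $\l|_{H^0}$ and hence a contradiction) agrees with the paper, and you correctly isolate the crucial structural fact that $z$ centralizes the $C_a$ factor, so the two $KH^0$-composition factors have the same $C_a$-component. However, your two central elimination mechanisms have genuine gaps. First, your block-boundary step rests on the claim that $\l-\a_j$ ``by \eqref{e:mt} affords the highest weight of a $KH^0$-composition factor''. That argument is only available when the root subgroups of $H^0$ are root subgroups of $G$ or explicit products of them, as in Lemmas \ref{c1,3,6:p1} and \ref{c2:p1}; for a tensor-product subgroup $C_a\otimes D_b<C_n$ the root subgroups of $H^0$ are not of this form, and the paper explicitly states (in the preliminaries in Section \ref{ss:prel_c4}, which describe the common method of both $\C_4$ chapters) that for these subgroups one works purely at the level of maximal tori and ``cannot immediately see when a particular weight affords the highest weight of a $KH^0$-composition factor''. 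So this step cannot be justified as written. Second, your plan for the within-block coefficients --- ``multiplicity comparisons via Lemmas \ref{l:118} and \ref{l:sr} fed into Lemma \ref{l:easy}'' --- is not the paper's route, is not carried out, and would require controlling weight multiplicities in the unknown modules $V_1,V_2$ uniformly in $a,b$; this is exactly where you admit ``the genuine work lies'', i.e.\ it is missing.

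The repair, which is the paper's actual proof, is to use your structural observation in a much stronger form: since $V_1$ and $V_2$ have the \emph{same} $X_1$-highest weight $\l|_{X_1}$, \emph{every} $T$-weight $\mu$ of $V=V_1\oplus V_2$ must satisfy $\mu|_{X_1}=\l|_{X_1}-\sum_j n_j\b_j$ with all $n_j\ge 0$ --- a constraint on all weights of $V$, not merely on those affording highest weights of composition factors. With this, no maximal-vector or multiplicity argument is needed anywhere: if $a_{2ja}\neq 0$ then $\l-\a_{2ja}\in\L(V)$ (Lemma \ref{l:t1}) restricts on $X_1$ to $\l|_{X_1}+\b_0$, where $\b_0$ is the highest root of $\Sigma(X_1)$, violating the constraint outright; and if $a_{2ja+k}\neq 0$ inside a block, then one of the two chain weights $\l-\a_{2ja}-\cdots-\a_{2ja+k}$ and $\l-\a_{2ja+k}-\cdots-\a_{2(j+1)a}$ (weights of $V$ by Lemma \ref{l:pr} applied in the $A_{2a-1}$ Levi) restricts to $\l|_{X_1}$ plus a positive root of $\Sigma(X_1)$, again an immediate contradiction. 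This uniformly forces $\l=a_1\l_1$, after which your concluding contradiction via Proposition \ref{p:niso} is correct.
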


\begin{proof}
Here $G=C_n$ and $p \neq 2$, where $n=2ab$ with $a \ge 1$ and $b \ge 2$. We have 
$H^0=X_1X_2$ with $X_1=C_a$ and $X_2=D_b$, and $H=H^0\la z\ra$ where $z$ centralizes $X_1$ and acts as an involutory graph automorphism on $X_2$. Let $\Pi(X_1)=\{\b_1,
\dots,\b_a\}$ and $\Pi(X_2)=\{\gamma_1,\dots,\gamma_b\}$
be bases of the root systems $\Sigma(X_1)$ and $\Sigma(X_2)$, respectively.

The natural $KG$-module $W$ restricts to $X_1$ as $2b$ copies of the natural module for
$X_1$, and hence up to conjugacy, we may assume that $X_1$ lies in the subgroup
$$\langle U_{\pm\a_i}\mid 2ja+1\leq i\leq 2(j+1)a-1,
0\leq j\leq b-1\rangle,$$
which is the derived subgroup of an $A_{2a-1}\times\cdots\times A_{2a-1}$ ($b$ factors)
Levi subgroup
of $G$. The projection of $X_1$ into each of the factors of this group
is the natural embedding of a symplectic group $C_a$ in $A_{2a-1}$. 
Therefore, up to conjugacy, we may assume that
$$\a_{2ja+i}|_{H^0}=\b_i, \;\; \a_{2ja+a+k}|_{H^0}=\b_{a-k}$$
for all $0\leq j\leq b-1$, $1\leq i\leq a$ and $1\leq k \leq a-1$.
Moreover, by considering the action of $H$ on $W$, we
obtain the restrictions of the remaining simple roots:
$$\a_{2ja}|_{H^0}=\gamma_j-\b_0,\;\; \a_{2ba}|_{H^0} = \gamma_b-\gamma_{b-1}-\b_{0}$$
for all $1\leq j\leq b-1$, where 
$\b_0 = 2(\b_1+\cdots+\b_{a-1})+\b_a$ 
is the root of maximal height in $\Sigma(X_1)$. Note that if $\a  = \sum_{i}c_i\a_i$ with $c_i \in \mathbb{N}_0$, then
\begin{equation}\label{e:ci0}
\alpha|_{H^0}=0 \mbox{ if and only if $\a=0$.}
\end{equation}
In particular, $\l$ is the unique weight of $V$ that restricts to $\l|_{H^0}$.

If $V|_{H^0}$ is irreducible, the result follows by the main theorem of
\cite{Seitz2}. So we now assume $V|_{H^0}$ is reducible. Here
 $V|_{H^0}$ has exactly two $KH^0$-composition factors of highest weights
$\l|_{H^0}$ and $\sigma(\l|_{H^0})$, where $\sigma$ is
induced by the graph automorphism of $X_2$. 
In particular, all $KX_1$-composition factors of $V$ have highest weight
$\l|_{X_{1}}$, and consequently if $\mu$ is a weight of $V$ then
\begin{equation}\label{restriction1}
\mu|_{X_1} = \l|_{X_1}-\sum_{j=1}^{a}n_j\b_j
\end{equation}
for some non-negative integers $n_j$.

Write $\l=\sum_{i=1}^{n}a_i\l_i$ and suppose $a_{2ja}\ne0$ for some $1\leq j\leq b$. Then
$\mu=\l-\a_{2ja}\in \L(V)$ and $\mu|_{X_1} = \l|_{X_1}+\b_0$, contradicting \eqref{restriction1}.
Indeed, we can argue more generally: suppose $a_{2ja+k}\ne 0$
for some $1\leq j<b$,
$1\leq k\leq 2a-1$. Then using Lemma \ref{l:pr} (for the group $A_{2a-1}$)
we see that 
$$\mu_1=\l-\a_{2ja}-\a_{2ja+1}-\cdots-\a_{2ja+k}, \;\; 
\mu_2=\l-\a_{2ja+k}-\a_{2aj+k+1}-\cdots-\a_{2(j+1)a}$$
are both weights of $V$. But for $i=1$ or $i=2$ we have $\mu_i|_{X_1} = \l|_{X_1}+r_i$ for some positive root
$r_i\in\Sigma(X_1)$, again contradicting \eqref{restriction1}.

Hence we now have that $a_i=0$ for all $i\geq 2a$. Arguing in this way, we quickly deduce that $a_i=0$ for all $i \ge 2$, so $\l=a_1\l_1$. Here
$\l|_{H^0} = a_1(\l_1|_{H^0})$
is fixed by $\sigma$,  
contradicting the irreducibility of $V|_H$.
\end{proof}

\begin{lem}\label{l:c4i3}
Proposition \ref{TH:C4I} holds in case (ii) of Table \ref{t:c4is}.
\end{lem}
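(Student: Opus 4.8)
The plan is to follow the structure of the proof of Lemma \ref{l:c4i1} very closely, since the two cases are genuinely parallel: there $H^0 = C_aD_b$ and here $H^0 = D_aD_b$, with the key difference that now \emph{both} factors admit a graph automorphism. First I would set up the embedding. Since $n = 2ab$ and $G = D_n$ with $p \neq 2$, the natural module $W$ factors as $W = W_1 \otimes W_2$ with $\dim W_1 = 2a$, $\dim W_2 = 2b$, and $H^0 = X_1X_2 = D_aD_b$ acts as ${\rm SO}(W_1) \otimes {\rm SO}(W_2)$. I would choose bases $\Pi(X_1) = \{\b_1, \ldots, \b_a\}$ and $\Pi(X_2) = \{\gamma_1, \ldots, \gamma_b\}$, place $X_1$ (up to conjugacy) inside a product of $A_{2a-1}$ Levi factors exactly as in the previous lemma, and record the root restrictions $\a_i|_{H^0}$. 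The crucial bookkeeping is to verify the analogue of \eqref{e:ci0}, namely that for $\a = \sum_i c_i\a_i$ with $c_i \in \mathbb{N}_0$ we have $\a|_{H^0} = 0$ if and only if $\a = 0$; this guarantees that $\l$ is the unique $T$-weight restricting to $\l|_{H^0}$, so by Proposition \ref{p:niso} the reducible case yields exactly two (or, because $|H:H^0| = 4$, possibly four) non-isomorphic $KH^0$-composition factors.

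Next I would dispose of the irreducible case: if $V|_{H^0}$ is irreducible the result is immediate from Seitz's main theorem \cite{Seitz2}, which forces $\l = \l_1$ (the natural module), recorded in Table \ref{t:c4i}. So I assume $V|_{H^0}$ is reducible. The heart of the argument is the restriction condition analogous to \eqref{restriction1}: I would argue that along the $X_1$-direction every $KX_1$-composition factor has highest weight $\l|_{X_1}$ (the graph automorphism of $X_2$, and the action of the $2^2$ on top, only move weights within the $X_2$-factor and by the central graph twist on $X_1$), so that any weight $\mu$ of $V$ satisfies $\mu|_{X_1} = \l|_{X_1} - \sum_j n_j \b_j$ with $n_j \ge 0$. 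Writing $\l = \sum a_i\l_i$, I would then run the same root-subtraction argument: if $a_{2ja+k} \neq 0$ for appropriate $j,k$, I use Lemma \ref{l:pr} inside the relevant $A_{2a-1}$ to produce a weight $\mu_i = \l - (\a_{2ja} + \cdots + \a_{2ja+k})$ (or the symmetric one) whose $X_1$-restriction picks up a \emph{positive} root of $\Sigma(X_1)$, contradicting the sign condition. This kills all coefficients $a_i$ with $i \ge 2$, reducing to $\l = a_1\l_1$.

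The one genuinely new wrinkle, and what I expect to be the main obstacle, is handling the $X_2$-side graph automorphism carefully. In Lemma \ref{l:c4i1} the $C_a$ factor has no graph automorphism, so the restriction condition \eqref{restriction1} on the $X_1 = C_a$ direction was clean; here the roles could in principle be played by either factor, and because $H/H^0 \cong 2^2$ there are up to four composition factors, permuted by the two commuting graph automorphisms $\sigma_1, \sigma_2$ of $X_1, X_2$. I would therefore need to argue that for \emph{whichever} factor I single out to run the weight argument, the remaining outer automorphisms fix the relevant projection. Concretely, since $a > b \ge 2$ the factors are distinguishable, and I can run the positivity argument on the $X_1 = D_a$ direction after checking that $\sigma_1$ (acting by swapping $\b_{a-1}, \b_a$) together with $\sigma_2$ still forces $\mu|_{X_1} = \l|_{X_1} - \sum_j n_j\b_j$ up to the bounded graph twist, which does not interfere with the sign of the leading coefficient. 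Once $\l = a_1\l_1$ is reached, I conclude as before: $\l|_{H^0} = a_1(\l_1|_{H^0})$ is fixed by both $\sigma_1$ and $\sigma_2$, so $V|_{H^0}$ would be irreducible, contradicting our standing assumption. Hence $a_1 = 1$, $\l = \l_1$, and $V$ is the natural module, completing the proof.
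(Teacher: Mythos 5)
Your proposal is correct and follows essentially the same route as the paper: the same placement of $X_1=D_a$ inside a product of $A_{2a-1}$ Levi factors, the same uniqueness property \eqref{e:ci0}, and the same positivity argument on the $X_1$-restriction, with the graph twist on $X_1$ handled exactly as the paper handles it, namely by noting that $(\l-\sigma_1\l)|_{X_1}$ is an integer multiple of $\tfrac{1}{2}(\b_{a-1}-\b_a)$ and hence can never absorb a strictly positive combination of simple roots. The only blemish is your appeal to Proposition \ref{p:niso}, which does not apply since $H/H^0\cong 2^2$ is not cyclic; but non-isomorphism of the composition factors is never used in the rest of your argument (nor in the paper's), so nothing is lost.
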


\begin{proof}
Here $G=D_n$ and $p \neq 2$, where $n=2ab$ and $a >b \ge 2$. We have 
$H^0=X_1X_2$ with $X_1=D_a$, $X_2=D_b$ and $H=H^0\la z_1, z_2\ra = H^0.2^2$ where $z_i$ induces an involutory graph automorphism on $X_i$. As before, let $\Pi(X_1)=\{\b_1,
\dots,\b_a\}$ and $\Pi(X_2)=\{\gamma_1,\dots,\gamma_b\}$
be bases of the root systems $\Sigma(X_1)$ and $\Sigma(X_2)$, respectively, and note that
the natural $KG$-module $W$ restricts to $X_1$ as $2b$ copies of the natural module for
$X_1$. Up to conjugacy, we may assume that $X_1$ lies in the subgroup
$$\langle U_{\pm\a_i}\mid 2ja+1\leq i\leq 2(j+1)a-1,
0\leq j\leq b-1\rangle,$$
the derived subgroup of an $A_{2a-1}\times\cdots\times A_{2a-1}$ ($b$ factors)
Levi subgroup
of $G$. The projection of $X_1$ into each of the factors of this group
is the natural embedding of an orthogonal group $D_a$ in $A_{2a-1}$. In particular, up to conjugacy, we may assume that
$$\a_{2ja+i}|_{H^0} = \b_i, \;\; \a_{2ja+a}|_{H^0} = \b_a-\b_{a-1},\;\; \a_{2aj+a+i}|_{H^0}=\b_{a-i}$$
for all $0\leq j\leq b-1$ and $1\leq i\leq a-1$. Similarly, for the remaining roots we have
$$\a_{2ja}|_{H^0} = \gamma_j-2\b_1-2\b_2-\cdots-2\b_{a-2}-\b_{a-1}-\b_a$$
for all $1\leq j\leq b-1$, while
$$\a_{2ba}|_{H^0} = \gamma_b-\gamma_{b-1}-\b_1-2\b_2-\cdots-2\b_{a-2}-\b_{a-1}-\b_a.$$
Therefore, \eqref{e:ci0} holds for all $\a  = \sum_{i}c_i\a_i$ with $c_i \in \mathbb{N}_0$, and thus $\l$ is the unique weight of $V$ that restricts to $\l|_{H^0}$.

We now argue by contradiction, as in the proof of the previous lemma. Write $\l = \sum_{i}a_i\l_i$ and suppose $V|_{H^0}$ is reducible. (If $V|_{H^0}$ is irreducible then the result follows from the main theorem of \cite{Seitz2}.)
Then $V$ has exactly two or four $KH^0$-composition factors, with highest weights
$\l|_{H^0}$, and one, or all, of $\sigma_1(\l|_{H^0})$,
$\sigma_2(\l|_{H^0})$,
$\sigma_1\sigma_2(\l|_{H^0})$,  where $\sigma_i$ is induced by the
graph automorphism of $X_i$, for $i=1,2$. In particular, if $\mu \in \L(V)$ then
\begin{equation}\label{restriction2}
\mu|_{X_1} = \eta|_{X_1} - \sum_{j=1}^{a}n_j\b_j
\end{equation}
for some non-negative integers $n_j$, where we may take
$\eta$ to be $\l$ or $\sigma_1\l$.  It is useful to note that if $\l|_{X_1}=\sum_{i=1}^ab_i\omega_i$,
where $\{\omega_1,\ldots,\omega_a\}$ are the fundamental dominant weights of $X_1$
corresponding to $\Pi(X_1)$, then
$$(\l-\sigma_1\l)|_{X_1}=\frac{1}{2}(b_{a-1}-b_a)(\b_{a-1}-\b_a).$$
In particular, if $\mu \in \L(V)$ and $\mu|_{X_1}$ is of the form $\l|_{X_1}+r$, where
$r$ is a positive integer linear combination of the roots
in $\Pi(X_1)$, then $\mu|_{X_1}$ cannot be of the form
$(\sigma_1\l)|_{X_1}-\sum_{j=1}^an_j\b_j$ with $n_j \in \mathbb{N}_0$, as otherwise this contradicts \eqref{restriction2}.

Suppose $a_i\ne 0$ for some $a\leq i\leq (b-1)2a+a$. Choosing $j$ such that
 $|2aj-i|$ is minimal, we see that there exists a positive root
$r\in\Sigma(G)$ of the form $r=\sum_{k=i}^{2aj}\a_k$, or $r=\sum_{k=2aj}^i\a_k$,
 such that $\nu=\l-r\in\L(V)$. But then
$\nu|_{X_1} = \l|_{X_1}+s$, where $s$ is a
positive integer linear combination of roots in $\Pi(X_1)$, contradicting
the above remarks. Therefore $a_i=0$ for all $a\leq i\leq (b-1)2a+a$.

Next suppose $a_i\ne 0$ with $2\leq i\leq a-1$. Then
$\mu = \l-\a_i-\a_{i+1}-\cdots-\a_{2a}\in\L(V)$ and it is straightforward to check that $\mu|_{X_1} = \l|_{X_1}+\b_1+\b_2+\cdots+\b_{i-1}$, which once again contradicts \eqref{restriction2}. Similarly, suppose that $a_i\ne 0$ for some $i\geq (b-1)2a+a+1$. If $i=n$ then $\l - \a_n \in \L(V)$, but this weight restricts to $\l|_{X_1}+s$ for some $s \in \Sigma^+(X_1)$, which contradicts \eqref{restriction2}. Similarly, if 
$i\le n-2$ then $\l-\a_i-\a_{i+1}-\cdots-\a_{n-2}-\a_n \in \L(V)$, while $\l-\a_{n-1}-\a_{n-2}-\a_n \in\L(V)$ if $i=n-1$. In both cases, these weights restrict to a weight of the form $\l|_{X_1}+s$ for some $s \in \Sigma^+(X_1)$, so once again we reach a contradiction.

We have now reduced to the case $\l=a_1\l_1$. Here
$\l|_{H^0} = a_1(\l_1|_{H^0})$
is stable under the graph automorphisms $\sigma_1$ and $\sigma_2$, but this contradicts the irreducibility of $V|_{H}$.
\end{proof}

This completes the proof of Proposition \ref{TH:C4I}.

\chapter{Tensor product subgroups, II}\label{s:c4ii}

To complete the proof of Theorem \ref{main}, it remains to deal with the tensor product subgroups in the $\C_{4}(ii)$ collection. Let $H$ be such a subgroup, so $H$ stabilizes a tensor product decomposition
$$W = W_{1}\otimes W_{2} \otimes \cdots \otimes W_{t}$$
of the natural $KG$-module $W$, with $t \ge 2$. As noted in Section \ref{ss:c4}, 
$H=N_G(\prod_iCl(W_i))$ where the classical groups $Cl(W_i)$ are simple and isomorphic, and the central product $\prod_iCl(W_i)$ acts naturally on the tensor product. The various cases that arise are described in Section \ref{ss:c4}, and they are listed in Table \ref{t:c4iis}.

\renewcommand{\arraystretch}{1.2}
\begin{table}[h]
$$\begin{array}{llll} \hline
& G & H & \mbox{Conditions} \\ \hline
{\rm (i)} & A_n & A_l^t.S_t & \mbox{$n+1 = (l+1)^t$, $l \ge 2$, $t \ge 2$} \\
{\rm (ii)} & B_n & B_l^t.S_t & \mbox{$2n+1=(2l+1)^t$, $l \ge 1$, $t \ge 2$} \\
{\rm (iii)} & C_n & C_l^t.S_t & \mbox{$2n=(2l)^t$, $l \ge 1$, $t \ge 3$ odd, $p \neq 2$} \\
{\rm (iv)} & D_n & C_l^t.S_t & \mbox{$2n=(2l)^t$, $l \ge 1$, $t \ge 2$ even or $p=2$} \\
{\rm (v)} & D_n & (D_l^t.2^t).S_t & \mbox{$2n=(2l)^t$, $l \ge 3$, $t \ge 2$, $p \neq 2$} \\
\hline
\end{array}$$
\caption{The $\C_{4}(ii)$ subgroups}
\label{t:c4iis}
\end{table}
\renewcommand{\arraystretch}{1}

\section{The main result}

\begin{prop}\label{TH:C4II}
Let $V$ be an irreducible tensor-indecomposable $p$-restricted $KG$-module with highest weight $\l$ and let $H$ be a maximal $\C_{4}(ii)$-subgroup of $G$. Then 
$V|_{H}$ is irreducible if and only if $(G,H,\l)$ is one of the cases recorded in Table \ref{t:c4ii}.
\end{prop}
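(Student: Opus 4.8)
The plan is to mirror the treatment of the $\C_2$ subgroups in Proposition \ref{TH:C2}, splitting the proof into five lemmas, one for each case (i)--(v) of Table \ref{t:c4iis}. In each case the first task is to fix the embedding of $H^0 = X_1\cdots X_t$, where $X_i = Cl(W_i)'$, by choosing a tensor-product basis of $W = W_1\otimes\cdots\otimes W_t$ and computing the restrictions $\a_j|_{H^0}$ of the simple roots of $G$ to a maximal torus $T_{H^0}$ of $(H^0)'$; this I would do using the construction of \cite[Claim 8]{Test2} and induction on $t$, exactly as in Lemmas \ref{c2:p2}--\ref{c2:p5}. In the two-factor cases ($t=2$) I expect the analogue of \eqref{e:rho} to hold, so that $\l$ is the unique $T$-weight of $V$ restricting to $\l|_{H^0}$; for larger $t$ uniqueness will generally fail, just as in the $\C_2$ analysis, and I would instead rely throughout on the $h$-function and associated-permutation machinery of Section \ref{ss:prel_c2} and Lemma \ref{l:eh}, which requires no such hypothesis. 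If $V|_{H^0}$ is irreducible the examples follow from Seitz's main theorem \cite{Seitz2}; this disposes of the natural module $\l=\l_1$ (and its dual $\l_n$ when $G=A_n$), whose restriction $\sum_i\o_{i,1}$ is $S_t$-symmetric. Thus the substance of the argument concerns the case where $V|_{H^0}$ is reducible but $V|_H$ is irreducible.

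In that case Clifford theory forces the highest weights of the $KH^0$-composition factors to be $H$-conjugate. Since $H/H^0$ acts by permuting the tensor factors, with the additional graph twists on each $D_l$ in case (v), this produces the associated-permutation data of Section \ref{ss:prel_c2}: a permutation $\s\in S_t$ and, in case (v), a family $\{\rho_1,\ldots,\rho_t\}$ of transpositions, subject to the $h$-value constraints of Lemma \ref{l:eh}. The core reduction is then combinatorial. Writing $\l=\sum_i a_i\l_i$, I would show that whenever $a_j\ne0$ for an index $j$ whose simple root moves \emph{within} a single tensor factor, a suitable weight $\mu=\l-\a_j-\cdots-\a_k\in\L(V)$---shown to lie in $\L(V)$ via Lemma \ref{l:t1}, Corollary \ref{c:sat} and Lemma \ref{l:pr}---affords the highest weight of a $KH^0$-composition factor whose $h$-value on some $\s$-invariant subset violates Lemma \ref{l:eh}, or which is simply not $S_t$-conjugate to $\l|_{H^0}$. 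Iterating this over all such $j$ should cut $\l$ down in each case to a short list of candidates.

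For the surviving candidates I would settle irreducibility by a dimension count: the number of distinct $S_t$-conjugates (and graph twists) of $\l|_{H^0}$, multiplied by the dimension of one composition factor---computed from Proposition \ref{p:dims} and Lemmas \ref{l:dmspin}, \ref{l:bn2}, \ref{l:dm} and \ref{l:dmdim}, or from L\"{u}beck's tables \cite{Lubeck}---should be compared with $\dim L_G(\l)$. When the two agree, $V|_H$ is irreducible; otherwise $V|_{H^0}$ acquires extra composition factors, detected as in the $\C_2$ analysis through the multiplicity computations of Lemmas \ref{l:118}, \ref{l:s816} and \ref{l:sr} together with the counting bound of Lemma \ref{l:easy}. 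I expect the generic outcome to be that only the natural module survives---so that in case (v) only $\l=\l_1$ remains---alongside a handful of small-rank sporadic examples: $\l_2$ and $\l_{n-1}$ for $G=A_n$ with $t=2$, the module $\l_4$ for $B_n$ with $(l,t)=(1,2)$, the modules $\l_2,\l_3$ for $C_n$ with $(l,t)=(1,3)$, the char-$2$ family $\l_3,\l_1+\l_4,\l_3+\l_4$ for $D_4$, and the half-spin module $\l_7$ of $D_8$.

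The hard part will be two-fold. First, pinning down the root restrictions $\a_j|_{H^0}$ for the iterated tensor embedding is delicate, since a single simple root of $G$ can restrict to a sum of roots distributed across several factors $X_i$; case (v) is the worst, as tracking the $(l-1,l)$-swaps coming from the $t$ graph automorphisms through the whole reduction is highly error-prone. Second, and more seriously, the genuinely new sporadic examples---above all the half-spin module $\l_7$ of $D_8$ for $(l,t)=(1,4)$ and $(2,2)$, and the exceptional char-$2$ cases for $D_4$---are not forced by the combinatorial reduction and must be confirmed by exact dimension and multiplicity bookkeeping. I expect these verifications, rather than the generic reduction, to be the real content of the proof.
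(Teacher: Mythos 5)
Your global strategy (a case-by-case reduction of $\l$ followed by dimension counts for the surviving candidates) and your anticipated list of examples both agree with the paper, but the machinery you propose for the reduction has a genuine gap. You plan to compute explicit root subgroups of $H^0$ via \cite[Claim 8]{Test2} and induction, ``exactly as in Lemmas \ref{c2:p2}--\ref{c2:p5}'', and then run the $h$-function machinery of Section \ref{ss:prel_c2}. This transfer fails for tensor product embeddings. Claim 8 of \cite{Test2} concerns embeddings arising from orthogonal direct-sum decompositions, where each simple root of $H^0$ is supported on at most two root subgroups of $G$; by contrast, for $H^0 = X_1\cdots X_t$ acting on $W = W_1\otimes\cdots\otimes W_t$, a root element of $X_i$ acts diagonally across the whole tensor decomposition and corresponds to a product of $\dim(W)/\dim(W_i)$ commuting root elements of $G$, which is precisely why the paper states it is ``quite difficult to give explicit expressions for the root elements of $H^0$'' and works only with the embedding $T_{H^0} \le T$ of maximal tori. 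More seriously, Lemma \ref{l:eh} cannot be invoked in the way you intend: its hypothesis is that $\mu$ \emph{affords the highest weight of a $KH^0$-composition factor}, and in the $\C_2$ analysis every application of it is preceded by a maximal-vector argument (via \eqref{e:mt}) that uses exactly the explicit root subgroups you will not have here. Without a substitute criterion, your combinatorial reduction never gets started.

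The paper's substitute is the content of Section \ref{ss:prel_c4} and Lemma \ref{l:sumc}: writing $\mu|_{H^0}-\l|_{H^0}$ in terms of the simple roots of $H^0$, using the restrictions \eqref{e:akh1}--\eqref{e:akh5} of the simple roots of $G$ computed purely at the torus level, one obtains $\ell(\mu|_{H^0})\le 0$ for \emph{every} weight $\mu\in\L(V)$, with equality if and only if $\mu$ affords the highest weight of a $KH^0$-composition factor. Exhibiting a weight of $V$ with $\ell(\mu|_{H^0})>0$ is therefore an immediate contradiction, with no identification of maximal vectors required; this one-sided inequality is the idea missing from your proposal, and it is what replaces Lemma \ref{l:eh} throughout Section \ref{ss:pc4}. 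Two smaller corrections: the uniqueness statement \eqref{e:ci0} (the analogue of \eqref{e:rho}) in fact holds for all the $\C_4(ii)$ embeddings and every $t$, not just $t=2$, and is used repeatedly; on the other hand, your final phase---exact dimension and multiplicity bookkeeping for the sporadic candidates such as $\l_7$ for $D_8$ and the $p=2$ cases for $D_4$---does match how the paper completes the proof.
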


\begin{remk}
Let us make a couple of comments on the statement of Proposition \ref{TH:C4II}, in particular concerning Table \ref{t:c4ii}.
\begin{itemize}\addtolength{\itemsep}{0.3\baselineskip}
\item[(a)] Consider the case $(G,H)=(D_n, C_l^t.S_t)$ in Table \ref{t:c4ii}, where $n=8$, $\l=\l_7$, and $(t,l)=(4,1)$ or $(2,2)$. If $\tilde{H}$ denotes the image of $H$ under a non-trivial graph automorphism of $G$ then $\l=\l_8$ is an example for 
the pair $(G,\tilde{H})$. Similarly, $\l=\l_1+\l_3$ and $\l_4$ are examples for $(G,\tilde{H})$ when $G=D_4$ and $H$ is of type $C_1^3.S_3$ (with $p=2$). 
\item[(b)] In the fourth column of Table \ref{t:c4ii}, we give the restriction of $\l$ to a suitable maximal torus of $H^0$ (as before, we denote this restriction by $\l|_{H^0}$) in terms of a set of fundamental dominant weights $\{\o_{i,1}, \ldots, \o_{i,l}\}$ for the $i$-th factor $X_i$ in $H^0 = X_1\cdots X_t$. In addition, $\kappa$ denotes the number of $KH^0$-composition factors in $V|_{H^0}$. Of course, any condition appearing in the final column of Table \ref{t:c4iis} must also hold for the relevant cases listed in Table \ref{t:c4ii}. 
\end{itemize}
\end{remk}

\renewcommand{\arraystretch}{1.2}
\begin{table}[h] \small
$$\begin{array}{llllll} \hline
G  & H & \l & \l|_{H^0} & \kappa & \mbox{Conditions} \\  \hline
A_n & A_l^t.S_t & \l_1 & \o_{1,1}+ \cdots + \o_{t,1} & 1 & \\
& & \l_n & \o_{1,l}+ \cdots + \o_{t,l}	& 1 & \\
& & \l_2 & \o_{1,2} + 2\o_{2,1} & 2 & \mbox{$t=2$, $p \neq 2$}\\
& & \l_{n-1} & \o_{1,l-1} + 2\o_{2,l} & 2 & \mbox{$t=2$, $p \neq 2$}\\

B_n & B_l^t.S_t & \l_1 & 	\o_{1,1}+ \cdots + \o_{t,1} & 1	& \\
	&	& \l_4 & \o_{1,1} + 3\o_{2,1} & 2 & \mbox{$(n,t,l)=(4,2,1)$, $p \neq 3$} \\
	
C_n & C_l^t.S_t & \l_1 & 	\o_{1,1}+ \cdots + \o_{t,1} & 1	& \\
	&	& \l_2 & 2\o_{2,1}+ 2\o_{3,1} & 3 & \mbox{$(n,t,l)=(4,3,1)$}\\
	&	& \l_3 & \o_{1,1}+ \o_{2,1}+ 3\o_{3,1} & 3 & \mbox{$(n,t,l)=(4,3,1)$, $p \neq 3$}\\
	
D_n & C_l^t.S_t & \l_1	& 	\o_{1,1}+ \cdots + \o_{t,1} & 1		& \\
	&	& \l_3 & \o_{1,1} + \o_{2,1} + \o_{3,1} & 1 & \mbox{$(n,t,l) = (4,3,1)$, $p=2$} \\
& & \l_1+\l_4 & \o_{1,1} + \o_{2,1} + 3\o_{3,1} & 3 & \mbox{$(n,t,l)=(4,3,1)$, $p=2$} \\
& & \l_3+\l_4 & \o_{1,1} + \o_{2,1} + 3\o_{3,1} & 3 & \mbox{$(n,t,l)=(4,3,1)$, $p=2$} \\
	&	& \l_7 & \o_{1,1} + \o_{2,1} + \o_{3,1} + 3\o_{4,1} & 4 			& \mbox{$(n,t,l)=(8,4,1)$, $p\ne3$}\\
	&	& \l_7 & \o_{1,1} + \o_{2,1}+\o_{2,2} & 2 & \mbox{$(n,t,l)=(8,2,2)$, $p\ne5$}\\
D_n & (D_l^t.2^t).S_t & \l_1 & \o_{1,1}+ \cdots + \o_{t,1} & 1	& \\ \hline
\end{array}$$
\caption{The $\C_4(ii)$ examples}
\label{t:c4ii}
\end{table}
\renewcommand{\arraystretch}{1}

\section{Preliminaries}\label{ss:prel_c4}
 
It is worth noting that the techniques used in this section
are similar to those used in the analysis of the $\C_4(i)$ subgroups in Section \ref{s:c4i},  but they differ
significantly from those used in Section \ref{s:c2}. In particular,
as it is quite difficult to give explicit expressions
for the root elements of $H^0$ in terms
of those for $G$, we will only work with the embedding of a maximal torus
of $H^0$ in a maximal torus of $G$, and the arguments will be purely at the level
of weights. This means that we cannot immediately see when a particular weight
affords the highest weight of a $KH^0$-composition factor. Nevertheless,
one can establish criteria that the weights must satisfy, and this is how we proceed.

Let $H$ be a $\C_{4}(ii)$-subgroup of $G$, with $H^0 = X_{1}\cdots X_{t}$, where the $X_i$ are isomorphic simple classical groups of rank $l$. Let $\{\b_{i,1}, \ldots, \b_{i,l}\}$ and $\{\o_{i,1}, \ldots, \o_{i,l}\}$ respectively denote a set of simple roots and the corresponding fundamental dominant weights for $X_i$. Note that for each $j \in\{1,\dots,l\}$, there exist rational numbers $d_{j,k}$ such that
$\o_{i,j}=\sum_{k=1}^{l}d_{j,k}\b_{i,k}$ for all $1 \le i \le t$. As the notation indicates,  the coefficients $d_{j,k}$ are independent of $i$.

Let $V$ be an irreducible $p$-restricted $KG$-module with highest weight $\l=\sum_{i=1}^{n}a_{i}\l_{i}$
and suppose $\l|_{X_{i}} = \sum_{j=1}^{l}{a_{i,j}\o_{i,j}}$ for each $i$, so
$$\l|_{H^0} = \sum_{i=1}^{t}{\sum_{j=1}^{l}{a_{i,j}\o_{i,j}}}.$$
Suppose $V|_{H}$ is irreducible and assume for now that we are not in case (v) of Table \ref{t:c4iis}. If $\mu \in \L(V)$ affords the highest weight of a $KH^0$-composition factor then there exists a  permutation $\s \in S_t$ such that
\begin{equation}\label{e:yb}
\mu|_{H^0} = \sum_{i=1}^{t}\sum_{j=1}^{l}{a_{i,j}\o_{\s(i),j}} =\l|_{H^0}+\sum _{i=1}^{t}\sum_{j=1}^{l}c_{i,j}\b_{\s(i),j},
\end{equation}
where
$$c_{i,j}=\sum_{k=1}^{l}d_{k,j}(a_{i,k}-a_{\s(i),k}).$$
Following the terminology introduced in Section \ref{s:c2}, we call $\s$ the \emph{associated permutation} of $\mu$. Observe that if a subset $S$ of $\{1, \ldots, t\}$ is $\s$-invariant then
$$\sum_{i \in S}c_{i,j}=\sum_{k=1}^{l}d_{k,j}\sum_{i \in S}(a_{i,k}-a_{\s(i),k})=0$$
for all $j \in \{1,\dots,l\}$.
In the special case $S=\{1, \ldots, t\}$, we have $\sum_{i=1}^{t}c_{i,j}=0$. Equivalently, if we set
\begin{equation}\label{e:ell}
\ell(\mu|_{H^0}) = \sum_{j=1}^l{\ell_j(\mu|_{H^0})} \mbox{ and } \ell_j(\mu|_{H^0}) = \sum_{i=1}^{t}{c_{i,j}},
\end{equation}
then $\ell(\mu|_{H^0})=0$.

The situation in case (v) of Table \ref{t:c4iis} is very similar. Here we use the standard labelling of simple roots for $X_i$ so that the standard graph automorphism of $X_{i}$ swaps the simple roots $\b_{i,l-1}$ and $\b_{i,l}$. Now, if $V|_{H}$ is irreducible and $\mu \in \L(V)$ affords the highest weight of a $KH^0$-composition factor then there exists an associated permutation, $\s \in S_t$ say, and a collection of permutations $\{\rho_1, \ldots, \rho_t\}$ in $S_{l}$ such that $\rho_i = (l-1,l)$ if $\rho_{i}\neq 1$,
and
$$\mu|_{X_{\s(i)}} = \sum_{j=1}^{l-2}{a_{i,j}\o_{\s(i),j}}+a_{i,l-1}\o_{\s(i),\rho_i(l-1)}+a_{i,l}\o_{\s(i),\rho_i(l)}$$
for all $i$. Write
$\mu|_{H^0}=\l|_{H^0}+\sum_{i=1}^{t}\sum_{j=1}^{l}c_{i,j}\b_{\s(i),j}$, where
$$c_{i,j}=
\sum_{k=1}^{l-2}(a_{i,k}-a_{\s(i),k})d_{k,j}
+(a_{i,l-1}-a_{\s(i),\rho_i(l-1)})d_{\rho_i(l-1),j}
+(a_{i,l}-a_{\s(i),\rho_i(l)})d_{\rho_i(l),j},$$
and observe that $d_{l-1,j}=d_{l,j}$ for all $j \le l-2$, $d_{l-1,l-1}=d_{l,l}$ and $d_{l-1,l}=d_{l,l-1}$.
We deduce that
\begin{align*}
\ell_j(\mu|_{H^0}) = & \sum_{k=1}^{l-2}d_{k,j}\sum_{i=1}^{t}(a_{i,k}-a_{\s(i),k}) \\
& + d_{l-1,j}\sum_{i=1}^{t}\left(a_{i,l-1}-a_{\s(i),\rho_i(l-1)}+a_{i,l}-a_{\s(i),\rho_i(l)}\right)=0
\end{align*}
for all $j \le l-2$, and 
$$\ell_{l-1}(\mu|_{H^0})+\ell_{l}(\mu|_{H^0}) =
(d_{l-1,l-1}+d_{l-1,l})\sum_{i=1}^{t}
\left(
a_{i,l-1}+a_{i,l}-a_{\s(i),l-1}-a_{\s(i),l}
\right)
=0,$$
so $\ell(\mu|_{H^0})=0$.

Finally, recall that if $\nu \in \L(V)$ and $\nu|_{H^0}$ occurs as a weight of $L_{H^0}(\mu|_{H^0})$ then 
$\nu|_{H^0}=\mu|_{H^0}-\sum _{i=1}^{t}\sum_{j=1}^{l}e_{i,j}\b_{i,j}$ for some non-negative integers $e_{i,j}$, so $\ell_j(\nu|_{H^0}) \le \ell_j(\mu|_{H^0})$ for all $j$. For easy reference, let us record these general observations.

\begin{lem}\label{l:sumc}
Suppose $V|_{H}$ is irreducible and $\mu \in \L(V)$.
\begin{itemize}\addtolength{\itemsep}{0.3\baselineskip}
\item[{\rm (i)}] We have $\ell(\mu|_{H^0})\le 0$, with equality if and only if $\mu$ affords the highest weight of a $KH^0$-composition factor.
\item[{\rm (ii)}] Moreover, if $\ell(\mu|_{H^0})= 0$ then $\ell_j(\mu|_{H^0})=0$ for all $j$, unless $H=(D_l^t.2^t).S_t$ and $j\in\{l-1,l\}$, in which case $\ell_{l-1}(\mu|_{H^0})+\ell_l(\mu|_{H^0})=0$.
\end{itemize}
\end{lem}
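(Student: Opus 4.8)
The plan is to recognise each $\ell_j$ as an honest $\mathbb{Q}$-linear functional on $X(T_{H^0})\otimes_{\mathbb{Z}}\mathbb{Q}$, so that the inequality in (i) becomes a monotonicity statement. Since each $X_i$ is semisimple of rank $l$, the $tl$ roots $\b_{i,k}$ form a $\mathbb{Q}$-basis of $X(T_{H^0})\otimes_{\mathbb{Z}}\mathbb{Q}$, and I would define $\ell_j$ by $\ell_j(\b_{i,k})=\delta_{j,k}$ (independently of $i$), extended linearly; the value $\ell_j(\mu|_{H^0})$ of \eqref{e:ell} is then just $\ell_j(\mu|_{H^0}-\l|_{H^0})$, the sum over $i$ of the $\b_{i,j}$-coefficients of $\mu|_{H^0}-\l|_{H^0}$, which makes sense for every weight $\mu$ and not only for those carrying an associated permutation. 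The crucial feature is that subtracting positive roots of $H^0$ cannot increase $\ell_j$: if $\eta'=\eta-\sum_{i,k}e_{i,k}\b_{i,k}$ with $e_{i,k}\in\mathbb{N}_0$, then $\ell_j(\eta')=\ell_j(\eta)-\sum_i e_{i,j}\le\ell_j(\eta)$.

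With this in hand I would combine two ingredients. First, the computation carried out just before the lemma shows that whenever $\nu\in\L(V)$ affords the highest weight of a $KH^0$-composition factor one has $\ell(\nu)=0$, with the refinement that $\ell_j(\nu)=0$ for every $j$, except that in case $H=(D_l^t.2^t).S_t$ only $\ell_{l-1}(\nu)+\ell_l(\nu)=0$ is guaranteed for $j\in\{l-1,l\}$. Second, since $V|_H$ is irreducible, Clifford theory gives that $V|_{H^0}$ is completely reducible, so any $\mu\in\L(V)$ restricts to a weight $\mu|_{H^0}$ lying in some composition factor $L_{H^0}(\nu)$; thus $\mu|_{H^0}=\nu-\sum_{i,k}e_{i,k}\b_{i,k}$ with $e_{i,k}\in\mathbb{N}_0$. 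Applying the displayed identity and summing over $j$ yields $\ell(\mu|_{H^0})=\ell(\nu)-\sum_{i,k}e_{i,k}=-\sum_{i,k}e_{i,k}\le 0$, with equality precisely when all $e_{i,k}=0$, that is, when $\mu|_{H^0}=\nu$ affords the highest weight of a composition factor. This is exactly (i). For (ii) I would invoke the equality case of (i) to deduce $\mu|_{H^0}=\nu$, whence $\ell_j(\mu|_{H^0})=\ell_j(\nu)$ and the per-$j$ statement is read off directly from the refinement recalled above.

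The content of the lemma is therefore almost entirely bookkeeping once $\ell_j$ is viewed as a linear functional, and I expect the only delicate point to be the twisted case $H=(D_l^t.2^t).S_t$: there the individual $\ell_j(\nu)$ need not vanish, only the combination $\ell_{l-1}(\nu)+\ell_l(\nu)$ does. This is a minor obstacle at most, since part (i) uses only the total $\ell(\nu)=\sum_j\ell_j(\nu)=0$, while the finer per-$j$ information feeds straight into the statement of part (ii). The one structural input I would make explicit at the outset is that the $\b_{i,k}$ genuinely form a basis, so that $\ell_j$ is well defined on all of $\L(V)$ rather than only on weights of the special form produced by an associated permutation.
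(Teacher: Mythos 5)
Your proposal is correct and follows essentially the same route as the paper, whose ``proof'' of Lemma \ref{l:sumc} is precisely the preliminary discussion it records: Clifford theory forces $\ell(\nu|_{H^0})=0$ (with the per-$j$ refinement, weakened to $\ell_{l-1}+\ell_l=0$ in the $(D_l^t.2^t).S_t$ case) for any weight affording the highest weight of a $KH^0$-composition factor, and writing an arbitrary $\mu|_{H^0}$ as such a highest weight minus non-negative integer combinations of the $\b_{i,k}$ gives the inequality and the equality analysis. Your one addition --- observing that the $\b_{i,k}$ form a $\mathbb{Q}$-basis so that each $\ell_j$ is a genuine linear functional defined on all of $\L(V)$, not only on weights carrying an associated permutation --- is a useful tightening of a point the paper leaves implicit, but it is a clarification rather than a different argument.
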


\section{Proof of Proposition \ref{TH:C4II}}\label{ss:pc4}

\begin{lem}\label{c4ii:p1}
Proposition \ref{TH:C4II} holds in case (i) of Table \ref{t:c4iis}.
\end{lem}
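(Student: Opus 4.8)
The plan is to turn the irreducibility condition into a purely combinatorial statement via the level function of Lemma \ref{l:sumc}, and for this the first task is to compute how the simple roots of $G$ restrict to $H^0 = X_1\cdots X_t$, where each $X_i\cong A_l$ acts on the tensor factor $W_i$ of dimension $d:=l+1$. Working only at the level of tori (as Section \ref{ss:prel_c4} advises), I would index the $T$-weights $\epsilon_1,\dots,\epsilon_{n+1}$ of the natural module $W$ by the base-$d$ expansions of $0,1,\dots,d^t-1$, so that $\epsilon_m|_{H^0}=\sum_{i=1}^t\epsilon^{(i)}_{j_i(m)}$ for the digit tuple $(j_1(m),\dots,j_t(m))$. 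A direct check then shows that passing from $m$ to $m+1$ triggers $r=v_d(m)$ carries (where $v_d$ is the $d$-adic valuation), giving $\a_m|_{H^0}=\b_{t-r,\,c}-\sum_{i>t-r}(\b_{i,1}+\cdots+\b_{i,l})$, so that the total coefficient of the $\b_{i,k}$ in $\a_m|_{H^0}$ equals $1-l\,v_d(m)$. Summing over a string and invoking Legendre's formula $v_d(N!)=(N-\mathrm{ds}_d(N))/l$ yields the clean identity
$$\ell\Big(\big(\l-\textstyle\sum_{m=r}^{u}\a_m\big)\big|_{H^0}\Big)=\mathrm{ds}_d(r-1)-\mathrm{ds}_d(u),$$
where $\mathrm{ds}_d$ is the base-$d$ digit sum. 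Establishing this identity is the technical heart of the setup.

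With it in hand I would run the reductions through Lemma \ref{l:sumc}(i), which forces $\ell(\mu|_{H^0})\le 0$ for every $\mu\in\L(V)$. Since $e(A_n)=1$, Lemma \ref{l:pr} and Corollary \ref{c:sat} apply in every characteristic, so whenever $a_p\neq0$ the whole string $\l-(\a_r+\cdots+\a_u)$ lies in $\L(V)$ for all $r\le p\le u$ (starting from $\l-\a_p\in\L(V)$ by Lemma \ref{l:t1} and saturating). Feeding these into the displayed identity, a supported index $p$ can survive only if
$$\min_{p\le u\le n}\mathrm{ds}_d(u)\ \ge\ \max_{0\le v<p}\mathrm{ds}_d(v).\qquad(\star)$$
Taking $r=u=p$ at a multiple of $d$ already kills every $a_m$ with $d\mid m$; more generally the powers $d,d^2,\dots$ (and, near the top, the duality symmetry $\l_k\leftrightarrow\l_{n+1-k}$, i.e. the digit-complement $\mathrm{ds}_d(n-k)=tl-\mathrm{ds}_d(k)$) furnish low-digit-sum targets that make $(\star)$ fail for all interior $p$ when $l\ge 3$. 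Thus for $l\ge3$ the support of $\l$ is confined to $\{1,2,n-1,n\}$, while the low-rank range $l=2$ (where $(\star)$ collapses to excluding multiples of $3$) leaves a controlled set of interior weights to be set aside.

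To finish I would examine the weights supported on $\{1,2,n-1,n\}$. If $\l|_{H^0}$ is $S_t$-stable (e.g. $a_1\ge2$, or $\l_1+\l_n$), Clifford theory forces $V|_{H^0}$ irreducible, and Seitz's theorem \cite{Seitz2} leaves only the natural module $\l_1$ and its dual $\l_n$, with $\kappa=1$. In the non-stable configurations the $S_t$-orbit of $\l|_{H^0}$ determines $\kappa$, and I would compare $\kappa$ times the dimension of a composition factor — a tensor product of symmetric and exterior powers of the $W_i$, evaluated by Weyl's formula — against $\dim V$. This singles out $\l_2,\l_{n-1}$ for $t=2$ (where $\binom{d^2}{2}=2\binom{d+1}{2}\binom{d}{2}$), forces $p\neq2$ because $\dim L_{A_l}(2\o_1)$ degenerates from $\binom{d+1}{2}$ in characteristic $2$, and eliminates $\l_2$ for $t\ge3$ and all genuine multi-node weights by strict dimension inequalities; the finitely many $l=2$ interior cases (e.g. $\l_4,\l_5$ for $(A_8,A_2^2.S_2)$) are dispatched using the explicit dimensions of \cite{Lubeck}, with Lemma \ref{l:easy} available to compare multiplicities where a count is tight.

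The main obstacle is precisely this endgame. The level function is decisive for pinning the support down, but it is inconclusive exactly on the equality locus of $(\star)$ — and that locus contains both the four genuine examples and several spurious weights, the latter being especially abundant when $l=2$. Separating them requires the dimension and multiplicity bookkeeping above, together with careful tracking of the characteristic-$2$ collapse that toggles $\l_2$ on and off; by comparison, the root-restriction computation underpinning the digit-sum identity, though delicate, is a finite and essentially mechanical step.
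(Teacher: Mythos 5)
Your reduction phase is sound, and it is essentially the paper's own argument in a cleaner package: your $v_d(m)$ is the paper's index $i_m$ (the position of the least nonzero base-$(l+1)$ digit), your formula for $\a_m|_{H^0}$ agrees with \eqref{e:akh1}, and your digit-sum identity (which follows from Legendre's formula exactly as you say) encodes in one line the string computations that the paper performs case by case before invoking Lemma \ref{l:sumc}. The resulting criterion $(\star)$ does confine the support of $\l$ to $\{1,2,n-1,n\}$ for $l\ge3$, and for $l=2$ it leaves the extra indices $4,5$ precisely when $t=2$, as you observe.

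The genuine gap is the endgame, in two places. First, the $(l,t)=(2,2)$ stragglers are not ``finitely many cases to look up'': $(\star)$ leaves \emph{every} $p$-restricted weight supported on $\{1,2,4,5,7,8\}$, with arbitrary coefficients less than $p$, and L\"{u}beck's tables \cite{Lubeck} are bounded-dimension tables that cannot dispatch, say, $a_4\l_4$ for large $a_4$ and $p$. What kills these instantly is the equality-case refinement you never deploy: if $a_4\neq0$ then $\mu=\l-\a_3-\a_4\in\L(V)$ by Corollary \ref{c:sat}, and since $(\a_3+\a_4)|_{H^0}=\b_{2,1}-\b_{1,2}$ one gets $\ell(\mu|_{H^0})=0$ but $\ell_1(\mu|_{H^0})=-1$, contradicting Lemma \ref{l:sumc}(ii); the same device eliminates $a_5$. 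This per-index refinement on the equality locus of $(\star)$ is exactly how the paper closes the $l=2$ case, and it must be part of the reduction, not deferred. Second, for weights supported on $\{1,2,n-1,n\}$ that are not $S_t$-stable, ``strict dimension inequalities evaluated by Weyl's formula'' is not a usable tool in characteristic $p>0$: neither $\dim V$ nor the dimension of a composition factor (a tensor product of modules such as $L_{A_l}(\o_1+\o_2)$ or $L_{A_l}(2\o_1+\o_l)$) is given by Weyl's formula, and $\kappa$ is not determined by the $S_t$-orbit of $\l|_{H^0}$ without a multiplicity argument. This is where the paper does its real work: the permutation condition \eqref{e:mu1} forces $a_2=1$ and $a_{n-1}=0$; the two weights $\chi_1=\l-\a_1-\cdots-\a_{l+1}$ and $\chi_2=\l-\a_2-\cdots-\a_{l+2}$, both restricting to $\l|_{H^0}-\b_{2,1}$, combine with Lemma \ref{l:118}(i) to force $a_1=0$; two further multiplicity counts force $a_n=0$; and only then, for the single weight $\l=\l_2$ where all dimensions are known exactly, is a dimension comparison made. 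Your passing mention of Lemma \ref{l:easy} gestures at this, but as written your endgame would fail on, for example, $\l=\l_1+\l_2$ or $\l=\l_2+\l_n$.
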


\begin{proof}
Here $G=A_n$, $n+1=(l+1)^t$ and $H^0 = X_1 \cdots X_t$, where $X_{i} \cong A_{l}$ and $l \ge 2$.
Recall that $\{\b_{i,1}, \ldots, \b_{i,l}\}$ is a set of simple roots for $X_{i}$, with corresponding fundamental dominant weights $\{\o_{i,1},\ldots,\o_{i,l}\}$. It will be convenient to set $\l_0=\l_{n+1}=0$ and $\o_{i,0}=\o_{i,l+1}=0$ for all $1 \le i \le t$. 

Let $0 \le k \le n$ be an integer. Then there exist unique integers $r_k(i) \in \{0,\ldots,l\}$ such that
\begin{equation}\label{e:rik1}
k=\sum_{i=0}^{t-1}r_k(i)(l+1)^i.
\end{equation}

For $t' \in \{1,\ldots,t\}$ let $k'=\sum_{i=0}^{t'-1}r_{k}(i)(l+1)^i$. By choosing an appropriate embedding of $H^0$ in $G$, we may assume that 
$$\langle\l_{k}|_{H^0},\b_{i,j}\rangle=\langle\l_{k'}|_{H^0},\b_{i,j}\rangle$$
for all $1 \le i \leq t'$ and $1 \le j \le l$. Let $\xi_k$ be the weight  $-\l_{k}+\l_{k+1}$ of the natural $KG$-module $W$. Then we deduce that
\begin{equation}\label{e:xi}
\xi_k|_{H^0} = \sum_{i=0}^{t-1}\left(-\o_{i+1,r_k(i)}+\o_{i+1,r_k(i)+1}\right).
\end{equation}
Now assume $1 \le k \le n$. Let $\mu=-\l_{k-1}+2\l_{k}-\l_{k+1}$ and let $0 \le i_k \le t-1$ be minimal such that $r_k(i_k) \neq 0$ in \eqref{e:rik1}. Note that $\mu = \xi_{k-1}-\xi_{k}$.
If $i_k=0$ then 
$$k-1=(r_k(0)-1)+\sum_{i=1}^{t-1}r_k(i)(l+1)^i$$ 
and therefore \eqref{e:xi} yields
$$\mu|_{H^0}= -\o_{1,r_k(0)-1}+2\o_{1,r_k(0)}-\o_{1,r_k(0)+1}.$$
Similarly, if $i_k>0$ then
$$k-1=\sum_{i=0}^{i_k-1}l(l+1)^i+(r_k(i_k)-1)(l+1)^{i_k}+\sum_{i=i_k+1}^{t-1}r_k(i)(l+1)^i$$ and thus
\begin{align*}
\mu|_{H^0}  = & \sum_{i=0}^{i_k-1}\left(-\o_{i+1,l}+\o_{i+1,l+1}+\o_{i+1,0}-\o_{i+1,1}\right)
-\o_{i_k+1,r_k(i_k)-1} \\
& +2\o_{i_k+1,r_k(i_k)}-\o_{i_k+1,r_k(i_k)+1} \\
 = & -\sum_{i=1}^{i_k}(\o_{i,1}+\o_{i,l})-\o_{i_k+1,r_k(i_k)-1}+2\o_{i_k+1,r_k(i_k)}-\o_{i_k+1,r_k(i_k)+1}.
\end{align*}
Since $\mu=\a_k$ we conclude that
\begin{equation}\label{e:akh1}
\a_{k}|_{H^0}=\left\{\begin{array}{ll}
\displaystyle \b_{i_k+1,r_k(i_k)}-\sum_{i=1}^{i_k}(\b_{i,1}+\dots+\b_{i,l}) & \mbox{if $i_k>0$} \\
\b_{1,r_k(0)} & \mbox{otherwise.} \\
\end{array}\right.
\end{equation}
In particular, note that if $\a=\sum_{i}c_i\a_i$, with $c_i \in \mathbb{N}_0$, then \eqref{e:ci0} holds.

Recall that $V$ has highest weight $\l=\sum_{i=1}^{n}a_i\l_i$ and assume that $V|_{H}$ is irreducible. Suppose $1 \le k<n-l$ and $a_k \neq 0$. We claim that $k \le 2$. To see this, define the integers $r_k(i)$ as in \eqref{e:rik1} and write $k$ in the form $k=r_k(0)+(a-1)(l+1)$, where $a \in \{1,\dots,(l+1)^{t-1}-1\}$.
Note that $\mu=\l-\a_{k} \in \L(V)$.

If $r_k(0)=0$ then $i_k>0$, so \eqref{e:akh1} implies that
$$\mu|_{H^0} = \l|_{H^0}+\sum_{i=1}^{i_k}(\b_{i,1}+\dots+\b_{i,l}) -\b_{i_k+1,r_k(i_k)}$$
and thus $\ell(\mu|_{H^0})=li_k-1 \ge 1$ (see \eqref{e:ell}), which contradicts Lemma \ref{l:sumc}. Therefore $r_k(0) \ge 1$.
Consequently, $\nu = \l-\a_{k}-\a_{k+1}-\dots-\a_{a(l+1)} \in \L(V)$ and using \eqref{e:akh1} we deduce that
\begin{equation}\label{e:res}
\begin{aligned}
\nu|_{H^0} = & \, \l|_{H^0}-\b_{1,r_k(0)}-\dots-\b_{1,l}-\b_{i_{a(l+1)}+1,r_{a(l+1)}(i_{a(l+1)})} \\
& + \sum_{i=1}^{i_{a(l+1)}}(\b_{i,1}+\dots+\b_{i,l}).
\end{aligned}
\end{equation}
Now 
$$a(l+1)=(r_k(1)+1)(l+1)+\sum_{i=2}^{t-1}r_k(i)(l+1)^{i}$$ 
and $\ell(\nu|_{H^0})=l(i_{a(l+1)}-1)+r_k(0)-2$, so Lemma \ref{l:sumc} implies that $i_{a(l+1)}=1$ and $r_k(0) \le 2$, hence $r_k(1) \leq l-1$.

First assume $r_k(0)=1$. If $k \neq 1$
			then $\mu=\l-\a_{k}-\a_{k-1}=\l-\a_{(a-1)(l+1)+1}-\a_{(a-1)(l+1)} \in \L(V)$ and once again, using \eqref{e:akh1}, we get
			 $$\mu|_{H^0}=\l|_{H^0}-\b_{1,1}-\b_{i_{k-1}+1,r_{k-1}(i_{k-1})}+\sum_{i=1}^{i_{k-1}}(\b_{i,1}+\dots+\b_{i,l}),$$
			where $i_{k-1} \geq 1$.
			Since $l \geq 2$, Lemma \ref{l:sumc} implies that
			$i_{k-1}=1$ and $l=2$, so $\mu|_{H^0}$ is the highest weight of a $KH^0$-composition factor (see Lemma \ref{l:sumc}) and thus \eqref{e:ell} indicates that $r_k(1)=2$. This is a contradiction since $r_k(1) \leq l-1$. We conclude that $k=1$ if $r_k(0)=1$.
			
Now suppose $r_k(0)=2$. First observe that $\nu|_{H^0} = \l|_{H^0}+\b_{1,1}-\b_{2,r_k(1)+1}$, where $\nu = \l-\a_{k}-\a_{k+1}- \cdots - \a_{a(l+1)}$ (see \eqref{e:res}). Therefore
			\eqref{e:ell} implies that $r_k(1)=0$, so $k=2+\sum_{i=2}^{t-1}r_k(i)(l+1)^i$. If $k>2$ then
						$\mu = \l-\a_{k}-\a_{k-1}-\a_{k-2} \in \L(V)$ and
			$$\mu|_{H^0}= \displaystyle \l|_{H^0}-\b_{1,1}-\b_{1,2}-\b_{i_{k-2}+1,r_{k-2}(i_{k-2})}+\sum_{i=1}^{i_{k-2}}(\b_{i,1}+\dots+\b_{i,l}),$$
			where $i_{k-2} \geq 2$. Since $l \geq 2$, this contradicts Lemma \ref{l:sumc}, so $k=2$ is the only possibility. This justifies the claim.
	
Next suppose $a_k \neq 0$ with $k \geq n-l$. Write 
$$k=(l+1)^t-c=l+1-c+\sum_{i=1}^{t-1}l(l+1)^{i}.$$ 
Then $\mu = \l-\a_{k}-\a_{k-1}-\dots-\a_{n-l} \in \L(V)$ and \eqref{e:akh1} implies that
$$\mu|_{H^0} = \l|_{H^0}-\b_{1,l+1-c}-\b_{1,l-c}-\dots-\b_{1,1}
		-\b_{2,l}+\sum_{j=1}^{l}\b_{1,j}$$
		(note that $n-l=l\sum_{i=1}^{t-1}(l+1)^i$).
		Therefore Lemma \ref{l:sumc} yields $c \le 2$, whence $k=n-1$ or $n$.
Notice that we have now reduced to the case
$$\l=a_1\l_1+a_2\l_2+a_{n-1}\l_{n-1}+a_n\l_n.$$

Now
$$\l_1|_{H^0} = \xi_0|_{H^0} = \sum_{i=1}^{t}\o_{i,1},
\;\; \l_2|_{H^0} = (\xi_0+\xi_1)|_{H^0} = \o_{1,2}+2\sum_{i=2}^{t}\o_{i,1}$$
and
$$\l_{n-1}|_{H^0} = -(\xi_{n-1}+\xi_n)|_{H^0} = \o_{1,l-1}+2\sum_{i=2}^{t}\o_{i,l},
\;\; \l_n|_{H^0} = -\xi_n|_{H^0} = \sum_{i=1}^{t}\o_{i,l},$$
so
$$\l|_{H^0}=a_1\o_{1,1}+a_2\o_{1,2}+a_{n-1}\o_{1,l-1}+a_n\o_{1,l}+\sum_{i=2}^{t}\left((a_1+2a_2)\o_{i,1}+(2a_{n-1}+a_n)\o_{i,l}\right).$$

If $\mu \in \L(V)$ affords the highest weight of a $KH^0$-composition factor then, in view of \eqref{e:yb} and \eqref{e:ci0},
either $\mu=\l$ or there exists an associated permutation of $\mu$, say $\s \in S_t$, such that $\s(1)=i_0 \neq 1$ and $\mu$ restricts to
$$a_1\o_{i_0,1}+a_2\o_{i_0,2}+a_{n-1}\o_{i_0,l-1}+a_n\o_{i_0,l}
					+\sum_{i=1, i \neq i_0}^{t}\left((a_1+2a_2)\o_{i,1}+(2a_{n-1}+a_n)\o_{i,l}\right),$$
					so
\begin{equation}\label{e:mu1}
\mu|_{H^0}
=\l|_{H^0}+a_2(\b_{1,1}-\b_{i_0,1})+a_{n-1}(\b_{1,l}-\b_{i_0,l}).
\end{equation}

For now, let us assume $a_2 \neq 0$. Then $\mu = \l-\a_{2}-\a_{3}-\dots-\a_{l+1} \in \L(V)$ restricts to $\l|_{H^0}+\b_{1,1}-\b_{2,1}$, so Lemma \ref{l:sumc} implies that $\mu$ affords the highest weight of a $KH^0$-composition factor, and \eqref{e:mu1} yields $a_2=1$ and $a_{n-1}=0$.
	Consequently, if $\nu \in \L(V)$ affords the highest weight of a $KH^0$-composition factor then either $\nu=\l$ or $\nu|_{H^0}=\l|_{H^0}+\b_{1,1}-\b_{i_0,1}$ for some $i_0 \in \{2,\ldots,t\}$.
	Set $\mu_{i_0}=\l|_{H^0}+\b_{1,1}-\b_{i_0,1}$. One checks that $\l - \a_2-\cdots -\a_{(l+1)^{i_0-1}}$ is the unique weight in $\L(V)$ that restricts to $\mu_{i_0}$, so $\mu_{i_0}$ occurs with multiplicity $1$ in $V|_{H^0}$ and we conclude that $V|_{H^0}$ has exactly $t$ composition factors. 
	
	Set $\chi_1=\l- \a_1- \cdots - \a_{l+1}$ and $\chi_2=\l-\a_2 - \cdots - \a_{l+2}$.
	Then $\chi_1,\chi_2 \in \L(V)$ and $\chi_1|_{H^0}=\chi_2|_{H^0}=\l|_{H^0}-\b_{2,1}$.
	If $\mu \in \L(V)$ affords the highest weight of a $KH^0$-composition factor containing $\l|_{H^0}-\b_{2,1}$ then either $\mu=\l$ or $\mu|_{H^0}=\mu_{2}$.
	As $\l|_{H^0}-\b_{2,1}$ occurs with multiplicity at most $1$ under each of $\l|_{H^0}$ and $\mu_{2}$, we must have $m_{V}(\chi_1)=1$, so Lemma \ref{l:118}(i) implies that $a_1=0$ or $p-2$.
	However, if $a_1=p-2$ then $\langle \l|_{H^0},\b_{2,1} \rangle=p$
	and $\l|_{H^0}-\b_{2,1}$ does not occur as a weight of $L_{H^0}(\l|_{H^0})$, which is impossible. Therefore $a_1=0$ is the only possibility, so $\l=\l_2+a_{n}\l_n$ and 
	$$\l|_{H^0}=\o_{1,2}+a_n\o_{1,l}+\sum_{i=2}^t(2\o_{i,1}+a_n\o_{i,l}).$$

	Suppose $a_n\geq 2$. Then
	$\l-\sum_{i=l(l+1)^{t-1}-1}^{n}\a_i$ and
	$\l-\sum_{i=l(l+1)^{t-1}}^{n-1}\a_i-2\a_{n} \in \L(V)$,
	both of which restrict to $\l|_{H^0}-\b_{1,l}-\b_{t,l}$.
	However, this latter weight can only occur in the composition factor with highest weight $\l|_{H^0}$, where it has multiplicity $1$,
	which is absurd.
	Hence $a_n \le 1$.
Similarly, if $a_n=1$ then each of the following $l$ distinct $T$-weights
$$\begin{array}{l}
\displaystyle \l - \sum_{j=2}^{l}\a_j - \sum_{j=1}^{l}\a_{n-l+j} \\
\displaystyle \l - \sum_{j=1}^{l}\a_j - \sum_{j=2}^{l}\a_{n-l+j} \\
\displaystyle \l - \sum_{j=1}^{l}\a_j - \sum_{j=2}^{k}\a_j - \sum_{j=k+1}^{l}\a_{n-l+j}\;\;(2 \le k \le l-1)
\end{array}$$
restricts to $\l|_{H^0}-\b_{1,1}-2\sum_{j=2}^{l}\b_{1,j}$. However, this weight can only occur in the composition factor with highest weight $\l|_{H^0}$, where it has multiplicity at most $l-1$ since it is conjugate to the $T_{H^0}$-weight $\l|_{H^0}-\sum_{j=2}^{l}\b_{1,j}$. This is a contradiction. We conclude that $a_n=0$.
	
We have now established that $\l=\l_2$ is the only possibility with $a_2 \neq 0$,
and thus $\l|_{H^0}=\o_{1,2}+2\sum_{i=2}^{t}\o_{i,1}$. Now $\dim V = \frac{1}{2}n(n+1)$ and since $\dim{L_{A_m}(\l_2)}=\frac{1}{2}m(m+1)$ and
$$\dim{L_{A_l}(2\l_1)}=\left\{
\begin{array}{ll}
l+1 & \mbox{if $p=2$} \\
\frac{1}{2}(l+2)(l+1) & \mbox{otherwise} \\
\end{array}\right.$$
(see Lemma \ref{l:s114}), we deduce that $\dim L_{H^0}(\l|_{H^0})= \frac{1}{2}l(l+1)^{t}$ if $p=2$, and $\frac{1}{2^{t}}l(l+1)^{t}(l+2)^{t-1}$ otherwise.
It follows that $\dim{L_{A_n}(\l_2)}=t \cdot \dim{L_{H^0}(\l|_{H^0})}$ if and only if $t=2$ and $p \neq 2$. These examples are recorded in Table \ref{t:c4ii}. Similarly, if $a_{n-1} \ge 1$ then $\l=\l_{n-1}$ is the only possibility, and again $V|_H$ is irreducible if and only if $t=2$ and $p \neq 2$.

Finally, let us assume $\l=a_1\l_1+a_n\l_n$. Then $\l$ is the unique weight in $\L(V)$ that affords the highest weight of a $KH^0$-composition factor (see \eqref{e:mu1} and \eqref{e:ci0}), so $V|_{H^0}$ is irreducible.
Therefore \cite[Theorem 1]{Seitz2} implies that the only examples are $\l=\l_1$ and $\l=\l_n$.
\end{proof}

\begin{lem}\label{c4ii:p2}
Proposition \ref{TH:C4II} holds in case (ii) of Table \ref{t:c4iis}.
\end{lem}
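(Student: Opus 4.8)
The plan is to adapt the weight-theoretic method of Lemma~\ref{c4ii:p1} to $G=B_n$, where now $H^0=X_1\cdots X_t$ with each $X_i\cong B_l$ and $2n+1=(2l+1)^t$. First I would fix a labelling of the $2n+1$ weights of the natural module $W=W_1\otimes\cdots\otimes W_t$ by a signed mixed-radix expansion of radix $2l+1$: order the weights $f_{i,1}>\cdots>f_{i,l}>0>-f_{i,l}>\cdots>-f_{i,1}$ of each natural $B_l$-module $W_i$, and take the induced lexicographic order on the tensor products with the first factor most significant. This forces the highest weight of $W$ to be $e_1=\sum_i f_{i,1}$, where $e_1,\dots,e_n$ denote the standard weights of the natural $B_n$-module, so that $\l_1|_{H^0}=\sum_i\o_{i,1}$ as recorded in Table~\ref{t:c4ii}, and it yields explicit formulas for each restriction $\a_k|_{H^0}$ as an integral combination of the $\b_{i,j}$, playing the role of \eqref{e:xi} and \eqref{e:akh1} in the $A_n$ case. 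From these formulas one reads off the analogue of \eqref{e:ci0}, so that $\l$ is the unique weight of $\L(V)$ restricting to $\l|_{H^0}$.

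Next I would assume $V|_H$ is irreducible and run the $\ell$-function argument of Lemma~\ref{l:sumc}. For each $k$ with $a_k\neq0$ I will exhibit an explicit weight $\mu=\l-(\a_k+\a_{k\pm1}+\cdots)\in\L(V)$, obtained from Lemma~\ref{l:t1} and, where the bound $e(B_n)=2$ is relevant, from Lemma~\ref{l:pr} or Corollary~\ref{c:sat}, and compute $\ell(\mu|_{H^0})$ from the root restrictions. As in the $A_n$ analysis, nearly all such weights give $\ell(\mu|_{H^0})>0$, contradicting Lemma~\ref{l:sumc}(i) and forcing $a_k=0$. The two-ended reduction available for $A_n$ is absent here, since the natural module is self-dual and $B_n$ has no graph automorphism; accordingly I expect the surviving support of $\l$ to consist of a few low-index nodes together with the spin node, leaving $\l=\l_1$, possibly $\l_2$ or $\l_3$, and the spin weight $\l_n$ as the candidates to be tested.

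The candidate $\l=\l_1$ gives the natural module, with $V|_{H^0}$ irreducible by \cite{Seitz2}, which is the first row of Table~\ref{t:c4ii}. The low-index candidates such as $\l_2$ and $\l_3$ I would eliminate by a composition-factor count: using $\dim L_{B_n}(\l_2)$ from Proposition~\ref{p:dims}, the dimensions of the associated $KH^0$-factors, the permuting action of $S_t$ on the homogeneous components, and Lemma~\ref{l:easy}, one shows the factor dimensions cannot sum to $\dim V$, just as $\l_2$ is ruled out in Lemmas~\ref{c2:p2} and~\ref{c2:p4}. The delicate case is the spin weight $\l_n$, for which $\dim L_{B_n}(\l_n)=2^n$ by Lemma~\ref{l:dmspin}. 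Here I must evaluate $\l_n|_{H^0}=\tfrac12\sum_{k=1}^n e_k|_{H^0}$ from the mixed-radix data and analyse the decomposition of the spin module under the tensor-product subgroup $B_l^t$. I expect this to survive only when $t=2$ and $l=1$, that is $n=4$, recovering the single family $\l=\l_4$ with $\l|_{H^0}=\o_{1,1}+3\o_{2,1}$; the characteristic condition $p\neq3$ then drops out of the requirement $\dim L_{B_1}(3\o_{2,1})=4$ (equivalently a multiplicity computation via Lemma~\ref{l:118}), after which Clifford theory and Proposition~\ref{p:niso} confirm irreducibility of the two swapped composition factors.

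The main obstacle will be the spin node. Obtaining the correct signed mixed-radix restriction of $\l_n$, and then decomposing the $2^n$-dimensional spin module finely enough to separate $n=4$ from every larger $n$, is the technical heart of the proof; by contrast the elimination of the low-index weights is routine once the restrictions $\a_k|_{H^0}$ are in hand.
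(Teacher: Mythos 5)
Your strategy is essentially the paper's: a base-$(2l+1)$ expansion giving explicit restrictions $\a_k|_{H^0}$, elimination of coefficients via the $\ell$-function of Lemma \ref{l:sumc}, and dimension counts for the survivors; executed faithfully, it reaches the same answer ($\l_1$ always, plus $\l_4$ with $p\neq3$ when $(n,t,l)=(4,2,1)$). The one substantive misdirection in your roadmap is the spin node. You exempt $\l_n$ from the $\ell$-function reduction and plan to handle it by decomposing the $2^n$-dimensional spin module under $B_l^t$, calling this the technical heart of the proof. In fact no spin-module decomposition occurs anywhere in the paper's argument: for $n>4$ the coefficient $a_n$ dies by exactly the computation you propose for the other nodes. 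For $l=1$, the weight $\mu=\l-\a_{n-4}-\cdots-\a_n\in\L(V)$ restricts to $\l|_{H^0}+\b_{1,1}+\b_{2,1}-\b_{3,1}$, so $\ell(\mu|_{H^0})=1>0$; for $l\ge2$ and $k\ge n-l$, the weight $\nu=\l-\sum_{i=n-l}^{k}\a_i$ gives $\ell(\nu|_{H^0})=2l-r_k(0)-1\ge l-1\ge1$; both contradict Lemma \ref{l:sumc}. The only spin case that survives is $n=4$, that is $(l,t)=(1,2)$, which the paper isolates as an opening special case and settles exactly as you describe: each of the two conjugate factors is $L_{B_1}(\o_{1,1})\otimes L_{B_1}(3\o_{2,1})$, of dimension $2\cdot\dim L_{B_1}(3\o_{2,1})$, and these sum to $\dim V=16$ precisely when $p\neq3$. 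This matters because, had the spin weight genuinely survived for larger $n$, your plan would have had a real gap: you supply no tools for restricting spin modules to tensor-product subgroups (the inductive method of Lemma \ref{l:c2_special} applies to the direct-sum stabilizers in $\C_2$, not here).

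Conversely, the work you label routine is where the paper actually spends its effort. Reducing to $\l=a_1\l_1+a_2\l_2$ for $n>4$ requires two-step weight arguments with congruence conditions modulo $(2l+1)^2$, and showing that $a_2\neq0$ forces $(a_1,a_2)=(0,1)$ uses the multiplicity statement of Lemma \ref{l:118}(i) together with the observation that $a_1=p-2$ would prevent $\l|_{H^0}-\b_{2,1}$ from occurring as a weight of $L_{H^0}(\l|_{H^0})$; only after these steps does the count $n(2n+1)>t\cdot l^{t}(2l+1)(2l+3)^{t-1}$ dispose of $\l=\l_2$.
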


\begin{proof}
Here $G=B_n$, $2n+1=(2l+1)^t$ and $H^0 = X_1 \cdots X_t$, where $X_{i} \cong B_{l}$ and $p \neq 2$. Let $\{\b_{i,1}, \ldots, \b_{i,l}\}$ be a set of simple roots for $X_{i}$, with corresponding fundamental dominant weights   $\{\o_{i,1},\ldots,\o_{i,l}\}$.
For $1 \le i \le t$ and $0 \le m \le l$ we define $\b_{i,l+m}=\b_{i,l-m}$ and $\o_{i,l+m}=\o_{i,l-m}$, with $\b_{i,0}=\o_{i,0}=0$.
Similarly we set $\l_{n+m}=\l_{n-m}$ and $\l_0=0$ for all $0 \le m \le n$.

Let $0 \le k <n$ be an integer. Then there exist unique integers $r_k(i) \in \{0,\ldots,2l\}$ such that
\begin{equation}\label{e:rik2}
k=\sum_{i=0}^{t-1}r_k(i)(2l+1)^i.
\end{equation}
For $t' \in \{1,\ldots,t\}$ let $k'=\sum_{i=0}^{t'-1}r_{k}(i)(2l+1)^i$. By choosing an appropriate embedding of $H^0$ in $G$, we may assume that
$$\langle\l_{k}|_{H^0},\b_{i,j}\rangle=\langle\l_{k'}|_{H^0},\b_{i,j}\rangle$$
for all $1 \le i \leq t'$ and $1 \le j \le l$.
Set $\xi_{k} = -\l_{k}+\l_{k+1}$.
If $k \neq n-1$ then $\xi_k$ is a weight of the natural $KG$-module $W$ and we have
$$\langle \xi_{k}|_{H^0},\b_{i+1,j}\rangle=
\left\{\begin{array}{rl}
1 & \mbox{if $0<j=r_k(i)+1<l$ or $l<j=r_k(i)<2l$} \\
-1 & \mbox{if $0<j=r_k(i)<l$ or $l<j=r_k(i)-1<2l$} \\
2 & \mbox{if $j=l=r_k(i)+1$} \\
-2 & \mbox{if $j=l=r_k(i)-1$} \\
0 & \mbox{otherwise}
\end{array}\right.$$
for all $0 \le i \le t-1$.

Now write
$n+k=\sum_{i=0}^{t-1}r_{n+k}(i)(2l+1)^i$
and
$n-1-k=\sum_{i=0}^{t-1}r_{n-1-k}(i)(2l+1)^i$.
Let $j_0\in\{0,\dots,t-1\}$ be minimal such that $r_{n+k}(j_0) \neq 2l$ (equivalently, $j_0$ is minimal such that $r_{n-1-k}(j_0) \neq 2l$).
Then
$$
r_{n-1-k}(i)=
\left\{\begin{array}{ll}
r_{n+k}(i)=2l & \mbox{ if $i<j_0$} \\
2l-1-r_{n+k}(j_0) & \mbox{ if $i=j_0$}\\
2l-r_{n+k}(i) & \mbox{ if $i>j_0$.}
\end{array}\right.$$
Now $\xi_{n+k}=-\l_{n+k}+\l_{n+k+1}=-\xi_{n-1-k}$ so, for $k \neq 0$, we have
$$\langle \xi_{n+k}|_{H^0},\b_{i,j}\rangle=
\left\{\begin{array}{ll}
1 & \mbox{if $j=1<l$} \\
2 & \mbox{if $j=1=l$} \\
0 & \mbox{otherwise}
\end{array}\right.$$
for $0<i \le j_0$,
$$\langle \xi_{n+k}|_{H^0},\b_{j_0+1,j}\rangle=
\left\{\begin{array}{rl}
1 & \mbox{if $0<j=r_{n+k}(j_0)+2<l$ or $l<j=r_{n+k}(j_0)+1<2l$} \\
-1 & \mbox{if $0<j=r_{n+k}(j_0)+1<l$ or $l<j=r_{n+k}(j_0)<2l$} \\
2 & \mbox{if $j=l=r_{n+k}(j_0)+2$} \\
-2 & \mbox{if $j=l=r_{n+k}(j_0)$} \\
0 & \mbox{otherwise,}
\end{array}\right.$$
and
$$\langle \xi_{n+k}|_{H^0},\b_{i+1,j}\rangle=
\left\{\begin{array}{rl}
1 & \mbox{if $0<j=r_{n+k}(i)+1<l$ or $l<j=r_{n+k}(i)<2l$} \\
-1 & \mbox{if $0<j=r_{n+k}(i)<l$ or $l<j=r_{n+k}(i)-1<2l$} \\
2 & \mbox{if $j=l=r_{n+k}(i)+1$} \\
-2 & \mbox{if $j=l=r_{n+k}(i)-1$} \\
0 & \mbox{otherwise,}
\end{array}\right.$$
for $j_0<i<t$.
We also observe that
the weight $-\l_{n-1}+2\l_{n}$
restricts to
$-\o_{1,l-1}+2\o_{1,l}$,
while
$-2\l_{n}+\l_{n+1}$ restricts to
$\o_{1,l-1}-2\o_{1,l}$.

As in the proof of the previous lemma, for an integer $1 \le k \le n$, let $i_k$ be minimal such that  $r_k(i_k) \neq 0$ in \eqref{e:rik2}.
Notice that 
$$k-1=r_k(0)-1+\sum_{i=1}^{t-1}r_k(i)(2l+1)^i$$
if $i_k=0$, and
$$k-1=\sum_{i=0}^{i_k-1}(2l)(2l+1)^i+(r_k(i_k)-1)(2l+1)^{i_k}+\sum_{i=i_k+1}^{t-1}r_k(i)(2l+1)^i$$
if $i_k \geq 1$.
Moreover we have $\a_k=\xi_{k-1}-\xi_{k}$ for $1 \leq k < n-1$, $\a_{n-1}=\xi_{n-2}+\l_{n-1}-2\l_{n}$ and $\a_n=-\l_{n-1}+2\l_{n}$.
In view of the above restrictions, we deduce that
\begin{equation}\label{e:akh2}
\a_{k}|_{H^0} = \left\{\begin{array}{ll}
\b_{1,r_k(0)} & \mbox{if $i_k=0$ and $r_k(0) \leq l$}\\
\b_{1,r_k(0)-1} & \mbox{if $i_k=0$ and $r_k(0)>l$}\\
\b_{i_k+1,r_k(i_k)}-2\displaystyle \sum_{i=1}^{i_k}\b(i) & \mbox{if $i_{k} \ge 1$ and $r_k(i_k) \leq l$}\\	
\b_{i_k+1,r_k(i_k)-1}-2\displaystyle \sum_{i=1}^{i_k}\b(i) & \mbox{if $i_{k} \ge 1$ and $r_k(i_k)>l$,}\\	
\end{array}\right.
\end{equation}
where
\begin{equation}\label{e:betai}
\b(i)=\b_{i,1}+\dots+\b_{i,l}
\end{equation}
is the highest short root in $\Sigma(X_i)$. In particular, we deduce that \eqref{e:ci0} holds, where $\a=\sum_{i}c_i\a_i$ with $c_i \in \mathbb{N}_0$.

Recall that $V$ has highest weight $\l=\sum_{i=1}^na_i\l_i$ and let us assume $V|_{H}$ is irreducible.
Our first task is to deal with the case $n=4$, so $(l,t)=(1,2)$ and 
\begin{equation}\label{e1:lh04} \l|_{H^0}=(2a_1+2a_2+a_4)\o_{1,1}+(2a_1+4a_2+6a_3+3a_4)\o_{2,1}.
\end{equation}
By \eqref{e:ci0}, $\l$ is the unique weight of $V$ that restricts to $\l|_{H^0}$.
In particular, if $\mu \in \L(V)$ affords the highest weight of a $KH^0$-composition factor  then either $\mu=\l$, or
\begin{equation}\label{e:muh0B04}
\mu|_{H^0}=\l|_{H^0}+(a_2+3a_3+a_4)(\b_{1,1}-\b_{2,1}).
\end{equation}
	If $a_3 \neq 0$ then $\nu=\l-\a_3$ is a weight of $V$ and  $\nu|_{H^0}=\l|_{H^0}+2\b_{1,1}-\b_{2,1}$ so $\ell(\nu|_{H^0})=1$, which cannot happen in view of Lemma \ref{l:sumc}. Therefore  $a_3=0$. Now $\l|_{H^0}-\b_{1,1}$ occurs with multiplicity at most $1$ in $V|_{H^0}$, and $\a_1,\a_2$ and $\a_4$ all restrict to $\b_{1,1}$ (see \eqref{e:akh2}), so we deduce that $\l=a_i\l_i$ with $i=1,2$ or $4$. 
	
	If $\l=a_1\l_1$ then $\l$ is the unique weight in $\L(V)$ that affords the highest weight of a $KH^0$-composition factor (see \eqref{e:muh0B04}),  so $V|_{H^0}$ is irreducible.
	Therefore, using \cite{Seitz2}, we conclude that the only example is $\l=\l_1$.
	If $\l=a_2\l_2$ then $\l-\a_2-\a_3\in\L(V)$ restricts to $\l|_{H^0}+\b_{1,1}-\b_{2,1}$, so it affords the highest weight of a $KH^0$-composition factor (see Lemma \ref{l:sumc}(i)).
	Now \eqref{e:muh0B04} implies that $a_2=1$, so $\l=\l_2$ and  $\l|_{H^0}=2\o_{1,1}+4\o_{2,1}$.
	Since $t=2$, 
	$V|_{H^0}$ has exactly two composition factors, and $\l|_{H^0}-\b_{1,1}-\b_{2,1}$ occurs with multiplicity at most $2$ in $V|_{H^0}$.
	However, the weights 
	$\l-\a_1-\a_2-\a_3-\a_4$,
	$\l-\a_1-2\a_2-\a_3$
	and
	$\l-\a_2-\a_3-2\a_4$ in $\L(V)$
	all restrict to $\l|_{H^0}-\b_{1,1}-\b_{2,1}$.
	This contradiction eliminates the case $\l=a_2\l_2$.
	Similarly, if $\l=a_4\l_4$ then
	$a_4=1$ is the only possibility, and $V|_{H^0}$ has exactly two composition factors of highest weights $\l|_{H^0}$ and $\l|_{H^0}+\b_{1,1}-\b_{2,1}$ respectively.
	So $\l=\l_4$, $\l|_{H^0}=\o_{1,1}+3\o_{2,1}$ and $\dim V = 16$ (see Lemma \ref{l:dmspin}). Therefore, arguing by dimension, we deduce that $V|_H$ is irreducible if and only if $p \neq 3$. This case is recorded in Table \ref{t:c4ii}.

For the remainder we may assume $n>4$.
Suppose $a_k \neq 0$ for some integer $k$ in the range $1 \le k \le n$, so $\mu = \l - \a_k \in \L(V)$.
Define the integers $r_k(i)$ as in \eqref{e:rik2} and write $k=r_k(0)+(a-1)(2l+1)$, where
$a \in \{1,\dots,((2l+1)^{t-1}+1)/2\}$.
Recall that $i_k$ is minimal such that $r_k(i_k) \neq 0$.
If $i_k \ge 1$ then \eqref{e:akh2} implies that $\ell_1(\mu|_{H^0})\ge 2i_k-1>0$, so this case is ruled out by Lemma \ref{l:sumc}.
Therefore $i_k =0$ and thus $r_k(0) \neq 0$.

For now let us assume $l=1$ and $a_k \neq 0$. By the above remarks we have $i_k=0$, so $r_k(0)=1$ or $2$. First suppose $r_k(0)=1$.
	If $k=n$ then $\mu=\l-\sum_{i=n-4}^{n}\a_i \in \L(V)$ and
		$\mu|_{H^0}=\l|_{H^0}+\b_{1,1}+\b_{2,1}-\b_{3,1}$, so $\ell(\mu|_{H^0})=1$ and this contradicts Lemma \ref{l:sumc}. Therefore $k \le n-3$. 
	If $k \neq 1$ then
		$\mu_1=\l-\a_{k}-\a_{k-1}$
		and
		$\mu_2=\l-\a_{k}-\a_{k+1}-\a_{k+2}$
		are weights of $V$,
		and \eqref{e:akh2} implies that
$$\mu_1|_{H^0} =  \l|_{H^0}-\b_{1,1}-\b_{i_{k-1}+1,1}+2\sum_{i=1}^{i_{k-1}}\b_{i,1}$$
and
$$\mu_2|_{H^0} = \l|_{H^0}-2\b_{1,1}-\b_{i_{k+2}+1,1}+2\sum_{i=1}^{i_{k+2}}\b_{i,1},$$
		where $i_{k-1},i_{k+2} \geq 1$.
		Now Lemma \ref{l:sumc} yields $i_{k-1}=i_{k+2}=1$, so $k \equiv 4 \imod{9}$ and $k \le n-9$. Then $\nu=\l-\sum_{i=k-1}^{k+5}\a_i \in \L(V)$ and
		$$\nu|_{H^0} =  \l|_{H^0}-2\b_{2,1}-\b_{i_{k+5}+1,1}+2\sum_{i=1}^{i_{k+5}}\b_{i,1},$$
		where $i_{k+5} \geq 2$,
		so $\ell(\nu|_{H^0})=2i_{k+5}-3\geq 1$, which is impossible by Lemma \ref{l:sumc}.
		Therefore $k=1$.
		
Now assume $r_k(0)=2$, so $k \le n-2$ since $n \equiv 1 \imod{3}$.
	If $k \neq 2$ then
		$\mu_1=\l-\a_{k}-\a_{k-1}-\a_{k-2}$
		and
		$\mu_2=\l-\a_{k}-\a_{k+1}$
		are weights of $V$,
		and \eqref{e:akh2} implies that
$$\mu_1|_{H^0} =  \l|_{H^0}-2\b_{1,1}-\b_{i_{k-2}+1,1}+2\sum_{i=1}^{i_{k-2}}\b_{i,1}$$
and
$$\mu_2|_{H^0} = \l|_{H^0}-\b_{1,1}-\b_{i_{k+1}+1,1}+2\sum_{i=1}^{i_{k+1}}\b_{i,1},$$
		where $i_{k-2},i_{k+1} \geq 1$.
		Here Lemma \ref{l:sumc} implies that $i_{k-2}=i_{k+1}=1$, so $k \equiv 5\imod{9}$ and $k \le n-8$. Then $\nu=\l-\sum_{i=k}^{k+4}\a_i \in \L(V)$ and
		$$\nu|_{H^0} =  \l|_{H^0}-\b_{1,1}-\b_{2,1}-\b_{i_{k+4}+1,1}+2\sum_{i=1}^{i_{k+4}}\b_{i,1},$$
		where $i_{k+4} \geq 2$,
		so $\ell(\nu|_{H^0})=2i_{k+4}-3\geq 1$, which contradicts Lemma \ref{l:sumc}.
		Therefore $k=2$. Note that for $l=1$ we have now reduced to the case $\l=a_1\l_1+a_2\l_2$.

Now assume $l \ge 2$ and $a_k \neq 0$. If $k \geq n-l$ then $r_k(0) \le l$,
$\nu=\l-\sum_{i=n-l}^{k}\a_{i} \in \L(V)$ and $\ell(\nu|_{H^0})=2l-r_k(0)-1 \ge 1$, so Lemma \ref{l:sumc} implies that $a_k=0$. Now suppose $k<n-l$, so $\mu=\l-\sum_{i=k}^{(2l+1)a}\a_{i} \in \L(V)$. Since 
$$(2l+1)a=(r_k(1)+1)(2l+1)+\sum_{i=2}^{t-1}r_k(i)(2l+1)^{i}$$ 
we deduce that
$$\mu|_{H^0} = 
\left\{\begin{array}{ll}
\displaystyle\l|_{H^0}-\sum_{j=r_k(0)}^{2l-1}\b_{1,j}-\b_{1,l}-\b_{i_{2la+a}+1,r}+2\sum_{i=1}^{i_{2la+a}}\b(i) & \mbox{if $r_k(0) \leq l$, $r \leq l$}\\
\displaystyle\l|_{H^0}-\sum_{j=r_k(0)-1}^{2l-1}\b_{1,j}-\b_{i_{2la+a}+1,r}+2\sum_{i=1}^{i_{2la+a}}\b(i) & \mbox{if $r_k(0)>l$, $r \leq l$}\\
\displaystyle\l|_{H^0}-\sum_{j=r_k(0)}^{2l-1}\b_{1,j}-\b_{1,l}-\b_{i_{2la+a}+1,r-1}+2\sum_{i=1}^{i_{2la+a}}\b(i) & \mbox{if $r_k(0) \leq l$, $r>l$}\\
\displaystyle\l|_{H^0}-\sum_{j=r_k(0)-1}^{2l-1}\b_{1,j}-\b_{i_{2la+a}+1,r-1}+2\sum_{i=1}^{i_{2la+a}}\b(i) & \mbox{if $r_k(0)>l$, $r>l$,}\\
\end{array}\right.$$
where $r=r_{2la+a}(i_{2la+a})$ and $\b(i)$ is defined as in \eqref{e:betai}.
As $\ell(\mu|_{H^0})=2l(i_{(2l+1)a}-1)+r_k(0)-2$, we must have $i_{(2l+1)a}=1$ and $r_k(0)\leq 2$.

Suppose $r_k(0)=1$. If $k\neq 1$ then $i_{k-1} \geq 1$,
			$\nu_1=\l-\a_{k-1}-\a_{k} \in \L(V)$
			and $\ell(\nu_1|_{H^0})=2li_{k-1}-2 \ge 2$, which is impossible by Lemma \ref{l:sumc}.
		 	Similarly, if $r_k(0)=2$ and $k \neq 2$ then $i_{k-2} \geq 1$,
			$\nu_2=\l-\a_{k-2}-\a_{k-1}-\a_{k} \in \L(V)$
			and $\ell(\nu_2|_{H^0})=2li_{k-2}-3\geq 1$, which again contradicts Lemma \ref{l:sumc}.
			Therefore, for the remainder of the proof, we may assume $l \ge 1$, $n>4$ and $\l=a_1\l_1+a_2\l_2$,
			so
$$\l|_{H^0} = \left\{\begin{array}{ll}
\displaystyle 2(a_1+a_2)\o_{1,1}+2\sum_{i=2}^{t}(a_1+2a_2)\o_{i,1} & \mbox{if $l=1$}\\
\displaystyle a_1\o_{1,1}+2a_2\o_{1,2}+\sum_{i=2}^{t}(a_1+2a_2)\o_{i,1} & \mbox{if $l=2$}\\
\displaystyle a_1\o_{1,1}+a_2\o_{1,2}+\sum_{i=2}^{t}(a_1+2a_2)\o_{i,1} & \mbox{if $l \ge 3$.}\\
\end{array}\right.$$

In particular, if $\mu \in \L(V)$ affords the highest weight of a $KH^0$-composition factor then either $\mu=\l$, or there exists an associated permutation of $\mu$, say $\s \in S_t$, such that $\s(1)=i_0 \neq 1$ and
\begin{equation}\label{e:muh0}
\mu|_{H^0}=\l|_{H^0}+a_2(\b_{1,1}-\b_{i_0,1}).
\end{equation}

Suppose $a_2 \neq 0$. Then $\l-\a_{2}-\a_{3}-\dots-\a_{2l+1} \in \L(V)$ restricts to $\l|_{H^0}+\b_{1,1}-\b_{2,1}$, so it affords the highest weight of a $KH^0$-composition factor (by Lemma \ref{l:sumc}).
	Therefore $a_2=1$ (see \eqref{e:muh0}). Moreover, it follows that if $\mu \in \L(V)$ affords the highest weight of a $KH^0$-composition factor then either $\mu=\l$, or $\mu|_{H^0}=\l|_{H^0}+\b_{1,1}-\b_{i_0,1}$ for some $i_0 \in \{2,\dots,t\}$. By \eqref{e:ci0}, $\l$ is the unique weight with restriction $\l|_{H^0}$, so we see that $V|_{H^0}$ has exactly $t$ composition factors.
		
	Set $\chi_1=\l- \a_1 - \cdots - \a_{2l+1}$ and $\chi_2=\l-\a_2 - \cdots - \a_{2l+2}$.
	Then $\chi_1,\chi_2 \in \L(V)$ and $\chi_1|_{H^0}=\chi_2|_{H^0}=\l|_{H^0}-\b_{2,1}$.
	If $\mu \in \L(V)$ affords the highest weight of a $KH^0$-composition factor containing $\l|_{H^0}-\b_{2,1}$ then either $\mu=\l$ or $\mu|_{H^0}=\l|_{H^0}+\b_{1,1}-\b_{2,1}=: \nu$.
	As $\l|_{H^0}-\b_{2,1}$ occurs with multiplicity $1$ under both $\l|_{H^0}$ and $\nu$, it follows that $m_{V}(\chi_1)=1$ so Lemma \ref{l:118}(i) implies that $a_1 \in \{0,p-2\}$.
	If $a_1=p-2$ then $\langle\l|_{H^0},\b_{2,1}\rangle \in \{p,2p\}$ and $\l|_{H^0}-\b_{2,1}$ does not occur as a weight of $L_{H^0}(\l|_{H^0})$, which is 
a contradiction. Therefore $a_1=0$ and thus $\l=\l_2$, so 
$$\l|_{H^0} = \left\{\begin{array}{ll}
\displaystyle 2\o_{1,1}+4\sum_{i=2}^{t}\o_{i,1} & \mbox{if $l=1$}\\
\displaystyle 2\o_{1,2}+2\sum_{i=2}^{t}\o_{i,1} & \mbox{if $l=2$}\\
\displaystyle \o_{1,2}+2\sum_{i=2}^{t}\o_{i,1} & \mbox{if $l \ge 3$.}\\
\end{array}\right.$$
	By Proposition \ref{p:dims}, since $p \neq 2$, we have $\dim{L_{B_2}(2\l_2)}=10$,
	$$\dim{L_{B_l}(2\l_1)}=
		\left\{\begin{array}{ll}
		3& \mbox{if $l=1$} \\
		l(2l+3)-1& \mbox{if $p\mid 2l+1$} \\
        	l(2l+3)	 & \mbox{otherwise}
		\end{array}\right.$$
	and
	$$\dim{L_{B_m}(\l_2)}=
		\left\{\begin{array}{ll}
		4 & \mbox{if $m=2$} \\
		m(2m+1) & \mbox{if $m>2$.}
		\end{array}\right.$$
	Therefore $\dim V = n(2n+1)$ and $\dim L_{H^0}(\l|_{H^0}) \le l^{t}(2l+1)(2l+3)^{t-1}$.	
	It is easy to check that $n(2n+1)>t \cdot l^{t}(2l+1)(2l+3)^{t-1}$ for all $t \ge 2$ and $l \ge 1$, whence $V|_{H}$ is reducible.

Finally, let us assume $\l=a_1\l_1$. Here $\l$ is the unique weight in $\L(V)$ that affords the highest weight of a $KH^0$-composition factor (see \eqref{e:muh0}),  so $V|_{H^0}$ is irreducible.
Therefore, by using \cite{Seitz2}, we see that the only example is $\l=\l_1$.
\end{proof}

\begin{lem}\label{c4ii:p3}
Proposition \ref{TH:C4II} holds in case (iii) of Table \ref{t:c4iis}.
\end{lem}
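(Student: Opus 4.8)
The plan is to follow the template of the preceding two lemmas, adapting the combinatorial machinery of Section \ref{ss:prel_c4} to the symplectic setting. Since $t$ is odd, the tensor product of the $t$ natural $C_l$-modules carries a symplectic form, so $G=C_n$ with $2n=(2l)^t$; and since type $C$ admits no nontrivial graph automorphism, we are in the branch governed by \eqref{e:yb}, with no auxiliary permutations $\rho_i$ to track. First I would fix bases $\{\b_{i,1},\ldots,\b_{i,l}\}$ of the factors $X_i\cong C_l$ and compute the restrictions $\a_k|_{H^0}$ of the simple roots of $G$. As in the $A_n$ and $B_n$ cases, I would index the $2n=(2l)^t$ weights of the natural module $W$ by a base-$(2l)$ digit expansion $k=\sum_{i=0}^{t-1}r_k(i)(2l)^i$ with digits $r_k(i)\in\{0,\ldots,2l-1\}$, exploiting the reflection symmetry of the weight string $\epsilon_1>\cdots>\epsilon_n>-\epsilon_n>\cdots>-\epsilon_1$ of $W$ (the middle step $2\epsilon_n=\a_n$ restricting to the long simple root $\b_{i,l}$ of a factor). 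This produces an explicit formula for $\a_k|_{H^0}$ analogous to \eqref{e:akh1} and \eqref{e:akh2}, and in particular establishes \eqref{e:ci0}, so that $\l$ is the unique weight of $V$ restricting to $\l|_{H^0}$.

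With the root restrictions in hand, the main engine is Lemma \ref{l:sumc}: assuming $V|_{H}$ irreducible, every $\mu\in\L(V)$ satisfies $\ell(\mu|_{H^0})\le 0$, with equality exactly when $\mu$ affords the highest weight of a $KH^0$-composition factor. I would feed in weights $\mu=\l-\sum\a_i\in\L(V)$ obtained by subtracting suitable chains of simple roots (existence being guaranteed by Lemma \ref{l:pr} or Lemma \ref{l:t1}) and read off $\ell(\mu|_{H^0})$ from the digit data via \eqref{e:ell}. Precisely as in the proof of Lemma \ref{c4ii:p2}, each nonzero coefficient $a_k$ of $\l=\sum a_i\l_i$ lying in a ``bad'' position forces $\ell>0$, a contradiction; iterating this bookkeeping reduces $\l$ to the shape $a_1\l_1+a_2\l_2$. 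A final round of multiplicity comparisons (using Lemma \ref{l:easy} and the uniqueness from \eqref{e:ci0}, together with Lemma \ref{l:118}(i) to pin down $a_1$) then shows that $a_2\ne 0$ forces $\l=\l_2$, which a dimension count via Proposition \ref{p:dims} renders reducible whenever $(n,t,l)\ne(4,3,1)$; while $\l=a_1\l_1$ makes $V|_{H^0}$ irreducible, so Seitz's theorem \cite{Seitz2} leaves only $\l=\l_1$, the natural module.

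The genuinely exceptional behaviour occurs solely in the smallest case $(n,t,l)=(4,3,1)$, that is $C_1^3.S_3<C_4$ with $p\ne2$, which I would dispose of first and by hand, since there $l=1$, each $X_i\cong A_1$, and the digit analysis degenerates. I would verify the two extra examples by dimension. For $\l=\l_2$ we have $\l|_{H^0}=2\o_{2,1}+2\o_{3,1}$, each $KH^0$-composition factor is $1\otimes L_{A_1}(2\o)\otimes L_{A_1}(2\o)$ of dimension $9$ (using $\dim L_{A_1}(2\o)=3$ for $p\ne2$, from Lemma \ref{l:s114}), and the $S_3$-action produces $\kappa=3$ such factors with $3\cdot 9=27=\dim L_{C_4}(\l_2)$ (Proposition \ref{p:dims}, $p\ne2$), so $V|_{H}$ is irreducible. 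For $\l=\l_3$ with $p\ne3$ we have $\l|_{H^0}=\o_{1,1}+\o_{2,1}+3\o_{3,1}$; the condition $p\ne3$ (with $p\ne2$) makes $3\o$ a $p$-restricted weight with $\dim L_{A_1}(3\o)=4$, so each factor has dimension $2\cdot2\cdot4=16$, and $\kappa=3$ gives $3\cdot16=48=\dim L_{C_4}(\l_3)$ (from \cite{Lubeck}), again forcing irreducibility. The remaining weight $\l_4$ at $C_4$ is eliminated by the $\ell$-function argument.

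I expect the main obstacle to be the derivation of the root-restriction formula for $\a_k|_{H^0}$ in the symplectic case. The $\pm$-symmetry of the weight string of the natural $C_n$-module and the appearance of the long simple root $\b_{i,l}$ at the ``turning point'' make the base-$(2l)$ bookkeeping more delicate than in type $B$, and the subsequent elimination via Lemma \ref{l:sumc} requires carefully separating the digit ranges $r_k(i)\le l$ and $r_k(i)>l$, each of which contributes differently to $\ell(\mu|_{H^0})$. Once the formula and \eqref{e:ci0} are in place, the elimination of coefficients and the dimension counts should run in close parallel with the proof of Lemma \ref{c4ii:p2}.
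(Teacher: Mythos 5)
Your overall framework is the same as the paper's: the base-$(2l)$ digit expansion, the restriction formula for the $\a_k|_{H^0}$, property \eqref{e:ci0}, the $\ell$-function criterion of Lemma \ref{l:sumc}, reduction towards $\l=a_1\l_1+a_2\l_2$, and closing dimension counts; your list of examples ($\l_1$ in general; $\l_2$, and $\l_3$ with $p\neq3$, at $(n,t,l)=(4,3,1)$) and the verifications $3\cdot 9=27=\dim L_{C_4}(\l_2)$ and $3\cdot 16=48=\dim L_{C_4}(\l_3)$ are correct. The genuine gap is in the middle reduction step, in the case $l=1$. There, \emph{every} odd simple root of $G$ restricts to the same simple root $\b_{1,1}$ of $X_1$, so a nonzero coefficient $a_k$ with $k\geq 3$ odd is invisible to the $\ell$-function: $\l-\a_k$ and the chain-weights you propose to feed into Lemma \ref{l:sumc} all have $\ell\leq 0$, and no contradiction arises. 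This is a real divergence from the $B_n$ case: in \eqref{e:akh2} the term $-2\sum_{i\leq i_k}\b(i)$ forces $\ell>0$ whenever $i_k\geq 1$, whereas in type $C$ with $l=1$ the corresponding coefficient is $1$, not $2$. So your assertion that the bookkeeping runs ``precisely as in the proof of Lemma \ref{c4ii:p2}'' fails exactly at the odd positions, and no iteration of $\ell>0$ arguments can repair it: $\l_3$ at $t=3$ \emph{is} an irreducible example, and $\l_3$ for $t\geq 5$ (say for $C_{16}$ and $H=C_1^5.S_5$) has identical local digit data, so a purely local $\ell$-computation cannot tell the two apart.

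What is needed here (and what the paper does) is a multiplicity-based argument, not an $\ell$-function one. First, since no weight of $V$ restricts to $\l|_{H^0}+\b_{i,1}-\b_{1,1}$ with $i>1$, the $T_{H^0}$-weight $\l|_{H^0}-\b_{1,1}$ has multiplicity $1$ in $V|_{H^0}$, so at most one odd coefficient is nonzero and $\l=a_2\l_2+a_k\l_k$ with $k$ odd. Second, for $k\geq 3$ the weight $\l-\a_{k-1}-\a_k-\a_{k+1}$ affords the highest weight of a $KH^0$-composition factor, forcing $\langle\l|_{H^0},\b_{1,1}\rangle+2=\langle\l|_{H^0},\b_{3,1}\rangle$; computing both sides explicitly ($a_k$ and $2a_2+3a_k$ respectively) gives $a_2+a_k=1$, hence $\l=\l_k$. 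Third, for $t>3$ one exhibits four distinct weights of $V$ all restricting to $\l|_{H^0}-\b_{3,1}$, whereas that $T_{H^0}$-weight can occur in at most three $KH^0$-composition factors, each with multiplicity at most $1$ -- a contradiction that eliminates $\l_k$. This last count requires $k\leq n-3$, which is exactly what separates $t\geq 5$ from the genuine example at $t=3$, where $k=3=n-1$. Without these three steps (or a replacement for them) your reduction to $\l=a_1\l_1+a_2\l_2$ is unproved, and deferring $(n,t,l)=(4,3,1)$ to a hand check does not help, since the missing argument concerns all odd $t\geq 5$.
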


\begin{proof}
Here $G=C_n$, $2n=(2l)^t$ and $H^0 = X_1 \cdots X_t$ where $X_{i} \cong C_{l}$, $t \ge 3$ is odd and $p \neq 2$. As before, let $\{\b_{i,1}, \ldots, \b_{i,l}\}$ be a set of simple roots for the factor $X_{i}$, with corresponding fundamental dominant weights $\{\o_{i,1},\ldots,\o_{i,l}\}$.
In addition, for $1 \le i \le t$ and $0 \le m \le l$ we define $\b_{i,l+m}=\b_{i,l-m}$ and $\o_{i,l+m}=\o_{i,l-m}$, where $\b_{i,0}=\o_{i,0}=0$.
Then for $1 \le j \le 2l-1$ we have $\b_{i,j}=-\o_{i,j-1}+2\o_{i,j}-\o_{i,j+1}$.
Similarly, for all $0 \le m \le n$ we set $\l_{n+m}=\l_{n-m}$ and $\l_0=0$.

Let $0 \le k < n$ be an integer. Then there exist unique integers $r_k(i) \in \{0,\ldots,2l-1\}$ such that
\begin{equation}\label{e:rik3}
k=\sum_{i=0}^{t-1}r_k(i)(2l)^i.
\end{equation}
Note that $r_k(t-1) \le l-1$ since $n=l(2l)^{t-1}$.
For $t' \in \{1,\ldots,t\}$ let $k'=\sum_{i=0}^{t'-1}r_{k}(i)(2l)^i$. By choosing an appropriate embedding of $H^0$ in $G$, we may assume that
$$\langle\l_{k}|_{H^0},\b_{i,j}\rangle=\langle\l_{k'}|_{H^0},\b_{i,j}\rangle$$
for all $1 \le i \leq t'$ and $1 \le j \le l$.
Let $\xi_k$ be the weight $-\l_{k}+\l_{k+1}$ of the natural $KG$-module $W$. Now
\begin{equation}\label{e:n1k}
n-k-1=\sum_{i=0}^{t-2}(2l-1-r_k(i))(2l)^i+(l-1-r_k(t-1))(2l)^{t-1},
\end{equation}
$$n+k = \sum_{i=0}^{t-2}r_k(i)(2l)^i+(l+r_k(t-1))(2l)^{t-1}$$
and $\xi_{n+k}=-\l_{n+k}+\l_{n+k+1}=-\xi_{n-k-1}$, so
$$\xi_{n+k}|_{H^0} = \sum_{i=0}^{t-2}\left(-\o_{i+1,r_k(i)}+\o_{i+1,r_k(i)+1}\right)-\o_{t,l+r_k(t-1)}+\o_{t,l+1+r_k(t-1)}$$
and we see that \eqref{e:xi} holds for any integer $0 \le k <2n$, where the $r_k(i)$ are the unique integers in  \eqref{e:rik3}.

As before, for an integer $1 \le k \le n$, let $i_k$ be minimal such that $r_k(i_k) \neq 0$ in \eqref{e:rik3}.
If $i_k=0$ then $k-1=r_k(0)-1+\sum_{i=1}^{t-1}r_k(i)(2l)^i$ and using \eqref{e:xi} we deduce that $-\l_{k-1}+2\l_{k}-\l_{k+1}$ restricts to $-\o_{1,r_k(0)-1}+2\o_{1,r_k(0)}-\o_{1,r_k(0)+1}$.
Now assume $i_k \geq 1$. Here
$$k-1=\sum_{i=0}^{i_k-1}(2l-1)(2l)^i+(r_k(i_k)-1)(2l)^{i_k}+\sum_{i=i_k+1}^{t-1}r_k(i)(2l)^i$$ and
we find that the weight $-\l_{k-1}+2\l_{k}-\l_{k+1}$ restricts to
\begin{align*}
& -\sum_{i=1}^{i_k}(\o_{i,1}+\o_{i,2l-1})-\o_{i_k+1,r_k(i_k)-1}+2\o_{i_k+1,r_k(i_k)}-\o_{i_k+1,r_k(i_k)+1} \\
= & \; \b_{i_k+1,r_k(i_k)}-2\sum_{i=1}^{i_k}\o_{i,1}.
\end{align*}
It follows that
\begin{equation}\label{e:akh3}
\a_{k}|_{H^0}=
	\left\{
	\begin{array}{ll}
	\displaystyle \b_{i_k+1,1} -\sum_{i=1}^{i_k}\b_{i,1} & \mbox{if $i_k>0$ and $l=1$}\\
	\displaystyle \b_{i_k+1,r_k(i_k)}-\sum_{i=1}^{i_k}\b(i) &  \mbox{if $i_k>0$ and $l \ge 2$} \\
	\b_{1,r_k(0)} & \mbox{if $i_k=0$,}
	\end{array}
	\right.
	\end{equation}
	where 
	\begin{equation}\label{e:betai2}
	\b(i) = 2\b_{i,1}+\cdots+2\b_{i,l-1}+\b_{i,l}
	\end{equation}
	is the highest long root in $\Sigma(X_i)$.
In particular, we deduce that \eqref{e:ci0} holds, where $\a=\sum_{i}c_i\a_i$ with $c_i \in \mathbb{N}_0$.

Let $\l=\sum_{i=1}^na_i\l_i$ be the highest weight of $V$ and suppose $V|_{H}$ is irreducible.
For now, let us assume $l=1$ and suppose $a_k \neq 0$. Define the integers $r_k(i)$ as in \eqref{e:rik3}, and recall that $i_k$ is minimal such that $r_k(i_k) \neq 0$.

Suppose $k$ is even, so $i_k>0$. Here $\mu = \l-\a_{k} \in \L(V)$ and \eqref{e:akh3} implies that
$$\mu|_{H^0} = \l|_{H^0}-\b_{i_k+1,1}+\sum_{i=1}^{i_k}\b_{i,1},$$
so Lemma \ref{l:sumc} yields $i_k=1$ (since $\ell(\mu|_{H^0})=i_k-1$) and thus $k \equiv 2 \imod{4}$.
	If $k \neq 2$, then
		$\mu_1=\l-\a_{k}-\a_{k-1}-\a_{k-2}$
		and
		$\mu_2=\l-\a_{k}-\a_{k+1}-\a_{k+2}$
		are weights of $V$ (note that $k \le n-2$ since $n=2^{t-1}$ is divisible by $4$),
		and \eqref{e:akh3} implies that
$$\mu_1|_{H^0} =  \l|_{H^0}-\b_{2,1}-\b_{i_{k-2}+1,1}+\sum_{i=1}^{i_{k-2}}\b_{i,1}$$
and
$$\mu_2|_{H^0} = \l|_{H^0}-\b_{2,1}-\b_{i_{k+2}+1,1}+\sum_{i=1}^{i_{k+2}}\b_{i,1},$$
where $i_{k-2},i_{k+2} \geq 2$ (since $k-2$ and $k+2$ are divisible by $4$). In view of  Lemma \ref{l:sumc}, we deduce that $i_{k-2}=i_{k+2}=2$, but this is a contradiction since
$k-2$ and $k+2$ are not both congruent to $4$ modulo $8$. We conclude that $k=2$ is the only possibility.

Next suppose $k$ is odd, so $i_k=0$ and \eqref{e:akh3} implies that $\l-\a_{k} \in \L(V)$ restricts to $\l|_{H^0}-\b_{1,1}$.
If $\mu \in \L(V)$ affords the highest weight of a $KH^0$-composition factor in which $\l|_{H^0}-\b_{1,1}$ occurs, then by Lemma \ref{l:sumc}{(ii)}, $\mu|_{H^0}=\l|_{H^0}+\b_{i,1}-\b_{1,1}$ for some $i \in \{1,\dots,t\}$.
 One checks that there is no weight of $V$ that restricts to $\l|_{H^0}+\b_{i,1}-\b_{1,1}$ with $i>1$, hence $\l|_{H^0}-\b_{1,1}$ occurs with multiplicity $1$ in $V|_{H^0}$, so for 
$l=1$ we have reduced to the case 
	$\l=a_2\l_2+a_k\l_k$ with $k$ odd. 
	
	The next step is to reduce to the case $k=1$. Let us assume that $a_k \neq 0$ with $k \ge 3$ odd, and continue to assume that $l=1$. Here
		$\nu_1=\l-\a_{k-1}-\a_{k}$
		and
		$\nu_2=\l-\a_{k}-\a_{k+1}$ are weights of $V$ and we have
		$$\nu_1|_{H^0} = \l|_{H^0}-\b_{1,1}-\b_{i_{k-1}+1,1}+\sum_{i=1}^{i_{k-1}}\b_{i,1}$$
		and
		$$\nu_2|_{H^0} =  \l|_{H^0}-\b_{1,1}-\b_{i_{k+1}+1,1}+\sum_{i=1}^{i_{k+1}}\b_{i,1},$$
		where $i_{k-1},i_{k+1} \ge 1$ and $|i_{k-1}-i_{k+1}| \ge 1$ since $k+1$ and $k-1$ are both even, but only one is divisible by $4$. Now Lemma \ref{l:sumc} implies that $i_{k-1}.i_{k+1}=2$,
		so $k=3+\sum_{i=3}^{t-1}r_k(i)2^i$ or $5+\sum_{i=3}^{t-1}r_k(i)2^i$.
		It follows that $\mu=\l-\a_{k-1}-\a_{k}-\a_{k+1} \in \L(V)$
		restricts to $\l|_{H^0}+\b_{1,1}-\b_{3,1}$,
		so $\mu$ affords the highest weight of a $KH^0$-composition factor (see Lemma \ref{l:sumc})
		and thus $\langle\mu|_{H^0},\b_{1,1}\rangle=\langle\l|_{H^0},\b_{3,1}\rangle$,
		so $\la \l|_{H^0},\b_{1,1}\ra +2=\la \l|_{H^0},\b_{3,1}\ra$.
		Write $\l|_{H^0}=\sum_{i=1}^{t}{a_{i,1}\o_{i,1}}$ and let us calculate $a_{1,1}$ and $a_{3,1}$.
		
		Now $k \ge 3$ and $\l_k=k\l_1-\sum_{i=1}^{k-1}(k-i)\a_i$
		so
		$$\l=a_2\l_2+a_k\l_k=(2a_2+ka_k)\l_1-(a_2+(k-1)a_k)\a_1-a_k\sum_{i=2}^{k-1}(k-i)\a_i.$$
		Recall that $\l_1|_{H^0} = \sum_{i=1}^{t}\o_{i,1} = \frac12\sum_{i=1}^{t}\b_{i,1}$.
		Since $a_{i,1}$ is the coefficient of $\frac12\b_{i,1}$ in $\l|_{H^0}$, using \eqref{e:akh3} we get
		$$a_{1,1}=2a_2+ka_k-2(a_2+(k-1)a_k)+2a_k\sum_{i=2}^{k-1}(-1)^{i}(k-i)=a_k$$
		and
		$$a_{3,1}=2a_2+ka_k+2a_k\sum_{i=1}^{c(k)}(-1)^{i}(k-4i)=2a_2+3a_k,$$
		where
		$$c(k)= \left\{\begin{array}{ll}
		(k-3)/4 & \mbox{if $k \equiv 3 \imod{8}$} \\
		(k-1)/4 & \mbox{if $k \equiv 5 \imod{8}$.}
		\end{array}\right.$$
		Therefore $a_k+2=2a_2+3a_k$ and thus $\l=\l_k$ is the only possibility. 
				
		Suppose $\l=\l_k$, where $k \ge 3$ is odd. If $t=3$ then $k=3=n-1$, $\dim V = 48-8\delta_{3,p}$ (see \cite[Table A.33]{Lubeck}) and $\l|_{H^0} = \l_3|_{H^0} = \o_{1,1}+\o_{2,1}+3\o_{3,1}$. Therefore, arguing by dimension, we deduce that $V|_H$ is irreducible if and only if $p \neq 3$. This case is recorded in Table \ref{t:c4ii}.
		
		Now assume $t>3$ and $\l=\l_k$ (with $l=1$ and $k \ge 3$ odd), so $n \ge 8$ and $k \le 5+\sum_{i=3}^{t-2}2^i = n-3$. Set
		$$\nu_1=\l-\a_{k-1}-\a_{k}-\a_{k+1},\;\; \nu_2=\l-(i_{k-1}-1)\a_{k-1}-\a_{k}-(i_{k+1}-1)\a_{k+1}$$ 
		and $\nu_3=\l$. Note that $\nu_i$ is the unique weight of $V$ such that $\nu_i|_{H^0}=\l|_{H^0}+\b_{i,1}-\b_{3,1}$.
		Also observe that there is no weight of $V$ that restricts to $\l|_{H^0}+\b_{i,1}-\b_{3,1}$ with $i>3$. 
		In particular, if $\nu \in \L(V)$ affords the highest weight of a $KH^0$-composition factor containing $\l|_{H^0}-\b_{3,1}$ then $\nu|_{H^0}=\nu_i|_{H^0}$ for $i=1,2$ or $3$, and $\l|_{H^0}-\b_{3,1}$ occurs with multiplicity at most $1$ under each $\nu_i|_{H^0}$, so this weight has multiplicity at most $3$ in $V|_{H^0}$. However, it is easy to see that the following four weights of $V$
		$$\l-\a_{k-1}-2\a_{k}-\a_{k+1}, \;\; \l-\a_{k-2}-\a_{k-1}-\a_{k}-\a_{k+1},\;\; \l-\a_{k-1}-\a_{k}-\a_{k+1}-\a_{k+2},$$
		 $$(i_{k-1}-1)\left(\l-\a_{k-3}-\a_{k-2}-\a_{k-1}-\a_{k}\right)+(i_{k+1}-1)\left(\l-\a_{k}-\a_{k+1}-\a_{k+2}-\a_{k+3}\right)$$
all restrict to $\l|_{H^0}-\b_{3,1}$, so this is a contradiction. We conclude that $a_k=0$ for all $k \ge 3$ odd, so for $l=1$ we have reduced to the case $\l=a_1\l_1+a_2\l_2$.

We can also reduce to the case $\l=a_1\l_1+a_2\l_2$ when $l \ge 2$. To see this, suppose $l \ge 2$ and $a_k \neq 0$ for some $k$.
Define the integers $r_k(i)$ as in \eqref{e:rik3} and write $k=r_k(0)+2l(a-1)$, where
$a \in \{1,\dots,l(2l)^{t-2}+1\}$.
		Note that $\mu = \l-\a_{k} \in \L(V)$.
		If $r_k(0)=0$ then
		$$\mu|_{H^0} = \l|_{H^0}-\b_{i_k+1,r_k(i_k)}+\sum_{i=1}^{i_k}\b(i),$$
		where $\b(i)$ is defined in \eqref{e:betai2}. However, $\ell(\mu|_{H^0})=(2l-1)i_k-1$ and $l \ge 2$, so this contradicts Lemma \ref{l:sumc}. Therefore $r_k(0) \neq 0$. 
		Let $\nu = \l-\a_{k}-\a_{k+1}-\dots-\a_{2la} \in \L(V)$ and note that
			\begin{equation}\label{e:nu09}
			\nu|_{H^0} =
			 \l|_{H^0}-\b_{1,r_k(0)}-\dots-\b_{1,2l-1}-\b_{i_{2la}+1,r_{2la}(i_{2la})}+\sum_{i=1}^{i_{2la}}\b(i).
			 \end{equation}
			Therefore $\ell(\nu|_{H^0})=(2l-1)(i_{2la}-1)+r_k(0)-2$, so Lemma \ref{l:sumc} implies that $i_{2la}=1$ and $r_k(0) \le 2$. In particular, since
			$2la=(r_k(1)+1)2l+\sum_{i=2}^{t-1}r_k(i)(2l)^{i}$ and $i_{2la}=1$, we deduce that $r_k(1) \leq 2l-2$.
			
			First assume $r_k(0)=1$. If $k \neq 1$
			then $\mu = \l-\a_{k}-\a_{k-1}=\l-\a_{2l(a-1)+1}-\a_{2l(a-1)} \in \L(V)$ and
			$$\mu|_{H^0} = \l|_{H^0}-\b_{1,1}-\b_{i_{k-1}+1,r_{k-1}(i_{k-1})}+\sum_{i=1}^{i_{k-1}}\b(i),$$
			where $i_{k-1} \geq 1$. This is ruled out by Lemma \ref{l:sumc}, so $k=1$ is the only possibility.
			
			Now assume $r_k(0)=2$. By considering the restriction of the weight $\nu = \l-\a_{k}-\a_{k+1}-\dots-\a_{2la}$ as given in \eqref{e:nu09}, and using Lemma \ref{l:sumc}, we deduce that $r_k(1)=0$ so $k=2+\sum_{i=2}^{t-1}r_k(i)(2l)^i$. If $k \neq 2$ then
			$\mu = \l-\a_{k}-\a_{k-1}-\a_{k-2} \in \L(V)$ and
			$$\mu|_{H^0}= \l|_{H^0}-\b_{1,1}-\b_{1,2}-\b_{i_{k-2}+1,r_{k-2}(i_{k-2})}+\sum_{i=1}^{i_{k-2}}\b(i),$$
			where $i_{k-2} \geq 2$. Again, this contradicts Lemma \ref{l:sumc}, so $k=2$.
			
			Therefore, for the remainder of the proof, we may assume that $l \ge 1$ and $\l=a_1\l_1+a_2\l_2$,
		so
		 $$\l|_{H^0}=a_1\o_{1,1}+a_2\o_{1,2}+\sum_{i=2}^{t}(a_1+2a_2)\o_{i,1}.$$
		
If $a_2=0$ then $\l=a_1\l_1$ and $\l$ is the unique weight in $\L(V)$ that affords the highest weight of a $KH^0$-composition factor (see \eqref{e:muh0}),  so $V|_{H^0}$ is irreducible. Therefore, by inspecting \cite{Seitz2}, we see that $\l=\l_1$ provides the only  example.

Finally, let us assume $a_2 \neq 0$. First note that if $\mu \in \L(V)$ affords the highest weight of a $KH^0$-composition factor then either $\mu=\l$, or there exists a permutation $\s \in S_t$ such that $\s(1)=i_0 \neq 1$ and \eqref{e:muh0} again holds.

The weight $\l-\a_{2}-\a_{3}-\dots-\a_{2l} \in \L(V)$ restricts to $\l|_{H^0}+\b_{1,1}-\b_{2,1}$, so it affords the highest weight of a $KH^0$-composition factor.
	Therefore $a_2=1$ (see \eqref{e:muh0}). Moreover, it follows that if $\mu \in \L(V)$ affords the highest weight of a $KH^0$-composition factor then either $\mu=\l$, or $\mu|_{H^0}=\l|_{H^0}+\b_{1,1}-\b_{i_0,1}$ for some $i_0 \in \{2,\dots,t\}$. By \eqref{e:ci0}, $\l$ is the unique weight with restriction $\l|_{H^0}$, so $V|_{H^0}$ has exactly $t$ composition factors.
	
	Set $\chi_1=\l- \a_1 - \cdots - \a_{2l}$ and $\chi_2=\l-\a_2 - \cdots - \a_{2l+1}$.
	Then $\chi_1,\chi_2 \in \L(V)$ and $\chi_1|_{H^0}=\chi_2|_{H^0}=\l|_{H^0}-\b_{2,1}$.
	If $\mu \in \L(V)$ affords the highest weight of a $KH^0$-composition factor containing $\l|_{H^0}-\b_{2,1}$ then either $\mu=\l$ or $\mu|_{H^0}=\l|_{H^0}+\b_{1,1}-\b_{2,1} =: \nu$.
	As $\l|_{H^0}-\b_{2,1}$ occurs with multiplicity $1$ under both $\l|_{H^0}$ and $\nu$, it follows that $m_{V}(\chi_1)=1$ and thus Lemma \ref{l:118}(i) gives $a_1 \in \{0,p-2\}$.
	If $a_1=p-2$ then $\langle\l|_{H^0},\b_{2,1}\rangle=p$ and $\l|_{H^0}-\b_{2,1}$ does not occur as a weight of $L_{H^0}(\l|_{H^0})$, which is absurd. Therefore $a_1=0$ and thus $\l=\l_2$. In particular,
	$\l|_{H^0}=2\sum_{i=2}^{t}\o_{i,1}$ if $l=1$, otherwise $\l|_{H^0}=\o_{1,2}+2\sum_{i=2}^{t}\o_{i,1}$.
	
	By Proposition \ref{p:dims}, since $p\neq 2$, we have
	$\dim{L_{C_l}(2\l_1)}=l(2l+1)$
	and
	$$\dim{L_{C_m}(\l_2)}=
		\left\{\begin{array}{ll}
			(m-1)(2m+1)-1 & \mbox{if $p\mid m$} \\
			(m-1)(2m+1) & \mbox{otherwise.}
		\end{array}\right.$$
	If $l=1$ then $n=2^{t-1}$, so $\dim V = (n-1)(2n+1)$ and we calculate that
	$\dim V = t \cdot (\dim L_{X_1}(2\o_{1,1}))^{t-1}$ if and only if $t=3$, so here $V|_{H}$ is irreducible if and only if $t=3$. This case is recorded in Table \ref{t:c4ii}. On the other hand, if $l \ge 2$ then
	$\dim V \ge (n-1)(2n+1)-1$
	and
	$$\dim L_{H^0}(\l|_{H^0}) = (\dim L_{X_1}(2\o_{1,1}))^{t-1} \cdot \dim L_{X_1}(\o_{1,2}) \le l^{t-1}(l-1)(2l+1)^{t}.$$
	It is easy to check that $(n-1)(2n+1)-1>l^{t-1}t(l-1)(2l+1)^{t}$ for all $t \ge 3$ and $l \ge 2$, whence $V|_{H}$ is reducible.
\end{proof}

\begin{lem}\label{c4ii:p4}
Proposition \ref{TH:C4II} holds in case (iv) of Table \ref{t:c4iis}.
\end{lem}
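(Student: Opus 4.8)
The plan is to follow, as closely as possible, the strategy of the symplectic case (iii) carried out in Lemma \ref{c4ii:p3}, working purely at the level of weights via the embedding of a maximal torus of $H^0 = X_1 \cdots X_t$ (with each $X_i \cong C_l$) into $T$, and then to single out the two half-spin weights $\l_{n-1}, \l_n$ of $D_n$, together with the sporadic small-rank configurations, as a separate and more delicate final stage. Throughout I must keep track of the two regimes permitted in Table \ref{t:c4iis}, namely $t \ge 2$ even (arbitrary $p$) and $p = 2$ (arbitrary $t$); in the embedding $\prod_i {\rm Sp}(W_i) < {\rm SO}(W)$ the natural module is $W = W_1 \otimes \cdots \otimes W_t$ with each $W_i$ the $2l$-dimensional symplectic module for $C_l$.

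First I would set up notation exactly as in Lemma \ref{c4ii:p3}: bases $\{\b_{i,1}, \ldots, \b_{i,l}\}$ and fundamental weights $\{\o_{i,1}, \ldots, \o_{i,l}\}$ for each $X_i$, the base-$2l$ expansion \eqref{e:rik3}, and the resulting restrictions $\xi_k|_{H^0}$ of the natural-module weights $\xi_k = -\l_k + \l_{k+1}$ as in \eqref{e:xi}. For the inner nodes $1 \le k \le n-2$ the restrictions $\a_k|_{H^0}$ will take the same shape as \eqref{e:akh3}, so the genuinely new computation is at the fork of $D_n$: I must determine $\a_{n-1}|_{H^0}$, $\a_n|_{H^0}$ and the restrictions $\l_{n-1}|_{H^0}$, $\l_n|_{H^0}$ of the half-spin weights, and here the two regimes may need slightly different bookkeeping. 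As in the earlier lemmas I would then verify the injectivity statement \eqref{e:ci0}, so that $\l$ is the unique $T$-weight restricting to $\l|_{H^0}$ and the composition-factor count governed by Lemma \ref{l:sumc} is valid.

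With the restrictions established, the combinatorial reduction runs as before: for each $k$ with $a_k \neq 0$ I form a suitable $\mu = \l - \sum \a_i \in \L(V)$, compute $\ell(\mu|_{H^0})$ via \eqref{e:ell}, and apply Lemma \ref{l:sumc} to force $\ell(\mu|_{H^0}) = 0$; this annihilates all interior coefficients and leaves $\l$ supported on $\l_1, \l_2, \l_{n-1}, \l_n$. The coefficient $a_2$ is then eliminated exactly as in Lemma \ref{c4ii:p3}: showing $a_2 = 1$ forces $\l = \l_2$, after which comparing $\dim V$ (given by Proposition \ref{p:dims}) against $t$ times the dimension of a single $KH^0$-composition factor rules this weight out. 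This leaves the exterior example $\l = \l_1$, which is genuinely irreducible as the natural module.

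The crux, and the step I expect to be hardest, is the analysis of the half-spin contributions $a_{n-1}\l_{n-1} + a_n\l_n$ and their combinations with $\l_1$, since the sporadic examples live precisely here. The relation $n = 2^{t-1}l^t$ together with $\dim L_{D_n}(\l_n) = 2^{n-1}$ from Lemma \ref{l:dmspin} lets me compare $\dim V$ with the product of the dimensions of the $C_l$-modules occurring in a composition factor and with the number of $S_t$-conjugates of $\l|_{H^0}$; this isolates the exceptional cases, all in very small rank. Concretely I would handle the half-spin module of $D_8$ restricted to $C_1^4.S_4$ and to $C_2^2.S_2$ (yielding $\l_7$ for $p \neq 3$, respectively $p \neq 5$) in the $t$-even regime, and the $D_4$, $p = 2$ configurations $\l_3$, $\l_1 + \l_4$, $\l_3 + \l_4$ for $C_1^3.S_3$ in the $p=2$ regime; these last must be treated directly, using the explicit dimensions from \cite{Lubeck} and Lemmas \ref{l:118}, \ref{l:s816} and \ref{l:sr} for the relevant weight multiplicities, with due care over the $D_4$ graph automorphism and triality and over the altered behaviour of the spin modules in characteristic two. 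In every case where $V|_{H^0}$ turns out to be already irreducible, the conclusion is read off directly from the main theorem of \cite{Seitz2}.
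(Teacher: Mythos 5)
In outline your plan is the paper's own proof: the same torus-level setup inherited from case (iii), the computation of the restrictions $\a_k|_{H^0}$ including the new cases at the fork of $D_n$ (the paper's \eqref{e:akh4}), the uniqueness statement \eqref{e:ci0}, the support reduction via Lemma \ref{l:sumc}, and dimension counting for the survivors; and you name exactly the right set of examples. Two deviations are harmless but worth recording. Where you propose to isolate the exceptional half-spin configurations by comparing $\dim V=2^{n-1}$ with $t!$ times the dimension of a single $KH^0$-composition factor, the paper does it in one stroke with Lemma \ref{l:sumc}: for $a_{n-1}\ne0$ the weight $\mu=\l-\a_{n-2}-\a_{n-1}-\a_n$ satisfies $\ell(\mu|_{H^0})=(t-1)(2l-1)-4$ when $l\ge2$, and $\ell(\mu|_{H^0})=t-4$ when $l=1$, which pins down $(t,l)=(2,2)$ or $(4,1)$ at once; your doubly-exponential-versus-polynomial estimate also works, but it cannot by itself fix the coefficients, so multiplicity arguments are still needed afterwards. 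Second, in the paper's fixed embedding the two half-spin weights are not symmetric: by \eqref{e:akh4} the restriction $\a_n|_{H^0}$ involves $-\sum_{i=1}^{t-1}\b(i)$ (see \eqref{e:betai2}), so for $n>4$ the coefficient $a_n$ dies already in the general reduction and only $\l_{n-1}$ reaches the final stage; the $\l_n$-examples belong to the graph-automorphism conjugate $\tilde{H}$, exactly as in the remark following Proposition \ref{TH:C4II}.

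The genuine gap is your ordering of the eliminations. You dispose of $a_2$ ``exactly as in Lemma \ref{c4ii:p3}'' immediately after the support reduction and assert that $a_2\ne0$ forces $\l=\l_2$. That argument rests on \eqref{e:muh0}, which is valid only once $\l=a_1\l_1+a_2\l_2$; while a half-spin coefficient is still alive it yields $a_2=1$ and (via the $\chi_1,\chi_2$ argument and Lemma \ref{l:118}) $a_1=0$, but nothing more. Concretely, in $D_8$ with $H^0=C_1^4$ the weight $\l=\l_2+\l_7$ passes your step --- $\l-\a_2$ restricts to an $S_4$-conjugate of $\l|_{H^0}$, so no contradiction arises --- and your final stage, described as the half-spin weights ``and their combinations with $\l_1$'', never returns to it. The paper kills precisely this configuration by a dedicated multiplicity argument: $\nu=\l-\a_2-\cdots-\a_7$ restricts to $\l|_{H^0}-\b_{2,1}-\b_{3,1}$ and has $m_V(\nu)\ge5$ by Lemma \ref{l:s816}, yet $\nu|_{H^0}$ can occur only in the composition factors afforded by $\l$, $\l-\a_2$ and $\l-\a_2-\a_3-\a_4$, each with multiplicity at most one; a parallel computation (the relation $2a_2+a_7=a_7$ forced by $\langle\mu|_{H^0},\b_{2,j}\rangle=\langle\l|_{H^0},\b_{1,j}\rangle$) deals with $C_2^2<D_8$. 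So either postpone the $a_2$-elimination until after the half-spin coefficients are dead, as the paper does, or add the mixed weights $a_2\l_2+a_{n-1}\l_{n-1}$ explicitly to your final stage.
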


\begin{proof}
Here $G=D_n$, $2n=(2l)^t$ and $H^0 = X_1 \cdots X_t$ where $X_{i} \cong C_{l}$ and either $t$ is even or $p=2$. Note that $(l,t) \neq (1,2)$ since we are assuming $G$ is simple. As before, let $\{\b_{i,1}, \ldots, \b_{i,l}\}$ be a set of simple roots for $X_{i}$, with $\{\o_{i,1},\ldots,\o_{i,l}\}$ the corresponding fundamental dominant weights. In addition, for $1 \le i \le t$ and $0 \le m \le l$, set $\b_{i,l+m}=\b_{i,l-m}$ and $\o_{i,l+m}=\o_{i,l-m}$, where $\b_{i,0}=\o_{i,0}=0$.
Then for $1 \le j \le 2l-1$ we have $\b_{i,j}=-\o_{i,j-1}+2\o_{i,j}-\o_{i,j+1}$.
Also, for all $0 \le m \le n$ we set $\l_{n+m}=\l_{n-m}$, with $\l_0=0$.

Let $0 \le k <n$ be an integer and define the unique integers $r_k(i) \in \{0,\ldots,2l-1\}$ as before in \eqref{e:rik3}.
Note that $r_k(t-1) \le l-1$ since $n=l(2l)^{t-1}$.  For $t' \in \{1,\dots,t\}$ let $k'=\sum_{i=0}^{t'-1}r_{k}(i)(2l)^i$. By choosing an appropriate embedding of $H^0$ in $G$, we may assume that
$$\langle\l_{k}|_{H^0},\b_{i,j}\rangle=\langle\l_{k'}|_{H^0},\b_{i,j}\rangle$$
for all $1 \le i \leq t'$ and  $1 \le j \le l$.
Let $\xi_{k}$ denote the weight $-\l_{k}+\l_{k+1}$.
For $k \neq n-2$, $\xi_k$ is a weight of the natural $KG$-module $W$ and we deduce that \eqref{e:xi} holds.
Since
$\xi_{n+k}=-\xi_{n-k-1}$ and \eqref{e:n1k} holds, it follows that
$$\xi_{n+k}|_{H^0} =
\sum_{i=0}^{t-2}\left(-\o_{i+1,r_k(i)}+\o_{i+1,r_k(i)+1}\right)-\o_{t,l+r_k(t-1)}+\o_{t,l+1+r_k(t-1)}$$
for all $k \neq 1$.
Similarly, if $k=n-2$ then $-\l_{k}+\l_{k+1}+\l_{k+2}$
restricts to
$$\sum_{i=0}^{t-1}\left(-\o_{i+1,r_k(i)}+\o_{i+1,r_k(i)+1}\right)
=\o_{1,1}-\o_{1,2}-\sum_{i=2}^{t-1}\o_{i,1}-\o_{t,l-1}+\o_{t,l},$$
while the weight
$-\l_{n+k-1}-\l_{n+k}+\l_{n+k+1} = -\l_{n-k+1}-\l_{n-k}+\l_{n-k-1}$
 restricts to
\begin{align*}
& \sum_{i=0}^{t-2}\left(-\o_{i+1,r_k(i)}+\o_{i+1,r_k(i)+1}\right)-\o_{t,l+r_k(t-1)}+\o_{t,l+1+r_k(t-1)} \\
 = & -\o_{1,1}+\o_{1,2}+\sum_{i=2}^{t-1}\o_{i,1}-\o_{t,l}+\o_{t,l+1}
 \end{align*}
 when $k=1$.

As before, for an integer $1 \le k \le n$, let $i_k$ be minimal such that $r_k(i_k) \neq 0$ in the decomposition \eqref{e:rik3}. In view of the above restrictions, noting that $\a_{n}=(-\l_{n-2}+\l_{n-1}+\l_{n})+(-\l_{n-1}+\l_{n})$ and
$$n-1 = (2l-1)\sum_{i=0}^{t-2}(2l)^i+(l-1)(2l)^{t-1},$$
we deduce that
\begin{equation}\label{e:akh4}
\a_{k}|_{H^0}=
	\left\{
	\begin{array}{ll}
	\b_{i_k+1,1}-\displaystyle \sum_{i=1}^{i_k}\b_{i,1} & \mbox{if $i_k>0$, $k \neq n$ and $l=1$}\\
	 \b_{i_k+1,r_k(i_k)}-\displaystyle\sum_{i=1}^{i_k}\b(i) & \mbox{if $i_k>0$, $k \neq n$ and $l \ge 2$}\\
\b_{t,1} -\displaystyle\sum_{i=2}^{t-1}\b_{i,1} & \mbox{if $k=n$ and $l=1$}\\
	 \b_{1,1}+\b_{t,l}-\displaystyle\sum_{i=1}^{t-1}\b(i) & \mbox{if $k=n$ and $l \ge 2$}\\
\b_{1,r_k(0)} & \mbox{if $i_k=0$,}
	\end{array}
	\right.
	\end{equation}
	where $\b(i)$ is the highest long root in $\Sigma(X_i)$ (see \eqref{e:betai2}).
In particular, \eqref{e:ci0} holds.

Suppose $V$ has highest weight $\l=\sum_{i=1}^{n}a_i\l_i$ and $V|_{H}$ is irreducible.
We deal first with the case where $n=4$, so $(l,t,p)=(1,3,2)$ and we have
\begin{equation}\label{e2:lh04} \l|_{H^0}=(a_1+a_3)\o_{1,1}+(a_1+2a_2+a_3)\o_{2,1}+(a_1+2a_2+a_3+2a_4)\o_{3,1}
\end{equation}
	where $a_i \le 1$ for all $i$.
	First note that $\l|_{H^0}-\b_{1,1}$ occurs with multiplicity at most $1$ in $V|_{H^0}$, and both $\a_1$ and $\a_3$ restrict to $\b_{1,1}$ (see \eqref{e:akh4}), so we have $a_1+a_3\le 1$ (note that there are no $T_{H^0}$-weights of the form $\l|_{H^0} - \b_{1,1}+\b_{i,1}$ ($i=2,3$) in $V$).
	
	Now suppose $a_2 \neq 0$. By \eqref{e:ci0}, $\l-\a_2$ is the unique weight of $V$ that restricts to $\l|_{H^0}+\b_{1,1}-\b_{2,1}$, and thus $\l|_{H^0}-\b_{2,1}$ occurs with multiplicity at most $2$ in $V|_{H^0}$.
	As both $\l-\a_1-\a_2$ and $\l-\a_2-\a_3$ restrict to $\l|_{H^0}-\b_{2,1}$, Lemma \ref{l:118} implies that $a_1=a_3=0$.
	Moreover, $\mu=\l-\a_2-\a_4\in\L(V)$ restricts to $\l|_{H^0}+\b_{1,1}-\b_{3,1}$ and affords the highest weight of a $KH^0$-composition factor (see Lemma \ref{l:sumc}), so
	 $\langle\mu|_{H^0},\b_{1,1}\rangle=\langle\l|_{H^0},\b_{3,1}\rangle$ and thus $a_4=0$ (see \eqref{e2:lh04}).
	It follows that $\l=\l_2$, so $\l|_{H^0}=2\o_{2,1}+2\o_{3,1}$.
	However $\dim V = 26$ (see Proposition \ref{p:dims}) is not divisible by  $\dim L_{H^0}(\l|_{H^0})=4$,
	so $H$ acts reducibly on $V$. This contradiction implies that $a_2=0$.
	
	If $a_4 \neq 0$ then $\mu_1=\l-\a_4$ and $\mu_2=\l-\a_2-\a_4\in\L(V)$ restrict to $\l|_{H^0}+\b_{2,1}-\b_{3,1}$ and $\l|_{H^0}+\b_{1,1}-\b_{3,1}$ respectively, so they afford the highest weights of $KH^0$-composition factors.
	If $a_1=a_3=0$ then $\l=\l_4$, $\l|_{H^0}=2\o_{3,1}$ and the other two composition factors of $V|_{H^0}$ have highest weights $\l|_{H^0}+\b_{i,1}-\b_{3,1}$ for $i=1,2$.
	However $\nu=\l-\a_1-\a_2-\a_4\in\L(V)$ and $\nu|_{H^0}=\l|_{H^0}-\b_{3,1}$ is not conjugate to a weight occurring in any of the composition factors of $V|_{H^0}$, which is absurd.
	Therefore $a_1+a_3=1$, so $\l|_{H^0}=\o_{1,1}+\o_{2,1}+3\o_{3,1}$ and $V|_{H}$ is irreducible since $\dim V = 48$ and so
	$$V|_{H^0}=(3\otimes1\otimes1) \oplus (1\otimes3\otimes1) \oplus (1\otimes1\otimes3).$$
		This case is recorded in Table \ref{t:c4ii}.
	Finally, if $\l=\l_1$ or $\l=\l_3$ then $\l|_{H^0}=\o_{1,1}+\o_{2,1}+\o_{3,1}$, $V|_{H^0}=1\otimes1\otimes1$ and $V|_{H}$ is irreducible.

For the remainder we may assume $n>4$.
Suppose first that $l=1$ (so $t>3$) and assume $a_k \neq 0$. If $k \le n-3$ then arguing as in the proof of Lemma \ref{c4ii:p3} we deduce that $k \le 2$. If $a_{n-2} \ne 0$ then $\mu = \l-\a_{n-2}-\a_{n} \in \L(V)$ and
$$\mu|_{H^0} = \l|_{H^0}+\b_{1,1}+\sum_{i=3}^{t-1}\b_{i,1}-\b_{t,1}$$
(see \eqref{e:akh4}),
which contradicts Lemma \ref{l:sumc} since $t>3$. Similarly, if $a_n \ne 0$ then $\l-\a_{n} \in \L(V)$ restricts to $\l|_{H^0}+\sum_{i=2}^{t-1}\b_{i,1}-\b_{t,1}$, and once again we arrive at a contradiction via Lemma \ref{l:sumc}.

Next suppose $l=1$ and $a_{n-1} \ne 0$.
Here we can argue as in the proof of Lemma \ref{c4ii:p3} to get $\l=a_2\l_2+a_{n-1}\l_{n-1}$.
Now $\mu=\l-\a_{n-2}-\a_{n-1}-\a_{n} \in \L(V)$ restricts to $\l|_{H^0}+\sum_{i=3}^{t-1}\b_{i,1}-\b_{t,1}$, so
	Lemma \ref{l:sumc} implies that $t=4$ (so $n=8$)
	and $\mu$ affords the highest weight of a $KH^0$-composition factor,
	whence $\langle\mu|_{H^0},\b_{3,1}\rangle=\langle\l|_{H^0},\b_{4,1}\rangle$.
	Now
		$$\l_2|_{H^0} = 2(\o_{2,1}+\o_{3,1}+\o_{4,1}),\;\;
		\l_7|_{H^0} = \o_{1,1}+\o_{2,1}+\o_{3,1}+3\o_{4,1},$$
		 so $\langle\l|_{H^0},\b_{4,1}\rangle = 2a_2+3a_7$ and we deduce that
		$a_7=1$.
	If $a_2 \neq 0$ then $\nu=\l-\a_2 - \cdots - \a_7 \in \L(V)$ restricts to $\l|_{H^0}-\b_{2,1}-\b_{3,1}$.
	Set $\nu_1=\l$, $\nu_2=\l-\a_{2}$ and $\nu_3=\l-\a_{2}-\a_{3}-\a_{4}$. By \eqref{e:ci0},  $\nu_i$ is the unique weight of $V$ such that $\nu_i|_{H^0}=\l|_{H^0}+\b_{1,1}-\b_{i,1}$. Also note that $\nu|_{H^0}$ occurs with multiplicity $1$ under each $\nu_i|_{H^0}$, and it is easy to check that there is no weight of $V$ that restricts to one of the following
	$$\lambda|_{H^0} + \beta_{2,1} - \beta_{3,1}, \; \lambda|_{H^0} + \beta_{4,1} - \beta_{3,1}, \; \lambda|_{H^0} + \beta_{3,1} - \beta_{2,1},\; \lambda|_{H^0} + \beta_{4,1} - \beta_{2,1}.$$ 
	In particular, if $\chi \in \L(V)$ affords the highest weight of a $KH^0$-composition factor containing $\nu|_{H^0}$ then $\chi|_{H^0} = \nu_i|_{H^0}$ for some $i$,
		so $\nu|_{H^0}$ occurs with multiplicity at most $3$ in $V|_{H^0}$. However,
 Lemma \ref{l:s816} gives $m_{V}(\nu) \ge 5$, so this contradiction implies that $a_2=0$.
	Therefore $\l=\l_7$, so $\dim V = 2^7$ (see Lemma \ref{l:dmspin}), $\l|_{H^0}=\o_{1,1}+\o_{2,1}+\o_{3,1}+3\o_{4,1}$ and arguing by dimension, we deduce that $V|_H$ is irreducible if and only if $p \neq 3$. This case is recorded in Table \ref{t:c4ii}. For $l=1$ we have now reduced to the case $\l=a_1\l_1+a_2\l_2$ (with $t>3$).
	
Next suppose $l \ge 2$ and $a_k \neq 0$.
As before, if $k \le n-2l$ then we can argue as in the proof of Lemma \ref{c4ii:p3} to get $k \le 2$.
If $n-2l+3 \le k \le n-3$ then $\mu=\l-\sum_{i=k}^{n-2}\a_{i}-\a_{n} \in \L(V)$ and
$$\mu|_{H^0} = \l|_{H^0}-\sum_{j=2l+k-n}^{2l-2}\b_{1,j}-\b_{1,1}-\b_{t,l}+\sum_{i=1}^{t-1}\b(i),$$
where $\b(i)$ is given in \eqref{e:betai2}. However $n-k+1 <(t-1)(2l-1)$, so this contradicts Lemma \ref{l:sumc}.
Similarly, if $a_{n-2l+1} \neq 0$ then $\mu = \l-\a_{n-2l}-\a_{n-2l+1} \in \L(V)$
and
$$\mu|_{H^0} = \l|_{H^0}-\b_{1,1}+2\sum_{j=1}^{l-1}\b_{1,j}+\b_{1,l}-\b_{2,1},$$
which again is ruled out by Lemma \ref{l:sumc}.
If $a_{n-2l+2} \neq 0$ then $\mu = \l-\a_{n-2l}-\a_{n-2l+1}-\a_{n-2l+2} \in \L(V)$ and, if $l\ge3$, we have 
$$\mu|_{H^0} =
\l|_{H^0}-\b_{2,1}+\b_{1,1}+\b_{1,2}+2\sum_{j=3}^{l-1}\b_{1,j}+\b_{1,l},$$
contradicting Lemma \ref{l:sumc}.
Similarly, if $a_{n-2} \neq 0$ then $\mu = \l-\a_{n-2}-\a_{n} \in \L(V)$ and
$$\mu|_{H^0} =  \l|_{H^0}-\b_{1,1}-\b_{1,2}+\sum_{i=1}^{t-1}\b(i)-\b_{t,l},$$
so $\ell(\mu|_{H^0}) = (2l-1)(t-1)-3$. In particular, if $(t,l) \neq (2,2)$ then $\ell(\mu|_{H^0})>0$, which cannot happen in view of Lemma \ref{l:sumc}. On the other hand, if $(t,l) = (2,2)$ then
$\mu|_{H^0}=\l|_{H^0}+\b_{1,1}-\b_{2,2}$, but this contradicts Lemma \ref{l:sumc}(ii). Next suppose
$a_n \neq 0$. Then $\mu = \l-\a_{n} \in \L(V)$ and
$$\mu|_{H^0} = \l|_{H^0}-\b_{1,1}+\sum_{i=1}^{t-1}\b(i)-\b_{t,l},$$
which yet again contradicts Lemma \ref{l:sumc}. For $l \ge 2$, we have now reduced to the case $\l = a_1\l_1+a_2\l_2+a_{n-1}\l_{n-1}$.		

Continuing with the case $l \ge 2$, suppose $a_{n-1} \neq 0$. Then $\mu = \l-\a_{n-2}-\a_{n-1}-\a_{n} \in \L(V)$ and
$$\mu|_{H^0} = \l|_{H^0}-2\b_{1,1}-\b_{1,2}+\sum_{i=1}^{t-1}\b(i)-\b_{t,l},$$
so Lemma \ref{l:sumc} implies that $t=l=2$, and thus $n=8$.
One can check that there is no weight of $V$ that restricts to one of the following
$$\lambda|_{H^0} + \beta_{1,2} - \beta_{1,1},\;  \lambda|_{H^0} + \beta_{2,1} - \beta_{1,1}, \; \lambda|_{H^0} + \beta_{2,2} - \beta_{1,1},$$ 
so $\lambda|_{H^0} - \beta_{1,1}$ occurs with multiplicity at most $1$ in $V|_{H^0}$. In particular, since both
$\a_1$ and $\a_7$ restrict to $\b_{1,1}$, it follows that $a_1=0$ and $\l=a_2\l_2+a_7\l_7$.
Hence $\mu=\l-\a_4 - \cdots - \a_8 \in \L(V)$ restricts to $\l|_{H^0}+\b_{1,1}+\b_{1,2}-\b_{2,1}-\b_{2,2}$, so $\mu$ affords the highest weight of a $KH^0$-composition factor.
	Therefore
	 $\langle\mu|_{H^0},\b_{2,j}\rangle=\langle\l|_{H^0},\b_{1,j}\rangle$ for $j \le 2$, so $2a_2+a_7=a_7$ and $a_2=a_7-1$ since 
		$\l_2|_{H^0} = \o_{1,2}+2\o_{2,1}$
		and
		$\l_7|_{H^0} = \o_{1,1}+\o_{2,1}+\o_{2,2}$. Therefore
		 $a_2=0$ and $a_7=1$, so 
	$\l=\l_7$, $\dim V = 2^7$ and $\l|_{H^0}=\o_{1,1}+\o_{2,1}+\o_{2,2}$. Moreover, since $\dim L_{X_1}(\o_{1,1}+\o_{1,2})=16-4\delta_{5,p}$ (see \cite[Table A.22]{Lubeck}), we deduce that $V|_H$ is irreducible if and only if $p \neq 5$. This case is recorded in Table \ref{t:c4ii}.

To complete the proof of the lemma, we may assume $l \ge 1$, $n>4$ and $\l=a_1\l_1+a_2\l_2$. If $a_2=0$ then $\l=a_1\l_1$, and by arguing as in the proof of Lemma \ref{c4ii:p3} we deduce that $a_1=1$, so $\l=\l_1$
and $H$ acts irreducibly on $V$.

Finally, let us assume $a_2 \neq 0$. First note that if $\mu \in \L(V)$ affords the highest weight of a $KH^0$-composition factor then either $\mu=\l$, or there exists a permutation $\s \in S_t$ such that $\s(1)=i_0 \neq 1$ and \eqref{e:muh0} holds.

By \eqref{e:ci0}, $\l-\sum_{i=2}^{n-2}\a_{i}-\a_n$ is the unique weight of $V$ that restricts to $\l|_{H^0}+\b_{1,1}-\b_{t,1}$.
	 By arguing as in the proof of Lemma \ref{c4ii:p3}, we deduce that $a_2=1$, $a_1=0$ and $V|_{H^0}$ has exactly $t$ composition factors.
	Therefore $\l=\l_2$ and thus $\l|_{H^0}=2\sum_{i=2}^{t}\o_{i,1}$ if $l=1$, otherwise $\l|_{H^0}=
	\o_{1,2}+2\sum_{i=2}^{t}\o_{i,1}$.
By Proposition \ref{p:dims} we have
	$$\dim{L_{C_l}(2\l_1)}=\left\{\begin{array}{ll}
	2l  & \mbox{if $p=2$} \\
	l(2l+1) & \mbox{otherwise}
	\end{array}\right.$$
	$$\dim{L_{C_l}(\l_2)}=
		\left\{\begin{array}{ll}
			(l-1)(2l+1)-1 & \mbox{if $p\mid l$} \\
			(l-1)(2l+1) & \mbox{otherwise}
		\end{array}\right.$$
and 
$$\dim V = \dim{L_{D_n}(\l_2)} \ge n(2n-1)-2.$$
	If $l=1$ then $n=2^{t-1}$,
	so $\dim V=n(2n-1)-2\delta_{2,p}$,
	and $\dim L_{H^0}(\l|_{H^0})=(\dim L_{X_{1}}(2\o_{1,1}))^{t-1}=2^{t-1}$ if $p=2$ and $3^{t-1}$ otherwise.
	It is easy to check that $\dim V>t \cdot \dim L_{H^0}(\l|_{H^0})$ for all $t \ge 2$, whence $V|_{H}$ is reducible.
	If $l\ge2$ then
$$\dim L_{H^0}(\l|_{H^0}) = \dim L_{X_{1}}(\o_{1,2}) \cdot (\dim L_{X_{1}}(2\o_{1,1}))^{t-1} \le (l-1)l^{t-1}(2l+1)^{t}$$
		and it is easy to check that
		$t \cdot (l-1)l^{t-1}(2l+1)^{t}<n(2n-1)-2$
		for all $t \ge 2$ and $l \ge 1$, so once again $V|_{H}$ is reducible.
\end{proof}

\begin{lem}\label{c4ii:p5}
Proposition \ref{TH:C4II} holds in case (v) of Table \ref{t:c4iis}.
\end{lem}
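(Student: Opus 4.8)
The plan is to follow the weight-theoretic strategy developed in Lemmas \ref{c4ii:p3} and \ref{c4ii:p4}, adapted to the orthogonal factors $X_i \cong D_l$ with $l \ge 3$ and $p \neq 2$. Since $2n = (2l)^t$ exactly as in case (iv), I would first set up the embedding of a maximal torus of $H^0 = X_1 \cdots X_t$ in $T$ by assigning to each integer $0 \le k < n$ the base-$(2l)$ digits $r_k(i)$ as in \eqref{e:rik3}, together with the reflection conventions $\b_{i,l+m} = \b_{i,l-m}$, $\o_{i,l+m} = \o_{i,l-m}$ and $\l_{n+m} = \l_{n-m}$. The first substantive task is to compute the restrictions $\a_k|_{H^0}$ in terms of the simple roots $\b_{i,j}$ of the $D_l$ factors, the analogue of \eqref{e:akh4}. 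This is more delicate than the symplectic case because of the fork at the end of each $D_l$ diagram: the two terminal nodes $\b_{i,l-1}, \b_{i,l}$ are interchanged by the graph automorphism present in the $2^t$ part of $H$, and the highest root of $D_l$ has a different shape than the root $\b(i)$ of \eqref{e:betai2}. From these formulas I would deduce the key property \eqref{e:ci0}: a non-negative integer combination $\a = \sum_i c_i \a_i$ restricts to $0$ on $H^0$ if and only if $\a = 0$. In particular $\l$ is then the unique $T$-weight of $V$ restricting to $\l|_{H^0}$, which lets me read off the multiplicity of any $T_{H^0}$-weight directly from the $T$-weights of $V$.

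With these restrictions in hand, the bulk of the argument is a coefficient-by-coefficient elimination using Lemma \ref{l:sumc}. Writing $\l = \sum a_i \l_i$ and assuming $V|_H$ is irreducible, I would show that whenever $a_k \neq 0$ for an interior node $k$, a suitable weight $\l - \a_k - \a_{k+1} - \cdots \in \L(V)$ restricts to a weight with $\ell > 0$, contradicting Lemma \ref{l:sumc}(i); this forces such $a_k$ to vanish and reduces $\l$ to a highest weight supported on $\l_1, \l_2$ and the two spin weights $\l_{n-1}, \l_n$, just as in the earlier lemmas. The node-$l$ forks of the factors and the two spin nodes $n-1, n$ of $G = D_n$ will require separate treatment, using Lemma \ref{l:sumc}(ii) (which for the $(D_l^t.2^t).S_t$ case only demands $\ell_{l-1} + \ell_l = 0$) to control behaviour at the terminal nodes. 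I then expect to bound the surviving coefficients, showing the spin and $\l_2$ contributions vanish and that the leading coefficient satisfies $a_1 \le 1$; for the latter I would use the standard device of comparing the multiplicity of $\l|_{H^0} - \b_{2,1}$ in $V$ with its multiplicity in the two candidate composition factors, via Lemma \ref{l:118}(i), exactly as in Lemmas \ref{c4ii:p3} and \ref{c4ii:p4}.

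This leaves only $\l = \l_1$, which is the natural module, restricts to $\o_{1,1} + \cdots + \o_{t,1}$ (invariant under $S_t$ and the factor graph automorphisms, so $V|_{H^0}$ is already irreducible), and hence gives an irreducible $KH$-module; this is the single entry recorded in Table \ref{t:c4ii}. For any residual borderline weight (such as $\l_2$) that survives the combinatorial reduction but is not the natural module, I would close the case by a dimension count: comparing $\dim V$, computed from Proposition \ref{p:dims} (and Lemma \ref{l:dm} or \ref{l:dmspin} should a spin weight intervene), with the product of the number of distinct $S_t$- and graph-conjugates of $\l|_{H^0}$ and $\dim L_{H^0}(\l|_{H^0})$, and verifying that equality fails for $l \ge 3$, so that $V|_H$ is reducible.

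The main obstacle I anticipate is the very first step, namely obtaining a clean closed form for $\a_k|_{H^0}$ across all the cases dictated by the digits $r_k(i)$ and the $D_l$ fork, since the orthogonal embedding produces genuinely more intricate restriction formulas at the terminal nodes than the symplectic embedding of case (iv). Once those formulas are pinned down, the subsequent eliminations, the handling of the spin nodes via Lemma \ref{l:sumc}(ii), and the final dimension comparison should be routine adaptations of the preceding proofs.
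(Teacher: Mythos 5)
Your plan is essentially the paper's proof: the same base-$(2l)$ digit setup yielding the restriction formulas (the paper's \eqref{e:akh5}, with $\b(i)=2\b_{i,1}+\cdots+2\b_{i,l-2}+\b_{i,l-1}+\b_{i,l}$ and separate cases when $r_k(i_k)=l$), the uniqueness property \eqref{e:ci0}, elimination of all other coefficients via Lemma \ref{l:sumc} down to $\l=a_1\l_1+a_2\l_2$, a dimension count disposing of $\l_2$, and $\l=\l_1$ as the unique example. One small repair: when $a_2=0$ the Lemma \ref{l:118}(i) device cannot give $a_1\le 1$, since $\l-\a_2-\cdots-\a_{2l+1}\notin\L(V)$ and hence only one weight of $V$ restricts to $\l|_{H^0}-\b_{2,1}$; instead, as in the endgame of Lemmas \ref{c4ii:p3} and \ref{c4ii:p4}, one observes that $V|_{H^0}$ is irreducible (by \eqref{e:ci0} and Clifford theory) and invokes \cite{Seitz2} to conclude $\l=\l_1$.
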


\begin{proof}
Here $G=D_n$, $2n=(2l)^t$ and $H^0 = X_1 \cdots X_t$ with $X_{i} \cong D_{l}$, $l \ge 3$  and $p \neq 2$.
Define $\{\b_{i,1}, \ldots, \b_{i,l}\}$ and  $\{\o_{i,1},\ldots,\o_{i,l}\}$ as before, and for $0 \le m \le n$ set $\l_{n+m}=\l_{n-m}$ and $\l_0=0$. For all $1 \le i \le t$ and $0 \le m \le l$, set $\b_{i,l+m}=\b_{i,l-m}$ and $\o_{i,l+m}=\o_{i,l-m}$, where $\b_{i,0}=\o_{i,0}=0$.

Let $0 \le k < n$ be an integer and define the unique integers $r_k(i) \in \{0,\dots,2l-1\}$ as in \eqref{e:rik3}. For $t' \in \{1,\dots,t\}$ let $k'=\sum_{i=0}^{t'-1}r_{k}(i)(2l)^i$. By choosing an appropriate embedding of $H^0$ in $G$, we may assume that 
$$\langle\l_{k}|_{H^0},\b_{i,j}\rangle=\langle\l_{k'}|_{H^0},\b_{i,j}\rangle$$ 
for all $1 \le i \leq t'$, $1 \le j \le l$.

Set $\xi_{k} = -\l_{k}+\l_{k+1}$. If $k \neq n-2$ then $\xi_k$ is a weight of the natural $KG$-module $W$ and we have
$$\langle \xi_{k}|_{H^0},\b_{i+1,j}\rangle=
\left\{\begin{array}{rl}
1 & \mbox{if $j=r_k(i)+1 \neq 2l$ or $j=l=r_k(i)+2$} \\
-1 & \mbox{if $j=r_k(i) \neq 0$ or $j=l=r_k(i)-1$} \\ 
0 & \mbox{otherwise}
\end{array}\right.$$
for all $0 \le i \le t-1$.
Now $\xi_{n+k}=-\xi_{n-1-k}$ and
$$n+k=\sum_{i=0}^{t-2}r_k(i)(2l)^i+(l+r_k(t-1))(2l)^{t-1},$$
$$n-1-k=\sum_{i=0}^{t-2}(2l-1-r_k(i))(2l)^i+(l-1-r_k(t-1))(2l)^{t-1},$$
so for $k \neq 1$ and $0 \le i \le t-2$ we have
$$\langle \xi_{n+k}|_{H^0},\b_{i+1,j}\rangle=
\left\{\begin{array}{rl}
1 & \mbox{if $j=2l-1-r_k(i) \neq 0$ or $j=l=2l-2-r_k(i)$} \\ 
-1 & \mbox{if $j=2l-r_k(i) \neq 2l$ or $j=l=2l+1-r_k(i)$}
\end{array}\right.$$
and
$$\langle \xi_{n+k}|_{H^0},\b_{t,j}\rangle=
\left\{\begin{array}{rl}
1 & \mbox{if $j=l-r_k(t-1)-1 \neq 0$} \\ 
-1 & \mbox{if $j=l-r_k(t-1)$ or $j=l=l+1-r_k(t-1)$} \\ 
0 & \mbox{otherwise.}
\end{array}\right.$$
We also observe that the weight $-\l_{n-2}+\l_{n-1}+\l_{n}$
restricts to
$$\o_{1,1}-\o_{1,2}-\sum_{i=2}^{t-1}\o_{i,1}-\o_{t,l-1}+\o_{t,l},$$
while $-\l_{n}-\l_{n+1}+\l_{n+2}$ restricts to
$$-\o_{1,1}+\o_{1,2}+\sum_{i=2}^{t-1}\o_{i,1}-\o_{t,l}+\o_{t,l+1}.$$

As before, for an integer $1 \le k \le n$, let $i_k$ be minimal such that  $r_k(i_k) \neq 0$ in \eqref{e:rik3}. In view of the above restrictions, we deduce that
\begin{equation}\label{e:akh5}
\a_{k}|_{H^0} = \left\{\begin{array}{ll}
\b_{1,r_k(0)} & \mbox{if $i_k=0$ and $r_k(0) \neq l$}\\
-\b_{1,l-1}+\b_{1,l} & \mbox{if $i_k=0$ and $r_k(0)=l$}\\
-\displaystyle \sum_{i=1}^{i_k}\b(i)+\b_{i_k+1,r_k(i_k)} & \mbox{if $i_{k} \ge 1$, $k \neq n$ and $r_k(i_k) \neq l$}\\	
-\displaystyle \sum_{i=1}^{i_k}\b(i)-\b_{i_k+1,l-1}+\b_{i_k+1,l} & \mbox{if $i_k \ge 1$, $k \neq n$ and $r_k(i_k)=l$}\\
\displaystyle \b_{1,1}-\sum_{i=1}^{t-1}\b(i)-\b_{t,l-1}+\b_{t,l} & \mbox{if $k=n$,}
\end{array}\right.
\end{equation}
where
\begin{equation}\label{e:betai3}
\b(i) = 2\b_{i,1}+\dots+2\b_{i,l-2}+\b_{i,l-1}+\b_{i,l}.
\end{equation}
We also note that \eqref{e:ci0} holds.

Recall that $V$ has highest weight $\l=\sum_{i=1}^na_i\l_i$ and let us assume $V|_{H}$ is irreducible.
Suppose $a_k \neq 0$. Define the integers $r_k(i)$ as in \eqref{e:rik3} and write $k=r_k(0)+2l(a-1)$, where $a \in \{1,\dots,l(2l)^{t-2}+1\}$.
Note that $\l-\a_{k} \in \L(V)$.

If $i_k \ge 1$ then \eqref{e:akh5} implies that $\ell((\l-\a_{k})|_{H^0})\ge2(l-1)i_k-1 > 0$, which contradicts Lemma \ref{l:sumc}. Therefore $i_k =0$ and thus $r_k(0) \neq 0$. 

Suppose $k \le n-2l$.
Then $\mu=\l-\sum_{i=k}^{2la}\a_{i} \in \L(V)$ and since $2la=(r_k(1)+1)2l+\sum_{i=2}^{t-1}r_k(i)(2l)^{i}$ we deduce that
$$\mu|_{H^0} = \left\{\begin{array}{ll}
\displaystyle\l|_{H^0}-\sum_{j=r_k(0)}^{2l-1}\b_{1,j}+\b_{1,l-1}-\b_{\iota,r}+\sum_{i=1}^{i_{2la}}\b(i) & \mbox{if $r \neq l$, $r_k(0) \le l$}\\
\displaystyle\l|_{H^0}-\sum_{j=r_k(0)}^{2l-1}\b_{1,j}-\b_{\iota,r}+\sum_{i=1}^{i_{2la}}\b(i) & \mbox{if $r \neq l$, $r_k(0)>l$}\\
\displaystyle\l|_{H^0}-\sum_{j=r_k(0)}^{2l-1}\b_{1,j}+\b_{1,l-1}+\b_{\iota,l-1}-\b_{\iota,l}+\sum_{i=1}^{i_{2la}}\b(i) & \mbox{if $r = l$, $r_k(0)\le l$}\\
\displaystyle\l|_{H^0}-\sum_{j=r_k(0)}^{2l-1}\b_{1,j}+\b_{\iota,l-1}-\b_{\iota,l}+\sum_{i=1}^{i_{2la}}\b(i) & \mbox{if $r = l$, $r_k(0)>l$,}
\end{array}\right.$$
where $r=r_{2la}(i_{2la})$, $\iota = i_{2la}+1$ and $\b(i)$ is the $\mathbb{Z}$-linear combination of simple roots defined in \eqref{e:betai3}.

Therefore $\ell(\mu|_{H^0})\ge 2(l-1)(i_{2la}-1)+r_k(0)-3$ and, by applying Lemma \ref{l:sumc} we deduce that $i_{2la}=1$ and $r_k(0) \le 3$.
Therefore $r_k(0) \le l$ and thus $\ell(\mu|_{H^0})\ge r_k(0)-2$, so $r_k(0)=1$ or $2$. 
Suppose $r_k(0)=1$. If $k\neq 1$ then $i_{k-1} \geq 1$,
			$\nu=\l-\a_{k-1}-\a_{k} \in \L(V)$
			and $\ell(\nu|_{H^0})\ge2(l-1)i_{k-1}-2\ge 2$, so Lemma \ref{l:sumc} implies that $k=1$ is the only possibility. Similarly, if $r_k(0)=2$ and $k \neq 2$ then $i_{k-2} \geq 1$,
			$\nu=\l-\a_{k-2}-\a_{k-1}-\a_{k} \in \L(V)$
			and $\ell(\nu|_{H^0})\ge 2(l-1)i_{k-2}-3 \geq 1$,
			which is a contradiction. It follows that if $a_k \neq 0$ and $k \le n-2l$ then $k \le 2$.

Next suppose $n-2l+1 \le k \le n-l-1$. Then
$\mu=\l-\sum_{i=n-2l}^{k}\a_{i} \in \L(V)$ restricts to
$$\l|_{H^0}-\sum_{j=1}^{2l+k-n}\b_{1,j}-\b_{2,2l-1}+\b(1),$$
so $\ell(\mu|_{H^0})=n-k-3 \ge l-2 \ge 1$ and thus Lemma \ref{l:sumc} implies that $a_k = 0$.
Similarly, if $a_{n-l}\neq 0$ then
$\mu=\l-\sum_{i=n-2l}^{n-l}\a_{i} \in \L(V)$ and
$$\mu|_{H^0} = \l|_{H^0}-\b_{2,2l-1}+\b_{1,1}+\dots+\b_{1,l-2}+\b_{1,l-1}$$
so $\ell(\mu|_{H^0})\ge 1$ and again we conclude that $a_{n-l}=0$.
For $n-l+1 \le k \le n-2$ we have
$\mu=\l-\sum_{i=k}^{n}\a_{i}+\a_{n-1} \in \L(V)$
and
$$\mu|_{H^0} = \l|_{H^0}-\sum_{j=2l+k-n}^{2l-2}\b_{1,j}-\b_{1,1}+\b_{t,l-1}-\b_{t,l}+\sum_{i=1}^{t-1}\b(i)$$
so $\ell_1(\mu|_{H^0})=2t-3 \ge 1$, which contradicts Lemma~\ref{l:sumc}. Therefore $a_k=0$.
Similarly, since $\l-\a_{n}$ restricts to $$\l|_{H^0}-\b_{1,1}+\sum_{i=1}^{t-1}\b(i)+\b_{t,l-1}-\b_{t,l}$$
we deduce that $a_n=0$.		
Finally, if $a_{n-1} \neq 0$ then $\mu=\l-\a_{n-2}-\a_{n-1}-\a_{n} \in \L(V)$ and
$$\mu|_{H^0} = \l|_{H^0}-2\b_{1,1}-\b_{1,2}+\sum_{i=1}^{t-1}\b(i)+\b_{t,l-1}-\b_{t,l},$$
so $\ell(\mu|_{H^0})=(t-1)(2l-2)-3 \ge 1$. Applying  Lemma \ref{l:sumc} once again, we conclude that $a_{n-1}=0$.

We have now reduced to the case $\l=a_1\l_1+a_2\l_2$, so
$$\l|_{H^0} = \left\{\begin{array}{ll}
\displaystyle a_1\o_{1,1}+a_2(\o_{1,2}+\o_{1,3})+\sum_{i=2}^{t}(a_1+2a_2)\o_{i,1} & \mbox{if $l=3$}\\
\displaystyle a_1\o_{1,1}+a_2\o_{1,2}+\sum_{i=2}^{t}(a_1+2a_2)\o_{i,1} & \mbox{if $l \ge 4$.}\\
\end{array}\right.$$

Suppose $a_2 \neq 0$. As in the proof of Lemma \ref{c4ii:p3}, we can reduce  to the case $(a_1,a_2)=(0,1)$, so $V|_{H^0}$ has exactly $t$ composition factors with highest weights $\l|_{H^0}+\b_{1,1}-\b_{i,1}$, for $1 \le i \le t$. Therefore $\l=\l_2$ and $\l|_{H^0}=\o_{1,2}+\o_{1,3}+2\sum_{i=2}^{t}\o_{i,1}$ if $l=3$, and $\l|_{H^0}=\o_{1,2}+2\sum_{i=2}^{t}\o_{i,1}$ if $l \ge 4$.

	By Proposition \ref{p:dims}, since $p \neq 2$, we have
	$\dim{L_{D_m}(\l_2)}=m(2m-1)$
	and
	$$\dim{L_{D_l}(2\l_1)}=
	\left\{\begin{array}{ll}
	(l+1)(2l-1)-1 & \mbox{if $p\mid l$} \\
	(l+1)(2l-1) & \mbox{otherwise,}
	\end{array}\right.$$
	while $\dim{L_{D_3}(\o_{1,2}+\o_{1,3})}=15$.
	Hence $\dim V = n(2n-1)$ and $\dim L_{H^0}(\l|_{H^0}) \le l(2l-1)^{t}(l+1)^{t-1}$.
	It is easy to check that $n(2n-1)>t \cdot l(2l-1)^{t}(l+1)^{t-1}$ for all $t \ge 2$ and $l \ge 3$, whence $V|_{H}$ is reducible.

We have now reduced to the case $\l=a_1\l_1$. Here $\l$ is the unique weight in $\L(V)$ that affords the highest weight of a $KH^0$-composition factor (see \eqref{e:ci0}),  
so $V|_{H^0}$ is irreducible.
Therefore, using \cite{Seitz2}, we conclude that the only example is $\l=\l_1$.
\end{proof}

This completes the proof of Proposition \ref{TH:C4II}. Moreover, in view of Propositions \ref{TH:C1,3,6}, \ref{TH:C2} and \ref{TH:C4I}, the proof of Theorem \ref{main} is complete.

\backmatter

\bibliographystyle{amsalpha}

\end{document}